\tikzstyle{vertex}=[circle, draw, inner sep=0pt, minimum size=4pt]
\tikzstyle{vtx}=[circle, draw, inner sep=0pt, minimum size=8pt]
\definecolor{darkgreen}{cmyk}{.9,0,.9,.2}
\definecolor{midgray}{gray}{0.60}
\definecolor{lightgray}{gray}{0.90}
\definecolor{lmgray}{gray}{0.70}
\definecolor{alt_col0}{rgb}{1.00000000000000, 0.000000000000000, 0.000000000000000}
\definecolor{alt_col1}{rgb}{1.00000000000000, 0.666666666666667, 0.000000000000000}
\definecolor{alt_col2}{rgb}{0.607843137254902, 0.701960784313725, 0.239215686274510}
\definecolor{alt_col3}{rgb}{0.278431372549020, 0.819607843137255, 0.568627450980392}
\definecolor{alt_col4}{rgb}{0.000000000000000, 0.372549019607843, 0.701960784313725}
\definecolor{alt_col5}{rgb}{0.666666666666667, 0.000000000000000, 1.00000000000000}
\definecolor{alt_col6}{rgb}{1.00000000000000, 0.000000000000000, 0.533333333333333}
\definecolor{alt_col7}{rgb}{0.941176470588235, 0.000000000000000, 0.000000000000000}
\definecolor{alt_col8}{rgb}{0.819607843137255, 0.545098039215686, 0.000000000000000}
\definecolor{alt_col9}{rgb}{0.666666666666667, 1.00000000000000, 0.000000000000000}
\definecolor{alt_col10}{rgb}{0.000000000000000, 1.00000000000000, 0.666666666666667}
\definecolor{alt_col11}{rgb}{0.298039215686275, 0.607843137254902, 0.878431372549020}
\definecolor{alt_col12}{rgb}{0.588235294117647, 0.000000000000000, 0.878431372549020}
\definecolor{alt_col13}{rgb}{0.819607843137255, 0.000000000000000, 0.439215686274510}
\definecolor{alt_col14}{rgb}{0.878431372549020, 0.000000000000000, 0.000000000000000}
\definecolor{alt_col15}{rgb}{0.698039215686274, 0.466666666666667, 0.000000000000000}
\definecolor{alt_col16}{rgb}{0.588235294117647, 0.878431372549020, 0.000000000000000}
\definecolor{alt_col17}{rgb}{0.000000000000000, 0.701960784313725, 0.466666666666667}
\definecolor{alt_col18}{rgb}{0.000000000000000, 0.400000000000000, 1.00000000000000}
\definecolor{alt_col19}{rgb}{0.505882352941176, 0.000000000000000, 0.760784313725490}
\definecolor{alt_col20}{rgb}{0.698039215686274, 0.000000000000000, 0.372549019607843}
\definecolor{alt_col21}{rgb}{0.819607843137255, 0.000000000000000, 0.000000000000000}
\definecolor{alt_col22}{rgb}{1.00000000000000, 0.780392156862745, 0.341176470588235}
\definecolor{alt_col23}{rgb}{0.505882352941176, 0.760784313725490, 0.000000000000000}
\definecolor{alt_col24}{rgb}{0.298039215686275, 0.878431372549020, 0.686274509803922}
\definecolor{alt_col25}{rgb}{0.000000000000000, 0.329411764705882, 0.819607843137255}
\definecolor{alt_col26}{rgb}{0.686274509803922, 0.298039215686275, 0.878431372549020}
\definecolor{alt_col27}{rgb}{1.00000000000000, 0.341176470588235, 0.690196078431373}
\definecolor{alt_col28}{rgb}{0.760784313725490, 0.000000000000000, 0.000000000000000}
\definecolor{alt_col29}{rgb}{0.819607843137255, 0.639215686274510, 0.278431372549020}
\definecolor{alt_col30}{rgb}{0.733333333333333, 0.941176470588235, 0.321568627450980}
\definecolor{alt_col31}{rgb}{0.000000000000000, 1.00000000000000, 0.800000000000000}
\definecolor{alt_col32}{rgb}{0.000000000000000, 0.278431372549020, 0.701960784313725}
\definecolor{alt_col33}{rgb}{0.800000000000000, 0.000000000000000, 1.00000000000000}
\definecolor{alt_col34}{rgb}{0.819607843137255, 0.278431372549020, 0.568627450980392}
\definecolor{alt_col35}{rgb}{1.00000000000000, 0.341176470588235, 0.341176470588235}
\definecolor{alt_col36}{rgb}{0.698039215686274, 0.545098039215686, 0.239215686274510}
\definecolor{alt_col37}{rgb}{0.639215686274510, 0.819607843137255, 0.278431372549020}
\definecolor{alt_col38}{rgb}{0.000000000000000, 0.760784313725490, 0.607843137254902}
\definecolor{alt_col39}{rgb}{0.341176470588235, 0.603921568627451, 1.00000000000000}
\definecolor{alt_col40}{rgb}{0.705882352941177, 0.000000000000000, 0.878431372549020}
\definecolor{alt_col41}{rgb}{0.701960784313725, 0.239215686274510, 0.486274509803922}
\definecolor{alt_col42}{rgb}{0.819607843137255, 0.278431372549020, 0.278431372549020}
\definecolor{alt_col43}{rgb}{1.00000000000000, 0.800000000000000, 0.000000000000000}
\definecolor{alt_col44}{rgb}{0.533333333333333, 1.00000000000000, 0.000000000000000}
\definecolor{alt_col45}{rgb}{0.341176470588235, 1.00000000000000, 0.866666666666667}
\definecolor{alt_col46}{rgb}{0.278431372549020, 0.494117647058824, 0.819607843137255}
\definecolor{alt_col47}{rgb}{0.607843137254902, 0.000000000000000, 0.760784313725490}
\definecolor{alt_col48}{rgb}{1.00000000000000, 0.000000000000000, 0.400000000000000}
\definecolor{alt_col49}{rgb}{0.698039215686274, 0.239215686274510, 0.239215686274510}
\definecolor{alt_col50}{rgb}{0.878431372549020, 0.705882352941177, 0.000000000000000}
\definecolor{alt_col51}{rgb}{0.470588235294118, 0.878431372549020, 0.000000000000000}
\definecolor{alt_col52}{rgb}{0.278431372549020, 0.819607843137255, 0.713725490196078}
\definecolor{alt_col53}{rgb}{0.239215686274510, 0.423529411764706, 0.701960784313725}
\definecolor{alt_col54}{rgb}{0.866666666666667, 0.341176470588235, 1.00000000000000}
\definecolor{alt_col55}{rgb}{0.819607843137255, 0.000000000000000, 0.329411764705882}
\definecolor{alt_col56}{rgb}{1.00000000000000, 0.427450980392157, 0.341176470588235}
\definecolor{alt_col57}{rgb}{0.760784313725490, 0.607843137254902, 0.000000000000000}
\definecolor{alt_col58}{rgb}{0.372549019607843, 0.701960784313725, 0.000000000000000}
\definecolor{alt_col59}{rgb}{0.239215686274510, 0.701960784313725, 0.607843137254902}
\definecolor{alt_col60}{rgb}{0.000000000000000, 0.266666666666667, 1.00000000000000}
\definecolor{alt_col61}{rgb}{0.658823529411765, 0.258823529411765, 0.760784313725490}
\definecolor{alt_col62}{rgb}{0.698039215686274, 0.000000000000000, 0.278431372549020}
\definecolor{alt_col63}{rgb}{0.819607843137255, 0.352941176470588, 0.278431372549020}
\definecolor{alt_col64}{rgb}{1.00000000000000, 0.866666666666667, 0.341176470588235}
\definecolor{alt_col65}{rgb}{0.690196078431373, 1.00000000000000, 0.341176470588235}
\definecolor{alt_col66}{rgb}{0.000000000000000, 1.00000000000000, 0.933333333333333}
\definecolor{alt_col67}{rgb}{0.000000000000000, 0.235294117647059, 0.878431372549020}
\definecolor{alt_col68}{rgb}{0.933333333333333, 0.000000000000000, 1.00000000000000}
\definecolor{alt_col69}{rgb}{1.00000000000000, 0.341176470588235, 0.603921568627451}
\definecolor{alt_col70}{rgb}{0.698039215686274, 0.298039215686275, 0.239215686274510}
\definecolor{alt_col71}{rgb}{0.878431372549020, 0.764705882352941, 0.298039215686275}
\definecolor{alt_col72}{rgb}{0.305882352941176, 0.760784313725490, 0.000000000000000}
\definecolor{alt_col73}{rgb}{0.000000000000000, 0.819607843137255, 0.764705882352941}
\definecolor{alt_col74}{rgb}{0.000000000000000, 0.203921568627451, 0.760784313725490}
\definecolor{alt_col75}{rgb}{0.819607843137255, 0.000000000000000, 0.878431372549020}
\definecolor{alt_col76}{rgb}{0.819607843137255, 0.278431372549020, 0.494117647058824}
\definecolor{alt_col77}{rgb}{1.00000000000000, 0.266666666666667, 0.000000000000000}
\definecolor{alt_col78}{rgb}{0.760784313725490, 0.658823529411765, 0.258823529411765}
\definecolor{alt_col79}{rgb}{0.533333333333333, 0.878431372549020, 0.298039215686275}
\definecolor{alt_col80}{rgb}{0.321568627450980, 0.941176470588235, 0.898039215686275}
\definecolor{alt_col81}{rgb}{0.341176470588235, 0.517647058823529, 1.00000000000000}
\definecolor{alt_col82}{rgb}{0.709803921568627, 0.000000000000000, 0.760784313725490}
\definecolor{alt_col83}{rgb}{0.698039215686274, 0.239215686274510, 0.423529411764706}
\definecolor{alt_col84}{rgb}{0.819607843137255, 0.219607843137255, 0.000000000000000}
\definecolor{alt_col85}{rgb}{1.00000000000000, 0.933333333333333, 0.000000000000000}
\definecolor{alt_col86}{rgb}{0.458823529411765, 0.760784313725490, 0.258823529411765}
\definecolor{alt_col87}{rgb}{0.258823529411765, 0.760784313725490, 0.725490196078431}
\definecolor{alt_col88}{rgb}{0.278431372549020, 0.423529411764706, 0.819607843137255}
\definecolor{alt_col89}{rgb}{0.843137254901961, 0.298039215686275, 0.878431372549020}
\definecolor{alt_col90}{rgb}{1.00000000000000, 0.000000000000000, 0.266666666666667}
\definecolor{alt_col91}{rgb}{0.698039215686274, 0.188235294117647, 0.000000000000000}
\definecolor{alt_col92}{rgb}{0.878431372549020, 0.819607843137255, 0.000000000000000}
\definecolor{alt_col93}{rgb}{0.266666666666667, 1.00000000000000, 0.000000000000000}
\definecolor{alt_col94}{rgb}{0.000000000000000, 0.933333333333333, 1.00000000000000}
\definecolor{alt_col95}{rgb}{0.239215686274510, 0.360784313725490, 0.701960784313725}
\definecolor{alt_col96}{rgb}{1.00000000000000, 0.000000000000000, 0.933333333333333}
\definecolor{alt_col97}{rgb}{0.819607843137255, 0.000000000000000, 0.219607843137255}
\definecolor{alt_col98}{rgb}{1.00000000000000, 0.517647058823529, 0.341176470588235}
\definecolor{alt_col99}{rgb}{0.760784313725490, 0.709803921568627, 0.000000000000000}
\definecolor{alt_col100}{rgb}{0.235294117647059, 0.878431372549020, 0.000000000000000}
\definecolor{alt_col101}{rgb}{0.000000000000000, 0.709803921568627, 0.760784313725490}
\definecolor{alt_col102}{rgb}{0.000000000000000, 0.133333333333333, 1.00000000000000}
\definecolor{alt_col103}{rgb}{0.878431372549020, 0.000000000000000, 0.819607843137255}
\definecolor{alt_col104}{rgb}{0.698039215686274, 0.000000000000000, 0.188235294117647}
\definecolor{alt_col105}{rgb}{0.819607843137255, 0.423529411764706, 0.278431372549020}
\definecolor{alt_col106}{rgb}{1.00000000000000, 0.956862745098039, 0.341176470588235}
\definecolor{alt_col107}{rgb}{0.517647058823529, 1.00000000000000, 0.341176470588235}
\definecolor{alt_col108}{rgb}{0.298039215686275, 0.843137254901961, 0.878431372549020}
\definecolor{alt_col109}{rgb}{0.000000000000000, 0.117647058823529, 0.878431372549020}
\definecolor{alt_col110}{rgb}{0.760784313725490, 0.000000000000000, 0.709803921568627}
\definecolor{alt_col111}{rgb}{1.00000000000000, 0.341176470588235, 0.517647058823529}
\definecolor{alt_col112}{rgb}{0.698039215686274, 0.360784313725490, 0.239215686274510}
\definecolor{alt_col113}{rgb}{0.878431372549020, 0.843137254901961, 0.298039215686275}
\definecolor{alt_col114}{rgb}{0.349019607843137, 0.819607843137255, 0.278431372549020}
\definecolor{alt_col115}{rgb}{0.000000000000000, 0.800000000000000, 1.00000000000000}
\definecolor{alt_col116}{rgb}{0.000000000000000, 0.101960784313725, 0.760784313725490}
\definecolor{alt_col117}{rgb}{1.00000000000000, 0.341176470588235, 0.956862745098039}
\definecolor{alt_col118}{rgb}{0.819607843137255, 0.278431372549020, 0.423529411764706}
\definecolor{alt_col119}{rgb}{1.00000000000000, 0.400000000000000, 0.000000000000000}
\definecolor{alt_col120}{rgb}{0.760784313725490, 0.725490196078431, 0.258823529411765}
\definecolor{alt_col121}{rgb}{0.298039215686275, 0.701960784313725, 0.239215686274510}
\definecolor{alt_col122}{rgb}{0.000000000000000, 0.654901960784314, 0.819607843137255}
\definecolor{alt_col123}{rgb}{0.298039215686275, 0.376470588235294, 0.878431372549020}
\definecolor{alt_col124}{rgb}{0.701960784313725, 0.239215686274510, 0.670588235294118}
\definecolor{alt_col125}{rgb}{0.698039215686274, 0.239215686274510, 0.360784313725490}
\definecolor{alt_col126}{rgb}{0.819607843137255, 0.329411764705882, 0.000000000000000}
\definecolor{alt_col127}{rgb}{0.933333333333333, 1.00000000000000, 0.000000000000000}
\definecolor{alt_col128}{rgb}{0.317647058823529, 0.941176470588235, 0.321568627450980}
\definecolor{alt_col129}{rgb}{0.000000000000000, 0.560784313725490, 0.701960784313725}
\definecolor{alt_col130}{rgb}{0.341176470588235, 0.341176470588235, 1.00000000000000}
\definecolor{alt_col131}{rgb}{1.00000000000000, 0.000000000000000, 0.800000000000000}
\definecolor{alt_col132}{rgb}{1.00000000000000, 0.000000000000000, 0.133333333333333}
\definecolor{alt_col133}{rgb}{0.698039215686274, 0.278431372549020, 0.000000000000000}
\definecolor{alt_col134}{rgb}{0.819607843137255, 0.878431372549020, 0.000000000000000}
\definecolor{alt_col135}{rgb}{0.000000000000000, 0.760784313725490, 0.101960784313725}
\definecolor{alt_col136}{rgb}{0.341176470588235, 0.866666666666667, 1.00000000000000}
\definecolor{alt_col137}{rgb}{0.278431372549020, 0.278431372549020, 0.819607843137255}
\definecolor{alt_col138}{rgb}{0.819607843137255, 0.000000000000000, 0.654901960784314}
\definecolor{alt_col139}{rgb}{0.819607843137255, 0.000000000000000, 0.109803921568627}
\definecolor{alt_col140}{rgb}{1.00000000000000, 0.603921568627451, 0.341176470588235}
\definecolor{alt_col141}{rgb}{0.709803921568627, 0.760784313725490, 0.000000000000000}
\definecolor{alt_col142}{rgb}{0.000000000000000, 1.00000000000000, 0.266666666666667}
\definecolor{alt_col143}{rgb}{0.278431372549020, 0.713725490196078, 0.819607843137255}
\definecolor{alt_col144}{rgb}{0.239215686274510, 0.239215686274510, 0.701960784313725}
\definecolor{alt_col145}{rgb}{0.701960784313725, 0.000000000000000, 0.560784313725490}
\definecolor{alt_col146}{rgb}{0.698039215686274, 0.000000000000000, 0.0941176470588235}
\definecolor{alt_col147}{rgb}{0.819607843137255, 0.494117647058824, 0.278431372549020}
\definecolor{alt_col148}{rgb}{0.956862745098039, 1.00000000000000, 0.341176470588235}
\definecolor{alt_col149}{rgb}{0.000000000000000, 1.00000000000000, 0.400000000000000}
\definecolor{alt_col150}{rgb}{0.239215686274510, 0.607843137254902, 0.701960784313725}
\definecolor{alt_col151}{rgb}{0.517647058823529, 0.341176470588235, 1.00000000000000}
\definecolor{alt_col152}{rgb}{1.00000000000000, 0.341176470588235, 0.866666666666667}
\definecolor{alt_col153}{rgb}{1.00000000000000, 0.341176470588235, 0.427450980392157}
\definecolor{alt_col154}{rgb}{0.698039215686274, 0.423529411764706, 0.239215686274510}
\definecolor{alt_col155}{rgb}{0.843137254901961, 0.878431372549020, 0.298039215686275}
\definecolor{alt_col156}{rgb}{0.000000000000000, 0.878431372549020, 0.352941176470588}
\definecolor{alt_col157}{rgb}{0.000000000000000, 0.666666666666667, 1.00000000000000}
\definecolor{alt_col158}{rgb}{0.392156862745098, 0.258823529411765, 0.760784313725490}
\definecolor{alt_col159}{rgb}{0.819607843137255, 0.278431372549020, 0.709803921568627}
\definecolor{alt_col160}{rgb}{0.819607843137255, 0.278431372549020, 0.349019607843137}
\definecolor{alt_col161}{rgb}{1.00000000000000, 0.533333333333333, 0.000000000000000}
\definecolor{alt_col162}{rgb}{0.725490196078431, 0.760784313725490, 0.258823529411765}
\definecolor{alt_col163}{rgb}{0.000000000000000, 0.760784313725490, 0.305882352941176}
\definecolor{alt_col164}{rgb}{0.000000000000000, 0.505882352941176, 0.760784313725490}
\definecolor{alt_col165}{rgb}{0.400000000000000, 0.000000000000000, 1.00000000000000}
\definecolor{alt_col166}{rgb}{1.00000000000000, 0.000000000000000, 0.666666666666667}
\definecolor{alt_col167}{rgb}{0.698039215686274, 0.239215686274510, 0.298039215686275}
\definecolor{alt_col168}{rgb}{0.819607843137255, 0.439215686274510, 0.000000000000000}
\definecolor{alt_col169}{rgb}{0.800000000000000, 1.00000000000000, 0.000000000000000}
\definecolor{alt_col170}{rgb}{0.321568627450980, 0.941176470588235, 0.568627450980392}
\definecolor{alt_col171}{rgb}{0.341176470588235, 0.780392156862745, 1.00000000000000}
\definecolor{alt_col172}{rgb}{0.352941176470588, 0.000000000000000, 0.878431372549020}
\definecolor{alt_col173}{rgb}{0.819607843137255, 0.000000000000000, 0.545098039215686}
\definecolor{alt_col174}{rgb}{0.698039215686274, 0.372549019607843, 0.000000000000000}
\definecolor{alt_col175}{rgb}{0.705882352941177, 0.878431372549020, 0.000000000000000}
\definecolor{alt_col176}{rgb}{0.000000000000000, 1.00000000000000, 0.533333333333333}
\definecolor{alt_col177}{rgb}{0.278431372549020, 0.639215686274510, 0.819607843137255}
\definecolor{alt_col178}{rgb}{0.305882352941176, 0.000000000000000, 0.760784313725490}
\definecolor{alt_col179}{rgb}{0.701960784313725, 0.000000000000000, 0.466666666666667}
\definecolor{alt_col180}{rgb}{1.00000000000000, 0.690196078431373, 0.341176470588235}
\definecolor{alt_col181}{rgb}{0.607843137254902, 0.760784313725490, 0.000000000000000}
\definecolor{alt_col182}{rgb}{0.000000000000000, 0.819607843137255, 0.439215686274510}
\definecolor{alt_col183}{rgb}{0.239215686274510, 0.545098039215686, 0.701960784313725}
\definecolor{alt_col184}{rgb}{0.533333333333333, 0.298039215686275, 0.878431372549020}
\definecolor{alt_col185}{rgb}{1.00000000000000, 0.341176470588235, 0.780392156862745}
\definecolor{alt_col186}{rgb}{0.819607843137255, 0.568627450980392, 0.278431372549020}
\definecolor{alt_col187}{rgb}{0.866666666666667, 1.00000000000000, 0.341176470588235}
\definecolor{alt_col188}{rgb}{0.000000000000000, 0.701960784313725, 0.372549019607843}
\definecolor{alt_col189}{rgb}{0.000000000000000, 0.533333333333333, 1.00000000000000}
\definecolor{alt_col190}{rgb}{0.690196078431373, 0.341176470588235, 1.00000000000000}
\definecolor{alt_col191}{rgb}{0.819607843137255, 0.278431372549020, 0.639215686274510}
\definecolor{alt_col192}{rgb}{0.698039215686274, 0.486274509803922, 0.239215686274510}
\definecolor{alt_col193}{rgb}{0.764705882352941, 0.878431372549020, 0.298039215686275}
\definecolor{alt_col194}{rgb}{0.321568627450980, 0.941176470588235, 0.650980392156863}
\newcommand{\altcirc}[1]{\tikz\draw[alt_col#1,fill=alt_col#1] (0,0) circle (.75ex);}
\def\a{\alpha}
\def\la{\lambda}
\def\lam{\lambda}
\def\CC{{\mathbb C}}
\def\C{\mathcal{C}}
\def\A{\mathcal{A}}
\def\Z{\mathbb{Z}}
\def\N{\mathbb{N}}
\def\C{\mathbb{C}}
\def\sl4{\mathfrak{sl}_4(\mathbb{C})}
\def\a{\alpha}
\def\w{\varpi}
\def\al{\alpha}
\def\fg{\mathfrak{g}}
\def\la{\lambda}
\def\jmid{J}
\newtheorem*{rep@theorem}{\rep@title}
\newcommand{\newreptheorem}[2]{%
\newenvironment{rep#1}[1]{%
 \def\rep@title{#2 \ref{##1}}%
 \begin{rep@theorem}}%
 {\end{rep@theorem}}}
 \newtheorem*{genericthm*}{\thistheoremname}
\newenvironment{namedthm*}[1]
  {\renewcommand{\thistheoremname}{#1}%
   \begin{genericthm*}}
  {\end{genericthm*}}
\newcommand{\zcase}[4]{
\begin{enumerate}
% m,k>n
\item If $#2, #4\geq #3$, then
    \begin{align*}
        #1= \wp_q(\xi)&=\sum_{i=#2+#4-#3}^{#2+#3+#4} (L+1)(L+(t\bmod2)+1)q^i
    \end{align*}
    where
    \begin{align*}
    L&=\min\left\{\floor{\frac{t}{2}}, #3-\ceil{\frac{t}{2}}\right\} \text{ and } t=#2+#3+#4-i.
    \end{align*}
% m>n>k
\item If $#2\geq#3\geq#4$, then
    \begin{align*} 
        #1=\wp_q(\xi)&=\sum_{i=#2}^{#2+#3+#4} c_iq^i
    \end{align*} 
    where
    \begin{equation*}\resizebox{.9\hsize}{!}{$       c_i= 
            \begin{cases}
                \dfrac{(L+1)(2#4-2F_2+L+2)}{2} & \mbox{if } J < 0 \\ \\
                \dfrac{(L+1)(L+2) - F_2(F_2+1) - #4(#4-1)}{2}+F_2 #4 + #4L- F_2 L + (#4-F_2)(t\bmod 2) & \mbox{if } 0\leq J\leq L \\ \\
                (L+1)(L+(t\bmod 2)+1) &\mbox{if } J > L
            \end{cases}
    $}\end{equation*}
    with $t=#2+#3+#4-i$,
    $F_2=\min\left\{\floor{\frac{t}{2}},#4\right\}, J=#4-F_2-(t\bmod 2), F_1 =\max\{t-#3,0\}$, and
    $L = F_2 - F_1$.
% k>n>m
\item If $#4\geq#3\geq#2$, then
    \begin{align*}
        #1= \wp_q(v)&=\sum_{i=#4}^{#2+#3+#4} c_iq^i
    \end{align*}
    where
    \begin{equation*}\resizebox{.9\hsize}{!}{$
        c_i = 
        \begin{cases}
        \dfrac{(L+1)(2#2-2F_2+L+2)}{2} & \text{if } J < 0 \\ \\
        \dfrac{(L+1)(L+2) - F_2(F_2+1) - #2(#2-1)}{2}
         + F_2#2+#2L-F_2L+(#2-F_2)(t\bmod 2)
        & \text{if } 0\leq J\leq L \\ \\
        (L+1)(L+(t\bmod 2)+1) & \text{if } J > L
        \end{cases}$}
    \end{equation*}
    with $t=#2+#3+#4-i$,
    $F_2=\min\left\{\floor{\frac{t}{2}},#2\right\}, J=#2-F_2-(t\bmod2), F_1=\max\{t-#3,0\}$, and $L = F_2 - F_1$.
% n>m>k
\item If $#3\geq#2\geq#4$, then
    \begin{align*}
        #1= \wp_q(\xi) &= \sum_{i=#3}^{#2+#3+#4} c_iq^i\ \
        \text{ and }\ c_i=S_1 - S_2 + F_2 - F_1 + 1
    \end{align*}
    where
    \begin{equation*}
       \resizebox{.9\hsize}{!}{$S_1=\begin{cases}
           (t-F_2)(F_2+1) & \text{if } t-#2 < 0 \\\\
           %\dfrac{(t-#2)(t-#2+1)+F_1(F_1-1)}{2} - F_2(F_2+1) + #2(t-#2-F_1+1) + t(F_2-t+#2) & \text{if } 0\leq t-#2\leq F_2
           \dfrac{(t-#2)(t-#2+1)+F_1(F_1-1) - 2F_2(F_2+1) + 2#2(t-#2-F_1+1) + 2t(F_2-t+#2)}{2} & \text{if } 0\leq t-#2\leq F_2
        \end{cases}$}
    \end{equation*}
    \begin{equation*}        \resizebox{.9\hsize}{!}{$S_2=\begin{cases}
            0 & \text{if } t-#4 < 0 \\\\
         (t-#4)(t-#4-F_1+1) - \dfrac{(t-#4)(t-#4+1)-F_1(F_1-1)}{2}& \text{if } 0\leq t-#4\leq F_2     \\\\
           (t-#4)(F_2-F_1+1) - \dfrac{F_2(F_2+1)-F_1(F_1-1)}{2}
            & \text{if } t-#4 > F_2
       \end{cases}$}
    \end{equation*}
    with $t = #2+#3+#4-i, F_1=\max\{0,t-\min\{#2+#4,#3\}\},$ and $F_2 = \min\left\{\floor{\frac{t}{2}},#4\right\}.$
% n>k>m
\item If $#3\geq#4\geq#2$, then
    \begin{align*}
         #1 &= \wp_q(\xi) = \sum_{i=#3}^{#2+#3+#4} c_iq^i \ \text{and} \ c_i=S_1-S_2 + F_2 - F_1 + 1
    \end{align*}
    where
    \begin{equation*}
    \resizebox{.9\hsize}{!}{$
       S_1 = 
       \begin{cases}
           (t-F_2)(F_2+1) & \text{if } t-#4 < 0 \\\\
           %\dfrac{(t-#4)(t-#4+1)+F_1(F_1-1)}{2} - F_2(F_2+1) + #4(t-#4-F_1+1) + t(F_2-t+#4) & \text{if } 0\leq t-#4\leq F_2
           \dfrac{(t-#4)(t-#4+1)+F_1(F_1-1) - 2F_2(F_2+1) + 2#4(t-#4-F_1+1) + 2t(F_2-t+#4)}{2} & \text{if } 0\leq t-#4\leq F_2
        \end{cases}$}
    \end{equation*}
    \begin{equation*}\resizebox{.9\hsize}{!}{$S_2 = 
        \begin{cases}
            0 & \text{if } t-#2 < 0 \\\\
            %(t-#2)(F_2-F_1+1) - \dfrac{F_2(F_2+1)-F_1(F_1-1)}{2}
            (t-#2)(t-#2-F_1+1) - \dfrac{(t-#2)(t-#2+1)-F_1(F_1-1)}{2} 
            & \text{if } 0\leq t-#2\leq F_2 \\\\
            %(t-#2)(t-#2-F_1+1) - \dfrac{(t-#2)(t-#2+1)-F_1(F_1-1)}{2} 
            (t-#2)(F_2-F_1+1) - \dfrac{F_2(F_2+1)-F_1(F_1-1)}{2}
            & \text{if } t-#2 > F_2
       \end{cases}$}
    \end{equation*}
    with $t= #2+#3+#4-i, F_1= \max\{0,t-\min\{#2+#4,#3\}\}, \text{ and } F_2 = \min\left\{\floor{\frac{t}{2}},#2\right\}.$
\end{enumerate}
}
\newcommand{\floor}[1]{\left\lfloor #1 \right\rfloor}
\newcommand{\ceil}[1]{\left\lceil #1 \right\rceil}
\newcommand{\partn}[6]{#1\al_1+#2\al_2+#3\al_3+#4(\al_1+\al_2)+#5(\al_2+\al_3)+#6(\al_1+\al_2+\al_3)}
\newcommand{\mpartn}[3]{#1\cdot \one + #2\cdot\onep + #3\cdot2}
\newcommand{\one}{\textcolor{red}{1_{r}}}
\newcommand{\onep}{\textcolor{blue}{1_{b}}}
\newtheorem*{rep@conjecture}{\rep@title}
\newcommand{\newrepconjecture}[2]{%
\newenvironment{rep#1}[1]{%
 \def\rep@title{#2 \ref{##1}}%
 \begin{rep@conjecture}}%
 {\end{rep@conjecture}}}
\newcommand{\addresseshere}{%
  \enddoc@text\let\enddoc@text\relax
}
\theoremstyle{definition}
\newtheorem{definition}{Definition}
\newtheorem{problem}{Problem}
\newtheorem{theorem}{Theorem}
\newtheorem{proposition}{Proposition}
\newtheorem{example}{Example}%[section] 
\newtheorem{lemma}{Lemma}
\newtheorem{corollary}{Corollary}
\theoremstyle{plain} % just in case the style had changed
\newcommand{\thistheoremname}{}
\newtheorem{genericthm}[theorem]{\thistheoremname}
\title{On Kostant's weight $q$-multiplicity formula for $\mathfrak{sl}_{4}(\mathbb{C})$
}
\author{Rebecca E. Garcia}
\address{Sam Houston State University, Department of Mathematics, United States}
\email{\textcolor{blue}{\href{mailto:math\_reg@shsu.edu}{math\_reg@shsu.edu}}}
\author{Pamela E. Harris}
\address{Department of Mathematics and Statistics, Williams College, United States}
\email{\textcolor{blue}{\href{mailto:peh2@williams.edu}{peh2@williams.edu}}}
\author{Marissa Loving}
\address{Department of Mathematics, University of Illinois at Urbana-Champaign, United States}
\email{\textcolor{blue}{\href{mailto:mloving2@illinois.edu}{mloving2@illinois.edu}}}
\author{Lucy Martinez}
\address{Stockton University, United States}
\email{\textcolor{blue}{\href{mailto:marti310@go.stockton.edu}{marti310@go.stockton.edu}}}
\author{David Melendez}
\address{Department of Mathematics, University of Central Florida, United States}
\email{\textcolor{blue}{\href{mailto:davidmelendez@knights.ucf.edu}{davidmelendez@knights.ucf.edu}}}
\author{Joseph Rennie}
\address{Department of Mathematics, University of Illinois at Urbana-Champaign, United States}
\email{\textcolor{blue}{\href{mailto:rennie2@illinois.edu}{rennie2@illinois.edu}}}
\author{Gordon Rojas Kirby}
\address{UC Santa Barbara, Department of Mathematics, United States}
\email{\textcolor{blue}{\href{mailto:gkirby@math.ucsb.edu}{gkirby@math.ucsb.edu}}}
\author{Daniel Tinoco}
\address{Department of Mathematics, San Francisco State University, United States}
\email{\textcolor{blue}{\href{mailto:dtinoco@mail.sfsu.edu}{dtinoco@mail.sfsu.edu}}}
\keywords{$q$-analog of Kostant's partition function; $q$-weight multiplicities}
\date{\today}
\begin{document}
\begin{abstract}
    The $q$-analog of Kostant's weight multiplicity formula is an alternating sum over a finite group, known as the Weyl group, whose terms involve the $q$-analog of Kostant's partition function. This formula, when evaluated at $q=1$, gives the multiplicity of a weight in a highest weight representation of a simple Lie algebra. 
    In this paper, we consider the Lie algebra $\mathfrak{sl}_4(\mathbb{C})$ and give closed formulas for the $q$-analog of Kostant's weight multiplicity. This formula depends on the following two sets of results. First, we present closed formulas for the $q$-analog of Kostant's partition function by counting restricted colored integer partitions. These formulas, when evaluated at $q=1$, recover results of De Loera and Sturmfels. 
    Second, we describe and enumerate the Weyl alternation sets, which consist of the elements of the Weyl group that contribute nontrivially to Kostant's weight multiplicity formula. From this, we introduce Weyl alternation diagrams on the root lattice of $\mathfrak{sl}_4(\mathbb{C})$, which are associated to the Weyl alternation sets. This work answers a question posed in 2019 by Harris, Loving, Ramirez, Rennie, Rojas Kirby, Torres Davila, and Ulysse.  
\end{abstract}

\maketitle

%%%%%%%%%%	 	SEC 1	 %%%%%%%%%%
\section{Introduction}
Let $\mathfrak g$ be a simple Lie algebra of rank $r$ and $\mathfrak h$ be a Cartan subalgebra of $\mathfrak g$. We let $\Phi$ denote the set of roots corresponding to $(\mathfrak g, \mathfrak h)$, $\Phi^+ \subset \Phi$ denote a set of positive roots, and $\Delta  \subset \Phi^+$ denote a set of simple roots. Throughout, we let $\a_1,\a_2,\ldots,\a_r$ denote the simple roots, $\w_1,\w_2,\ldots,\w_r$ denote the fundamental weights, $P(\mathfrak{g}):=\{c_1\w_1+\cdots+c_r\w_r:c_1,\ldots,c_r\in\mathbb{Z}\}$ denote the set of integral weights, and $P_+(\mathfrak{g}):=\{c_1\w_1+\cdots+c_r\w_r:c_1,\ldots,c_r\in\mathbb{N}\}$ denote the set of dominant integral weights. The theorem of the highest weight asserts that every dominant integral weight $\lambda\in P_+(\mathfrak{g})$ is the highest weight of an irreducible finite-dimensional representation of $\mathfrak{g}$, which we denote by $L(\lambda)$. For general references on Lie theory, see \cite{GW,varadarajan}.

In this paper, we consider the $q$-analog of Kostant's weight multiplicity formula defined by Lusztig, which is defined in \cite{lus} as follows:
\begin{equation}
m_q( \lambda, \mu) = \sum_{ \sigma \in W} (-1)^{ \ell (\sigma)}   \wp_q( \sigma ( \lambda + \rho ) - ( \mu + \rho)).
\label{eq:q-analog} 
\end{equation}
Here $\rho$ is equal to half the sum of the positive roots, $W$ is the Weyl group associated to $(\mathfrak g, \mathfrak h)$ 
(which is generated by reflections orthogonal to the simple roots),
$\ell(\sigma)$ denotes the length of $\sigma\in W$ (which is the minimum nonnegative integer $k$ such that $\sigma$ is a product of $k$ reflections), and $\wp_q$ denotes the $q$-analog of Kostant's partition function defined on $\xi\in \mathfrak h^*$ by $\wp_q(\xi) = \sum c_iq^i$ with $c_i$ representing the number of ways to write the weight $\xi$ as a sum of $i$ positive roots. In this way, 
$\wp_q(\xi)|_{q=1}=\wp(\xi)$ is Kostant's partition function, which counts 
number of ways of expressing the weight $\xi$ as a nonnegative integral sum of positive roots. Thus, $m_{q}(\lambda,\mu)|_{q=1}$
gives the multiplicity of a weight $\mu$ in a highest weight representation $L(\lambda)$ of $\mathfrak{g}$. For a detailed account of weight multiplicity computations, we point the reader to \cite{PHcomputingweightmultiplicities}.

A main challenge in using \eqref{eq:q-analog} for computations is that formulas for the partition function, $\wp_q$, do not exist in much generality. 
Moreover, for a Lie algebra of rank $r$, the number of terms appearing in this sum is factorial in the rank. 
Thus, many have worked to determine closed formulas for the partition function and its $q$-analog, including low rank examples \cite{deloera,HarrisLauber,mars1,mars2} and for specific families of inputs \cite{PHThesis,harris2011,HIO}. Other work was motivated by the observation that in practice, many terms appearing in computations involving Kostant's weight multiplicity formula are zero \cite{Cochet}.
Hence, it is of interest to determine the Weyl group elements whose associated term contributes nontrivially to the sum. More precisely, the Weyl alternation set is
\[\mathcal{A}(\lambda,\mu):=\{\sigma \in W: \wp (\sigma(\lambda + \rho) - (\mu + \rho)) > 0\}.\]

Weyl alternation sets have been studied in the following cases: 
    $\lambda$ and $\mu$ are pairs of weights such that $m(\lambda,\mu)=1$, see  \cite{HRSS};
    $\lambda$ is the highest root of a Lie algebra and $\mu$ is either zero or a positive root, see \cite{PHThesis,HIS,HIW}; and
$\lambda$ is the sum of the simple roots of a classical Lie algebra and $\mu$ is either zero or a positive root, see \cite{CHI}.
In the last two cases, the cardinality of the Weyl alternation set follows a recurrence relation with constant coefficients. For the Lie algebras of type $A$ and $C$ these recurrence relations define the Fibonacci numbers or (multiples) of the Lucas numbers. 

Some interesting geometric behavior is also exhibited by these Weyl alternation sets. For example, Harris, Lescinsky, and Mabie provided some fundamental weight lattice patterns, called ``Weyl alternation diagrams'', describing the Weyl alternation sets for the Lie algebra $\mathfrak{sl}_{3}(\mathbb{C})$ \cite{HLM}. This work was extended to all rank two Lie algebras in \cite{HLRRRKTDU}.  
Figure \ref{fig:G2pics} provides visualizations for the Weyl alternation sets $\A(\lambda,\mu)$, for a fixed weight $\mu$, of the exceptional Lie algebra of type $G_2$ (as presented in \cite{HLRRRKTDU}). In these figures, each color-coded region represents a set of weights $\lambda$ which have the same Weyl alternation set. Note that the interior uncolored region is called the \textit{empty region}, since these are weights $\lambda$ for which $\A(\lambda,\mu)=\emptyset$, and these regions are completely determined by~$\mu$. 

\begin{figure}[h]
    \includegraphics[width=\textwidth]{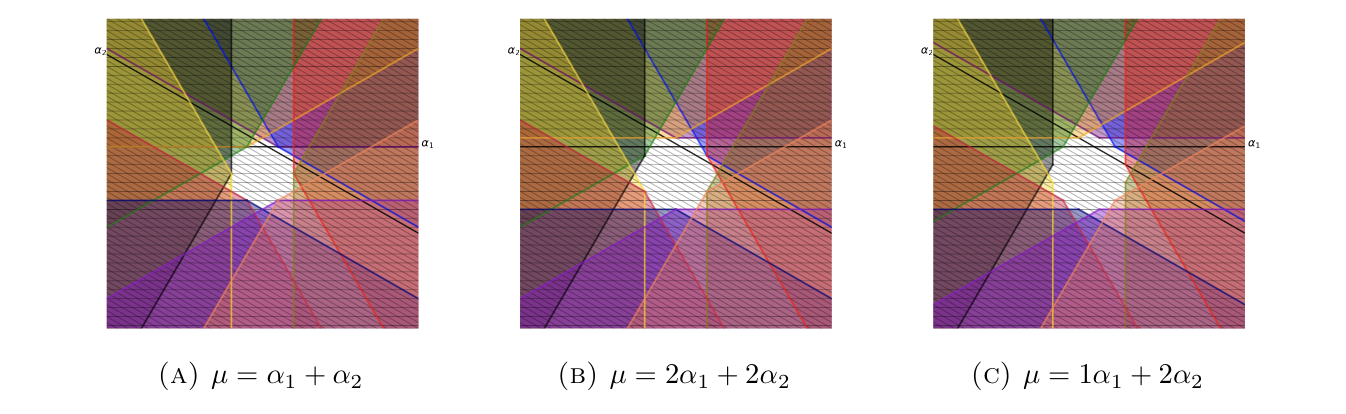}
    \vspace{-7mm}
    \caption{Weyl alternation diagrams of $\fg_2$ with a specified $\mu$.}
    \label{fig:G2pics}
\end{figure}

In the present work, we  consider the Lie algebra $\sl4$ and we
\begin{enumerate}
    \item 
    give closed formulas for the $q$-analog of Kostant's partition function for any weight $\xi$,\label{qpart}
        \item compute the Weyl alternation sets for every pair of integral weights $(\lambda,\mu)$, \label{weyl alt sets} 
        \item illustrate the Weyl alternation diagrams when $\mu=0$ and $\lambda=a_1\a_1+a_2\a_2+a_3\a_3$ with $a_1,a_2,a_3\in\N$, as well as describe the empty region for a variety of weights $\mu$,
    \item 
    use \eqref{qpart} and \eqref{weyl alt sets} to give a closed formula for the $q$-analog of Kostant's weight multiplicity formula for $\sl4$, and
    \item provide code for all of the above mentioned results, which can be found in the GitHub repository at \textcolor{blue}{\href{https://github.com/melendezd/Weight-Multiplicities}{https://github.com/melendezd/Weight-Multiplicities}}.
\end{enumerate}

\subsection{Statements of main results and some specializations}
Throughout, we let $\a_1$, $\a_2$, and $\a_3$ denote the simple roots, and $\w_1$, $\w_2$, and $\w_3$ denote the fundamental weights of $\sl4$.
Harris, Rahmoeller, Schneider, and Simpson gave the following formula for the $q$-analog of Kostant's partition function for the Lie algebra $\mathfrak{sl}_4(\mathbb{C})$. 

\begin{proposition}[Proposition 5.1 in \cite{HRSS}]\label{thm:sum} 
If $m,n,k\in\N$, then
    \[
    \wp_q(m\alpha_1+n\alpha_2+k\alpha_3)=\sum_{f=0}^{\min(m,n,k)}\; \sum_{d=0}^{\min(m-f,n-f)}\; \sum_{e=0}^{\min(n-d-f,k-f)} q^{m+n+k-2f-d-e}.
    \]
\end{proposition}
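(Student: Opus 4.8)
The plan is to compute $\wp_q$ directly from its combinatorial definition. The positive roots of $\sl4$ are the six vectors $\alpha_1$, $\alpha_2$, $\alpha_3$, $\alpha_1+\alpha_2$, $\alpha_2+\alpha_3$, and $\alpha_1+\alpha_2+\alpha_3$, so an expression of $\xi = m\alpha_1+n\alpha_2+k\alpha_3$ as a sum of positive roots is exactly a choice of nonnegative integers $(a,b,c,d,e,f)$ recording the multiplicity of each of these roots, in that order. Comparing coefficients of $\alpha_1$, $\alpha_2$, and $\alpha_3$ yields the linear system
\[
a+d+f = m,\qquad b+d+e+f = n,\qquad c+e+f = k,
\]
and the number of roots used in the expression is $a+b+c+d+e+f$; hence by definition $\wp_q(\xi) = \sum q^{a+b+c+d+e+f}$, summed over all solutions of this system in $\mathbb{N}^6$.

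First I would eliminate $a$, $b$, and $c$ using the three equations, namely $a = m-d-f$, $b = n-d-e-f$, and $c = k-e-f$. This puts the solutions in bijection with the triples $(d,e,f) \in \mathbb{N}^3$ satisfying
\[
d+f \le m,\qquad d+e+f \le n,\qquad e+f \le k,
\]
these inequalities being exactly the conditions $a,b,c \ge 0$. Substituting, the number of parts becomes
\[
a+b+c+d+e+f = (m-d-f)+(n-d-e-f)+(k-e-f)+d+e+f = m+n+k-2f-d-e,
\]
which is precisely the exponent appearing in the claimed formula.

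It then remains to rewrite the set of admissible triples $(d,e,f)$ as the iterated sum in the statement, summing over $f$ outermost. Since $d \ge 0$ in $d+f\le m$, $e\ge 0$ in $e+f\le k$, and $d,e\ge 0$ in $d+e+f\le n$, the only constraint on $f$ by itself is $0 \le f \le \min(m,n,k)$. For each such $f$, the variable $d$ must satisfy $d\ge 0$, $d\le m-f$, and (taking $e=0$ in $d+e+f\le n$) $d\le n-f$, so $0\le d\le\min(m-f,\,n-f)$. For each such pair $(f,d)$, the variable $e$ ranges over $0\le e\le\min(n-d-f,\,k-f)$. Conversely, every triple in these ranges satisfies all three inequalities, so the ranges parametrize the admissible triples bijectively; summing $q^{\,m+n+k-2f-d-e}$ over them yields the identity. (Setting $q=1$ then recovers the enumeration of De Loera and Sturmfels.)

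Since the whole argument reduces to solving a $3\times 6$ integer linear system and bookkeeping the resulting inequalities, I do not expect a serious obstacle. The one point requiring care is the choice of which three multiplicities to eliminate and the order in which the remaining three sums are nested, so that the surviving constraints decouple into the triangular shape $f\le\min(m,n,k)$, then $d\le\min(m-f,n-f)$, then $e\le\min(n-d-f,k-f)$, with each admissible triple counted exactly once and no spurious terms introduced or valid terms dropped.
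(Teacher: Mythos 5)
Your proof is correct, and the parametrization is verified cleanly in both directions: the bounds $0\le f\le\min(m,n,k)$, $0\le d\le\min(m-f,n-f)$, $0\le e\le\min(n-d-f,k-f)$ are exactly equivalent to $a,b,c\ge 0$ after eliminating $a=m-d-f$, $b=n-d-e-f$, $c=k-e-f$, and the exponent bookkeeping $a+b+c+d+e+f=m+n+k-2f-d-e$ is right. Note that the paper itself does not prove this statement (it is quoted from \cite{HRSS}), but your argument is essentially the same elimination-and-inequalities reduction the paper carries out in its Proposition \ref{bijection}, just organized as a triangular iterated sum rather than as a bijection with $\{1_r,1_b,2\}$-colored partitions, so there is no substantive divergence in method.
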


The formula in Proposition \ref{thm:sum} can be readily implemented in a computer program. However, as $m,n,$ and $k$ grow, the runtime for this algorithm grows rapidly. This motivates our first result. 

\begin{theorem}\label{thm:main1}
Let $m,n,k\in \mathbb{N}:=\{0,1,2,\ldots\}$ and $\xi=m\a_1+n\a_2+k\a_3$.
\begin{enumerate}[leftmargin=.6cm,itemsep=2pt]
  \item\label{thm:main1:part1} If $m,k \geq n$, then
  $\wp_q(\xi)=\sum_{i=m+k-n}^{m+n+k} (L+1)(L+(t\bmod2)+1)q^i$ where $L=\min\left\{\floor{\frac{t}{2}}, n-\ceil{\frac{t}{2}}\right\}$ and  $t=m+n+k-i$.
  
 \item\label{thm:main1:part2} If $m\geq n \geq k$, then
  $\wp_q(\xi)=\sum_{i=m}^{m+n+k} c_iq^i$ where 
  \begin{equation*}
        c_i = 
        \begin{cases}
        \frac{(L+1)(2k-2F_2+L+2)}{2} &\mbox{if}\ J < 0 \\
        \frac{(L+1)(L+2)-F_2(F_2+1)-k(k-1)}{2} 
        +F_2 k+kL- F_2 L+(k-F_2)(t\bmod2)
        & \mbox{if}\ 0\leq J\leq L \\
        (L+1)(L+(t\bmod2)+1) &\mbox{if}\ J > L
        \end{cases}
    \end{equation*}
    with $t=m+n+k-i$, 
    $F_2=\min\left\{\floor{\frac{t}{2}},k\right\}$,
    $J=k-F_2-(t\bmod2)$,
    $F_1 =\max\{t-n,0\}$, and
    $L = F_2 - F_1$.
\item\label{thm:main1:part3} If $k\geq n \geq m$, then
  $\wp_q(\xi)=\sum_{i=k}^{m+n+k} c_iq^i$ where 
  \begin{equation*}
        c_i = 
        \begin{cases}
        \frac{(L+1)(2m-2F_2+L+2)}{2} & \mbox{if}\ J < 0 \\
        \frac{(L+1)(L+2)-F_2(F_2+1)-m(m-1)}{2} 
        +F_2m+mL-F_2L+(m-F_2)(t\bmod2)
        &\mbox{if}\ 0\leq J\leq L \\
        (L+1)(L+(t\bmod2)+1) &\mbox{if}\ J > L
        \end{cases}
    \end{equation*}
    with $t=m+n+k-i$, 
    $F_2=\min\left\{\floor{\frac{t}{2}},m\right\}$,
    $J=m-F_2-(t\bmod2)$,
    $F_1=\max\{t-n,0\}$, and
    $L = F_2 - F_1$.
 \item\label{thm:main1:part4} If $n \geq m \geq k$, then
  $\wp_q(\xi) = \sum_{i=n}^{m+n+k} c_iq^i$ where $c_i=S_1-S_2 + F_2 - F_1 + 1$ and    \begin{align*}
        S_1 &= 
      \begin{cases}
          (t-F_2)(F_2+1) &\mbox{if}\ t-m < 0 \\
          \frac{(t-m)(t-m+1)+F_1(F_1-1)- 2F_2(F_2+1) + 2m(t-m-F_1+1) + 2t(F_2-t+m)}{2}  &\mbox{if}\ 0\leq t-m\leq F_2
        \end{cases} \\
          S_2 &= 
        \begin{cases}
         (t-k)(t-k-F_1+1) - \frac{(t-k)(t-k+1)-F_1(F_1-1)}{2}&\mbox{if}\ 0\leq t-k\leq F_2     \\
          (t-k)(F_2-F_1+1) - \frac{F_2(F_2+1)-F_1(F_1-1)}{2}
            &\mbox{if}\ t-k > F_2\\
            0 &\mbox{if}\ t-k < 0 
      \end{cases}
         \end{align*}
         with $t = m+n+k-i$, $F_1=\max\{0,t-\min\{m+k,n\}\}$, and $F_2 = \min\left\{\floor{\frac{t}{2}},k\right\}$.
\item\label{thm:main1:part5} If $n\geq k\geq m$, then
  $\wp_q(\xi) = \sum_{i=n}^{m+n+k} c_iq^i$ where $c_i=S_1-S_2 + F_2 - F_1 + 1$ and 
    \begin{align*}
        S_1 &= 
      \begin{cases}
          (t-F_2)(F_2+1) &\mbox{if}\ t-k < 0 \\
          \frac{(t-k)(t-k+1)+F_1(F_1-1)- 2F_2(F_2+1)
          + 2k(t-k-F_1+1) + 2t(F_2-t+k)}{2}  &\mbox{if}\ 0\leq t-k\leq F_2 
        \end{cases} \\
          S_2 &= 
        \begin{cases}
            (t-m)(t-m-F_1+1) - \frac{(t-m)(t-m+1)-F_1(F_1-1)}{2}
            &\mbox{if}\ 0\leq t-m\leq F_2 \\(t-m)(F_2-F_1+1) - \frac{F_2(F_2+1)-F_1(F_1-1)}{2}
            &\mbox{if}\ t-m > F_2\\
            0 &\mbox{if}\ t-m < 0
      \end{cases}
    \end{align*}
with $t = m+n+k-i$,
        $F_1 = \max\{0,t-\min\{m+k,n\}\}$, and
        $F_2 = \min\left\{\floor{\frac{t}{2}},m\right\}$.
\end{enumerate}
\end{theorem}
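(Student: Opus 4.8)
The plan is to prove Theorem \ref{thm:main1} by converting the triple sum in Proposition \ref{thm:sum} into a coefficient-by-coefficient count. Fix $\xi = m\a_1+n\a_2+k\a_3$ and, for a target exponent $i$, set $t = m+n+k-i$; we want $c_i$, the number of triples $(f,d,e)\in\N^3$ with $0\le f\le\min(m,n,k)$, $0\le d\le\min(m-f,n-f)$, $0\le e\le\min(n-d-f,k-f)$, and $m+n+k-2f-d-e = i$, i.e.\ $2f+d+e = t$. The first reduction is to reparametrize: writing $s = 2f+d+e$, the constraint is $s=t$, so $c_i = \#\{(f,d,e): 2f+d+e=t,\ \text{the three }\min\text{ bounds hold}\}$. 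I would first dispose of the ``symmetric'' case \eqref{thm:main1:part1} ($m,k\ge n$), where the binding constraints simplify the most, and use it as a warm-up for bookkeeping conventions before handling the four asymmetric cases.

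The key combinatorial step is to slice the count by the value of $f$. For each fixed $f$ with $0\le f\le\min(m,n,k)$, we need $d+e = t-2f$ with $0\le d\le \min(m-f,n-f)$ and $0\le e\le \min(n-d-f,k-f)$. The constraint $e\le n-d-f$ rewrites as $d+e\le n-f$, which given $d+e=t-2f$ becomes the single inequality $f\ge t-n$ on $f$ (independent of $d$); similarly $e\le k-f$ and $d\le m-f$, together with $d,e\ge 0$ and $d+e=t-2f$, cut out an interval of admissible $d$ for each $f$. Concretely, for fixed $f$ the number of valid $d$ is the length of the interval $[\max(0,\,(t-2f)-(k-f)),\ \min(m-f,\,n-f,\,t-2f)]$ intersected appropriately; writing $u=t-2f$ this is $\min(m-f,n-f,u) - \max(0,u-k+f) + 1$ when nonnegative. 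So $c_i$ is a sum over an interval of $f$-values of a piecewise-linear function of $f$, and evaluating that sum is a matter of (i) identifying the endpoints $f\in[F_1',F_2']$ of the $f$-range where the summand is positive, which is where the quantities $F_1=\max\{t-n,0\}$ (or $F_1 = \max\{0,t-\min\{m+k,n\}\}$ in the other cases) and $F_2=\min\{\floor{t/2},k\}$ come from, and (ii) splitting that range according to which of the three terms $m-f$, $n-f$, $u$ is the active minimum and which of $0$ or $u-k+f$ is the active maximum. The threshold $J = k - F_2 - (t\bmod 2)$ records which regime the top of the $f$-range falls into, giving the three-way case split in parts \eqref{thm:main1:part2}--\eqref{thm:main1:part3}; in parts \eqref{thm:main1:part4}--\eqref{thm:main1:part5} the summand is itself a difference $S_1-S_2$ of two triangular-number-type partial sums, reflecting that the interval of valid $d$ has both a nontrivial lower and upper envelope that each switch formulas once.

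After the range and the active-piece decomposition are pinned down, each resulting piece is a sum of the form $\sum_f (\text{linear in }f)$ over a subinterval, which collapses by the standard arithmetic-series identities $\sum_{j=a}^{b} 1 = b-a+1$ and $\sum_{j=a}^{b} j = \tfrac{(b-a+1)(a+b)}{2}$; substituting $L = F_2 - F_1$ for the length-minus-one of the governing interval and simplifying produces the stated closed forms. The $t\bmod 2$ terms appear because $f$ ranges over integers while $\floor{t/2}$ and the parity of $u=t-2f$ interact: when $t$ is odd the top of the range and the ``middle'' piece each lose or gain a half-row. For the asymmetric cases I would exploit the symmetry $m\leftrightarrow k$ of the partition function (which is manifest in Proposition \ref{thm:sum} after the change of variables swapping the roles of the first and third coordinates) to derive \eqref{thm:main1:part3} from \eqref{thm:main1:part2} and \eqref{thm:main1:part5} from \eqref{thm:main1:part4}, cutting the real work to three cases: \eqref{thm:main1:part1}, \eqref{thm:main1:part2}, and \eqref{thm:main1:part4}.

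The main obstacle I anticipate is not any single hard idea but the case management: correctly enumerating the breakpoints where the piecewise-linear summand changes formula, checking that the interval endpoints $F_1,F_2$ are exactly right at the boundary (including the empty-interval case $L<0$, where $c_i=0$ and must be consistent with the stated summation range), and verifying that the three (or two) pieces glue continuously at $J=0$ and $J=L$. I would control this by (a) always phrasing the count as an explicit lattice-point count in the $(f,d)$-plane under four linear inequalities, (b) proving a single ``master'' lemma computing $\sum_{f=A}^{B}\big(\min(\cdots)-\max(\cdots)+1\big)$ under a clean hypothesis on which constraints bind, and then (c) specializing that lemma to each ordering of $m,n,k$, with a short sanity check that setting $q=1$ (i.e.\ summing $c_i$ over $i$) recovers the De Loera--Sturmfels count and that small numerical cases match direct evaluation of Proposition \ref{thm:sum}.
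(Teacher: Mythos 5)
Your plan is correct and takes essentially the same route as the paper: both reduce the coefficient $c_i$ to the same lattice-point count in $(f,d)$ with $d+e+2f=t$ (the paper packages this as a bijection with restricted colored integer partitions of $t$, while you read it directly off Proposition \ref{thm:sum}, but the constraints and the count are identical), then sum the resulting piecewise-linear interval length over $f\in[F_1,F_2]$ with the case split governed by which $\min/\max$ term is active, using arithmetic series and the parity term $t\bmod 2$. Your use of the $m\leftrightarrow k$ symmetry to deduce parts \eqref{thm:main1:part3} and \eqref{thm:main1:part5} from \eqref{thm:main1:part2} and \eqref{thm:main1:part4} is exactly the paper's Proposition \ref{mirror} reduction, so no genuine gap remains beyond the routine bookkeeping you already flag.
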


Our proof of Theorem \ref{thm:main1} counts restricted integer partitions
with parts of multiple colors, thereby giving interesting connections between these types of partitions and vector partitions. Moreover, as mentioned above, we note that the runtime for computing 
$\wp_q(m\al_1+n\al_2+k\al_3)$ for $60\leq m,n,k < 70$ was $359$ms on average using an algorithm based on Proposition \ref{thm:sum}, while the same computations took less than $2$ms on average using the closed formulas in Theorem \ref{thm:main1}. 
In general, the runtime of an algorithm to compute each coefficient of $\wp_q(\xi)$ based on Proposition \ref{thm:sum} grows at least linearly with $\wp(\xi)$, while the runtime of an algorithm using Theorem \ref{thm:main1} remains constant. 
We also remark that evaluating the formulas in Theorem \ref{thm:main1} at $q =1$ recovers formulas of De Loera and Sturmfels \cite[Section 5]{deloera}, which we present below.

\begin{corollary}\label{corollary:deloeraresults}
Let $m,n,k\in \mathbb{N}$ and $\xi=m\a_1+n\a_2+k\a_3$.
   \begin{enumerate}[leftmargin=.7cm]
       \item\label{corollary:deloeraresults:part1} If
        $m,k\geq n$, then $\wp(\xi)=\frac{(n+1)(n+2)(n+3)}{6}$.
         \item\label{corollary:deloeraresults:part2} If $m\geq n\geq k$, then $\wp(\xi)=\frac{(k+1)(k+2)(k+3(n-k)+3)}{6}$.
         \item\label{corollary:deloeraresults:part3} If $k\geq n\geq m$, then $\wp(\xi)=\frac{(m+1)(m+2)(m+3(n-m)+3)}{6}$.
         \item\label{corollary:deloeraresults:part4} If $n\geq m\geq k$ and $n\geq m+k$, then $\wp(\xi)=\frac{(k+1)(k+2)(3m-k+3)}{6}$.
         \item\label{corollary:deloeraresults:part5} If $n\geq k\geq m$ and $n\geq m+k$, then $\wp(\xi)=\frac{(m+1)(m+2)(3k-m+3)}{6}$.
         \item\label{corollary:deloeraresults:part6} If $n\geq m\geq k$ and $m+k\geq n$, then 
         {\footnotesize{
         \[
             %\wp(\xi)&=\frac{1}{6} \left(6 - k^3 - 5       m^3 + 3 k^2 (n-1) + 2 n - 3 n^2 + n^3 + 
        %m^2 (9 n-3) \right) \\
        %&\qquad+ \frac{1}{6}\left(m (5 + 6 n - 3 n^2)+k (4 + 3 m + 6 n - 3 n^2)\right).
        \wp(\xi) = 
        % 1 + \frac{11}{6}k + \frac{2}{3}(m-n) + n - k + \frac{3}{2}k(n-k) + k^2 + \frac{1}{6}k^3 + \frac{1}{2}k^2(n-k) \\
        % &\qquad - \frac{1}{6}(m-n)^3 - \frac{1}{2}k(m-n)^2 + \frac{1}{2}k(m-n) - \frac{1}{2}(m-n)^2=\\
        \frac{-2 k^3 - (m - n)^2 (3 + m - n) + 3 k^2 (n-1) +
   2 (3 + 2 m + n) + k (5 - 3 m^2 - 3 ( n-2) n + m (3 + 6 n))}{6}.
         \]
         }}
         
        \item\label{corollary:deloeraresults:part7} If $n\geq k\geq m$ and $m+k\geq n$, then 
        {\footnotesize{
        \begin{align*}
        \wp(\xi) = 
        % 1 + \frac{11}{6}k + \frac{2}{3}(m-n) + n - k + \frac{3}{2}k(n-k) + k^2 + \frac{1}{6}k^3 + \frac{1}{2}k^2(n-k) \\
        % &\qquad - \frac{1}{6}(m-n)^3 - \frac{1}{2}k(m-n)^2 + \frac{1}{2}k(m-n) - \frac{1}{2}(m-n)^2=\\
        \frac{-2 m^3 - (k - n)^2 (3 + k - n) + 3 m^2 (n-1) +
   2 (3 + 2 k + n) + m (5 - 3 k^2 - 3 ( n-2) n + k (3 + 6 n))}{6}.
            %\wp(\xi)&=\frac{1}{6} \left(6 - m^3 - 5 k^3 + 3 m^2 (n-1) + 2 n - 3 n^2 + n^3 + 
        %k^2 (9 n-3) \right)\\&\qquad+ \frac{1}{6}\left(k (5 + 6 n - 3 n^2)+m (4 + 3 k + 6 n - 3 n^2)\right).
        %\wp(\xi) &= 1 + \frac{11}{6}m + \frac{2}{3}(k-n) + n - m + \frac{3}{2}m(n-m) + m^2 + \frac{1}{6}m^3 + \frac{1}{2}m^2(n-m) \\
        %&\qquad - \frac{1}{6}(k-n)^3 - \frac{1}{2}m(k-n)^2 + \frac{1}{2}m(k-n) - \frac{1}{2}(k-n)^2.
        \end{align*}
        }}
   \end{enumerate}
\end{corollary}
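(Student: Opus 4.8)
The plan is to obtain Corollary~\ref{corollary:deloeraresults} by specializing Theorem~\ref{thm:main1} at $q=1$, where $\wp(\xi)=\wp_q(\xi)|_{q=1}=\sum_i c_i$, and then evaluating the resulting finite sum in closed form. The seven parts of the corollary come from the five parts of Theorem~\ref{thm:main1}: parts \ref{corollary:deloeraresults:part1}, \ref{corollary:deloeraresults:part2}, \ref{corollary:deloeraresults:part3} correspond directly to parts \ref{thm:main1:part1}, \ref{thm:main1:part2}, \ref{thm:main1:part3}, while part \ref{thm:main1:part4} yields parts \ref{corollary:deloeraresults:part4} and \ref{corollary:deloeraresults:part6} according to whether $n\ge m+k$ or $m+k\ge n$, and part \ref{thm:main1:part5} yields parts \ref{corollary:deloeraresults:part5} and \ref{corollary:deloeraresults:part7} likewise. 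Throughout I reindex the sum by $t=m+n+k-i$, so that (reading off the summation ranges in Theorem~\ref{thm:main1}) $t$ runs over $[0,2n]$ in part \ref{thm:main1:part1}, over $[0,n+k]$ in part \ref{thm:main1:part2}, over $[0,m+n]$ in part \ref{thm:main1:part3}, and over $[0,m+k]$ in parts \ref{thm:main1:part4} and \ref{thm:main1:part5}. For each $t$, after unwinding $F_1,F_2,L,J$ (or $S_1,S_2$) the coefficient $c_i$ is a degree-two quasi-polynomial in $t$ whose only non-polynomial ingredients are $t\bmod 2$ and the floors/ceilings of $t/2$.

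The evaluation itself is mechanical: I partition $\{0,1,\dots,T\}$ into the finitely many subintervals on which (i) a single branch of the piecewise definition of $c_i$ is active and (ii) each $\min\{\floor{t/2},\cdot\}$ and $\max\{\cdot,0\}$ resolves to a fixed alternative, then on each subinterval I sum the now-honest degree-two polynomial over the integer points. The parity term is handled by splitting even and odd $t$, or equivalently by writing $\floor{t/2}=\tfrac{t-(t\bmod 2)}{2}$, $\ceil{t/2}=\tfrac{t+(t\bmod 2)}{2}$ and using the closed forms for $\sum_{t}(-1)^t$ and $\sum_t(-1)^t t$ over a run of consecutive integers; the resulting parity contributions from the several pieces recombine so that the total is an honest (period-one) polynomial of degree three in $m,n,k$, matching De Loera and Sturmfels. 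For part \ref{thm:main1:part1} this is short: for $t\le n$ one has $\floor{t/2}\le n-\ceil{t/2}$, hence $L=\floor{t/2}$ and $c_i=(\floor{t/2}+1)(\ceil{t/2}+1)$, while for $n\le t\le 2n$ one has $L=n-\ceil{t/2}$; using the palindromic symmetry $t\leftrightarrow 2n-t$ of $c_i$ and summing, the total telescopes to $\binom{n+3}{3}=\tfrac{(n+1)(n+2)(n+3)}{6}$, which is part \ref{corollary:deloeraresults:part1}. Parts \ref{thm:main1:part2} and \ref{thm:main1:part3} are exchanged by $m\leftrightarrow k$, so one computation suffices: there $F_2=\floor{t/2}$ for $t\le 2k$ and $F_2=k$ for $t\ge 2k$ (so that $J=k-\ceil{t/2}$ in the first range and $J\le 0$ in the second), and $F_1$ turns on at $t=n$; assembling the handful of resulting pieces gives parts \ref{corollary:deloeraresults:part2}, \ref{corollary:deloeraresults:part3}.

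The bulk of the work is parts \ref{thm:main1:part4} and \ref{thm:main1:part5}, again interchanged by $m\leftrightarrow k$, which produce parts \ref{corollary:deloeraresults:part4}--\ref{corollary:deloeraresults:part7}. Here $t$ ranges over $[0,m+k]$ and $F_1=\max\{0,t-\min\{m+k,n\}\}$: when $n\ge m+k$ one has $t\le n$ so $F_1\equiv 0$, which is exactly why parts \ref{corollary:deloeraresults:part4}, \ref{corollary:deloeraresults:part5} are cleaner, whereas when $m+k>n$ an extra breakpoint appears at $t=n$. One must additionally track the breakpoints of $S_1$ (at $t=m$, resp.\ $t=k$), of $S_2$ (at $t=k$ and $t=k+F_2$, resp.\ $t=m$ and $t=m+F_2$), and the breakpoint $t=2k$ (resp.\ $t=2m$) where $F_2$ switches from $\floor{t/2}$ to $k$ (resp.\ $m$). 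Taking the common refinement of these breakpoints, summing the degree-two polynomial on each cell, and simplifying produces the cubic in parts \ref{corollary:deloeraresults:part6}, \ref{corollary:deloeraresults:part7} and its degenerations in parts \ref{corollary:deloeraresults:part4}, \ref{corollary:deloeraresults:part5}.

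The main obstacle is bookkeeping rather than any new idea: one must carefully enumerate the finitely many cells cut out by the floors, ceilings, minima, maxima and piecewise branches, keep the $t\bmod 2$ terms under control, and confirm that the pieces sum to the stated cubics. As an independent check one can bypass Theorem~\ref{thm:main1} and set $q=1$ directly in Proposition~\ref{thm:sum}, obtaining $\wp(\xi)=\sum_{f=0}^{\min(m,n,k)}\sum_{d=0}^{\min(m-f,n-f)}\bigl(\min(n-d-f,k-f)+1\bigr)$, and evaluate this threefold lattice-point count by the same type of case analysis on which minimum is attained; both routes recover the formulas of De Loera and Sturmfels.
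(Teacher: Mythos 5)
Your proposal is correct and follows the same route as the paper, which obtains Corollary \ref{corollary:deloeraresults} simply by evaluating the closed formulas of Theorem \ref{thm:main1} at $q=1$ (the paper states this specialization without writing out the summation details). Your reindexing by $t=m+n+k-i$, the breakpoint/parity bookkeeping, and the observation that parts \eqref{thm:main1:part4} and \eqref{thm:main1:part5} split into the cases $n\geq m+k$ versus $m+k\geq n$ to yield the seven cases all agree with how the paper intends the corollary to be derived.
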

We note that the formula in \cite[Item 4 (above Theorem 6.1 on page 14)]{deloera} is missing a factor of $1/6$, which we correct in Corollary \ref{corollary:deloeraresults} parts \eqref{corollary:deloeraresults:part4} and \eqref{corollary:deloeraresults:part5}. We also remark that in Corollary \ref{corollary:deloeraresults} there are 7 cases, while there are only 5 in Theorem~\ref{thm:main1}. This is because part \eqref{thm:main1:part5} in Theorem~\ref{thm:main1} encompasses cases \eqref{corollary:deloeraresults:part5} and \eqref{corollary:deloeraresults:part7} in Corollary \ref{corollary:deloeraresults}, and part \eqref{thm:main1:part4} in Theorem \ref{thm:main1} encompasses cases \eqref{corollary:deloeraresults:part4} and \eqref{corollary:deloeraresults:part6} in Corollary \ref{corollary:deloeraresults}.

Our second main result, Theorem~\ref{thm:195 alt sets} in Section \ref{sec:Weyl alt sets and diagrams},  describes the Weyl alternation sets $\A(\lambda, \mu)$ for any pair of integral weights $\lambda$ and $\mu$. This result establishes that there are a total of 195 distinct Weyl alternation sets and that the maximum cardinality among all Weyl alternation sets is $6$. Given the length of Theorem \ref{thm:195 alt sets} we defer its statement until Section \ref{sec:Weyl alt sets and diagrams}.  To illustrate Theorem \ref{thm:195 alt sets}, we consider $\lambda$ in the nonnegative octant of the root lattice of $\mathfrak{sl}_{4}(\mathbb{C})$ and obtain the following result.

\begin{corollary}\label{cor:13 alt sets}
If $\mu=0$ and $\lambda=(2x-y)\w_1 + (2y-x-z)\w_2 + (2z-y)\w_3$ with $x,y,z,2x-y,2y-x-z,2z-y \in \N$, then $\lambda$ is in the nonnegative integral root lattice $\N\a_1\oplus\N\a_2\oplus\N\a_3$ and
\begin{enumerate}[leftmargin=.7cm]
\item  $\A_1:=\A(\lambda,0)=\{1\}$  if  $\lam=0$,
\item  $\A_2:=\A(\lambda,0)=\{1,s_2\}$  if $x+z-y-1\in\N$,  $-1<x-y<2 $,  and $-1<z-y<2$,
\item $\A_3:=\A(\lambda,0)=\{1,s_2,s_3\}$  if $x+z-y-1,y-z-1\in\N$ ,  $x-y-2\notin\N $, and  $-1 < x-y < 2$,
\item   $\A_4:=\A(\lambda,0)=\{1,s_1,s_2\}$  if $y-x-1,x+z-y-1\in\N$,  $z-x-2\notin\N $,  and  $-1<z-y<2$,
\item $\A_5:=\A(\lambda,0)=\{1,s_2,s_3,s_2s_3\}$  if $x-z-2,x+z-y-1,y-z-1\in\N$,  and $-1<x-y<2$,
\item $\A_6:=\A(\lambda,0)=\{1,s_1,s_3,s_3s_1\}$  if $y-x-1,y-z-1\in\N$,   $z+z-y-1\notin\N$, and $-2<x-z<2 $,
\item $\A_7:=\A(\lambda,0)=\{1,s_1,s_2,s_2s_1\}$  if $y-x-1,z-x-2,x+z-y-1\in\N$, and  $-1<z-y<2$,
\item $\A_8:=\A(\lambda,0)=\{1,s_1,s_2,s_3,s_3s_1\}$  if $x+z-y-1,y-x-1,y-z-1\in\N$,  and $-2 < x-z < 2$, 
\item $\A_9:=\A(\lambda,0)=\{1,s_1,s_2,s_3,s_2s_3,s_3s_1\}$  if $x+z-y-1,y-x-1,x-z-2,y-z-1\in\N$, 
\item $\A_{10}:=\A(\lambda,0)=\{1,s_1,s_2,s_3,s_2s_1,s_3s_1\}$  if $x+z-y-1,y-x-1,z-x-2,y-z-1\in\N$, 
\item $\A_{11}:=\A(\lambda,0)=\{1,s_2,s_3,s_2s_3,s_3s_2,s_2s_3s_2\}$  if $x+z-y-1,x-z-2,x-y-2,y-z-1\in\N$,
\item $\A_{12}:=\A(\lambda,0)=\{1,s_1,s_2,s_1s_2,s_2s_1,s_1s_2s_1\}$  if $x+z-y-1,z-y-2,y-z-1,z-x-1\in\N$,
\item $\A(\lambda,0)=\emptyset$, otherwise. 
\end{enumerate}
\end{corollary}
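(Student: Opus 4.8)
\emph{Strategy.} Since $\mu=0$, we have $\A(\la,0)=\{\sigma\in W:\wp(\sigma(\la+\rho)-\rho)>0\}$, and because the simple roots $\al_1,\al_2,\al_3$ are themselves positive roots of $\sl4$, a weight $\xi$ satisfies $\wp(\xi)>0$ precisely when $\xi\in\N\al_1\oplus\N\al_2\oplus\N\al_3$ (this is also visible from Corollary~\ref{corollary:deloeraresults}). Moreover $\sigma(\la+\rho)-\rho$ always lies in the root lattice, since $\la$ does and $W$ fixes $P(\sl4)$ modulo the root lattice; hence the only condition to check for each $\sigma$ is nonnegativity of the three coordinates of $\sigma(\la+\rho)-\rho$ in the basis $\al_1,\al_2,\al_3$. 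Using the $A_3$ Cartan matrix one sees $\la=x\al_1+y\al_2+z\al_3$, which immediately gives the asserted membership $\la\in\N\al_1\oplus\N\al_2\oplus\N\al_3$ and shows $1\in\A(\la,0)$ for every admissible $(x,y,z)$.

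The key simplification I would make is to pass to the standard coordinates for $\mathfrak{gl}_4$: represent $\la+\rho$ by the integer $4$-tuple $L=(x+3,\ y-x+2,\ z-y+1,\ -z)$, which is strictly decreasing exactly because of the dominance hypotheses $2x-y,\,2y-x-z,\,2z-y\in\N$, and whose entries sum to $6$. A Weyl group element $\sigma\in S_4$ acts by permuting the entries of $L$, and a short computation turns the condition $\sigma(\la+\rho)-\rho\in\N\al_1\oplus\N\al_2\oplus\N\al_3$ into: writing $\sigma\cdot L=(M_1,M_2,M_3,M_4)$,
\[
M_1\ge 3,\qquad M_1+M_2\ge 5,\qquad M_4\le 0 .
\]
This renders all $24$ membership conditions explicit and uniform; for instance $s_1\in\A(\la,0)\iff y\ge x+1$, $\ s_2\in\A(\la,0)\iff x+z\ge y+1$, $\ s_3\in\A(\la,0)\iff y\ge z+1$, $\ s_3s_2\in\A(\la,0)\iff x\ge y+2$, $\ s_1s_2\in\A(\la,0)\iff z\ge y+2$, $\ s_2s_3\in\A(\la,0)\iff x\ge z+2\text{ and }y\ge z+1$, and so on. A direct inspection of the remaining elements shows that exactly the eleven elements appearing in parts (1)--(12) can ever occur: each of the other thirteen is excluded either because $M_1\ge 3$ or $M_4\le 0$ fails identically on the octant, or because the region it would define is empty by virtue of the dominance inequalities (in particular $2y\ge x+z$).

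Finally I would partition the octant $\{(x,y,z)\in\N^3:\ 2x\ge y,\ 2y\ge x+z,\ 2z\ge y\}$ according to the signs of the finitely many linear forms appearing above, and on each cell read off exactly which $\sigma$ satisfy their inequalities. Collating, one checks that only the twelve sets in parts (1)--(12) occur, with the stated defining conditions, that these conditions are pairwise incompatible, and that their union is the whole octant, so that part (13) is vacuous --- consistently with $1\in\A(\la,0)$ always. Two observations shorten the verification: the diagram automorphism of $A_3$ (equivalently the involution $x\leftrightarrow z$, which exchanges $s_1$ and $s_3$) pairs up (3)$\leftrightarrow$(4), (5)$\leftrightarrow$(7), (9)$\leftrightarrow$(10), (11)$\leftrightarrow$(12) while fixing (1), (2), (6), (8); and the dominance inequalities $2x\ge y$, $2y\ge x+z$, $2z\ge y$ are exactly what rule out alternation sets such as $\{1,s_1\}$ and $\{1,s_3\}$ and force the list of nonempty cases to be the twelve given. (Alternatively, the corollary is the specialization of the general classification in Theorem~\ref{thm:195 alt sets} to $\mu=0$ and $\la$ in the root-cone octant.)

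\textbf{Main obstacle.} The real work is the last bookkeeping step: confirming exhaustiveness (that no cell, hence no alternation set, is missed) and getting the boundary conditions exactly right --- the constraints written as ``$-1<x-y<2$'' encode $x-y\in\{0,1\}$ over $\Z$ and must be shown to interact correctly with the $\in\N$ constraints coming from the inequalities above --- for which the accompanying code provides an independent check.
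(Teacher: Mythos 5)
Your proposal is correct and is essentially the paper's own route: since $\lambda$ lies in the root lattice, integrality is automatic, so the corollary is just the specialization of the per-element nonnegativity inequalities from (the proof of) Theorem~\ref{thm:195 alt sets} at $c_1=c_2=c_3=0$, intersected over the dominant octant $2x\ge y$, $2y\ge x+z$, $2z\ge y$; your $\mathfrak{gl}_4$ one-line coordinates $(x+3,\,y-x+2,\,z-y+1,\,-z)$ with the uniform tests $M_1\ge 3$, $M_1+M_2\ge 5$, $M_4\le 0$ are a convenient repackaging of the coefficients $P_i,Q_j,R_k$ of Table~\ref{tab:tablewithelementsFULLCOEFFS}, and your pruning to the eleven possible Weyl group elements and the $x\leftrightarrow z$ diagram symmetry agree with the paper. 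One caveat about your deferred collation step: carried out literally it will not confirm that the printed conditions are pairwise incompatible, because the published statement has small slips (item (3) omits $x-z-2\notin\N$ and so overlaps item (5), e.g.\ at $(x,y,z)=(10,10,5)$; item (6) has ``$z+z-y-1$'' where $x+z-y-1$ is meant; item (12)'s list should involve $y-x-1$ and $z-x-2$), so your procedure would end up correcting those conditions rather than verifying them verbatim.
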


In Figure~\ref{fig:diagram for 13 alt sets}, we illustrate the 12 nonempty Weyl alternation sets 
given by Corollary \ref{cor:13 alt sets}. In these figures, the axes are the nonnegative real span of the simple roots $\a_1$ (in red), $\a_2$ (in green), and $\a_3$ (in blue). For each $1\leq i\leq 12$, the colored vertices within each particular subfigure depict the a set 
of weights whose Weyl alternation set is indicated in the caption. \\

\begin{figure}[h]
    \centering
\includegraphics[width=\textwidth]{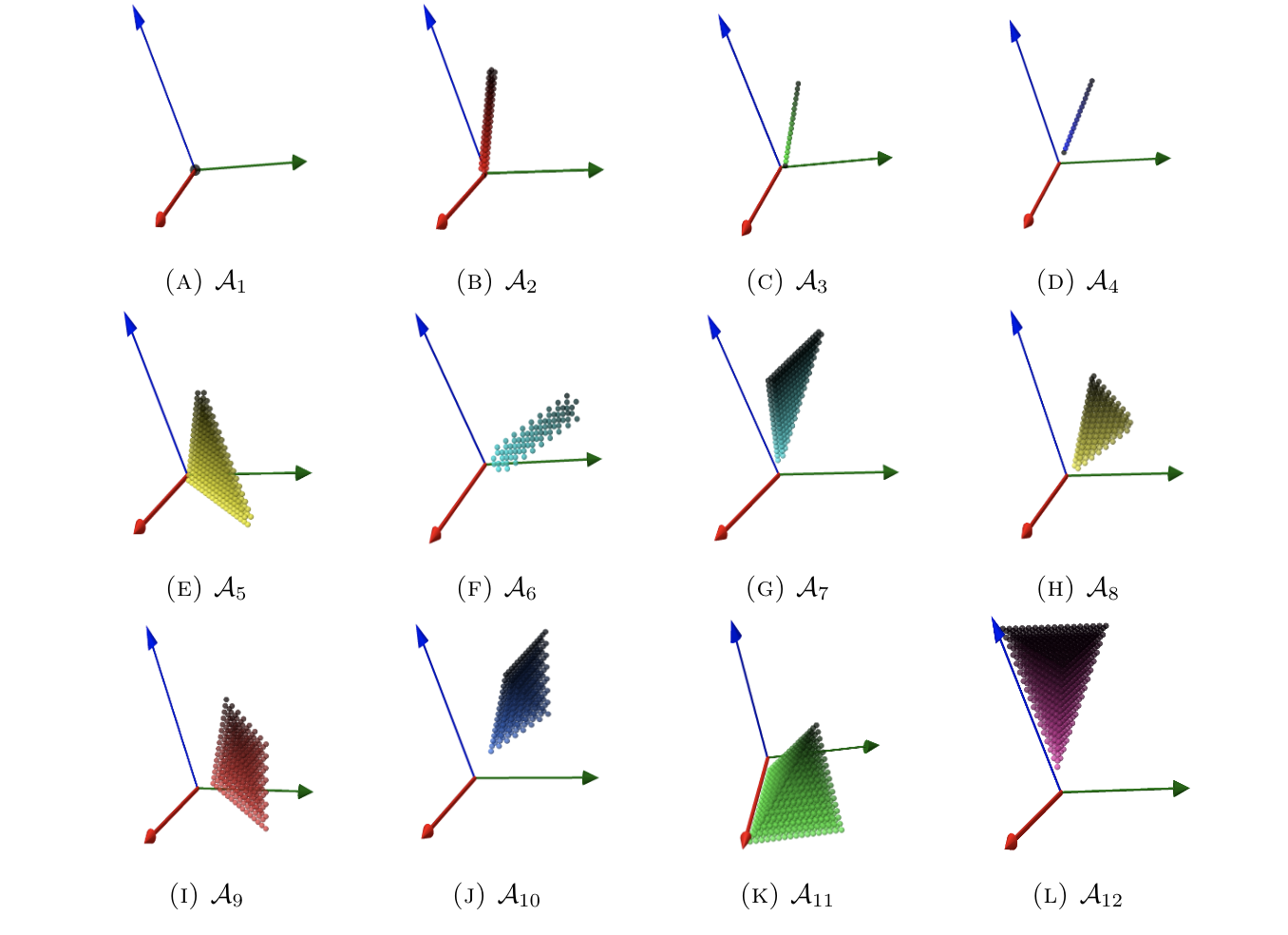}
\vspace{-7mm}
    \caption{The Weyl alternation diagrams of Corollary \ref{cor:13 alt sets}.}
    \label{fig:diagram for 13 alt sets}
\end{figure}
\vspace{-4mm}

In Figure \ref{fig:emptyregionexamples}, we illustrate some of the possible behavior of the central empty region for the Lie algebra $\sl4$.
For a fixed $\mu=c_1\a_1+c_2\a_2+c_3\a_3$, with $c_1,c_2,c_3\in\N$,
the colored vertices in Figure \ref{fig:emptyregionexamples} denote the weights $\lambda=m\w_1+n\w_2+k\w_2$, with $m,n,k\in\Z$, for which $\A(\lambda,\mu)=\emptyset$. Such regions are completely described by Theorem \ref{thm:195 alt sets}. Thus, Theorem \ref{thm:195 alt sets} 
answers \cite[Question 5.1]{HLRRRKTDU}, by giving the Weyl alternation diagrams of $\sl4$ along with a description of how the empty region diagrams change as the weight $\mu$ changes.

\begin{figure}[h]
    \centering
        \includegraphics[width=\textwidth]{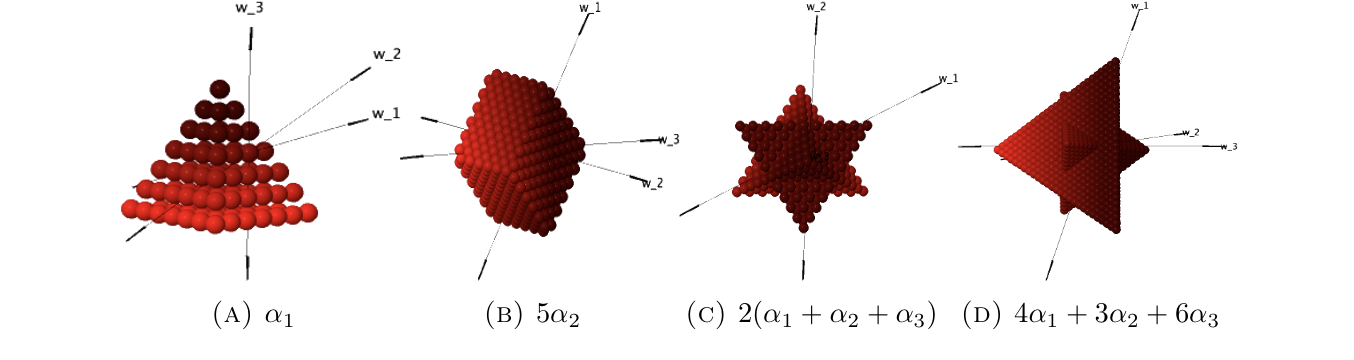}
    \caption{Various empty regions associated to $\sl4$ with a fixed weight $\mu$.}
    \label{fig:emptyregionexamples}
\end{figure}

Our last main result presents a closed formula for the $q$-analog of Kostant's weight multiplicity formula for the Lie algebra $\sl4$ for dominant integral weight $\lambda$ and $\mu$. 
This restriction is motivated by the fact that the 
weights which lie on the
nonnegative octant of the fundamental weight lattice are in correspondence with the finite-dimensional irreducible  representations of $\mathfrak{sl}_4(\mathbb{C})$. 
Theorem \ref{thm:qKWMF} utilizes the closed formulas for the $q$-analog of Kostant's partition function given by Theorem \ref{thm:main1} and the Weyl alternation sets $\A(\lambda,\mu)$ given by Theorem \ref{thm:195 alt sets}.

\begin{reptheorem}{thm:qKWMF}
Let $\lam=m\w_1+n\w_2+k\w_3$ and
 $\mu=c_1\w_1+c_2\w_2+c_3\w_3$ with $m,n,k,c_1,c_2,c_3\in\N$. If
$ x=\frac{3m+2n+k-3c_1-2c_2-c_3}{4}$,
$y=\frac{m+2n+k-c_1-2c_2-c_3}{2}$, and
$z=\frac{m+2n+3k-c_1-2c_2-3c_3}{4}$,
then

{\footnotesize{
\[
m_q(\lambda,\mu)=\begin{cases}
Z_1-Z_{11}-Z_3+Z_5+Z_{10}-Z_7 & P_1,Q_1,R_1,Q_6,R_4,Q_5,R_3 \in \N \ \text{and}\  P_4, P_3 \notin \N \\
Z_1-Z_6-Z_3+Z_4+Z_8-Z_2 & P_1,Q_1,R_1,P_4,Q_6,P_3,Q_4 \in \N \ \text{and} \ R_4, R_3 \notin \N\\
Z_1-Z_6-Z_{11}-Z_3+Z_8+Z_9 & P_1,Q_1,R_1,P_4,R_4,Q_6,Q_4 \in \N \ \text{and} \ P_3, Q_5, R_3 \notin \N\\
Z_1-Z_6-Z_{11}-Z_3+Z_9+Z_5 & P_1,Q_1,R_1,P_4,Q_6,R_4, Q_5 \in \N \ \text{and} \ P_3,Q_4,R_3 \notin \N\\
Z_1-Z_6-Z_{11}-Z_3+Z_9 & P_1,Q_1,R_1,P_4,Q_6,R_4 \in \N\ \text{and} \ P_3,Q_4, Q_5, R_3 \notin \N\\
Z_1-Z_{11}-Z_3+Z_5 & P_1,Q_1,R_1,Q_6,R_4,Q_5 \in \N \ \text{and} \ P_4,P_3,R_3 \notin \N \\
Z_1-Z_6-Z_3+Z_9 & P_1,Q_1,R_1,P_4,R_4 \in \N \ \text{and} \ Q_6,Q_4,Q_5 \notin \N\\
Z_1-Z_6-Z_{11}+Z_8 & P_1,Q_1,R_1,P_4,Q_6,Q_4 \in \N \ \text{and} \ R_4,P_3,R_3 \notin \N\\
Z_1-Z_{11}-Z_3 & P_1,Q_1,R_1,Q_6,R_4 \in \N \ \text{and} \ P_4,P_3, Q_5, R_3 \notin \N \\
Z_1-Z_6-Z_{11} & P_1,Q_1,R_1,P_4,Q_6 \in \N \ \text{and} \ R_4,P_3,Q_4,R_3 \notin \N\\
Z_1-Z_3 & P_1,Q_1,R_1,R_4 \in \N \ \text{and} \ Q_6,P_4,Q_5 \notin \N \ \text{and} \ (P_3 \notin \N \ \text{or} \ Q_4 \notin \N )\\
Z_1-Z_{11} & P_1,Q_1,R_1,Q_6 \in \N \ \text{and} \ P_4,R_4,R_3,P_3 \notin \N\\
Z_1 - Z_6 & P_1,Q_1,R_1, P_4 \in \N \ \text{and} \ Q_6,R_4,Q_4 \notin \N \ \text{and} \ (Q_5 \notin \N \ \text{or} \ R_3 \notin \N)\\
Z_1 & P_1,Q_1, R_1 \in \N \ \text{and} \ R_4,P_4,Q_6 \notin \N \ \text{and} \ (P_3 \notin \N \ \text{or} \ Q_4 \notin \N) \\& \text{and} \ (Q_5 \notin \N \ \text{or} \ R_3 \notin \N )\\
0 & \textnormal{otherwise,}
\end{cases}
\]
}}
where 
\begin{align*}
Z_{1}=&\wp_q(1(\lam+\rho)-(\rho+\mu))=\wp_q(P_1\al_1 + Q_1\al_2 + R_1\al_3)\\
Z_{2}=&\wp_q(s_1s_2s_1 (\lam+\rho)-(\rho+\mu))=\wp_q(P_3\al_1 + Q_4\al_2 + R_1\al_3)\\
Z_{3}=&\wp_q(s_3(\lam+\rho)-(\rho+\mu))=\wp_q(P_1\al_1 + Q_1\al_2 + R_4\al_3)\\
Z_{4}=&\wp_q(s_1s_2(\lam+\rho)-(\rho+\mu))=\wp_q(P_3\al_1 + Q_6\al_2 + R_1\al_3)\\
Z_{5}=&\wp_q(s_2s_3(\lam+\rho)-(\rho+\mu))=\wp_q(P_1\al_1 + Q_5\al_2 + R_4\al_3)\\
Z_{6}=&\wp_q(s_1(\lam+\rho)-(\rho+\mu))=\wp_q(P_4\al_1 + Q_1\al_2 + R_1\al_3)\\
Z_{7}=&\wp_q(s_2s_3s_2(\lam+\rho)-(\rho+\mu))=\wp_q(P_1\al_1 + Q_5\al_2 + R_3\al_3)\\
Z_{8}=&\wp_q(s_2s_1(\lam+\rho)-(\rho+\mu))=\wp_q(P_4\al_1 + Q_4\al_2 + R_1\al_3)\\
Z_{9}=&\wp_q(s_3s_1 (\lam+\rho)-(\rho+\mu))=\wp_q(P_4\al_1 + Q_1\al_2 + R_4\al_3)\\
Z_{10}=&\wp_q(s_3s_2(\lam+\rho)-(\rho+\mu))=\wp_q(P_1\al_1 + Q_6\al_2 + R_3\al_3)\\
Z_{11}=&\wp_q(s_2(\lam+\rho)-(\rho+\mu))=\wp_q(P_1\al_1 + Q_6\al_2 + R_1\al_3)
\end{align*}
and 
$P_1=x$, $P_3=-c_1-c_2-y+z-2$, $P_4=-c_1-x+y-1$, $Q_1=y$, $Q_4=-c_1-c_2-x+z-2$,
$Q_5=-c_2-c_3+x-z-2$,
$Q_6=-c_2+x-y+z-1$,
$R_1=z$,
$R_3=-c_2-c_3+x-y-2$, and 
$R_4=-c_3+y-z-1$.
\end{reptheorem}

For each $1\leq i\leq 11$, the formula for $Z_i$ given in Theorem \ref{thm:qKWMF} is obtained by an application of Theorem \ref{thm:main1}, see Appendix \ref{appendix:ztable} for the details of this. By evaluating $q=1$ in Theorem~\ref{thm:qKWMF} we give a formula for the multiplicity of the weight $\mu$ in the highest weight representation $L(\lambda)$ of~$\sl4$, with highest dominant integral weight $\lambda$.

\subsection*{Outline of the Paper} The remainder of the manuscript is organized as follows. Section \ref{sec:background} provides all of the necessary background on the Lie algebra $\sl4$ in order to make our approach precise. Section \ref{sec:qpartition} establishes Theorem \ref{thm:main1} 
using a combinatorial approach related to integer paritions. Section \ref{sec:Weyl alt sets and diagrams} presents all of the results associated to the Weyl alternation sets and Weyl alternation diagrams. Section \ref{sec:qmult} establishes Theorem \ref{thm:qKWMF}. We end with Section \ref{sec:future} where we provide a few directions for future research. In particular, we ask about further connections between Kostant's partition function and restricted colored integer partitions.

%%%%%%%%%%	 	SEC 2	 %%%%%%%%%%
\section{Background}\label{sec:background}
In this section we provide definitions needed to make our approach precise throughout the following sections. We follow the notation used in \cite{GW}.

The Lie algebra $\sl4$ consists of traceless $4\times 4$ complex-valued matrices. We will consider the Cartan subalgebra, $\mathfrak{h}$ of $\sl4$, consisting of the diagonal matrices of $\sl4$. For $i=1,2,3$ let $\a_i=e_i-e_{i+1}$, where $e_1,e_2,e_3,e_4$ denote the standard basis vectors of $\mathbb{R}^4$. Then the set $\Delta=\{\a_1,\a_2,\a_3\}$ is a set of simple roots and $\Phi^+=\{\a_1,\a_2,\a_3,\a_1+\a_2,\a_2+\a_3,\a_1+\a_2+\a_3\}$ is the corresponding set of positive roots of $\sl4$.  Figure \ref{fig:rootsystem} gives a geometry-preserving projection of the root system of $\mathfrak{sl}_4(\mathbb{C})$ into $\mathbb{R}^3$. The fundamental weights of $\sl4$ are
\begin{align}\label{eq:fundamental weights}
    \w_1 &= \frac{1}{4}(3\al_1+2\al_2+\al_3),\qquad
    \w_2 = \frac{1}{2}(\al_1+2\al_2+\al_3),\qquad
    \w_3 = \frac{1}{4}(\al_1+2\al_2+3\al_3).
\end{align}
Hence, $\rho=\frac{1}{2}\sum_{\alpha\in\Phi^+}\alpha=\w_1+\w_2+\w_3=\frac{1}{2}(3\al_1 + 4\al_2 + 3\al_3)$.
\begin{figure}[h]
\centering
\includegraphics[width=1.5in]{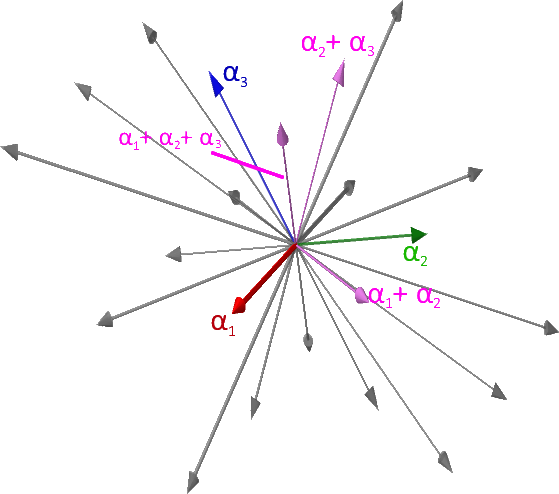}
\caption{Root system of $\mathfrak{sl}_4(\C)$.}
\label{fig:rootsystem}
\end{figure}

The Weyl group of $\sl4$ is isomorphic to the symmetric group $\mathfrak{S}_4$ and its generators are $s_1$, $s_2$, and $s_3$ which act as reflections across hyperplanes orthogonal to the simple roots $\a_1,\a_2,\a_3$, respectively. In particular for $1\leq i\leq 3$ we have that 
\begin{align*}
    s_i(\a_j)=\begin{cases}
    -\a_j&\mbox{if $i=j$}\\
    \a_i+\a_{j}&\mbox{if $|i-j|=1$}\\
    \a_j&\mbox{if $|i-j|>1$}
    \end{cases}
\qquad \mbox{and}\qquad
    s_i(\w_j)=\begin{cases}
    \w_j&\mbox{if $i\neq j$}\\
    \w_j-\a_{j}&\mbox{if $i=j$}.
    \end{cases}
\end{align*}

We conclude this section with the following result which will be important when computing Weyl alternation sets in Section \ref{sec:Weyl alt sets and diagrams}.
\begin{lemma}\label{lem:integral}
Let $\lambda=m\w_1+n\w_2+k\w_3$, with $m,n,k\in \mathbb{Z}$. Then $\lambda=c_1{\a_1}+c_2\a_2+c_3\a_3$, for some $c_1,c_2,c_3\in\Z$ if and only if $m=2x-y, n=2y-z-x$ and $k=2z-y$ where $x, y, z \in \Z.$
\end{lemma}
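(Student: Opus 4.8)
The plan is to make the change of basis between $\{\w_1,\w_2,\w_3\}$ and $\{\a_1,\a_2,\a_3\}$ of $\mathfrak h^*$ completely explicit. First I would invert the system \eqref{eq:fundamental weights} (equivalently, read off the Cartan matrix of type $A_3$) to record the identities
\[
\a_1 = 2\w_1 - \w_2, \qquad \a_2 = -\w_1 + 2\w_2 - \w_3, \qquad \a_3 = -\w_2 + 2\w_3 .
\]
Since $\{\w_1,\w_2,\w_3\}$ is a basis of $\mathfrak h^*$, every weight has a unique expression as a rational combination of $\a_1,\a_2,\a_3$, so all of the coefficient comparisons below are unambiguous.

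For the forward implication I would assume $\lambda = c_1\a_1 + c_2\a_2 + c_3\a_3$ with $c_1,c_2,c_3 \in \mathbb Z$, substitute the three identities above, and collect the coefficients of $\w_1,\w_2,\w_3$; this yields $m = 2c_1 - c_2$, $n = -c_1 + 2c_2 - c_3$, and $k = -c_2 + 2c_3$. Setting $x = c_1$, $y = c_2$, $z = c_3 \in \mathbb Z$ then gives exactly $m = 2x - y$, $n = 2y - z - x$, and $k = 2z - y$, as claimed.

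For the converse I would begin from $m = 2x - y$, $n = 2y - z - x$, $k = 2z - y$ with $x,y,z \in \mathbb Z$, put $c_1 = x$, $c_2 = y$, $c_3 = z$, and expand $c_1\a_1 + c_2\a_2 + c_3\a_3$ using the same identities: the coefficients of $\w_1,\w_2,\w_3$ come out to be $2x - y = m$, $-x + 2y - z = n$, and $-y + 2z = k$, whence $c_1\a_1 + c_2\a_2 + c_3\a_3 = m\w_1 + n\w_2 + k\w_3 = \lambda$, which is the desired integral expression.

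I do not anticipate a genuine obstacle here; the only point worth stating carefully is that the linear map $(c_1,c_2,c_3)\mapsto(m,n,k)$ is given by the $A_3$ Cartan matrix, whose determinant is $4 \neq 0$, so it is invertible over $\mathbb Q$ and the assignments $x=c_1$, $y=c_2$, $z=c_3$ used in the two directions are genuinely inverse to one another. This also explains why the statement is phrased through the auxiliary integers $x,y,z$ (the preimage under the Cartan matrix) rather than through a direct congruence condition on $(m,n,k)$.
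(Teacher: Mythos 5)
Your proof is correct. The identities $\a_1=2\w_1-\w_2$, $\a_2=-\w_1+2\w_2-\w_3$, $\a_3=-\w_2+2\w_3$ do follow from \eqref{eq:fundamental weights}, and with them both implications reduce to a coefficient comparison in the basis $\{\w_1,\w_2,\w_3\}$, which is legitimate since the fundamental weights form a basis of $\fh^*$. The route is genuinely different from the paper's: the paper works in the opposite direction, expanding $\lambda=m\w_1+n\w_2+k\w_3$ in the root basis via \eqref{eq:fundamental weights}, which produces the fractional coefficients $\frac{3m+2n+k}{4}$, $\frac{2m+4n+2k}{4}$, $\frac{m+2n+3k}{4}$; it then imposes integrality by setting these equal to $4x$, $4y$ (after clearing denominators), $4z$, and solves the resulting $3\times 3$ linear system to obtain $m=2x-y$, $n=2y-x-z$, $k=2z-y$, with the backwards direction done by substitution. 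Your version, by using the inverse change of basis (the $A_3$ Cartan matrix, which has integer entries), avoids both the denominators and the system-solving: each direction is an immediate substitution, with $x=c_1$, $y=c_2$, $z=c_3$. What the paper's computation buys in exchange for the extra algebra is the explicit identity $\lambda=x\a_1+y\a_2+z\a_3$ (its equation \eqref{sigma_1_sub_1}), i.e.\ the identification of $x,y,z$ as the root-lattice coordinates of $\lambda$, which is reused throughout Section \ref{sec:Weyl alt sets and diagrams}; your argument yields the same interpretation, since your $c_1,c_2,c_3$ are exactly those coordinates, and it would be worth stating that explicitly if your proof were to replace the paper's. Your closing remark about invertibility of the Cartan matrix is not actually needed, because you verify both directions by direct substitution rather than by appealing to an inverse map.
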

\begin{proof}
First, we prove the forward direction. Let $\lambda=m\w_1+n\w_2+k\w_3$, with $m,n,k\in \mathbb{Z}.$ 
By \eqref{eq:fundamental weights} we write $\lambda$ as
\begin{align*}
     \lambda 
    &= \frac{m}{4}(3\al_1+2\al_2+\al_3) + \frac{n}{2}(\al_1+2\al_2+\al_3) + \frac{k}{4}(\al_1+2\al_2+3\al_3)\\
    &= \Big(\frac{3m+2n+k}{4}\Big)\al_1+\Big(\frac{2m+4n+2k}{4}\Big)\al_2+ \Big(\frac{m+2n+3k}{4}\Big)\al_3.
\end{align*}
In order for $\lambda=c_1{\a_1}+c_2\a_2+c_3\a_3$ for some $c_1,c_2,c_3\in\Z$, it must be that
\begin{align}
    3m+2n+k &= 4x\label{eq:f1}\\
    2m+4n+2k &= 4y\label{eq:f2}\\
    m+2n+3k &= 4z\label{eq:f3}
\end{align}
for some $x, y, z \in \Z.$ Then, from \eqref{eq:f1}, we know $k=4x-3m-2n$, and so substituting into \eqref{eq:f2} and simplifying yields $m = 2x-y$.
Similarly, solving for $m$ in \eqref{eq:f3}  shows $m=4z-2n-3k$. Substituting this into equation \eqref{eq:f2} and simplifying yields $k=2z-y$. 
Lastly, substituting $m=2x-y$ and $k=2z-y$ into \eqref{eq:f1} establishes  that $n=2y-x-z$, as desired.

To prove the backwards direction, let $\lambda=m\omega_1+n\omega_2+k\omega_3$ and assume
\begin{align}
    m &=2x-y\label{eq:g1}\\
    n &=2y-z-x\label{eq:g2}\\
    k &=2z-y\label{eq:g3}
\end{align}
where $x, y, z \in \Z$,
By \eqref{eq:fundamental weights} we can write 
$\lambda$ in terms of the simple roots as
\begin{align}
    \lambda &= \Big(\frac{3m+2n+k}{4}\Big)\al_1+\Big(\frac{2m+4n+2k}{4}\Big)\al_2+ \Big(\frac{m+2n+3k}{4}\Big)\al_3.\label{eq:h1}
\end{align}
Substituting \eqref{eq:g1}, \eqref{eq:g2}, and \eqref{eq:g3} into \eqref{eq:h1}, gives $\lambda=x\al_1+y\al_2+z\al_3$, which completes the proof.
\end{proof}

%%%%%%%%%%	 	SEC 3	 %%%%%%%%%%
\section{Formulas for the \texorpdfstring{$q$}{q}-analog of Kostant's partition function}\label{sec:qpartition}
In this section we establish Theorem \ref{thm:main1}, which
generalizes the formulas of De Loera and Sturmfels for the Kostant's partition function of $\mathfrak{sl}_4(\mathbb{C})$ to its $q$-analog \cite{deloera}.
Our general strategy is to reduce the problem of counting partitions of a weight with a fixed number of parts to the problem of counting restricted partitions of an integer with parts drawn from a multiset of natural numbers. We begin with the following definition.

\begin{definition}\label{def:colored partitions}
    Let $t\in\N$, and let $\mathcal{M}$ be a multiset with elements from $\N$. Then an $\mathcal{M}$-partition of $t$ is a sequence of natural numbers $(a_m)_{m\in\mathcal{M}}$ such that $ \sum_{m\in\mathcal{M}} m \cdot a_m=t $.
\end{definition}

When working with elements of a multiset we associate a ``color'' to elements with multiplicity greater than one so that we may distinguish them. For more on colored partitions, restricted colored partitions, and their properties, see \cite{CL,CLY, Keith,Kim}. The following example illustrates Definition \ref{def:colored partitions} in the case when $\mathcal{M}=\{1,1,2\}$.

\begin{example}For ease of distinguishing between the multiple $1$'s we let $\mathcal{M}$ be the multiset $\mathcal{M}=\{\one,\onep, 2\}$. 
    Then the distinct $\mathcal{M}$-partitions of $4$ are $\one + \one + 2$, $\one + \onep + 2$, $\onep + \onep + 2$, $\one + \one + \one + \one$, $\onep + \one + \one + \one$, $\onep + \onep + \one + \one$, $\onep + \onep + \onep + \one$, $\onep + \onep + \onep + \onep$, and $2 + 2$.
\end{example}

Next, we establish a bijection between vector partitions with a fixed number of parts and restricted colored integer partitions, a result which will be central to the remainder of this paper. 

\begin{proposition}
    Let $m,n,k,i\in\N$, and let $\xi=m\al_1+n\al_2+k\al_3$. Then the number of ways to express $\xi$ as a sum of exactly $i$ positive roots is equal to the number of $\{\one,\onep,2\}$-partitions $\mpartn def$ of $t:=m+n+k-i$ that satisfy
    \begin{enumerate}[label=(\roman*),ref=\roman*]
        \item \label{prop5.1} $d+f\leq m$,
        \item \label{prop5.2} $d+e+f\leq n$, and
        \item \label{prop5.3}$e+f\leq k$.
    \end{enumerate}
    \label{bijection}
\end{proposition}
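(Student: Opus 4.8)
The plan is to set up an explicit bijection between the set of expressions of $\xi = m\alpha_1 + n\alpha_2 + k\alpha_3$ as a sum of exactly $i$ positive roots and the set of $\{\one,\onep,2\}$-partitions of $t = m+n+k-i$ satisfying (\ref{prop5.1})--(\ref{prop5.3}). First I would record that any expression of $\xi$ as a nonnegative integral combination of the six positive roots of $\sl4$ is determined by a tuple of multiplicities $(a_1,a_2,a_3,a_{12},a_{23},a_{123}) \in \N^6$ with $\xi = a_1\alpha_1 + a_2\alpha_2 + a_3\alpha_3 + a_{12}(\alpha_1+\alpha_2) + a_{23}(\alpha_2+\alpha_3) + a_{123}(\alpha_1+\alpha_2+\alpha_3)$; comparing coefficients of $\alpha_1,\alpha_2,\alpha_3$ gives the three linear equations $a_1 + a_{12} + a_{123} = m$, $a_2 + a_{12} + a_{23} + a_{123} = n$, $a_3 + a_{23} + a_{123} = k$. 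The total number of parts is $N := a_1+a_2+a_3+a_{12}+a_{23}+a_{123}$, and summing the three equations gives $m+n+k = N + a_{12} + a_{23} + 2a_{123}$, so the condition ``exactly $i$ parts'' becomes $a_{12} + a_{23} + 2a_{123} = m+n+k-i = t$.

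Next I would make the key identification: given such a tuple with $i$ parts, assign $f := a_{123}$ (the multiplicity of $\alpha_1+\alpha_2+\alpha_3$, colored as the part $2$ since it contributes $2$ to $t$), $d := a_{12}$ (the part $\one$), and $e := a_{23}$ (the part $\onep$). Then $d + e + 2f = t$ is exactly the statement that $\mpartn def$ is an $\{\one,\onep,2\}$-partition of $t$. The inequality constraints come from solving the linear system for the remaining multiplicities: $a_1 = m - a_{12} - a_{123} = m - d - f \geq 0$, $a_2 = n - a_{12} - a_{23} - a_{123} = n - d - e - f \geq 0$, and $a_3 = k - a_{23} - a_{123} = k - e - f \geq 0$, which are precisely (\ref{prop5.1}), (\ref{prop5.2}), (\ref{prop5.3}). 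Conversely, given a $\{\one,\onep,2\}$-partition $\mpartn def$ of $t$ satisfying those three inequalities, I would define $a_{12} := d$, $a_{23} := e$, $a_{123} := f$, $a_1 := m-d-f$, $a_2 := n-d-e-f$, $a_3 := k-e-f$; the three inequalities guarantee these are nonnegative, the construction of the $a_i$ forces the coefficient equations for $\alpha_1,\alpha_2,\alpha_3$ to hold, and $d+e+2f = t$ forces the part count to be $i$. The two assignments are mutually inverse by inspection, so this is a bijection.

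The main obstacle here is really just bookkeeping rather than any deep difficulty: one must be careful that the map is well-defined in both directions (nonnegativity of all six multiplicities, and that the part count is exactly $i$ and not merely at most $i$), and that no ordering or multiplicity subtlety arises — since a vector partition is an unordered multiset of positive roots, it is faithfully encoded by the multiplicity tuple, and an $\{\one,\onep,2\}$-partition is likewise an unordered object encoded by the triple $(d,e,f)$, so the correspondence respects the combinatorial structure on both sides. I would close by remarking that summing over all $i$ and tracking $i$ as the exponent of $q$ recovers, via this bijection, the generating-function identity that underlies the closed formulas of Theorem~\ref{thm:main1}.
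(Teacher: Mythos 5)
Your proposal is correct and follows essentially the same route as the paper: encode a vector partition by its multiplicity tuple, map it to the triple $(d,e,f)$ of multiplicities of $\al_1+\al_2$, $\al_2+\al_3$, $\al_1+\al_2+\al_3$, observe that the part count $i$ forces $d+e+2f=t$, and recover the nonnegativity of the remaining multiplicities as exactly conditions (\ref{prop5.1})--(\ref{prop5.3}), with the explicit inverse establishing the bijection. No gaps.
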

\begin{proof}
    Let $X$ denote the set of partitions of $\xi$ using positive roots, where $(a,b,c,d,e,f)$ corresponds to the partition $\xi=a\al_1 + b\al_2 + c\al_3 + d(\al_1+\al_2) + e(\al_2+\al_3) + f(\al_1+\al_2+\al_3)$. Let $Y$ denote the set of $\{\one,\onep,2\}$-partitions of $t$ satisfying \eqref{prop5.1}-\eqref{prop5.3}, where the tuple $(d,e,f)$ corresponds to the partition $t=d\cdot\one + e\cdot\onep + f\cdot2$. 
    We define a function $F:X\to Y$ which is given by $(a,b,c,d,e,f) \mapsto (d,e,f)$.
    
    To prove that $F$ is well-defined, let $p=(a,b,c,d,e,f)$ be a partition of $\xi$. Then the image of this partition under $F$ is the partition $\mpartn def$. First, note that we can expand and combine terms to find that
        $\partn abcdef \\=\ (a+d+f)\al_1+(b+d+e+f)\al_2+(c+e+f)\al_3$.
    
    Comparing the coefficients of $\xi=m\al_1+n\al_2+k\al_3$ with the coefficients above, it follows that
    \[a = m-d-f,\qquad
        b = n-d-e-f, \qquad
        c = k-e-f,\]
    and so our partition is actually $(m-d-f,n-d-e-f,k-e-f,d,e,f)$. The number of parts in $p$ is $a+b+c+d+e+f=m+n+k-d-e-2f$. 
    Hence, as defined above, we have that $t=d+e+2f$, and so $F(p)=(d,e,f)$ is indeed a $\{\one,\onep,2\}$-partition of $t$.
    
    Since $p\in X$, we have by definition that each of its components are nonnegative integers. Thus, $a, b, c \geq 0$ imply
    \[
        d+f\leq m, \qquad
        d+e+f\leq n \\\qquad
        e+f\leq k.
    \]
    Thus, $(d,e,f)$ satisfies \eqref{prop5.1}-\eqref{prop5.3} and is also a $\{\one, \onep, 2\}$ partition of $t$ and so is in $Y$.
    
    Next to show that $F$ is a bijection, consider $G:Y\to X$ given by \[(d,e,f) \mapsto (m-d-f,n-d-e-f,k-e-f,d,e,f).\] A similar argument to the one above will show that $G$ is well defined and it is straightforward to check that $G$ is the inverse of $F$. It follows that $F$ is a bijection, and so $|X|=|Y|$, as desired.
\end{proof}

In the following subsections, we 
give closed formulas for $\wp_q(m\al_1+n\al_2+k\al_3)$ in terms of $m$, $n$, and $k$ based on the following conditions:
\begin{enumerate}
    \item\label{cond:mkn} $m,k\geq n$,
    \item\label{cond:mnk} $m\geq n\geq k$,
    \item\label{cond:knm} $k\geq n\geq m$,
    \item\label{cond:nmk} $n \geq m\geq k$, and
    \item\label{cond:nkm} $n\geq k\geq m$.
\end{enumerate}
As is standard, whenever a lower index of a sum is larger than the upper index, the sum is zero.

\indent In Subsections \ref{subsec:case1}-\ref{subsec:case3}, we apply Proposition \ref{bijection} to determine closed formulas for $\wp_q$ in cases \eqref{cond:mkn}, \eqref{cond:mnk}, and \eqref{cond:nmk}.
In each case, we determine the minimum number of parts that a partition of $\xi=m\al_1+n\al_2+k\al_3$ may have, giving us the lowest and highest powers of $q$ in $\wp_q(\xi)$.
In general, note that the partition of the vector $m\al_1+n\al_2+k\al_3$ with the maximum number of parts is the partition $(m,n,k,0,0,0)$, which has $m+n+k$ parts.\\
\indent In Subsection \ref{subsec:mirroredcases}, we then exploit a symmetry in the root system for $\sl4$ to find closed formulas for cases \eqref{cond:knm} and \eqref{cond:nkm} using the closed formulas from cases \eqref{cond:mnk} and \eqref{cond:nmk}, respectively.

\subsection{Proof of Theorem \texorpdfstring{\ref{thm:main1}}{1}, Part \texorpdfstring{\eqref{thm:main1:part1}}{1}}
\label{subsec:case1}
\begin{lemma}  \label{ncase1}
    Let $m,n,k,t\in\N$ be such that $m,k\geq n$. The number of $\{\one,\onep,2\}$-partitions of $t$ of the form $d\cdot \one +e\cdot \onep +f\cdot 2$ with $d,e,f\in\N$ that satisfy 
    \begin{enumerate}[label=(\roman*),ref=\roman*]
        \item\label{lemma1:ineq1} $d+f \leq m$,
        \item\label{lemma1:ineq2} $d+e+f\leq n$, and 
        \item\label{lemma1:ineq3} $e+f \leq k$
    \end{enumerate}
    is equal to
    \begin{equation}\label{lemma1:sum}
        \sum_{f=F_1}^{F_2} \sum_{d=D_1}^{D_2} 1,
    \end{equation} where $D_1 = 0, D_2 = t - 2f, F_1 = \max \{0, t - n\},$ and $F_2 = \left \lfloor \frac{t}{2} \right \rfloor.$
\end{lemma}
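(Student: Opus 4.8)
The goal is to show that the number of $\{\one,\onep,2\}$-partitions $d\cdot\one + e\cdot\onep + f\cdot 2$ of $t$ satisfying the three inequalities \eqref{lemma1:ineq1}--\eqref{lemma1:ineq3} equals the double sum $\sum_{f=F_1}^{F_2}\sum_{d=0}^{t-2f}1$. The strategy is to use the defining equation $d + e + 2f = t$ to eliminate $e$, so that a valid partition is determined by the pair $(d,f)$ alone, and then to identify exactly which pairs $(d,f)$ are admissible. Since $e = t - d - 2f$, the triple $(d,e,f)$ is a genuine $\{\one,\onep,2\}$-partition of $t$ precisely when $d,f\in\N$ and $t - d - 2f \geq 0$, i.e. $d \leq t - 2f$; in particular we need $2f \leq t$, so $f \leq \floor{t/2}$. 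This already accounts for the inner bounds $D_1 = 0$, $D_2 = t - 2f$ and the upper bound $F_2 = \floor{t/2}$ on $f$.

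It remains to check that the three inequalities \eqref{lemma1:ineq1}--\eqref{lemma1:ineq3}, under the hypothesis $m, k \geq n$, are together equivalent to the single constraint $f \geq \max\{0, t-n\}$ (the bound $f \geq 0$ being automatic). First I would substitute $e = t - d - 2f$ into each inequality. Inequality \eqref{lemma1:ineq2}, $d + e + f \leq n$, becomes $d + (t - d - 2f) + f \leq n$, i.e. $t - f \leq n$, i.e. $f \geq t - n$; combined with $f \geq 0$ this is exactly $f \geq F_1$. So the claim reduces to showing that inequalities \eqref{lemma1:ineq1} and \eqref{lemma1:ineq3} are implied by \eqref{lemma1:ineq2} together with $m, k \geq n$ and $0 \leq d \leq t - 2f$. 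For \eqref{lemma1:ineq1}: we have $d + f \leq d + e + f \leq n \leq m$, using $e \geq 0$ and then \eqref{lemma1:ineq2} and the hypothesis $m \geq n$. For \eqref{lemma1:ineq3}: $e + f \leq d + e + f \leq n \leq k$, using $d \geq 0$, then \eqref{lemma1:ineq2}, then $k \geq n$. Hence under the hypothesis $m,k\geq n$, the inequalities \eqref{lemma1:ineq1} and \eqref{lemma1:ineq3} are redundant, and the admissible pairs $(d,f)$ are exactly those with $F_1 \leq f \leq F_2$ and $0 \leq d \leq t - 2f$.

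Putting this together: the map $(d,e,f)\mapsto(d,f)$ is a bijection from the set of $\{\one,\onep,2\}$-partitions of $t$ satisfying \eqref{lemma1:ineq1}--\eqref{lemma1:ineq3} to the set of pairs $(d,f)\in\N^2$ with $\max\{0,t-n\}\leq f\leq\floor{t/2}$ and $0\leq d\leq t-2f$ (inverse: $(d,f)\mapsto(d,\,t-d-2f,\,f)$, well-defined since the range of $d$ forces $t - d - 2f \geq 0$). Counting the pairs gives exactly \eqref{lemma1:sum}. I do not expect any serious obstacle here; the one point requiring a little care is making sure that $F_1 \leq F_2$ is not needed as a side hypothesis — it is not, because when $t - n > \floor{t/2}$ the sum \eqref{lemma1:sum} is empty, which is consistent with the stated convention that a sum with lower index exceeding upper index is zero (and indeed in that regime no partition of $t$ can satisfy \eqref{lemma1:ineq2}). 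I would remark on this explicitly to keep the edge cases clean.
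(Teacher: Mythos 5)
Your proposal is correct and follows essentially the same route as the paper's proof: both reduce to showing that, for triples with $d+e+2f=t$, the three inequalities are equivalent to $F_1\leq f\leq F_2$ and $0\leq d\leq t-2f$ (using that \eqref{lemma1:ineq2} rewrites as $f\geq t-n$ and that \eqref{lemma1:ineq1}, \eqref{lemma1:ineq3} follow from it under $m,k\geq n$), and then count pairs $(d,f)$ since $e$ is determined. Your explicit remark about the empty-sum case $F_1>F_2$ is a nice touch but not a substantive difference.
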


\begin{proof}
We wish to show that if $d,e,f\in\N$ are such that $d+e+2f=t$, then 
\eqref{lemma1:ineq1}-\eqref{lemma1:ineq3} hold if, and only if $D_1 \leq d \leq D_2$ and $F_1 \leq f \leq F_2$.

We begin by proving the converse direction. 
Assume $D_1 \leq d \leq D_2$ and $F_1 \leq f \leq F_2$. 

For the inequality in \eqref{lemma1:ineq2}, note that since $f\geq F_1\geq t-n$, we get $t-n \leq f$. Then, by substituting $t=d+e+2f$, we get the desired inequality, $d+e+f\leq n$. Using this result, the fact that $m,k\geq n$, and $e,d\geq 0$ the other two inequalities follow. To obtain the inequality in \eqref{lemma1:ineq1}, we have $d \leq n - e - f \leq n - f$, so $d+f \leq n \leq m$. To obtain the inequality in \eqref{lemma1:ineq3}, observe that $d+e+f\leq n$ implies $e \leq n - d -f \leq n - f$. Thus, $e+f \leq n \leq k$.

We now prove the forward direction. Assume 
\eqref{lemma1:ineq1}-\eqref{lemma1:ineq3} hold.

We first show that $D_1 \leq d \leq D_2$. Observe that $d\in \N$ and so $D_{1} = 0\leq d.$ Now, from $t=d+e+2f,$ we have that $d = t - e - 2f \leq t - 2f = D_{2}$. Therefore, $D_1 \leq d \leq D_2$.

We next show that $F_1 \leq f \leq F_2$.  Since $d+e+f \leq n$ and $t=d+e+2f$, we have $t-f = d+e+2f-f = d+e+f \leq n$, which implies that $t - n \leq f$. Furthermore, $f \in \N$ means that $0 \leq f$. Therefore, $F_1 = \max \{ 0, t - n \} \leq f$. Now, by definition, we have that $2f = t-d-e \leq t$. Since $f \in \N$, we have that $f \leq \floor{\frac{t}{2}} = F_2$. Hence, $F_1 \leq f \leq F_2.$

We have proved that if $d,e,f\in\N$ such that $d+e+2f=t,$ then \eqref{lemma1:ineq1}-\eqref{lemma1:ineq3} hold if, and only if $D_1 \leq d \leq D_2$ and $F_1 \leq f \leq F_2$. Since $e$ is determined by $d$ and $f,$ it then follows that the number of $\{\one,\onep,2\}$-partitions of 
$t$ satisfying \eqref{lemma1:ineq1}-\eqref{lemma1:ineq3} is the sum in  \eqref{lemma1:sum}. 
\end{proof}
We now establish part \eqref{thm:main1:part1} of Theorem \ref{thm:main1}, which we restate below for ease of reference.
\begin{proposition}\label{prop:3}
    If $m,n,k\in\N$
    satisfy $m,k\geq n$, then
    \[
        \wp_q(m\al_1+n\al_2+k\al_3) = \sum_{i=m+k-n}^{m+n+k} (L+1)(L+(t\bmod2)+1)q^i,
    \]
    where $L= \min\left\{\floor{\frac{t}{2}},n-\ceil{\frac{t}{2}}\right\}$
    and $t=m+n+k-i$.
    \label{case1}
\end{proposition}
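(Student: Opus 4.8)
The plan is to combine Proposition~\ref{bijection} with Lemma~\ref{ncase1} to reduce $\wp_q(\xi)$ to a single double sum, and then evaluate that double sum in closed form. By Proposition~\ref{bijection}, the coefficient of $q^i$ in $\wp_q(\xi)$ counts the $\{\one,\onep,2\}$-partitions of $t=m+n+k-i$ satisfying the three inequalities $d+f\le m$, $d+e+f\le n$, $e+f\le k$; since $m,k\ge n$, Lemma~\ref{ncase1} shows this count equals $\sum_{f=F_1}^{F_2}\sum_{d=0}^{t-2f}1=\sum_{f=F_1}^{F_2}(t-2f+1)$, where $F_1=\max\{0,t-n\}$ and $F_2=\floor{t/2}$. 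So the first step is to record that $\wp_q(\xi)=\sum_i c_i q^i$ with $c_i=\sum_{f=F_1}^{F_2}(t-2f+1)$, and to determine the range of $i$ for which $c_i\ne 0$: the sum is nonempty exactly when $F_1\le F_2$, i.e. $\max\{0,t-n\}\le\floor{t/2}$, which together with $0\le t\le m+n+k$ (the total number of parts constraint) gives $t\le \min\{2n, m+n+k\}$; translating to $i=m+n+k-t$ yields $i\ge \max\{m+k-n,0\}=m+k-n$ (using $m,k\ge n$ so $m+k\ge n$, hence $m+k-n\ge 0$) and $i\le m+n+k$. This matches the summation bounds claimed in the statement.

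The second step is the arithmetic-progression evaluation. The inner sum $\sum_{f=F_1}^{F_2}(t-2f+1)$ is an arithmetic progression in $f$ with common difference $-2$; writing $N=F_2-F_1+1$ for the number of terms, its value is $N\cdot\big((t-2F_1+1)+(t-2F_2+1)\big)/2 = N(t-F_1-F_2+1)$. Now substitute $F_2=\floor{t/2}$, so that $t-2F_2 = t\bmod 2$, and split into the two cases $F_1=0$ (i.e. $t\le n$) and $F_1=t-n$ (i.e. $t>n$). A short computation in each case shows that, with $L:=F_2-F_1$, one has $N=L+1$ and $t-F_1-F_2+1 = L+(t\bmod 2)+1$: indeed when $F_1=0$ this reads $t-\floor{t/2}+1=\ceil{t/2}+1 = L+(t\bmod 2)+1$ since $L=\floor{t/2}$ and $\ceil{t/2}=\floor{t/2}+(t\bmod 2)$; when $F_1=t-n$ it reads $n-\floor{t/2}+1$, and here $L=\floor{t/2}-(t-n)=n-\ceil{t/2}$, so $L+(t\bmod2)+1 = n-\ceil{t/2}+(t\bmod2)+1 = n-\floor{t/2}+1$ as needed. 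Hence in both cases $c_i=(L+1)(L+(t\bmod 2)+1)$, and one checks $L=\min\{\floor{t/2},\,n-\ceil{t/2}\}$ since $F_1=\max\{0,t-n\}$ makes $L=\floor{t/2}-\max\{0,t-n\}=\min\{\floor{t/2},\floor{t/2}-(t-n)\}=\min\{\floor{t/2},n-\ceil{t/2}\}$.

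The main obstacle is bookkeeping rather than conceptual: one must carefully verify that the nonvanishing range of $i$ produced by the condition $F_1\le F_2$ together with the global bound $t\le m+n+k$ coincides with the stated range $m+k-n\le i\le m+n+k$, and that the formula $(L+1)(L+(t\bmod 2)+1)$ correctly returns $0$ — or is harmlessly included — at the boundary values of $t$ (e.g. when $L=-1$, corresponding to $F_1>F_2$, the factor $L+1=0$, so extending the sum over such $i$ adds nothing). I would handle this by explicitly noting that for $i$ in the stated range we always have $F_1\le F_2$, so no spurious terms appear, and for $i$ outside it either $t<0$, $t>m+n+k$, or $L<0$; in the last case $(L+1)=0$. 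I expect no difficulty with the ceiling/floor identities beyond the case split on the parity of $t$, which I would do once and for all at the start of the progression computation.
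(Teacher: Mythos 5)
Your proposal is correct and takes essentially the same route as the paper: both rest on Proposition~\ref{bijection} and Lemma~\ref{ncase1} and then evaluate the resulting single sum $\sum_{f=F_1}^{F_2}(t-2f+1)$ in closed form. The only (harmless) differences are that the paper obtains the lower limit $i=m+k-n$ by exhibiting the partition $(m-n)\al_1+(k-n)\al_3+n(\al_1+\al_2+\al_3)$ with the fewest parts, whereas you derive it from the nonemptiness condition $F_1\leq F_2$, and the paper evaluates the sum by reindexing $j=\floor{\frac{t}{2}}-f$ while you use the arithmetic-progression average formula with a parity case split.
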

\begin{proof}
    \indent First, note that the maximum number of parts among all partitions of $\xi$ is $m+n+k$.
    To find the minimum number of parts among all partitions of $\xi$, we note that in a partition of $\xi$, we can use $\al_1+\al_2+\al_3$ at most $n$ times since $m,k\geq n$.
    Afterwards, we cannot use $\al_1+\al_2$ nor $\al_2+\al_3$ since $n(\al_1+\al_2+\al_3)$ already contains the term $n\al_2$.
    Thus, we obtain a partition
    \begin{equation*}
        \xi = (m-n)\al_1 + (k-n)\al_3 + n(\al_1+\al_2+\al_3),
    \end{equation*}
    with the minimum number of parts: $(m-n)+(k-n)+n=m+k-n$.
    
    By Proposition \ref{bijection}, we know that for $i\in\N$, the number of vector partitions of
    $\xi$ with $i$ parts is equal to the number of $\{\one,\onep,2\}$-partitions
    of $t=m+n+k-i$ that satisfy \eqref{lemma1:ineq1}-\eqref{lemma1:ineq3}.
    Lemma \ref{ncase1} then tells us that the number of such partitions of $t$ is
    \begin{equation*}
        \sum_{f=\max\{0,t-n\}}^{\floor{\frac{t}{2}}}
        \sum_{d=0}^{t-2f} 1.
    \end{equation*}
    Thus, the number of partitions of $\xi$ using exactly $i$ positive roots is
    \begin{align*}
        \sum_{f=\max\{0,t-n\}}^{\floor{\frac{t}{2}}}
        \sum_{d=0}^{t-2f} 1
        &= \sum_{f=\max\{0,t-n\}}^{\floor{\frac{t}{2}}} (t-2f+1).
    \end{align*}
    Reindexing using $j=\floor{\frac{t}{2}}-f$ and letting 
    $L=\floor{\frac{t}{2}} - \max\{0,t-n\}$, this sum becomes
    \begin{align*}
        \sum_{j=0}^{L} \left[t-2\left(\floor{\frac{t}{2}} - j \right)+1\right] 
        %&= \sum_{j=0}^L \left[t-2\floor{\frac{t}{2}} + 2j + 1\right] \\
        &= \sum_{j=0}^L (2j + (t\bmod2) + 1) 
        %&= 2\sum_{j=0}^L j + \sum_{j=0}^L \left[(t\bmod2)+1\right] \\
        %&= L(L+1) + (L+1)((t\bmod2) + 1) \\
        = (L+1)(L+(t\bmod2)+1).
    \end{align*}
    Finally, we simplify $L$: 
    \begin{align*}
        L &= \floor{\frac{t}{2}} - \max\{0,t-n\}= \min\left\{\floor{\frac{t}{2}}, n-\left(t -\floor{\frac{t}{2}}\right)\right\}=\min\left\{\floor{\frac{t}{2}},n - \ceil{\frac{t}{2}}\right\}.\qedhere
    \end{align*}
\end{proof}

This completes the proof part \eqref{thm:main1:part1} Theorem \ref{thm:main1}, where $m,k\geq n$.
We now move on to part \eqref{thm:main1:part2} of Theorem \ref{thm:main1}, where $m\geq n\geq k$.
\subsection{Proof of Theorem \texorpdfstring{\ref{thm:main1}}{1}, Part \texorpdfstring{\eqref{thm:main1:part2}}{2}}
\label{subsec:case2}

\begin{lemma}
    Let $m,n,k,t\in\N$ be such that $m\geq n\geq k$.
    Then the number of $\{\one,\onep,2\}$-partitions of $t$ of the form $d\cdot \one + e\cdot\onep + f\cdot2$ with $d,e,f\in\N$ that satisfy
    the conditions
    \begin{enumerate}[label=(\roman*),ref=\roman*]
        \item\label{lemma2:ineq1} $d+f\leq m$,
        \item\label{lemma2:ineq2} $d+e+f\leq n$, and  
        \item\label{lemma2:ineq3} $e+f\leq k$
    \end{enumerate}
    is equal to
    \begin{equation}\label{lemma2:sum}
        \sum_{f=F_1}^{F_2}\; \sum_{e=E_1}^{E_2} 1,
    \end{equation}where $ E_1 = 0$, $E_2 = \min\{k-f,t-2f\}$,  $F_1 = \max \{  0, t-n\}$, and $F_2 =\min\left\{\floor{\frac{t}{2}},k\right\}$.
    \label{ncase2}
\end{lemma}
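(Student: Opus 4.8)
The plan is to proceed exactly as in the proof of Lemma \ref{ncase1}, namely to show that for $d,e,f\in\N$ with $d+e+2f=t$, the three inequalities \eqref{lemma2:ineq1}--\eqref{lemma2:ineq3} are equivalent to the bounds $E_1\leq e\leq E_2$ and $F_1\leq f\leq F_2$. Once this equivalence is established, since $e$ together with $f$ determines $d$ (as $d=t-e-2f$), the count of valid $\{\one,\onep,2\}$-partitions of $t$ is precisely the double sum \eqref{lemma2:sum}. The key difference from Lemma \ref{ncase1} is that here we have $m\geq n\geq k$ rather than $m,k\geq n$, so $k$ is now the smallest parameter and the ``binding'' constraint shifts from the $\one$-part count to the $\onep$-part count; accordingly we parametrize the inner sum by $e$ instead of $d$.

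First I would prove the backward direction. Assume $0\leq e\leq\min\{k-f,t-2f\}$ and $\max\{0,t-n\}\leq f\leq\min\{\floor{t/2},k\}$. The inequality \eqref{lemma2:ineq3} is immediate: $e\leq k-f$ gives $e+f\leq k$. For \eqref{lemma2:ineq2}, from $f\geq t-n$ and $t=d+e+2f$ we get $t-f=d+e+f\leq n$. For \eqref{lemma2:ineq1}, use \eqref{lemma2:ineq2} together with $e\geq 0$ to get $d+f\leq d+e+f\leq n\leq m$. Note the asymmetry with Lemma \ref{ncase1}: there the middle inequality forced both outer ones via $d,e\geq 0$ and $m,k\geq n$; here only \eqref{lemma2:ineq1} follows that way, while \eqref{lemma2:ineq3} must be imposed directly through the bound $e\leq k-f$ — this is exactly why $E_2$ involves the minimum of $k-f$ and $t-2f$ rather than just $t-2f$.

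Next I would prove the forward direction. Assume \eqref{lemma2:ineq1}--\eqref{lemma2:ineq3} hold. For the $e$ bounds: $e\in\N$ gives $E_1=0\leq e$; from $t=d+e+2f$ and $d\geq0$ we get $e\leq t-2f$, and from \eqref{lemma2:ineq3} we get $e\leq k-f$, so $e\leq\min\{k-f,t-2f\}=E_2$. For the $f$ bounds: $f\in\N$ gives $0\leq f$, and \eqref{lemma2:ineq2} with $t=d+e+2f$ gives $t-f=d+e+f\leq n$, hence $t-n\leq f$, so $F_1=\max\{0,t-n\}\leq f$. From $t=d+e+2f$ and $d,e\geq0$ we get $2f\leq t$, so $f\leq\floor{t/2}$; combined with \eqref{lemma2:ineq3}, which gives $f\leq k-e\leq k$, we obtain $f\leq\min\{\floor{t/2},k\}=F_2$. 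This is the step where the hypothesis $n\geq k$ is not actually needed for the bounds themselves but will matter when this lemma is later fed into the proof of Part \eqref{thm:main1:part2} to determine the range of $i$ (equivalently $t$); here the only genuine subtlety is making sure every one of the three inequalities is used exactly where needed and that no extraneous hypothesis is invoked.

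I do not expect a serious obstacle in this lemma — it is the direct analogue of Lemma \ref{ncase1} with the roles of $d$ and $e$ swapped and an extra term appearing in the upper limit $E_2$. The one place to be careful is bookkeeping: verifying that the constraint $e\leq k-f$ is genuinely active (so that dropping it would overcount) and that, conversely, no further constraint is hidden — in particular that once $E_1\leq e\leq E_2$ and $F_1\leq f\leq F_2$ hold, the value $d=t-e-2f$ is automatically a nonnegative integer satisfying \eqref{lemma2:ineq1}, which it is by the argument above. The heavier lifting — converting the double sum \eqref{lemma2:sum} into the explicit three-case closed form for $c_i$ in Part \eqref{thm:main1:part2} of Theorem \ref{thm:main1} — is deferred to the proposition that follows this lemma, and is where the real casework (on the sign and size of $J=k-F_2-(t\bmod 2)$) will occur.
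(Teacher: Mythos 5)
Your proposal is correct and follows essentially the same route as the paper's proof: both directions of the equivalence between the inequalities (i)--(iii) and the bounds $E_1\leq e\leq E_2$, $F_1\leq f\leq F_2$ are argued with exactly the same estimates (deriving (ii) from $f\geq t-n$, (i) from (ii) together with $m\geq n$, and (iii) directly from $e\leq k-f$), and the count then follows since $d=t-e-2f$ is determined by $e$ and $f$. Your side remark that only $m\geq n$ (not $n\geq k$) is genuinely used in this lemma is also accurate and consistent with the paper's argument.
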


\begin{proof}
    We wish to show that if $t=d+e+2f$, then
    $E_1\leq e\leq E_2$ and $F_1\leq f\leq F_2$ if, and only if $d$, $e$, and $f$ satisfy \eqref{lemma2:ineq1}, \eqref{lemma2:ineq2}, and  \eqref{lemma2:ineq3}.

  In the forward direction, we assume $E_1\leq e\leq E_2$ and $F_1\leq f\leq F_2$. Note that by definition, $e+f \leq E_2+f \leq (k-f)+f = k$, and so \eqref{lemma2:ineq3} is satisfied. Next, observe that since $d=t-e-2f$, we have $d+e+f \leq (t-e-2f) + e + f = t-f \leq t-F_1 \leq t - (t - n) = n $, so \eqref{lemma2:ineq2} is satisfied. Finally, with \eqref{lemma2:ineq2}, we see that $d+f \leq d+e+f \leq n \leq m$. Hence, we get the inequality in \eqref{lemma2:ineq1}.
    
   Conversely, assume the inequalities in \eqref{lemma2:ineq1}, \eqref{lemma2:ineq2}, and \eqref{lemma2:ineq3} hold. Then $f\geq 0$, since $f\in \N$. Furthermore, $t-n = d+e+2f - n = d+e+f-n+f \leq n-n+f = f$. Therefore, $F_1 \leq f$. 
   
   Next, we note that since $t=d+e+2f$, then $f = \frac{t-d-e}{2} \leq \frac{t}{2}$, from which it follows that $f \leq \floor{\frac{t}{2}}$. Furthermore, we observe from \eqref{lemma2:ineq3} that $e+f \leq k$, yielding $f \leq k-e \leq k$. Hence, $f \leq F_2$. Altogether, this shows that $F_1 \leq f \leq F_2$.
   
   Continuing on, we observe that $e \in \N$ implies $E_1 = 0 \leq e.$ By assuming \eqref{lemma2:ineq3}, we note that this implies that $e \leq k - f $. Additionally, since $t = d+e+2f$, we have that $e = t - d - 2f \leq t - 2f$, which gives $e \leq E_2$. Altogether, this shows $E_1 \leq e \leq E_2$.
   
To summarize, we have proved that $\mpartn def$ is a partition of $t$ satisfying  \eqref{lemma2:ineq1}, \eqref{lemma2:ineq2} and  \eqref{lemma2:ineq3} if, and only if
$F_1\leq f\leq F_2$ and $D_1\leq d\leq D_2$.
Since $d$ is determined by $e$ and $f$, it then follows that
the number of such partitions of $t$ is the sum in \eqref{lemma2:sum}.
\end{proof}
We now establish part \eqref{thm:main1:part2} of Theorem \ref{thm:main1}, which we restate below for ease of reference.
\begin{proposition}
    If $m,n,k\in\N$ satisfy $m\geq n\geq k$, then
    $    \wp_q(m\al_1 + n \al_2 + k \al_3)=\sum_{i=m}^{m+n+k} c_iq^i,
    $
    where
    \begin{equation*}\resizebox{.95\hsize}{!}{$
        c_i = 
        \begin{cases}
        \dfrac{(L+1)(2k-2F_2+L+2)}{2} & \mbox{if }J < 0 \\\\
        %\dfrac{(L+1)(L+2)}{2} - \dfrac{F_2(F_2+1)}{2} - \dfrac{k(k-1)}{2}
        %+F_2 k+kL- F_2 L+(k-F_2)(t\bmod2)
        \dfrac{(L+1)(L+2) - F_2(F_2+1) - k(k-1)}{2}
        +F_2 k+kL- F_2 L+(k-F_2)(t\bmod2)
        & \mbox{if }0\leq J\leq L \\\\
        (L+1)(L+(t\bmod2)+1) & \mbox{if }J > L
        \end{cases}$}
    \end{equation*}
    with $t=m+n+k-i$, 
    $F_2=\min\left\{\floor{\frac{t}{2}},k\right\}$,
    $J=k-F_2-(t\bmod2)$,
    $F_1 =\max\{t-n,0\}$, and 
    $L = F_2 - F_1$.
    \label{case2}
\end{proposition}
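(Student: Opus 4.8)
### Proof proposal

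The plan is to start from the double sum in Lemma~\ref{ncase2}, namely $\sum_{f=F_1}^{F_2}\sum_{e=0}^{E_2} 1 = \sum_{f=F_1}^{F_2}(E_2+1)$ with $E_2=\min\{k-f,\,t-2f\}$, and to carry out the inner summation explicitly by splitting the range of $f$ according to which of the two quantities $k-f$ and $t-2f$ is the minimum. Observe that $k-f\le t-2f$ exactly when $f\le t-k$, so the summand is $k-f+1$ for $f$ in the lower part of the range and $t-2f+1$ for $f$ in the upper part. The key bookkeeping is to locate the crossover value $t-k$ relative to the endpoints $F_1=\max\{t-n,0\}$ and $F_2=\min\{\lfloor t/2\rfloor,k\}$. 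Since $m\ge n\ge k$ and $t=m+n+k-i$ with $i\ge m$, one has $t\le n+k\le 2n$, which controls $F_1$; and the sign of $J=k-F_2-(t\bmod 2)$ will turn out to be precisely the indicator of where $t-k$ sits. I expect the three cases $J<0$, $0\le J\le L$, and $J>L$ to correspond exactly to: (i) $t-k\ge F_2$ (the ``$k-f$'' branch never activates, so every term is $t-2f+1$), (ii) $F_1\le t-k< F_2$ (genuine split), and (iii) $t-k< F_1$ (every term is $k-f+1$). Reconciling the case in part~\eqref{thm:main1:part1} (all terms $t-2f+1$) with the formula $(L+1)(L+(t\bmod 2)+1)$ is already done inside the proof of Proposition~\ref{case1}, so that computation can be quoted or repeated verbatim for case~(i).

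Concretely, I would first record the reindexing $j=F_2-f$ used in Proposition~\ref{case1}, under which $\sum_{f=F_1}^{F_2}(t-2f+1)=\sum_{j=0}^{L}(2j+(t\bmod 2)+\delta+1)$ for an appropriate constant shift $\delta$ coming from the fact that here $F_2=\min\{\lfloor t/2\rfloor,k\}$ rather than $\lfloor t/2\rfloor$; when $F_2=\lfloor t/2\rfloor$ this is the clean $(L+1)(L+(t\bmod2)+1)$, and when $F_2=k<\lfloor t/2\rfloor$ (which is exactly the regime $J<0$) the shift produces the numerator $(L+1)(2k-2F_2+L+2)$ after using $t-2F_2 = t-2k$ and $F_2=k$... wait, more carefully: when $F_2=k$, $t-2f+1$ at $f=F_2$ equals $t-2k+1$, and summing the arithmetic progression of length $L+1$ with common difference $2$ gives $(L+1)\bigl((t-2k+1)+(t-2k+1+2L)\bigr)/2 = (L+1)(t-2k+L+2)$; since in this regime also $F_1=t-n$ forces $t-2k = (t-n) + (n-2k)$ hmm — I would instead just verify $t-2F_2 = 2F_1 - 2F_2 + (t-2F_1)$ and note $t-2F_1$ has a fixed parity, reducing everything to $(L+1)(2k-2F_2+L+2)/2$ directly. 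For the middle case I would split the sum at $f=t-k$: the upper block $\sum_{f=F_1}^{t-k}(k-f+1)$ and the lower block $\sum_{f=t-k+1}^{F_2}(t-2f+1)$, evaluate each as a standard arithmetic series in terms of $F_1,F_2,k,t$, and then substitute $L=F_2-F_1$, $J=k-F_2-(t\bmod 2)$ and simplify to the stated quadratic-in-$L$ expression; the term $(k-F_2)(t\bmod 2)$ and the $-k(k-1)/2$ piece should emerge from the boundary contributions of the two blocks.

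The third case $J>L$ should be the quickest: $J>L=F_2-F_1$ combined with $J=k-F_2-(t\bmod 2)$ gives $F_1 > F_2 - J \ge$ ... it forces $t-k< F_1$, so $k-f > t-2f$ throughout, i.e. $E_2+1 = t-2f+1$ for every $f$, and then the same arithmetic-series computation as in Proposition~\ref{case1} (now with the honest upper limit $F_2$, which in this regime equals $\lfloor t/2\rfloor$ since $J>L\ge 0$ rules out $F_2=k<\lfloor t/2\rfloor$) yields $(L+1)(L+(t\bmod 2)+1)$. Finally, I would confirm the summation range $i$ runs from $m$ to $m+n+k$: the maximum number of parts is $m+n+k$ as always, and the minimum is $m$ because with $m\ge n\ge k$ one can use $\alpha_1+\alpha_2+\alpha_3$ $k$ times and $\alpha_1+\alpha_2$ $(n-k)$ times, leaving $(m-n)\alpha_1$, for a total of $k+(n-k)+(m-n)=m$ parts, and no fewer is possible since every positive root contributes at least as much to the $\alpha_1$-coordinate count as... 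I would argue this minimality cleanly via $t=m+n+k-i\le n$ being forced by $F_1\le F_2$, equivalently $\max\{t-n,0\}\le\min\{\lfloor t/2\rfloor,k\}$, so the sum in Lemma~\ref{ncase2} is empty once $t>n$, i.e. $i<m$.

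The main obstacle will be the middle case algebra: correctly identifying that $0\le J\le L$ is equivalent to $F_1\le t-k\le F_2-1$ (so that the split point lies strictly inside the summation range and both blocks are nonempty), and then pushing the two arithmetic series through without sign errors to land on exactly the printed expression $\tfrac{(L+1)(L+2)-F_2(F_2+1)-k(k-1)}{2}+F_2k+kL-F_2L+(k-F_2)(t\bmod 2)$. Everything else is either a direct quotation of the computation in Proposition~\ref{case1} or a short parity/arithmetic-series manipulation.
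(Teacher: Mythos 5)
Your overall plan---start from $c_i=\sum_{f=F_1}^{F_2}\bigl(\min\{k-f,\,t-2f\}+1\bigr)$ (Lemma \ref{ncase2}), split at the crossover $f=t-k$, and handle three regimes---is essentially the paper's proof (the paper merely reindexes by $j=F_2-f$ first and locates the crossover at $j=J$), and your middle-case split and your detailed treatment of $J>L$ are correct. But you have the two extreme regimes backwards where it matters. Since $k-f\le t-2f$ exactly when $f\le t-k$, in the regime $t-k\ge F_2$ (your case (i), the $J<0$ case) \emph{every} $f\in[F_1,F_2]$ satisfies $f\le t-k$, so the minimum is $k-f$ throughout and every term is $k-f+1$, not $t-2f+1$ as you assert. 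Consequently your concrete computation for $J<0$ evaluates the wrong sum: $\sum_{f=F_1}^{F_2}(t-2f+1)$ is simply not equal to $\tfrac{(L+1)(2k-2F_2+L+2)}{2}$ (take $m=5$, $n=3$, $k=1$, $i=6$, so $t=3$, $F_1=0$, $F_2=1$, $L=1$, $J=-1$: the true coefficient is $\min\{1,3\}+1+\min\{0,1\}+1=3$, which is $\sum(k-f+1)$, whereas $\sum(t-2f+1)=6$), and the parity manipulation you fall back on (``reducing everything to $(L+1)(2k-2F_2+L+2)/2$ directly'') cannot rescue it---you even flag the trouble mid-computation and then assert the conclusion. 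The fix is exactly the computation you avoided: in this regime $\sum_{f=F_1}^{F_2}(k-f+1)$ is an arithmetic series of $L+1$ terms running from $k-F_2+1$ to $k-F_1+1=k-F_2+L+1$, which equals $\tfrac{(L+1)(2k-2F_2+L+2)}{2}$ on the nose; this is the paper's Case 1. (Your overview item (iii) contains the same swap, but your later paragraph silently corrects it, so only the $J<0$ case is actually broken. Also note your clean equivalences, e.g.\ $J<0\iff t-k\ge F_2$, hold only up to boundary overlaps such as $F_2=k$ with $t$ even, where $J=0$ yet the minimum is $k-f$ throughout; this is harmless because the case formulas coincide there, but it needs a sentence.)

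The justification of the lower summation limit is also wrong as written. You claim the sum in Lemma \ref{ncase2} is empty ``once $t>n$, i.e.\ $i<m$''; but $t>n$ is equivalent to $i<m+k$, not $i<m$, and for $n<t\le n+k$ the sum is nonempty---indeed the coefficients $c_i$ with $m\le i<m+k$ are exactly those with $t>n$ (e.g.\ $m=2$, $n=1$, $k=1$, $i=2$ gives $t=2>n$ and $c_2=1$), so your criterion would contradict the proposition itself. The correct emptiness argument is: if $i<m$ then $t>n+k$, hence $F_1=t-n>k\ge F_2$ and the range of $f$ is empty; together with the explicit partition $(m-n)\al_1+(n-k)(\al_1+\al_2)+k(\al_1+\al_2+\al_3)$ realizing $i=m$ (which is also how the paper argues the minimum number of parts), this gives the limits $m\le i\le m+n+k$.
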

\begin{proof}
    \indent First, we note that the maximum number of parts in a partition of $\xi$ is $m+n+k$. To find the minimum number of parts among the partitions of $\xi$, first observe that we can use $(\a_1+\a_2+\a_3)$ a maximum of $k$ times since $k\leq m,n$. Assuming our partition has $k$ $(\al_1+\al_2+\al_3)$'s, we can use a maximum of $n-k$ $(\al_1+\al_2)$'s, from which point we complete the partition with $(m-n)$ $\al_1$'s. Thus, we obtain a partition
    \begin{equation*}
        \xi = (m-n)\al_1 + (n-k)(\al_1+\al_2) + k(\al_1+\al_2+\al_3),
    \end{equation*}
    with the minimum number of parts: $(m-n)+(n-k)+k = m$.
    
    By Proposition \ref{bijection} we have that $c_i$ equals the number of $\{\one,\onep,2\}$-partitions of $t$ satisfying
    \begin{enumerate}[label=(\roman*),ref=\roman*]
        \item $d+f \leq m$,
        \item $d+e+f \leq n$, and
        \item $e+f \leq k$.
    \end{enumerate}
    Lemma \ref{ncase2} then tells us that the number of such partitions is 
    \begin{align*}
        %\sum_{c=\max\{0,t-n\}}^{\min\left\{\floor{\frac{t}{2}},k\right\}}
        %\sum_{b=0}^{\min\{k-c,t-2c\}} 1 
        %&= \sum_{c=\max\{0,t-n\}}^{\min\left\{\floor{\frac{t}{2}},k\right\}}
        %\min\{k-c,t-2c\}
        \sum_{f=F_1}^{F_2}
        \sum_{e=0}^{\min\{k-f,t-2f\}} 1 
        &= \sum_{f=F_1}^{F_2}
        [\min\{k-f,t-2f\}+1].
    \end{align*}
    
    Reindexing by $j=F_2-f$, our sum then becomes
    \begin{equation*}
        \sum_{j=0}^L [\min\{k-(F_2-j), t-2(F_2-j)\}+1],
    \end{equation*}
    which we claim is equal to 
    \begin{equation}\label{prop9:sum1}
    %    %\sum_{j=0}^{F_2} \min\{k-(F_2-j), 2j + (t\bmod2)\}.
        \sum_{j=0}^{L} [\min\{k-(F_2-j), 2j + (t\bmod2)\}+1],
    \end{equation}
    where $L=F_2-F_1$. 
    
    To justify this, recall that $F_2=\min\left\{\floor{\frac{t}{2}},k\right\}$.
    If $k\leq\floor{\frac{t}{2}}$, then $F_2=k$, in which case
    we have \[t-f \geq t - k \geq t - \floor{\frac{t}{2}} \geq \floor{\frac{t}{2}} \geq k.\] Thus, $t-2f\geq k-f$.
   
    When $F_2=k$, we also have $j = F_2 - f = k-f$, and so
    \begin{align*}
        \min\{k-F_2+j, 2j+t\bmod{2}\} &= \min\{k-k+k-f,2(k-f)+(t\bmod{2})\} \\
        &= \min\{k-f, 2(k-f)+(t\bmod{2})\}.
    \end{align*}
    Note here that $k-f \leq 2(k-f) + (t\bmod{2})$. Therefore, $\min\{k-f,2(k-f)+(t\bmod{2})\}=k-f$. Thus, when $k\leq \floor{\frac{t}{2}}$, we have \[\min\{k-f,t-2f\} = k-f = k-(F_2-j) = \min\{k-(F_2-j), 2j + (t\bmod2)\}.\]
    
    On the other hand, when $k\geq\floor{\frac{t}{2}}$,
    we have $F_2=\floor{\frac{t}{2}}$, and so \[  t-2f = t - 2(F_2-j) = t - 2F_2 + 2j = t - 2\floor{\frac{t}{2}} + 2j = 2j + (t\bmod 2). \] 
    Thus, in general, we have $\min\{k-f,t-2f\} = \min\{k-(F_2-j), 2j + (t\bmod2)\}$, and so our sum is indeed as in \eqref{prop9:sum1}.
    
    Observe that in \eqref{prop9:sum1}, we have a sum of a minimum of two expressions linear in our index $j$. Let $\jmid$ be the $j$ value for which these functions meet. To find $\jmid$, we equate the expressions $2\jmid+(t\bmod2)=k-F_2+\jmid$. Solving for $\jmid$, we find that $\jmid = k-F_2-(t\bmod2)$.
    
    Since $2j+(t\bmod2)$ increases more quickly with respect to $j$ than $k-F_2+j$ does, it follows that $2j+(t\bmod2)\leq k-F_2-(t\bmod2)$ when $j\leq\jmid$ and $2j+(t\bmod2)> k-F_2-(t\bmod2)$ when $j>J$. We then evaluate the sum in \eqref{prop9:sum1} in three different cases depending on $\jmid$.
    
    \smallskip
    
    \noindent\textit{Case 1 ($\jmid<0$):}
    If $\jmid<0$, then we have $j>\jmid$ for all $j\in\{0,1,\dots,L\}$,
    in which case $k-F_2+j < 2j+(t\bmod2)$.
    Thus, we get \[\sum_{j=0}^L \left[\min{(2j + (t\bmod 2), k-F_2+j)}+1\right] = \sum_{j=0}^L (k-F_2+j+1) = \dfrac{(L+1)(2k-2F_2+L+2)}{2},\] after rearranging and simplifying the terms in the second sum. 
    
    \smallskip
    
    \noindent \textit{Case 2 ($0\leq\jmid\leq L$):}
    If $0\leq\jmid\leq L$, then we must separate the sum \eqref{prop9:sum1} into two sums, one indexed by $j\leq\jmid$, and the other indexed by $j>\jmid$. Doing this and further simplifying, we obtain 
    \begin{align*}
    \sum_{j=0}^L [\min\{2j + (t\bmod 2), k -F_2+j\} +1]
    =&\sum_{j=0}^{\jmid}(2j + (t\bmod2)) + \sum_{\jmid+1}^L (k-F_2+j) + \sum_{j=0}^L 1&\\
    %  =& 2\left(\sum_{j=0}^{\jmid}j\right) + (t\bmod2) 
    %  \left(\sum_{j=0}^{\jmid}1\right) + (k-F_2)\left(\sum_{\jmid+1}^L 1\right)
    %  + \left(\sum_{\jmid+1}^L j\right) + \left(\sum_{j=0}^L 1\right)&\\
    % &\hspace{-2.7in}= 2\left(\frac{\jmid(\jmid+1)}{2} \right) +(t\bmod2)(\jmid+1)
    % + (k-F_2)(L-(\jmid+1)+1)
    % \\
    % &\hspace{-2.4in}+ \frac{L(L+1)}{2} - \frac{\jmid(\jmid+1)}{2} + (L+1)\\
    &\hspace{-2.7in}= (t\bmod2)(\jmid+1)
    + (k-F_2)(L-(\jmid+1)+1)
    + \frac{L(L+1)}{2} + \frac{\jmid(\jmid+1)}{2} + (L+1),
    \end{align*}
    where we obtain the last equality by rearranging and simplifying the second sum. Now, by substituting in the value for $\jmid = k-F_2- (t\bmod2)$, combining terms and using the fact that $(t \bmod 2)^2 = t \bmod 2$, we then obtain the closed formula 
    \[ \frac{(L+1)(L+2) - F_2(F_2+1) - k(k-1)}{2}+F_2k+kL-F_2L+(k-F_2)(t\bmod2). \]

\smallskip
\noindent \textit{Case 3 ($\jmid>L$):}
 Since $j\leq L<\jmid$, we have that $2j+(t\bmod2)<k-F_2+j$, and so
    \begin{align*}
        \sum_{j=0}^L \left[\min{(2j + (t\bmod 2), k-F_2+j)}+1\right]
        &= \sum_{j=0}^L [2j+(t\bmod2) +1] 
%        &= 2\sum_{j=0}^L j + \sum_{j=0}^L (t\bmod2) \\
 %       &= 2\left(\dfrac{L(L+1)}{2}\right) + (t\bmod2)(L+1) \\
        = (L+1)(L+(t\bmod2)+1),
    \end{align*}
after rearranging and simplifying the second sum.

Therefore, in all three of these cases, we have the desired result.
\end{proof}

\subsection{Proof of Theorem \texorpdfstring{\ref{thm:main1}}{1}, Part \texorpdfstring{\eqref{thm:main1:part4}}{4}}
\label{subsec:case3}

\begin{lemma}
    Let $m,n,k,t \in\N$ be such that $n\geq m\geq k$.
    Then the number of $\{\one,\onep,2\}$-partitions 
    $d\cdot\one + e\cdot\onep + f\cdot2$ of $t$ that satisfy the conditions
    \begin{enumerate}[label=(\roman*),ref=\roman*]
        \item\label{lemma3:ineq1} $d+f\leq m$,
        \item\label{lemma3:ineq2} $d+e+f\leq n$, and
        \item\label{lemma3:ineq3} $e+f\leq k$
    \end{enumerate}
    is equal to 
    \begin{equation}\label{lemma3:sum}
       \sum_{f=F_1}^{F_2}\; \sum_{d=D_1}^{D_2} 1
    \end{equation}
    where $D_1 = \max\{0,t-k-f\}, D_2 = \min\{t-2f, m-f\},  F_1 = \max\{0,t-\min\{m+k,n\}\}, \mbox{ and } F_2 = \min\left\{\floor{\frac{t}{2}},k\right\}$.
    \label{ncase3}
\end{lemma}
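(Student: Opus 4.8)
The plan is to follow the template of the proofs of Lemmas~\ref{ncase1} and~\ref{ncase2}. Any $\{\one,\onep,2\}$-partition of $t$ of the prescribed shape satisfies $d+e+2f=t$, so $e$ is determined by $d$ and $f$ via $e=t-d-2f$; it therefore suffices to show that for $d,f\in\N$ with $e:=t-d-2f\in\N$ (equivalently $0\le d\le t-2f$), the conditions \eqref{lemma3:ineq1}--\eqref{lemma3:ineq3} hold if and only if $F_1\le f\le F_2$ and $D_1\le d\le D_2$. Granting this equivalence, the number of admissible partitions is exactly the double sum \eqref{lemma3:sum}. The first step is to substitute $e=t-d-2f$ into the three inequalities: \eqref{lemma3:ineq1} becomes $d\le m-f$, \eqref{lemma3:ineq3} becomes $t-d-f\le k$, i.e. $d\ge t-k-f$, and \eqref{lemma3:ineq2} becomes $t-f\le n$, i.e. $f\ge t-n$ (note this last one no longer involves $d$). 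Adding the rewritten forms of \eqref{lemma3:ineq1} and \eqref{lemma3:ineq3} also gives $t\le m+k$, a consequence I will use to handle the $\min\{m+k,n\}$ in $F_1$.

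For the direction ``box constraints $\Rightarrow$ \eqref{lemma3:ineq1}--\eqref{lemma3:ineq3}'', assume $F_1\le f\le F_2$ and $D_1\le d\le D_2$. Then $e=t-d-2f\ge t-D_2-2f\ge 0$ since $D_2\le t-2f$; from $d\le D_2\le m-f$ we get \eqref{lemma3:ineq1} (here the standing hypothesis $m\ge k$ is convenient, since it gives $F_2\le k\le m$, so $m-f\ge 0$); from $d\ge D_1\ge t-k-f$ we get \eqref{lemma3:ineq3}; and from $f\ge F_1\ge t-\min\{m+k,n\}\ge t-n$ we get \eqref{lemma3:ineq2}. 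For the converse direction, assume \eqref{lemma3:ineq1}--\eqref{lemma3:ineq3} hold. The bounds on $d$ are immediate: $d\ge 0$ together with $d\ge t-k-f$ (from \eqref{lemma3:ineq3}) gives $d\ge D_1$, while $d\le m-f$ (from \eqref{lemma3:ineq1}) together with $d\le t-2f$ (from $e\ge 0$) gives $d\le D_2$. For $f$: $e\ge 0$ forces $2f\le t$, hence $f\le\floor{t/2}$, and \eqref{lemma3:ineq3} with $e\ge 0$ forces $f\le k$, so $f\le F_2$; also $f\ge 0$ always and \eqref{lemma3:ineq2} gives $f\ge t-n$, so it remains to handle the possibility that $\min\{m+k,n\}=m+k$, in which case the inequality $t\le m+k$ derived above yields $t-(m+k)\le 0$, so $F_1=\max\{0,t-\min\{m+k,n\}\}=0\le f$ regardless. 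Hence $f\ge F_1$ in all cases.

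The one mildly delicate point, and the step I expect to require the most care in writing up, is exactly this bookkeeping around $F_1$: one must observe that the branch $\min\{m+k,n\}=m+k$ is forced by \eqref{lemma3:ineq1} and \eqref{lemma3:ineq3} to contribute a nonpositive quantity, so that the single expression $\max\{0,t-\min\{m+k,n\}\}$ simultaneously encodes the constraint $f\ge t-n$ coming from \eqref{lemma3:ineq2} and the trivial constraint $f\ge 0$. Everything else is routine manipulation of linear inequalities. Once the equivalence is established, since $e$ is determined by $d$ and $f$, the count of admissible partitions is $\sum_{f=F_1}^{F_2}\sum_{d=D_1}^{D_2}1$, with the usual convention that the inner sum is empty (and contributes $0$) whenever $D_1>D_2$. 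This lemma will then feed into the closed-form evaluation of this double sum, in a proposition analogous to Proposition~\ref{case2}.
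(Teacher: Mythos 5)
Your proof is correct and takes essentially the same route as the paper's: reduce to the equivalence between conditions (i)--(iii) and the box constraints $F_1\le f\le F_2$, $D_1\le d\le D_2$ (with $e=t-d-2f$ determined by $d$ and $f$), and then count lattice points as the double sum. If anything, your treatment of the $F_1$ bound in the converse direction (adding $d+f\le m$ and $e+f\le k$ to get $t\le m+k$, so the branch $\min\{m+k,n\}=m+k$ contributes nothing) is slightly cleaner and more complete than the paper's, which only writes out the case $n\le m+k$.
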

\begin{proof}
    We wish to show that when $d+e+2f=t$,
    the conditions \eqref{lemma3:ineq1}, \eqref{lemma3:ineq2}, and \eqref{lemma3:ineq3} are satisfied
    if and only if $F_1\leq f \leq F_2$ and $D_1\leq d\leq D_2$. 
  
    First, let $m,n,k,t,d,e,f \in \N $ and assume $d+e+2f=t$, $F_1\leq f\leq F_2$, and $D_1\leq d\leq D_2$.\\ 
    For condition \eqref{lemma3:ineq3}, observe that $e=t-2f-d$ and $D_1 \leq d$. This implies \[ e + f \leq t - 2f - d + f \leq t - D_1 - f \leq k. \]
    For \eqref{lemma3:ineq2}, first assume $m+k\leq n$. Then, using the fact that $d \leq D_2 \leq m - f$ and that $e \leq k$ (from \eqref{lemma3:ineq3}), we have 
    \[ d+e+f \leq D_2 +e+f \leq m+e \leq m+k \leq n. \]
    On the other hand, if we assume $m+k\geq n$, then 
    \begin{align*}
        F_1 &= \max\{0,t-\min\{m+k,n\}\}= \max\{0,t-n\}.
    \end{align*}
    In this case, since $t = d+e+2f$ and $t-n \leq F_1 \leq f$, we have \[ d+e+f = t - f \leq t - (t-n) = n. \] Thus, in either case, \eqref{lemma3:ineq2} is satisfied.
    Now for condition \eqref{lemma3:ineq1}, note that because $d \leq D_2$ we have \[ d+f \leq D_2 +f \leq m. \]
    Therefore, if $F_1\leq f \leq F_2$ and $D_1\leq d \leq D_2,$ then conditions \eqref{lemma1:ineq1}, \eqref{lemma3:ineq2}, and \eqref{lemma3:ineq3} hold as desired.
    
    Conversely, assume the inequalities \eqref{lemma3:ineq1}, \eqref{lemma3:ineq2}, and \eqref{lemma3:ineq3} hold. We first show that $F_1\leq f \leq F_2$. Assume $d,e,f \in \N.$ Then, since $f \in \N$, we have $f\geq0$. In the case where $n\leq m+k$, we have $F_1=\max\{t-n,0\}$. Together with condition \eqref{lemma3:ineq2} and since $t = d + e + 2f$, we see that $$f \leq n-d-e = n - d - t + d + 2f = n- t + 2f.$$ Thus, $t-n\leq f$. Then $f \geq \max\{t-n,0\} = F_1$ and so  $F_1\leq f$.
    Next, we note that $t=d+e+2f$ implies $2f = t-d-e\leq t$. Thus, $f\leq \frac{t}{2}$, and so $f\leq \floor{\frac{t}{2}}$ since $f\in\N$. Observe also that condition \eqref{lemma3:ineq3} implies that $f\leq k-e\leq k$. Therefore, since $f\leq k$ and $f\leq \floor{\frac{t}{2}}$, we have that $f\leq \min\left\{\floor{\frac{t}{2}},k\right\}=F_2$. 
    Hence, we have shown $F_1 \leq f \leq F_2$, where $F_1=\max\{0,t-\min\{m+k,n\}\}$ and $F_2= \min\left\{\floor{\frac{t}{2}},k\right\}$.
    
    We now wish to show that $D_1\leq d\leq D_2.$ Since $d \in \N,$ we have $0\leq d$. Condition \eqref{lemma3:ineq3} together with $t = d + e + 2f$ implies $k \geq e + f = (t - 2f - d) + f$.  Therefore, $t-k-f \leq d$. Combining this, we see that $ d \geq 0$ and $ d \geq t-k-f$, whereby $d \geq \max\{0, t-k-f \}= D_1$. 
    Next, rewriting condition \eqref{lemma3:ineq1}, we have $d\leq m-f$. In addition, $d \leq d+e = (t - 2f - e) + e = t - 2f$, since $t = d+e+2f$. Therefore, $d \leq \min \{ t-2f, m-f \} = D_2$. Combining both results, we see that $D_1 \leq d \leq D_2$. We have proved that $F_1 \leq f \leq F_2$ and $D_1 \leq d \leq D_2$ as desired. 
    
    To summarize, we have shown that $d\cdot\one+e\cdot\onep+f\cdot2 = t$ is a partition of $t$ satisfying conditions \eqref{lemma3:ineq1}, \eqref{lemma3:ineq2}, and \eqref{lemma3:ineq3} if, and only if $F_1\leq f\leq F_2$ and $D_1\leq d\leq D_2$.
    Since $e$ is determined by $d$ and $f$, it then follows that the number of such partitions of $t$ is given in the sum in \eqref{lemma3:sum}.
\end{proof}

We now establish part \eqref{thm:main1:part4} of Theorem \ref{thm:main1}, which we restate below for ease of reference.

\begin{proposition}
   If $m,n,k\in\N$ satisfy
    $n\geq m\geq k$, then
        $\wp_q(m\al_1+n\al_2+k\al_3) = \sum_{i=n}^{m+n+k} c_iq^i$,
   where $c_i=S_1-S_2 + F_2 - F_1 + 1$,
   \begin{align*}
        S_1 &= 
      \begin{cases}
          (t-F_2)(F_2+1) &\mbox{if}\ t-m < 0 \\
          \frac{(t-m)(t-m+1)+F_1(F_1-1)- 2F_2(F_2+1) + 2m(t-m-F_1+1) + 2t(F_2-t+m)}{2}  &\mbox{if}\ 0\leq t-m\leq F_2,
        \end{cases} \\
          S_2 &= 
        \begin{cases}
         (t-k)(t-k-F_1+1) - \frac{(t-k)(t-k+1)-F_1(F_1-1)}{2}&\mbox{if}\ 0\leq t-k\leq F_2     \\
          (t-k)(F_2-F_1+1) - \frac{F_2(F_2+1)-F_1(F_1-1)}{2}
            &\mbox{if}\ t-k > F_2\\
            0 &\mbox{if}\ t-k < 0
      \end{cases}
         \end{align*}
         with $t = m+n+k-i, F_1 = \max\{0,t-\min\{m+k,n\}\}, \text{ and } F_2 = \min\left\{\floor{\frac{t}{2}},k\right\}$.
    \label{case3}
\end{proposition}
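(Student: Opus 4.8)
The plan is to convert $c_i$ into a finite sum via Lemma~\ref{ncase3}, fix the range of the index $i$, split that sum according to where two piecewise-linear summands switch behaviour, and then collapse the pieces with standard arithmetic-series identities. First I would determine the range of $i$. The maximum number of parts among partitions of $\xi=m\al_1+n\al_2+k\al_3$ is $m+n+k$, attained by $(m,n,k,0,0,0)$; for the minimum, the partition corresponding to $(a,b,c,d,e,f)$ has $m+n+k-(d+e+2f)$ parts, so I want to maximize $d+e+2f=(d+f)+(e+f)$ subject to $d+f\le m$, $e+f\le k$, $d+e+f\le n$. Since $m,k\le n$, the choice $f=k$, $d=m-k$, $e=0$ is admissible and attains $(d+f)+(e+f)=m+k$, which is visibly the maximum. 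Hence the minimum number of parts is $n$, so $i$ runs over $\{n,n+1,\dots,m+n+k\}$, equivalently $t=m+n+k-i$ runs over $\{0,1,\dots,m+k\}$; in particular $t\le m+k$ and, since $k\le m$, also $t\le 2m$ throughout.

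Next, by Proposition~\ref{bijection} and Lemma~\ref{ncase3}, $c_i=\sum_{f=F_1}^{F_2}(D_2-D_1+1)$ with $D_1=\max\{0,t-k-f\}$ and $D_2=\min\{t-2f,m-f\}$. Using $t\le m+k$ and $f\le F_2\le\min\{\floor{t/2},k\}$ one checks $D_2\ge 0$, $D_2\ge t-k-f$, hence $D_1\le D_2$ for every $f$ in range, so no empty inner sums are being silently mishandled. Pulling out the constant $1$ gives the $F_2-F_1+1$ term, and I set $S_1:=\sum_{f=F_1}^{F_2}\min\{t-2f,m-f\}$ and $S_2:=\sum_{f=F_1}^{F_2}\max\{0,t-k-f\}$, so that $c_i=S_1-S_2+F_2-F_1+1$ exactly as claimed; it remains to evaluate $S_1$ and $S_2$.

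For $S_1$, the lines $t-2f$ and $m-f$ meet at $f=t-m$, with $t-2f\le m-f$ iff $f\ge t-m$. Since $t\le m+k$ and $k\le m$ give $t-m\le k$ and $t-m\le t/2$, one always has $t-m\le F_2$; and $F_1\le t-m$ whenever $t\ge m$ (checking both forms of $F_1$), so only two cases occur. If $t-m<0$ then $F_1=0$ and $S_1=\sum_{f=0}^{F_2}(t-2f)=(t-F_2)(F_2+1)$. If $0\le t-m\le F_2$, I split at $f=t-m$, writing $S_1=\sum_{f=F_1}^{t-m-1}(m-f)+\sum_{f=t-m}^{F_2}(t-2f)$; each piece is an arithmetic-series sum, and collecting terms reproduces the displayed quadratic. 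For $S_2$ the threshold is $f=t-k$ ($t-k-f>0$ iff $f<t-k$), and $F_1\le t-k$ whenever $t\ge k$, but $t-k$ may or may not exceed $F_2$, giving three cases: $t-k<0$ forces $S_2=0$; if $0\le t-k\le F_2$ then $S_2=\sum_{f=F_1}^{t-k-1}(t-k-f)=\frac{(t-k-F_1)(t-k-F_1+1)}{2}$, which rearranges to the stated form; and if $t-k>F_2$ then every summand is $t-k-f$ and $S_2=\sum_{f=F_1}^{F_2}(t-k-f)$, again an arithmetic series matching the claim. Substituting back into $c_i=S_1-S_2+F_2-F_1+1$ and summing $q^i$ over $i\in\{n,\dots,m+n+k\}$ finishes part~\eqref{thm:main1:part4}.

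The genuinely delicate part is not any single step but the bookkeeping: verifying that the switching points $t-m$ and $t-k$ lie where the piecewise formulas presume (this is precisely where $n\ge m\ge k$ enters, via $t\le m+k$), confirming the ranges $[D_1,D_2]$ are never empty so the clean identity for $c_i$ is legitimate, and then pushing the arithmetic-series reductions so they land on the exact quadratic expressions written in the statement. I would carry out that last verification by expanding both my derived expression and the claimed one as polynomials in $t,m,k,F_1,F_2$ and checking coefficientwise that they agree.
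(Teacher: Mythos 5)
Your proposal is correct and follows essentially the same route as the paper: invoke Proposition \ref{bijection} and Lemma \ref{ncase3}, determine that $i$ runs from $n$ to $m+n+k$, write $c_i=S_1-S_2+F_2-F_1+1$, and evaluate $S_1,S_2$ by splitting at the crossover points $f=t-m$ and $f=t-k$, using $t\le m+k$ and $n\ge m\ge k$ to rule out the case $t-m>F_2$. Your minor variations (locating the minimum number of parts by maximizing $d+e+2f$ in the bijection coordinates, splitting the sums at $t-m$ and $t-k$ rather than one index later, and explicitly checking $D_1\le D_2$) are all sound and equivalent to the paper's argument.
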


\begin{proof}
    \indent We begin by noting that the maximum number of parts in a partition of $\xi$ is $m+n+k$.
    
    To find the minimum number of parts among the partitions of $\xi$, first observe that we can use $(\a_1+\a_2+\a_3)$ a maximum of $k$ times, since $k\leq m,n$. We can then use $(\al_1+\al_2)$ a maximum of $m-k$  times. Finally, we complete a partition of $\xi$ with $(n-m)$ $\al_1$'s, obtaining a partition
    \begin{equation*}
        \xi = (n-m)\al_1 + (m-k)(\al_1+\al_2) + k(\al_1+\al_2+\al_3)
    \end{equation*}
    with the minimum number of parts: $(n-m)+(m-k)+k=n$.
    
    Proposition \ref{bijection} and Lemma \ref{ncase3} give us the following closed formula for $c_i$, where $t=m+n+k-i$ and
    \begin{align*}
        c_i &= \sum_{f=F_1}^{F_2}
        \sum_{d=\max\{0,t-k-f\}}^{\min\{t-2f,m-f\}} 1 
        % \\
        % &= \sum_{f=F_1}^{F_2} (\min\{t-2f,m-f\} - \max\{0,t-k-f\}+1) \\
         &=F_2 - F_1 + 1+\sum_{f=F_1}^{F_2} \min\{t-2f,m-f\} - \sum_{f=F_1}^{F_2}\max\{0,t-k-f\}.
    \end{align*}
    
    Let 
    \begin{equation*}
        S_1 = \sum_{f=F_1}^{F_2} \min\{t-2f,m-f\},\quad
        S_2 = \sum_{f=F_1}^{F_2}\max\{0,t-k-f\}.
    \end{equation*}
    
    First, we consider $S_1$, starting with the case where $t-m<0$. Note that if $m+k\leq n$, then 
    $
        F_1 = \max\{0,t-\min\{m+k,n\}\} 
        = \max\{0,t-(m+k)\} 
        = 0,
    $
    since $t\leq m+k$.
    Furthermore, when $n\leq m+k$, we have
    $
        F_1 = \max\{0,t-\min\{m+k,n\}\}
        = \max\{0,t-n)\} 
        = 0,
    $
    since in this case $t\leq m\leq n$. Thus, $F_1=0$. Note that $t-2f < m-f$ if and only if $f > t-m$. It then follows that when $t-m<0$, we have
    \begin{align*}
        S_1=\sum_{f=0}^{F_2} (t-2f) 
        %&= \sum_{f=0}^{F_2} t - 2\sum_{f=0}^{F_2} f
        = t(F_2+1) - \frac{2F_2(F_2+1)}{2}
        % = t(F_2+1) - F_2(F_2+1)
        =(t-F_2)(F_2+1).
    \end{align*}
   
    Next, when $0\leq t-m\leq F_2$, we have
    
    \begin{align*}
        {S_1=\sum_{f=F_1}^{t-m} (m-f) + \sum_{f=t-m+1}^{F_2} (t-2f)} %%%%% excluded from paper --->
         %&= \sum_{f=F_1}^{t-m} m - \sum_{f=F_1}^{t-m} f + \sum_{f=t-m+1}^{F_2} t  -  \sum_{f=t-m+1}^{F_2} 2f \\
         %&= m(t-m-F_1+1) - \left( \frac{(t-m)(t-m+1)}{2} - \frac{F_1(F_1-1)}{2} \right)\\ & \qquad+t(F_2-t+m) - 2\left(\frac{F_2(F_2+1)}{2}- \frac{(t-m+1)(t-m)}{2} \right)\\
         %&= m(t-m-F_1+1) - \frac{(t-m)(t-m+1)}{2} + \frac{F_1(F_1-1)}{2} \\ & +t(F_2-t+m)-F_2(F_2+1) +  \frac{2(t-m+1)(t-m)}{2}\\ %%%%% <--- excluded 
         &= \frac{(t-m)(t-m+1)}{2} +m(t-m-F_1+1)+ \frac{F_1(F_1-1)}{2}\\ &\qquad\qquad\qquad +t(F_2-t+m) -F_2(F_2+1).
    \end{align*}
    
Finally, recall that $F_2=\min\{\floor{\frac{t}{2}},k\}$, and note that
    $t\leq m+k$, and so $t-m \leq k$. Additionally, note that
    $
        2m = m + m \geq m + k \geq t,
    $
    and so $m \geq \ceil{\frac{t}{2}}$, giving us $t - m \leq \floor{\frac{t}{2}}$. Therefore, we have $t-m\leq \min\left\{\floor{\frac{t}{2}},k\right\} = F_2$. Thus, we do not need to consider the case $t-m > F_2$. 
    
    Next we consider $S_2$. Note that $0 > t-k-f$ when $t-k < f$, and $t-k-f > 0$ when $t-k > f$. It then follows that when $t-k<0$, we have $S_2=\sum_{f=F_1}^{F_2} 0 =0$. In addition, when $0\leq t-k\leq F_2$, we have
    \begin{align*}
        S_2=\sum_{f=F_1}^{t-k} t- k - f  + \sum_{f=F_1}^{F_2} 0 
        % &= \sum_{f=F_1}^{t-k} t- k - f 
        % \\
        % &= \sum_{f=F_1}^{t-k} t- k -\sum_{f=F_1}^{t-k}  f\\
        % &= (t-k)(t-k-F_1+1) - \bigg( \frac{(t-k)(t-k+1)}{2} - \frac{F_1(F_1-1)}{2} \bigg)\\
        % &
        = (t-k)(t-k-F_1+1) -\frac{(t-k)(t-k+1)}{2} +\frac{F_1(F_1-1)}{2}.
    \end{align*}
    Finally, when $t-k > F_2$, we have    
    \begin{align*}
        S_2=\sum_{f=F_1}^{F_2} (t-k-f) 
        % &= 
        % \sum_{f=F_1}^{F_2} (t-k) - \sum_{f=F_1}^{F_2} f\\
        % &= (t-k)(F_2-F_1+1) - \left(\dfrac{F_2(F_2+1)}{2} - 
        % \dfrac{F_1(F_1-1)}{2}\right) \\
        &= (t-k)(F_2-F_1+1) - \dfrac{F_2(F_2+1) - F_1(F_1-1)}{2}.
    \end{align*}
    Thus the result holds.
\end{proof}

\subsection{Remaining Cases}
\label{subsec:mirroredcases}
In this section, we exploit the symmetry of the root system to give closed formulas for the $q$-analog of Kostant's partition function in the rest of
the cases provided in~\cite{deloera}.
\begin{proposition}\label{mirror}
    If $m,n,k\in\N$, then 
    $        \wp_q(m\al_1+n\al_2+k\al_3) =
        \wp_q(k\al_1+n\al_2+m\al_3).
    $
\end{proposition}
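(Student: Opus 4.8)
The plan is to establish this symmetry directly from the combinatorial description of $\wp_q$ in terms of positive roots, without appealing to the closed formulas of Theorem~\ref{thm:main1}. The key observation is that there is an involution on the root system of $\sl4$ induced by the Dynkin diagram automorphism that swaps $\al_1\leftrightarrow\al_3$ and fixes $\al_2$. Under this map, the set of positive roots $\Phi^+=\{\al_1,\al_2,\al_3,\al_1+\al_2,\al_2+\al_3,\al_1+\al_2+\al_3\}$ is permuted: $\al_1\leftrightarrow\al_3$, $\al_1+\al_2\leftrightarrow\al_2+\al_3$, while $\al_2$ and $\al_1+\al_2+\al_3$ are fixed. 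Since this is a bijection $\Phi^+\to\Phi^+$, it induces a bijection between the set of ways to write $m\al_1+n\al_2+k\al_3$ as a sum of positive roots and the set of ways to write its image as such a sum, and this bijection preserves the number of summands used.

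Concretely, I would argue as follows. Recall from the proof of Proposition~\ref{bijection} that a partition of $\xi=m\al_1+n\al_2+k\al_3$ into positive roots is recorded by a tuple $(a,b,c,d,e,f)$ corresponding to $a\al_1+b\al_2+c\al_3+d(\al_1+\al_2)+e(\al_2+\al_3)+f(\al_1+\al_2+\al_3)$, with the number of parts equal to $a+b+c+d+e+f$. Define a map sending $(a,b,c,d,e,f)\mapsto(c,b,a,e,d,f)$. One checks by expanding that this tuple is a partition of $k\al_1+n\al_2+m\al_3$: the $\al_1$-coefficient becomes $c+e+f$, which equals the old $\al_3$-coefficient $c+e+f$ of $\xi$, i.e.\ $k$; similarly the $\al_3$-coefficient becomes $a+d+f=m$, and the $\al_2$-coefficient is unchanged at $b+d+e+f=n$. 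This map clearly has the same form as its own inverse, hence is a bijection, and it preserves the number of parts since $a+b+c+d+e+f=c+b+a+e+d+f$. Therefore, for every $i$, the coefficient of $q^i$ in $\wp_q(m\al_1+n\al_2+k\al_3)$ equals the coefficient of $q^i$ in $\wp_q(k\al_1+n\al_2+m\al_3)$, which gives the claimed equality of polynomials.

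I do not anticipate a serious obstacle here; the only thing to be careful about is matching coefficients correctly when expanding the vector partition, and confirming that the proposed map genuinely lands in the partitions of the swapped weight. (As an alternative one-line argument, one could invoke the Weyl group element $w_0 s$ realizing the diagram automorphism, or simply note that Proposition~\ref{thm:sum} is manifestly symmetric in $m$ and $k$ after reindexing the innermost sum — but the explicit bijection above is cleanest and is the natural companion to Proposition~\ref{bijection}.) This symmetry is then exactly what is needed to deduce the formulas for cases \eqref{cond:knm} and \eqref{cond:nkm} of Theorem~\ref{thm:main1} from cases \eqref{cond:mnk} and \eqref{cond:nmk}.
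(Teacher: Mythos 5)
Your proof is correct and is essentially the paper's own argument: the paper proves Proposition \ref{mirror} by exactly the bijection $(a,b,c,d,e,f)\mapsto(c,b,a,e,d,f)$ on tuples recording vector partitions, checking it lands in the partitions of $k\al_1+n\al_2+m\al_3$ and preserves the number of parts. Your framing via the Dynkin diagram automorphism is just a motivation for the same map, so there is no substantive difference.
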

\begin{proof}
    Suppose we have a partition of $m\al_1+n\al_2+k\al_3$ with $i$ parts:
    \begin{equation*}
        a\al_1+b\al_2+c\al_3+d(\al_1+\al_2)+e(\al_2+\al_3)+f(\al_1+\al_2+\al_3)
        = m\al_1+n\al_2+k\al_3.
    \end{equation*}
   \indent We identify these partitions of $m\al_1+n\al_2+k\al_3$
    with tuples $(a,b,c,d,e,f)\in\N^6$ such that
    \begin{enumerate}
        \item $a+d+f=m$,
        \item $b+d+e+f=n$,
        \item $c+e+f=k$, and
        \item $a+b+c+d+e+f=i$.
    \end{enumerate}
    Call this set of $6$-tuples $\mathcal{P}_i(m,n,k)$.
    We then define a function 
    $f_{mnk}:\mathcal{P}_i(m,n,k)\to\mathcal{P}_i(k,n,m)$
    given by $(a,b,c,d,e,f) \mapsto (c,b,a,e,d,f).$ First we wish to show that $f$ is well-defined. Let $(a,b,c,d,e,f)\in\mathcal{P}_i(m,n,k)$. Then note that
    \begin{enumerate}
        \item $c+e+f=k$,
        \item $b+e+d+f=n$,
        \item $a+d+f=m$, and
        \item $a+b+c+d+e+f=i$.
    \end{enumerate}
    Thus, we have that $(c,b,a,e,d,f)\in\mathcal{P}_i(k,n,m)$, and so $f_{mnk}:\mathcal{P}_i(m,n,k)\to\mathcal{P}_i(k,n,m)$ is well-defined for all $m,n,k\in\N$.
    
    Note, then, that for all $m,n,k\in\N$, we have
    \begin{align*}
        f_{knm}(f_{mnk}(a,b,c,d,e,f)) &= f_{knm}(c,b,a,e,d,f) = (a,b,c,d,e,f).
    \end{align*}
    It follows that $f_{mnk}\circ f_{knm}$ is the identity as well, so $f_{knm}=f_{mnk}^{-1}$.
    Thus, $f_{mnk}$ is a bijection, and so 
    $|\mathcal{P}_i(m,n,k)| = |\mathcal{P}_i(k,n,m)|$.
    Since $m,n,k,$ and $i$ were arbitrary, and
    $
        \wp_q(m\al_1+n\al_2+k\al_3)=\sum_{i\in\N} |\mathcal{P}_i(m,n,k)|q^i,
    $
    then we have 
    $
        \wp_q(m\al_1+n\al_2+k\al_3) = 
        \wp_q(k\al_1+n\al_2+m\al_3)
    $,
    as desired.
\end{proof}

Proposition \ref{mirror} immediately gives us the closed formulas in Theorem \ref{thm:main1} parts \eqref{thm:main1:part3} and \eqref{thm:main1:part5}. We now state and prove these two results explicitly.

\begin{corollary}
If $m,n,k\in\N$ satisfy $k\geq n\geq m$, then
 $\wp_q(m\al_2+n\al_2+k\al_3)=\sum_{i=k}^{m+n+k} c_iq^i$,
    where
    \begin{equation*}\resizebox{0.95\hsize}{!}{$
        c_i = 
        \begin{cases}
        \dfrac{(L+1)(2m-2F_2+L+2)}{2} & \mbox{if } J < 0 \\
        %\dfrac{(L+1)(L+2)}{2} - \dfrac{F_2(F_2+1)}{2} - \dfrac{m(m-1)}{2}
        %+F_2m+mL-F_2L+(m-F_2)(t\bmod2)
        \dfrac{(L+1)(L+2) - F_2(F_2+1) - m(m-1)}{2}
        +F_2m+mL-F_2L+(m-F_2)(t\bmod2)
        & \mbox{if } 0\leq J\leq L \\
        (L+1)(L+(t\bmod2)+1) & \mbox{if } J > L
        \end{cases}$}
    \end{equation*}
    with $t=m+n+k-i$, 
    $F_2=\min\left\{\floor{\frac{t}{2}},m\right\}$,
    $J=m-F_2-(t\bmod2),$
    $F_1=\max\{t-n,0\}$, and 
    $L = F_2 - F_1$.
    \label{coro:case4}
\end{corollary}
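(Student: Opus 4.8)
The plan is to obtain this formula as an immediate consequence of the root-system symmetry established in Proposition \ref{mirror} together with the already-proved case $m \geq n \geq k$ of Theorem \ref{thm:main1} (that is, Proposition \ref{case2}). Since the hypothesis here is $k \geq n \geq m$, Proposition \ref{mirror} gives $\wp_q(m\al_1+n\al_2+k\al_3) = \wp_q(k\al_1+n\al_2+m\al_3)$, and the weight $k\al_1 + n\al_2 + m\al_3$ has coefficients satisfying $k \geq n \geq m$, so it falls directly under the hypothesis of Proposition \ref{case2} with the first and third coefficients interchanged.

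First I would apply Proposition \ref{case2} to $\wp_q(k\al_1 + n\al_2 + m\al_3)$; concretely this means performing the substitution $m \mapsto k$ and $k \mapsto m$ (leaving $n$ fixed) throughout the statement of that proposition. The summation range $\sum_{i=k}^{k+n+m}$ equals $\sum_{i=k}^{m+n+k}$ since the total degree $m+n+k$ is symmetric in $m$ and $k$; the auxiliary quantity $t = k + n + m - i = m+n+k-i$ is unchanged; $F_2$ becomes $\min\{\floor{\frac{t}{2}}, m\}$; $J$ becomes $m - F_2 - (t\bmod 2)$; $F_1 = \max\{t-n,0\}$ and $L = F_2 - F_1$ are unchanged in form; and each appearance of $k$ in the three-case expression for $c_i$ is replaced by $m$.

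Then I would check that the expression produced by this substitution is verbatim the formula claimed in the corollary, which finishes the proof. The only step requiring care is the bookkeeping of the substitution — confirming term by term that $c_i$, $F_1$, $F_2$, $J$, and $L$ transform into exactly the stated quantities — but no genuine obstacle arises, since the substance of the result is entirely contained in Propositions \ref{mirror} and \ref{case2}.
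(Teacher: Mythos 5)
Your argument is correct and is precisely the paper's own proof: apply Proposition \ref{mirror} to replace $\wp_q(m\al_1+n\al_2+k\al_3)$ by $\wp_q(k\al_1+n\al_2+m\al_3)$, which under $k\geq n\geq m$ falls under Proposition \ref{case2} after swapping the roles of $m$ and $k$. The substitution bookkeeping you describe matches the stated formula verbatim, so nothing further is needed.
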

\begin{proof}
    Proposition $\ref{mirror}$ tells us that the number of partitions
    of $m\al_1+n\al_2+k\al_3$ with $i$ parts equals the number of
    partitions of $k\al_1+n\al_2+m\al_3$ with $i$ parts.
    Since $k\geq n\geq m$, Proposition $\ref{case2}$ then gives 
    us the closed formula for the number of such partitions, as shown 
    in the statement above.
\end{proof}
    
\begin{corollary}
   If $m,n,k\in\N$ satisfy
    $n\geq k\geq m$, then 
    $
        \wp_q(m\al_1+n\al_2+k\al_3) = \sum_{i=n}^{m+n+k} c_iq^i
    $,
   where $c_i=S_1-S_2 + F_2 - F_1 + 1$,
    \begin{align*}
        S_1 &= 
      \begin{cases}
          (t-F_2)(F_2+1) &\mbox{if}\ t-k < 0 \\
          \frac{(t-k)(t-k+1)+F_1(F_1-1)- 2F_2(F_2+1)
          + 2k(t-k-F_1+1) + 2t(F_2-t+k)}{2}  &\mbox{if}\ 0\leq t-k\leq F_2 ,
        \end{cases}
        \\  S_2 &= 
        \begin{cases}
            (t-m)(t-m-F_1+1) - \frac{(t-m)(t-m+1)-F_1(F_1-1)}{2}
            &\mbox{if}\ 0\leq t-m\leq F_2 \\(t-m)(F_2-F_1+1) - \frac{F_2(F_2+1)-F_1(F_1-1)}{2}
            &\mbox{if}\ t-m > F_2\\
            0 &\mbox{if}\ t-m < 0,
      \end{cases}
    \end{align*}
with $t = m+n+k-i$,
        $F_1 = \max\{0,t-\min\{m+k,n\}\}$, and
        $F_2 = \min\left\{\floor{\frac{t}{2}},m\right\}$.
    \label{coro:cor2}
\end{corollary}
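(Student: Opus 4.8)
The plan is to obtain this corollary from Proposition~\ref{case3} by the same symmetry argument that produced Corollary~\ref{coro:case4} from Proposition~\ref{case2}. First I would apply Proposition~\ref{mirror} to rewrite $\wp_q(m\al_1+n\al_2+k\al_3) = \wp_q(k\al_1+n\al_2+m\al_3)$. In the weight $k\al_1+n\al_2+m\al_3$ the coefficient of $\al_1$ is $k$, the coefficient of $\al_2$ is $n$, and the coefficient of $\al_3$ is $m$; the hypothesis $n\geq k\geq m$ says exactly that the $\al_2$-coefficient is the largest and the $\al_3$-coefficient is the smallest, which is precisely the configuration handled by Proposition~\ref{case3} after the relabeling $(m,n,k)\mapsto(k,n,m)$.

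Next I would invoke Proposition~\ref{case3} for the weight $k\al_1+n\al_2+m\al_3$ and then rename the variables back. Since $m+n+k$ is symmetric under swapping $m$ and $k$, the summation range $i=n,\ldots,m+n+k$ and the quantity $t=m+n+k-i$ are unchanged; likewise $F_1=\max\{0,t-\min\{m+k,n\}\}$ is fixed by the swap. On the other hand, $F_2=\min\{\floor{t/2},k\}$ in Proposition~\ref{case3} becomes $F_2=\min\{\floor{t/2},m\}$; the role played by $m$ in $S_1$ is now played by $k$, so the case guards there become $t-k<0$ and $0\leq t-k\leq F_2$; and the role played by $k$ in $S_2$ is now played by $m$, so the case guards there become $0\leq t-m\leq F_2$, $t-m>F_2$, and $t-m<0$. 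Reading off the resulting piecewise expressions yields exactly the stated formulas for $c_i$, $S_1$, and $S_2$.

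I do not expect any genuine obstacle: the entire content of the proof is the substitution $(m,n,k)\mapsto(k,n,m)$ combined with Proposition~\ref{mirror}. The one point requiring care is the bookkeeping of which of $m$ and $k$ controls each case split and which one caps $F_2$ after the swap, and I expect the written proof to be as short as that of Corollary~\ref{coro:case4}.
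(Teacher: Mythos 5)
Your proposal is correct and matches the paper's own proof: the paper likewise invokes Proposition \ref{mirror} to replace $m\al_1+n\al_2+k\al_3$ by $k\al_1+n\al_2+m\al_3$ and then applies Proposition \ref{case3} under the relabeling $(m,n,k)\mapsto(k,n,m)$. Your bookkeeping of which variable caps $F_2$ and controls each case split after the swap is exactly the content left implicit in the paper's shorter statement.
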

\begin{proof}
    Proposition $\ref{mirror}$ tells us that the number of partitions
    of $m\al_1+n\al_2+k\al_3$ with $i$ parts equals the number of
    partitions of $k\al_1+n\al_2+m\al_3$ with $i$ parts.
    Since $n\geq k\geq m$, Proposition $\ref{case3}$ then gives 
    us the closed formula for the number of such partitions, as shown 
    in the statement above.
\end{proof}

This establishes all parts of Theorem \ref{thm:main1}.
Next, in Section \ref{sec:Weyl alt sets and diagrams}, we compute the Weyl alternation sets for $\sl4$. Then, in Section \ref{sec:qmult}, we utilize these results along with Theorem \ref{thm:main1} to provide a closed formula for the $q$-analog of Kostant's weight multiplicity formula for the Lie algebra $\sl4$.

%%%%%%%%%%	 	SEC 4	 %%%%%%%%%%
\section{Weyl alternation sets and diagrams  of \texorpdfstring{$\mathfrak{sl}_4(\mathbb{C})$}{sl4(C)}}\label{sec:Weyl alt sets and diagrams}

We begin by recalling that the theorem of the highest weight asserts that every  nonnegative integral linear combination of the fundamental weights (i.e. every dominant weight) of a semisimple Lie algebra $\mathfrak{g}$ is the highest weight of an irreducible finite-dimensional representation of $\mathfrak{g}$ \cite[Chapter 3]{GW}. Thus, the nonnegative octant of the fundamental weight lattice is the only portion of this lattice that is significant in the representation theory of Lie algebras. However, our work on Weyl alternation sets considers weights on all of the  fundamental weight lattice since interesting symmetries arise in the geometry of the associated Weyl alternation diagrams. 

\subsection{Weyl alternation sets}\label{subsec:Weyl alt sets}
Throughout we let  $\lambda=m\w_1 + n\w_2 + k\w_3$ with $m,n,k \in \Z$ and $\mu = c_1\w_1 + c_2\w_2 + c_3\w_3 $ with $c_1,c_2,c_3 \in \Z$. In order to compute the Weyl alternation sets $\A(\lambda,\mu)$ we need to determine when  $\wp( \sigma(\la + \rho) - (\mu + \rho))>0$ for a fixed $\sigma\in W$.  This is equivalent to determining when $\sigma(\la + \rho) - (\mu + \rho)$ can be expressed as a nonnegative integral sum of simple roots. 

As an example, consider the case when $\sigma=1$, the identity element of $W$. In this case, note
\begin{align}\label{sigma_1}
    1(\la + \rho) - (\rho + \mu) 
    % &= \la + \rho - \rho - \mu \nonumber\\
    % &= \la - \mu \nonumber\\
    = &\left(\frac{3m+2n+k-3c_1-2c_2-c_3}{4}\right) \al_1 
    + \left(\frac{m+2n+k-c_1-2c_2-c_3}{2}\right) \al_2 \\
    &\qquad\qquad\qquad\qquad+ \left(\frac{m+2n+3k-c_1-2c_2-3c_3}{4}\right) \al_3. \nonumber
\end{align}
Hence, $1\in\A(\lambda,\mu)$ if and only if the coefficients of $\a_1$, $\a_2$, and $\a_3$ in \eqref{sigma_1} are nonnegative integers. We first deal with the fact that these coefficients must be integers, a condition which we henceforth refer to as the \textit{integrality condition}. We later focus on the restriction that these coefficients are nonnegative, which we henceforth refer to as the \textit{nonnegativity condition}. Let $b_1,b_2,b_3$ represent the coefficients of $\al_1,\al_2,\al_3$ in \eqref{sigma_1}, respectively.
In order for $b_1$, $b_2$, and $b_3$ to be integers there must exist $x,y,z\in\N$ such that
\begin{align*}
3m+2n+k-3c_1-2c_2-c_3 &= 4x,\\    m+2n+k-c_1-2c_2-c_3 &= 2y, \\
m+2n+3k-c_1-2c_2-3c_3 &= 4z.
\end{align*}
Since $c_1$, $c_2$, and $c_3$ are fixed, we obtain the following system of linear 
equations in the variables $m$, $n$, and $k$:
\begin{align*}
    3m + 2n + k &= 4x + 3c_1 + 2c_2 + c_3 \\
    m + 2n + k &= 2y + c_1 + 2c_2 + c_3 \\ 
    m + 2n + 3k &= 4z + c_1 + 2c_2 + 3c_3
\end{align*}
with solution
\begin{align}
    m &= 2x -y + c_1 \label{sigma_1_sub} \\
    n&= -x +2y -z +c_2 \nonumber \\
    k&= -y +2z + c_3. \nonumber 
\end{align}
Substituting \eqref{sigma_1_sub} into \eqref{sigma_1} yields
\begin{align}
    1(\la-\rho)-(\rho+\mu) = x\al_1+y\al_2+z\al_3. \label{sigma_1_sub_1}
\end{align}
Thus the set of all lattice points in (\ref{sigma_1}) as $m,n,k\in\Z$ vary equals the set of all lattice points in (\ref{sigma_1_sub_1}) as $x,y,z\in\Z$ vary. 

Next, we consider a similar process for  the case $\sigma=s_1$ and find that
\begin{align}
    s_1(\la + \rho) - (\rho + \mu) &= \label{sigma_s1}
    \left(\frac{-m+2n+k -3c_1 - 2c_2 - c_3}{4} - 1 \right)\al_1 + 
    \left( \frac{m + 2n + k - c_1 - 2c_2 - c_3}{2} \right)\al_2 \\
    &\qquad\qquad\qquad\qquad + \nonumber
    \left( \frac{m + 2n + 3k - c_1 - 2c_2 - 3c_3}{4} \right) \al_3
\end{align}
has corresponding system
\begin{align*}
    -m + 2n + k &= 4x' + 3c_1 + 2c_2 + c_3 \\
    m + 2n + k &= 2y' + c_1 + 2c_2 + c_3 \\
    m + 2n + 3k &= 4z' + c_1 + 2c_2 + 3c_3
\end{align*}
with $x',y',z'\in\Z$, which has  solution
\begin{align}
    m &= -2x' + y' - c_1 \label{sigma_s1_sub}\\
    n &= x' + y' - z' + c_1 + c_2 \nonumber\\
    k &= -y' + 2z' + c_3. \nonumber
\end{align}
Substituting \eqref{sigma_s1_sub} into \eqref{sigma_s1} yields
\begin{align}\label{final s_1}
    s_1(\la + \rho) - (\rho + \mu)&=(x'-1)\a_1+y'\a_2+z'\a_3.
\end{align}

Repeating this process in the case $\sigma=s_2$, we find that
\begin{align}
    s_2(\la+\rho) - (\rho+\mu) &= \label{sigma_s2}
    \left( \frac{3m+2n+k-3c_1-2c_2-c_1}{4} \right) \al_1 + \left( \frac{m+k-c_1-2c_2-c_3}{2} - 1 \right) \al_2\\
    &\qquad\qquad\qquad\qquad+ \left( \frac{m+2n+3k - c_1-2c_2-3c_3}{4} \right) \al_3 \nonumber
\end{align}
has corresponding system
\begin{align*}
    3m+2n+k &= 4x''+3c_1+2c_2+c_3 \\
    m+k &= 2y'' + c_1 + 2c_2 + c_3 \\
    m+2n+3k &= 4z'' + c_1 + 2c_2 + 3c_3
\end{align*}
with $x'',y'',z''\in\Z$, which has solution 
\begin{align}
    m &= x'' + y'' - z'' + c_1 + c_2 \label{sigma_s2_sub}\\
    n &= x'' - 2y'' + z'' - c_2 \nonumber\\
    k &= -x'' + y'' + z'' + c_2 + c_3. \nonumber
\end{align}
Substituting \eqref{sigma_s2_sub} into \eqref{sigma_s2} yields
\begin{align}\label{final s_2}
    s_2(\la + \rho) - (\rho + \mu)&=x''\a_1+(y''-1)\a_2+z''\a_3.
\end{align}

Repeating the process one final time in the case $\sigma = s_3$, we find that
\begin{align}
    s_3(\la + \rho) - (\rho + \mu) &= \label{sigma_s3}
    \left( \frac{3m+2n+k-3c_1-2c_2-c_3}{4} \right)\al_1 + \left( \frac{m + 2n + k - c_1 - 2c_2 - c_3}{2} \right) \al_2 \\ \nonumber
    &\qquad\qquad\qquad\qquad+ \left( \frac{m + 2n - k - c_1 - 2c_2 - 3c_3}{4} - 1 \right) \al_3
\end{align}
has corresponding system
\begin{align*}
    3m + 2n + k &= 4x''' + 3c_1 + 2c_2 + c_3 \\
    m + 2n + k &= 2y''' + c_1 + 2c_2 + c_3 \\
    m + 2n - k &= 4z''' + c_1 + 2c_2 + 3c_3
\end{align*}
with $x''',y''',z'''\in\Z$, which has solution 
\begin{align}
    m &= 2x''' - y''' + c_1 \label{sigma_s3_sub}\\
    n &= -x''' + y''' + z''' + c_2 + c_3 \nonumber\\
    k &= y''' - 2z''' - c_3. \nonumber
\end{align}
Substituting \eqref{sigma_s3_sub} into \eqref{sigma_s3} yields
\begin{align}\label{final s_3}
    s_3(\la + \rho) - (\rho + \mu)&=x'''\a_1+y'''\a_2+(z'''-1)\a_3.
\end{align}

Each of the substitutions made allowed us to express $\sigma(\lambda+\rho)-\rho-\mu$ as an integral sum of simple roots, provided $\sigma=1,s_1,s_2,$ or $s_3$. 
However, it turns out that substituting \eqref{sigma_1_sub} into \eqref{sigma_s1}, \eqref{sigma_s2}, and \eqref{sigma_s3} yields
\begin{align}
        s_1(\la+\rho)-(\rho+\mu)& = (-x+y-c_1-1)\al_1+y\al_2+z\al_3
        \label{s1}\\
        s_2(\la+\rho)-(\rho+\mu) &= x\al_1+(-c_2+x-y+z-1)\al_2 + z\al_3
        \label{s2}\\
        s_3(\la+\rho)-(\rho+\mu) &= x\al_1+y\al_2+(-c_3+y-z-1)\al_3.
        \label{s3}
    \end{align}
Thus, this single substitution ensures that the coefficients of  $\al_1,\al_2,\al_3$ in the above equations are integers whenever $x,y,z\in\Z$. In light of this it is natural to ask if these substitutions always preserve the set of lattice points in $\sigma(\la+\rho)-(\rho+\mu)$. As it turns out, the answer is yes. We prove this next and henceforth we use the substitution for $\sigma=1$ as given in \eqref{sigma_1_sub}. 

\begin{proposition}\label{prop:same lattices}
    Let $m,n,k,c_1,c_2,c_3\in\Z$.
    The lattices obtained by letting $x,y,z\in\Z$ vary in 
    \eqref{s1}, \eqref{s2}, and \eqref{s3} equal the integer lattices 
    obtained by letting $m,n,k\in\Z$ vary
    in \eqref{final s_1}, \eqref{final s_2}, and \eqref{final s_3}, 
    respectively.
    \label{prop:div}
\end{proposition}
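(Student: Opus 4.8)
The plan is to show that, for a fixed $\mu=c_1\w_1+c_2\w_2+c_3\w_3$, the set of triples $(m,n,k)\in\Z^3$ for which $\sigma(\lambda+\rho)-(\rho+\mu)$ lies in the root lattice $Q:=\Z\al_1\oplus\Z\al_2\oplus\Z\al_3$ is the \emph{same} for all four elements $\sigma\in\{1,s_1,s_2,s_3\}$; call this common set $S$. Granting this, both lattices in the statement are just the image of the single map $S\to Q$ sending $(m,n,k)$ to $s_i(\lambda+\rho)-(\rho+\mu)$. Indeed, by construction the substitution \eqref{sigma_1_sub} has image exactly $S$: it is the solution set of the $\sigma=1$ integrality system, and it is onto $S$ because any $(m,n,k)\in S$ satisfies that system and hence arises via \eqref{sigma_1_sub} from the unique integer solution $(x,y,z)$. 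Plugging \eqref{sigma_1_sub} into the formula for $s_i(\lambda+\rho)-(\rho+\mu)$ therefore produces \eqref{s1}, \eqref{s2}, \eqref{s3} as $(x,y,z)$ ranges over $\Z^3$, and the same reasoning with \eqref{sigma_s1_sub}, \eqref{sigma_s2_sub}, \eqref{sigma_s3_sub} (each onto $S$) produces \eqref{final s_1}, \eqref{final s_2}, \eqref{final s_3}. Hence the two parametrizations trace out the identical set $\{\,s_i(\lambda+\rho)-(\rho+\mu):(m,n,k)\in S\,\}$.

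The one substantive step is the $\sigma$-independence of $S$. Comparing the linear systems displayed for $s_1,s_2,s_3$ with the one for $\sigma=1$, each differs in exactly one equation: the left-hand side $3m+2n+k$ is replaced by $-m+2n+k$ for $s_1$, the left-hand side $m+2n+k$ by $m+k$ for $s_2$, and the left-hand side $m+2n+3k$ by $m+2n-k$ for $s_3$. In each case the two left-hand sides differ by $4m$, $2n$, and $4k$ respectively, a multiple of the modulus ($4$, $2$, and $4$) appearing on the right of that equation; hence the $\Z$-solvability of the system, and so the set $S$, is unchanged. Equivalently, one may cite the standard fact that $w\nu-\nu\in Q$ for every $w\in W$ and every integral weight $\nu$: with $w=s_i$ and $\nu=\lambda+\rho$ this shows $s_i(\lambda+\rho)-(\rho+\mu)$ and $(\lambda+\rho)-(\rho+\mu)$ differ by an element of $Q$, so one is integral exactly when the other is.

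I do not expect a real obstacle: once $\sigma$-independence of $S$ is in hand, everything left is linear bookkeeping. The only point that wants a little care is the surjectivity of \eqref{sigma_1_sub} onto \emph{all} of $S$ rather than onto a proper sublattice, which is settled by the uniqueness-of-solution remark above. As a fully self-contained alternative one may skip $\sigma$-independence entirely and simply note that each affine map $(x,y,z)\mapsto(\text{coefficients of }\al_1,\al_2,\al_3)$ read off from \eqref{s1}--\eqref{s3} has unimodular linear part, so the left-hand lattices already equal all of $Q$; the analogous computation for \eqref{final s_1}--\eqref{final s_3}, now parametrized by the auxiliary integers that sweep out $\Z^3$, shows the right-hand lattices equal $Q$ too, and the proposition follows.
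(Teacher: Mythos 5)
Your proposal is correct, but it follows a genuinely different route from the paper. The paper proves Proposition \ref{prop:div} by a direct two-inclusion verification: for each of the three cases it writes down an explicit affine change of variables over $\Z$ (e.g.\ $x'=-x+y-c_1$, $y'=y$, $z'=z$, together with its inverse) carrying the parametrization \eqref{s1} onto \eqref{final s_1}, and likewise for \eqref{s2} and \eqref{s3}. You instead argue conceptually: the set $S$ of triples $(m,n,k)$ for which $\sigma(\lambda+\rho)-(\rho+\mu)$ lies in $\Z\al_1\oplus\Z\al_2\oplus\Z\al_3$ is the same for all $\sigma\in\{1,s_1,s_2,s_3\}$ --- justified either by your congruence comparison of the three linear systems with the $\sigma=1$ system (the single altered left-hand sides indeed differ by $4m$, $2n$, $4k$, multiples of the respective moduli) or by the standard fact $s_i\nu-\nu\in\Z\al_1\oplus\Z\al_2\oplus\Z\al_3$ for integral $\nu$ (applicable since $m,n,k\in\Z$ makes $\lambda+\rho$ integral) --- and then both parametrizations, \eqref{sigma_1_sub} and the $\sigma$-specific ones, surject onto $S$, so both displayed families sweep out the same set of weights. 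Your closing alternative is also valid and is in fact the quickest complete argument: each of \eqref{s1}--\eqref{s3} and \eqref{final s_1}--\eqref{final s_3} is an affine image of $\Z^3$ with unimodular linear part and integral translation, so every one of these lattices is the entire root lattice, a fact the paper's computation leaves implicit. What the paper's verification buys is a self-contained check that also produces the explicit substitutions used later in Section \ref{sec:Weyl alt sets and diagrams}; what your argument buys is an explanation of why the statement must hold (Weyl group elements preserve cosets of the root lattice) and a proof that generalizes verbatim to every $\sigma\in W$ and to other Lie algebras, rather than requiring a case-by-case change of variables.
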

\begin{proof}
    The set of lattice points of the form in \eqref{sigma_s1} equals the set of points $(x'-1)\al_1+y'\al_2+z'\al_3$ for $x',y',z'\in\Z$. Now, notice if $x' = -x+y-c_1$, $y'=y$, and $z'=z$, then $(x'-1)\al_1 + y'\al_2 + z'\al_3 = (-x+y-c_1-1)\al_1+y\al_2+z\al_3.$ Hence, lattice points of the form in \eqref{s1} can be written in the form given in \eqref{final s_1}. Conversely, if $x = -x'+y-c_1$, $y=y'$, and $z=z'$, then $ (-x+y-c_1-1)\al_1 + y\al_2 + z\al_3 = (x'-1)\al_1+y'\al_2+z'\al_3.$ Thus, \eqref{final s_1} and \eqref{s1} give the same lattice.
    
    The set of lattice points of the form in \eqref{sigma_s2} equals the set of points $x''\al_1+(y''-1)\al_2+z''\al_3$ for $x'',y'',z''\in\Z$. If $x''=x$, $y'' = -c_2+x-y+z$, and $z''=z$, then $x''\al_1 + (y''-1)\al_2 + z''\al_3 = x\al_1 + (-c_2+x-y+z-1)\al_2 + z\al_3.$ Hence, lattice points of the form in \eqref{s2} can be written in the form \eqref{final s_2}. Conversely, if $x=x''$, $y = -c_2+x-y''+z$, and $z = z''$, then $x\al_1 + (-c_2+x-y+z-1)\al_2 + z\al_3 = x''\al_1 + (y''-1)\al_2 + z''\al_3.$ Thus,  \eqref{final s_2} and \eqref{s2} give the same lattice.

    The set of lattice points of the form in \eqref{sigma_s3} equals the set of points $x'''\al_1+y'''\al_2+(z'''-1)\al_3$ for $x''',y''',z'''\in\Z$. If $x'''=x$, $y'''=y$, and $z''' = -c_1+y-z$, then $x'''\al_1 + y'''\al_2 + (z'''-1)\al_3 = x\al_1 + y\al_2 + (-c_3+y-z-1)\al_3.$ Hence, lattice points of the form \eqref{s3} can be written in the form \eqref{final s_3}. Conversely, if $x=x'''$, $y=y'''$, and $z = -c_1+y-z'''$, then $x\al_1 + y\al_2 + (-c_3+y-z-1)\al_3 = x'''\al_1 + y'''\al_2 + (z'''-1)\al_3.$ Thus, \eqref{s3} and \eqref{final s_3} give the same lattice.
\end{proof}

Proposition \ref{prop:same lattices} and the substitution from Lemma \ref{lem:integral} allow us reduce our work to describing the Weyl alternation sets in the case where $\lambda$ is in the root lattice and $\mu$ is in the nonnegative octant of the root lattice. 

\begin{theorem}\label{thm:195 alt sets} 
Let $\lambda= (2x-y+c_1)\w_1 + (2y-x-z+c_2)\w_2 + (2z-y+c_3)\w_3$ and fix  $\mu=c_1\w_1 + c_2\w_2 + c_3\w_3$ with $x,y,z\in\Z$ and $c_1,c_2,c_3 \in \N$. Then there are 195 distinct Weyl alternation sets $\A(\lambda,\mu)$ and these are listed explicitly in Appendix \ref{appendix:AS}.
\end{theorem}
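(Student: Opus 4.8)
The plan is to convert the defining condition $\wp\big(\sigma(\lambda+\rho)-(\mu+\rho)\big)>0$ into an explicit system of linear inequalities for each of the $24$ elements $\sigma$ of $W\cong\mathfrak{S}_4$, and then to enumerate the finitely many subsets of $W$ that can arise as the solution sets vary over the parameters. The first observation is that for $\mathfrak{sl}_4(\mathbb{C})$ one has $\wp(\xi)>0$ if and only if $\xi\in\mathbb{N}\alpha_1\oplus\mathbb{N}\alpha_2\oplus\mathbb{N}\alpha_3$: a sum of positive roots is a nonnegative integral combination of simple roots, and conversely each $\alpha_i$ is itself a positive root. Hence $\sigma\in\A(\lambda,\mu)$ precisely when the three simple-root coordinates of $\sigma(\lambda+\rho)-(\mu+\rho)$ are nonnegative integers. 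With the parametrization in the statement — which by Lemma~\ref{lem:integral} places $\lambda-\mu$ in the root lattice, so that $\sigma(\lambda+\rho)-(\mu+\rho)$ lies in the root lattice for \emph{every} $\sigma$ — and invoking Proposition~\ref{prop:same lattices}, the integrality half of the condition becomes automatic, and membership of $\sigma$ reduces to three nonnegativity inequalities, each affine-linear in the six parameters $x,y,z,c_1,c_2,c_3$.

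Next I would write these inequalities out. The cases $\sigma=1,s_1,s_2,s_3$ are exactly \eqref{sigma_1_sub_1}, \eqref{s1}, \eqref{s2}, \eqref{s3}; the remaining $20$ elements follow by expressing each as a product of the $s_i$ and iterating the action on roots recorded in Section~\ref{sec:background}. Collecting the coefficients that occur across all $24$ elements yields a short list of \emph{atomic conditions}, each of the shape ``(a fixed integer affine function of $x,y,z,c_1,c_2,c_3$) $\in\mathbb{N}$'' — in fact the ten functions $P_1,P_3,P_4,Q_1,Q_4,Q_5,Q_6,R_1,R_3,R_4$ that reappear in Theorem~\ref{thm:qKWMF}, the list being this short because many coordinates are shared between different $\sigma$. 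Thus $\A(\lambda,\mu)$ is a Boolean function of a bounded collection of atomic conditions. I would then record the logical implications among these conditions — each atomic function is monotone in the relevant $c_i$, and the coordinates of $\sigma(\lambda+\rho)-(\mu+\rho)$ for different $\sigma$ satisfy linear dependencies, so that certain memberships force or forbid others — which pins down which subsets of $W$ are consistent, well below the $2^{24}$ a priori possibilities.

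The last step is the enumeration. The atomic conditions cut out half-spaces in $\mathbb{Z}^3$ whose bounding hyperplanes are translated according to $(c_1,c_2,c_3)$; as these parameters range over $\mathbb{N}^3$ only finitely many combinatorial types of the arrangement occur, and within each type one lists the subsets of $W$ realized by some $(x,y,z)\in\mathbb{Z}^3$ together with the inequalities carving out each one. Carrying this out — organized by $\ell(\sigma)$ and pruned using the implications above — produces the $195$ sets recorded in Appendix~\ref{appendix:AS}, the largest of which has cardinality $6$; the accompanying code runs the same enumeration and certifies both completeness (no other subset occurs) and irredundancy (the $195$ are pairwise distinct).

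I expect the main obstacle to be the exhaustiveness of this final enumeration: confirming the list of $195$ misses nothing amounts to ruling out every subset of $W$ not on it across every parameter region, and with ten atomic functions distributed over $24$ Weyl group elements the case analysis is just large enough that an unaided hand check is impractical — which is why the argument leans on the structural implications to organize the cases and on the computer verification to confirm them.
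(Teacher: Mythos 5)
Your overall strategy is the same as the paper's: after the parametrization of Lemma \ref{lem:integral} and Proposition \ref{prop:same lattices} the integrality condition is automatic, membership of $\sigma$ in $\A(\lambda,\mu)$ reduces to nonnegativity of the three simple-root coordinates of $\sigma(\lambda+\rho)-(\rho+\mu)$, these coordinates are computed for all $24$ elements of $W$, and the $195$ sets are obtained by intersecting the resulting solution regions, with the exhaustive enumeration organized (and in practice certified) by computer. That is precisely how the paper proves Theorem \ref{thm:195 alt sets}.

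However, there is a concrete error in your reduction to ``atomic conditions.'' You claim that collecting the coefficients over all $24$ elements yields only the ten functions $P_1,P_3,P_4,Q_1,Q_4,Q_5,Q_6,R_1,R_3,R_4$ appearing in Theorem \ref{thm:qKWMF}. That list is correct only in the dominant-weight setting of Theorem \ref{thm:qKWMF}, where $m,n,k,c_1,c_2,c_3\in\N$ forces $P_2,Q_2,Q_3,R_2<0$. In Theorem \ref{thm:195 alt sets} the parameters $x,y,z$ range over all of $\Z$, so the four additional coordinates $P_2=-c_1-c_2-c_3-z-3$, $Q_2=-c_1-2c_2-c_3-y-4$, $Q_3=-c_1-c_2-c_3-x+y-z-3$, and $R_2=-c_1-c_2-c_3-x-3$ can perfectly well be nonnegative (take $x$, $y$, or $z$ sufficiently negative), and the full set of atomic conditions is the fourteen conditions $K_1,\dots,K_{14}$ of Table \ref{tab:conditions}. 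With only your ten atoms, twelve of the twenty-four Weyl group elements --- $s_1s_2s_3$, $s_2s_3s_1$, $s_3s_2s_1$, and everything of length $\geq 4$, including the longest element --- could never be certified to lie in any $\A(\lambda,\mu)$, yet a large portion of the $195$ sets in Appendix \ref{appendix:AS} (for instance $\A(\lambda,\mu)=\{s_1s_2s_3\}$ or $\{s_1s_2s_3s_1s_2s_1\}$) consist exactly of such elements; an enumeration built on your list would therefore be incomplete and could not reach $195$. The repair is routine --- carrying out the coefficient computation you yourself propose for the remaining $20$ elements produces the fourteen functions of Table \ref{tab:tablewithelements} --- after which your argument coincides with the paper's.
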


\begin{proof}
Let $\lambda= (2x-y+c_1)\w_1 + (2y-x-z+c_2)\w_2 + (2z-y+c_3)\w_3$ for some $x,y,z\in\Z$ and a fixed $\mu=c_1\w_1 + c_2\w_2 + c_3\w_3$ with $c_1,c_2,c_3 \in \N$. Direct computations show that $\sigma(\lambda+\rho)-\rho-\mu$ can be written as a sum of simple roots for each $\sigma\in W$ as is given in Table \ref{tab:tablewithelementsFULLCOEFFS}. From these computations and the definition of a Weyl alternation set, it follows that \begin{align*}
    1 \in \A(\lambda,\mu) \iff & x\geq 0,  y\geq 0, \ \text{and} \  z\geq 0\\
    s_1\in \A(\lambda,\mu) \iff& (-c_1-x+y-1)\geq 0,y\geq 0, \ \text{and} \ z\geq 0\\
    s_2\in \A(\lambda,\mu) \iff& x\geq 0,(-c_2 + x - y + z - 1)\geq 0, \ \text{and} \ z\geq 0\\
    s_3\in \A(\lambda,\mu) \iff& x\geq 0,y\geq 0, \ \text{and} \ (-c_3 + y - z - 1)\geq 0\\
    s_1s_2\in \A(\lambda,\mu) \iff& (-c_1-c_2-y+z-2)\geq 0,(-c_2 + x - y + z - 1)\geq 0, \ \text{and} \ z\geq 0\\
    s_2s_1\in \A(\lambda,\mu) \iff& (-c_1-x+y-1)\geq 0, (-c_1 - c_2 - x + z - 2)\geq 0, \ \text{and} \ z\geq 0\\
    s_2s_3\in \A(\lambda,\mu) \iff& x\geq 0,(-c_2 - c_3 +x - z - 2 )\geq 0, \ \text{and} \ (-c_3 + y - z - 1)\geq 0\\
    s_3s_1\in \A(\lambda,\mu) \iff& (-c_1-x+y-1)\geq 0,y\geq 0, \ \text{and} \ (-c_3 + y - z - 1)\geq 0\\
    s_3s_2\in \A(\lambda,\mu) \iff& x\geq 0,(-c_2 + x - y + z - 1)\geq 0, \ \text{and} \ ( -c_2 - c_3 + x - y - 2)\geq 0\\
    s_1s_2s_1\in \A(\lambda,\mu) \iff& (-c_1-c_2-y+z-2)\geq 0,(-c_1 - c_2 - x + z - 2)\geq 0, \ \text{and} \ z\geq 0\\
    s_1s_2s_3\in \A(\lambda,\mu) \iff& (-c_1 - c_2 - c_3 - z - 3)\geq 0,(-c_2 - c_3 +x - z - 2)\geq 0, \ \text{and} \ \\ &( -c_3 + y - z - 1)\geq 0\\
    s_2s_3s_1\in \A(\lambda,\mu) \iff& (-c_1-x+y-1)\geq 0,(-c_1-c_2-c_3-x+y-z-3)\geq 0, \ \text{and} \ \\ & (-c_3 + y - z - 1)\geq 0\\
    s_2s_3s_2\in \A(\lambda,\mu) \iff& x\geq 0,(-c_2 - c_3 +x - z - 2)\geq 0, \ \text{and} \ (-c_2 - c_3 + x - y - 2)\geq 0\\
    s_3s_1s_2\in \A(\lambda,\mu) \iff& (-c_1-c_2-y+z-2)\geq 0,(-c_2 + x - y + z - 1)\geq 0, \ \text{and} \ \\ & ( -c_2 - c_3 + x - y - 2)\geq 0\\
    s_3s_2s_1\in \A(\lambda,\mu) \iff& (-c_1-x+y-1)\geq 0,(-c_1 - c_2 - x + z - 2 )\geq 0, \ \text{and} \ \\ &(-c_1 - c_2 - c_3 - x - 3 )\geq 0\\
    s_1s_2s_3s_1\in \A(\lambda,\mu) \iff& (-c_1 - c_2 - c_3 - z - 3)\geq 0,(-c_1-c_2-c_3-x+y-z-3)\geq 0, \ \text{and} \ \\ &(-c_3 + y - z - 1)\geq 0\\
    s_1s_2s_3s_2\in \A(\lambda,\mu) \iff& (-c_1 - c_2 - c_3 - z - 3)\geq 0,(-c_2 - c_3 +x - z - 2 )\geq 0, \ \text{and} \ \\ &(-c_2 - c_3 + x - y - 2)\geq 0\\
    s_2s_3s_1s_2\in \A(\lambda,\mu) \iff& (-c_1-c_2-y+z-2)\geq 0,(-c_1 - 2c_2 - c_3 - y - 4)\geq 0, \ \text{and} \ \\ &(-c_2 - c_3 + x - y - 2)\geq 0\\
    s_2s_3s_2s_1\in \A(\lambda,\mu) \iff& (-c_1-x+y-1)\geq 0,(-c_1-c_2-c_3-x+y-z-3)\geq 0, \ \text{and} \ \\ &(-c_1 - c_2 - c_3 - x - 3)\geq 0\\
    s_3s_1s_2s_1\in \A(\lambda,\mu) \iff& (-c_1-c_2-y+z-2)\geq 0,(-c_1 - c_2 - x + z - 2)\geq 0, \ \text{and} \ \\ &(-c_1 - c_2 - c_3 - x - 3)\geq 0\\
    s_1s_2s_3s_1s_2\in \A(\lambda,\mu) \iff& (-c_1 - c_2 - c_3 - z - 3)\geq 0,(-c_1 - 2c_2 - c_3 - y - 4)\geq 0, \ \text{and} \ \\ &(-c_2 - c_3 + x - y - 2)\geq 0\\
    s_1s_2s_3s_2s_1\in \A(\lambda,\mu) \iff & (-c_1 - c_2 - c_3 - z - 3)\geq 0,  (-c_1-c_2-c_3-x+y-z-3)\geq 0, \ \text{and} \ \\ &  (-c_1 - c_2 - c_3 - x - 3 )\geq 0\\
    s_2s_3s_1s_2s_1\in \A(\lambda,\mu) \iff& (-c_1-c_2-y+z-2)\geq 0,(-c_1 - 2c_2 - c_3 - y - 4)\geq 0, \ \text{and} \ \\ & (-c_1 - c_2 - c_3 - x - 3)\geq 0\\
    s_1s_2s_3s_1s_2s_1\in \A(\lambda,\mu) \iff& ( -c_1 - c_2 - c_3 - z - 3)\geq 0, (-c_1 - 2c_2 - c_3 - y - 4)\geq 0, \ \text{and} \ \\ &(-c_1 - c_2 - c_3 - x - 3)\geq 0.
\end{align*}
Intersecting these solutions sets on the lattice $\Z{\al_1} \oplus \Z{\al_2}\oplus \Z{\al_3}$ produces the desired results.
\begin{table}[h]
\begin{tabular}{|l|l||l|l||l|l|}\hline
$\sigma\in W$& $\sigma(\lambda+\rho)-\rho-\mu$&$\sigma\in W$& $\sigma(\lambda+\rho)-\rho-\mu$&$\sigma\in W$& $\sigma(\lambda+\rho)-\rho-\mu$\\\hline\hline
1 &
$(P_1,Q_1,R_1)$&
$s_1$ &
$(P_4,Q_1,R_1)$&
$s_2$ &
$(P_1,Q_6,R_1)$\\\hline
$s_3$ &
$(P_1,Q_1,R_4)$&
$s_1s_2$ &
$(P_3,Q_6,R_1)$&
$s_2s_1$ &
$(P_4,Q_4,R_1)$\\\hline
$s_2s_3$ &
$(P_1,Q_5,R_4)$&
$s_3s_1$&
$(P_4,Q_1,R_4)$&
$s_3s_2$ &
$(P_1,Q_6,R_3)$\\\hline
$s_1s_2s_1$&
$(P_3,Q_4,R_1)$&
$s_1s_2s_3$ &
$(P_2,Q_5,R_4)$&
$s_2s_3s_1$&
$(P_4,Q_3,R_4)$\\\hline
$s_2s_3s_2$ &
$(P_1,Q_5,R_3)$&
$s_3s_1s_2$ &
$(P_3,Q_6,R_3)$&
$s_3s_2s_1$ &
$(P_4,Q_4,R_2)$\\\hline
$s_1s_2s_3s_1$&
$(P_2,Q_3,R_4)$&
$s_1s_2s_3s_2$ &
$(P_2,Q_5,R_3)$&
$s_2s_3s_1s_2$&
$(P_3,Q_2,R_3)$\\\hline
$s_2s_3s_2s_1$ &
$(P_4,Q_3,R_2)$&
$s_3s_1s_2s_1$ &
$(P_3,Q_4,R_2)$&
$s_1s_2s_3s_1s_2$ &
$(P_2,Q_2,R_3)$\\\hline
$s_1s_2s_3s_2s_1$&
$(P_2,Q_3,R_2)$&
$s_2s_3s_1s_2s_1$ &
$(P_3,Q_2,R_2)$&
$s_1s_2s_3s_1s_2s_1$ &
$(P_2,Q_2,R_2)$\\\hline
\end{tabular}
\begin{tabular}{lll}
$P_1=x$&
$P_2=-c_1-c_2-c_3-z-3$&
$P_3=-c_1-c_2-y+z-2$\\
$P_4=-c_1-x+y-1$&
$Q_1=y$&
$Q_2=-c_1-2c_2-c_3-y-4$\\
$Q_3=-c_1-c_2-c_3-x+y-z-3$&
$Q_4=-c_1-c_2-x+z-2$&
$Q_5=-c_2-c_3+x-z-2$\\
$Q_6=-c_2+x-y+z-1$&
$R_1=z$&
$R_2=-c_1-c_2-c_3-x-3$\\
$R_3=-c_2-c_3+x-y-2$&
$R_4=-c_3+y-z-1$&\\
\end{tabular}
\caption{Notation for $\sigma(\lambda+\rho)-\rho-\mu$ as a sum of simple roots for each $\sigma\in W$, where $(X,Y,Z):=X\a_1+Y\a_2+Z\a_3$.}
\label{tab:tablewithelementsFULLCOEFFS}
\end{table}

\end{proof}
Theorem \ref{thm:195 alt sets} states that there are 195 distinct Weyl alternation sets, which can be attained as $\lambda$ varies within the root lattice. In the next section, we give examples which realize each of these sets when $\mu=0$.

\begin{table}[htp]
\centering
\resizebox{6.75in}{!}{
\includegraphics[width=\textwidth]{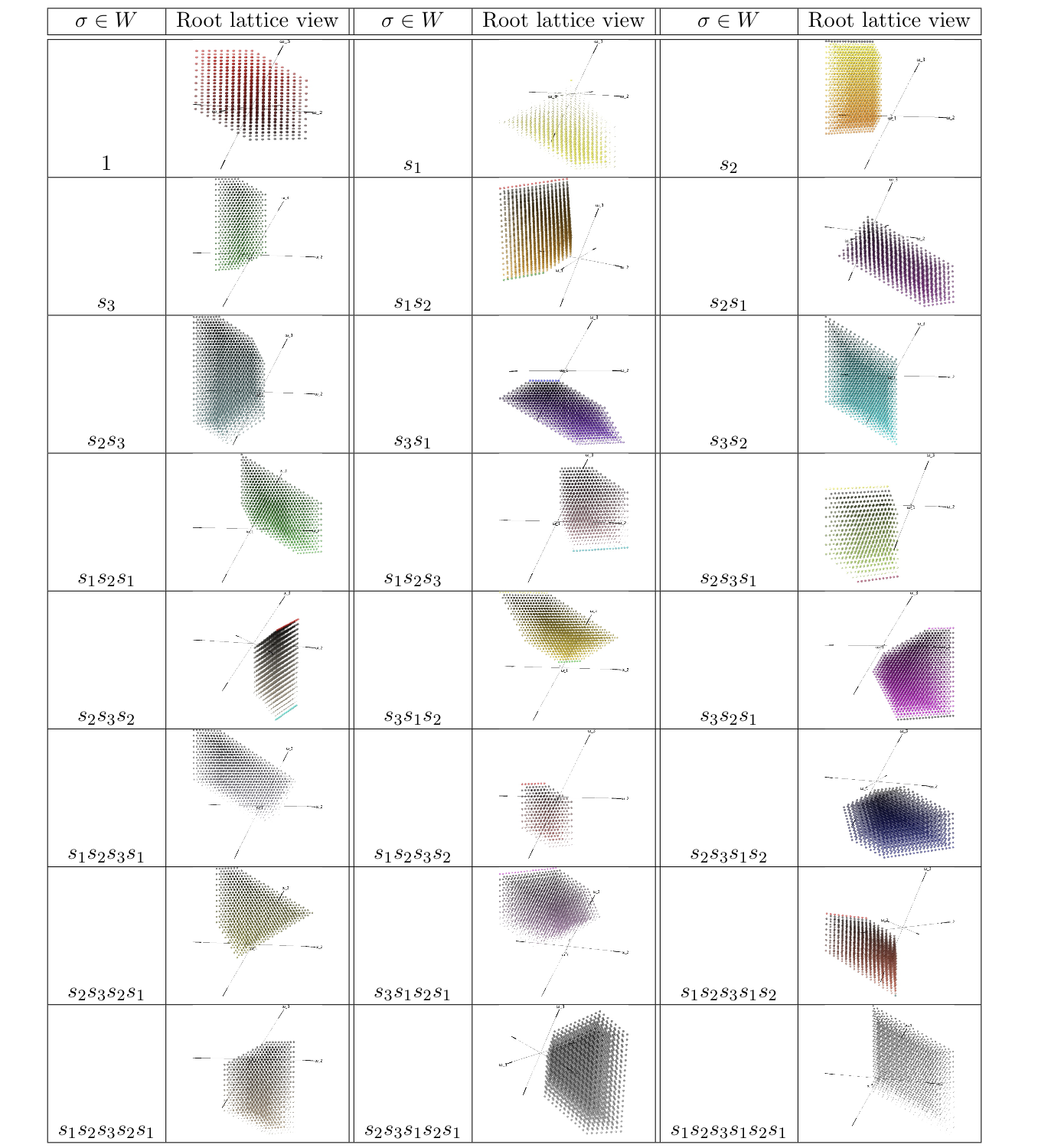}
}
\caption{Solution regions for inequalities determined by each Weyl group element in the case that $\mu=0$.}\label{tab:weyl solutions}
\end{table}

\subsection{Weyl alternation diagrams}
The proof of Theorem \ref{thm:195 alt sets} provides inequalities which describe when certain elements of the Weyl group are in the Weyl alternation set $\A(\lambda,\mu)$. Each Weyl group element provides 3 inequalities, whose solution set cuts out a cone of the lattice $\mathbb{Z}\w_1\oplus\mathbb{Z}\w_2\oplus\mathbb{Z}\w_3$. Table \ref{tab:weyl solutions} illustrates  these regions in the root lattice for each element in $W$ and when $\mu=0$.

We note that as $\mu$ varies within the nonnegative octant of the root lattice, the diagrams in Table \ref{tab:weyl solutions} translate but their geometry remains the same. Moreover, for any fixed $\mu$, by intersecting the associated solution regions we can create the associated $\mu$ Weyl alternation diagram. 

As is evident, 
Weyl alternation diagrams 
are best analyzed when one has the ability to rotate an image and view it from a multitude of angles, we provide code which allows a user to input a weight $\mu$ and whose output is the set of Weyl alternation diagrams associated to each of the Weyl alternation sets determined by $\mu$. The code, along with instructions on how to use it, are found in the GitHub repository at \textcolor{blue}{\href{https://github.com/melendezd/Weight-Multiplicities}{https://github.com/melendezd/Weight-Multiplicities}}.
However, here we present a set of 2-dimensional diagrams containing the same information for the case when $\mu=0$. 
\begin{example}
To illustrate the $\mu=0$ Weyl alternation diagrams, we fix an integer $z_0$ (satisfying $-5\leq z_0\leq 5$), and give a Weyl alternation diagram on the integral lattice spanned by $w_1$ and $w_2$ at height $z_0$. That is, we consider the set of weights
$\lambda=x\w_1+y\w_2+z_0\w_3$ with $x,y\in Z$ satisfying $-25\leq x,y\leq 25$, and we color all lattice points with the same color when they share a Weyl alternation set $\A(\lambda,0)$. This produces the diagrams in Figure~\ref{fig:mu=0 weyl diagrams}.
\begin{figure}[h]
    \centering
\includegraphics[width=\textwidth]{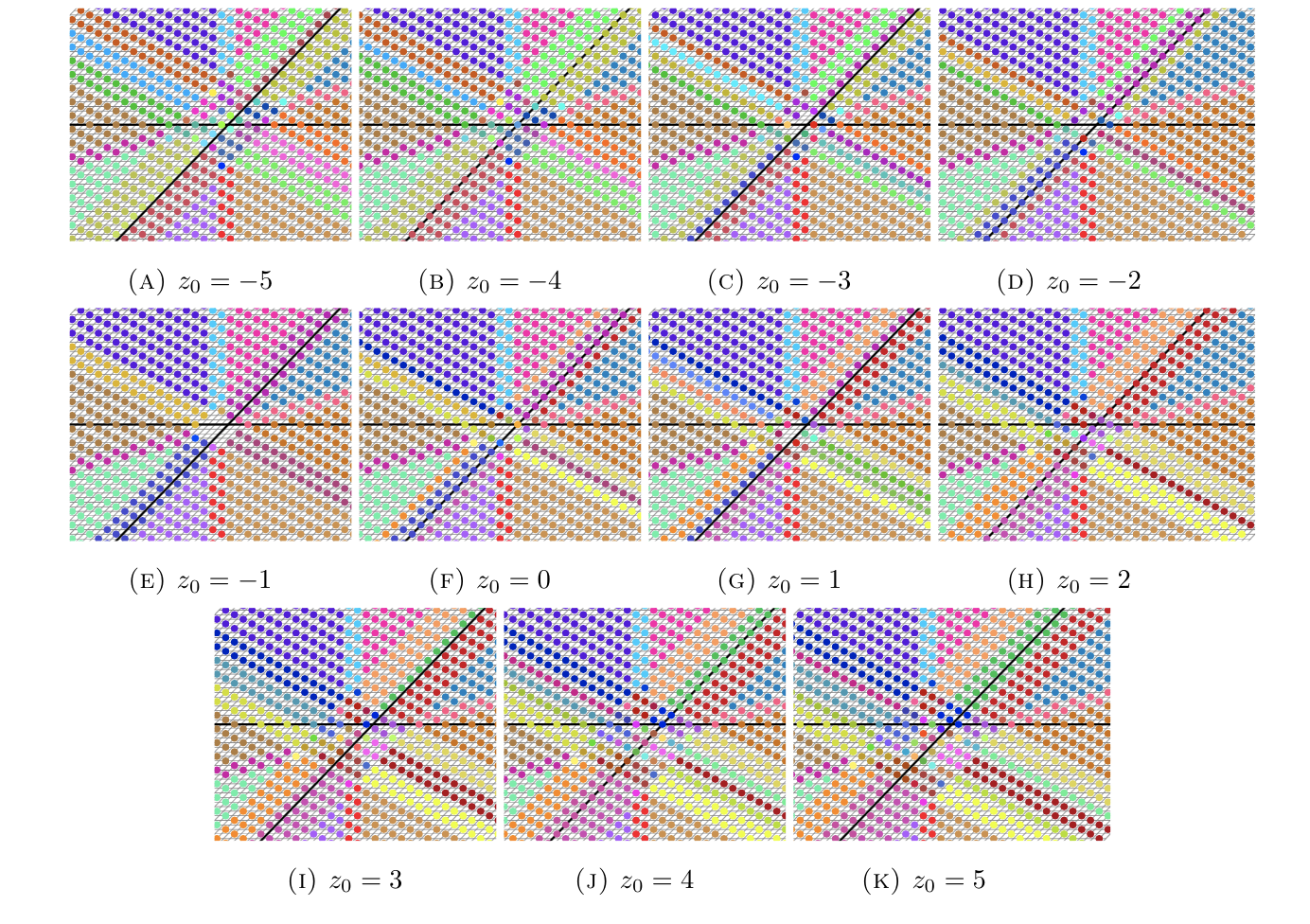}
    \caption{The $\mu=0$ Weyl alternation diagrams for $\sl4$. The horizontal axis is the real span of $\w_1$, while the other is the real span of $\w_2$. The key for these diagram is given in Appendix \ref{appendix:AS}.}
\label{fig:mu=0 weyl diagrams}
\end{figure}
\end{example}

Another set of diagrams associated to Weyl alternation sets are known as empty regions. We recall that for a fixed $\mu$ in the root lattice, the empty region consists of the set of weights $\lambda$ on the root lattice for which $\A(\lambda,\mu)=\emptyset$. In the following examples, we fix $\mu$ in the root lattice, color red every weight $\lambda$ in the root lattice for which $\A(\lambda,\mu)=0$, and give a geometric description and visualization of the empty region. Note that at times we increase the size of the vertices to better illustrate the behavior of the empty region.
\vspace{.1in}
{ 
\begin{enumerate}[label=Case \arabic*]
    \item\hspace{-1.5mm}:  $\mu=n\al_1$.  This empty region is in the shape of a tetrahedron. Additionally, the tetrahedron increases in size as the coefficient of $\alpha_1$ increases.
    
    \begin{center}
\includegraphics[width=5in]{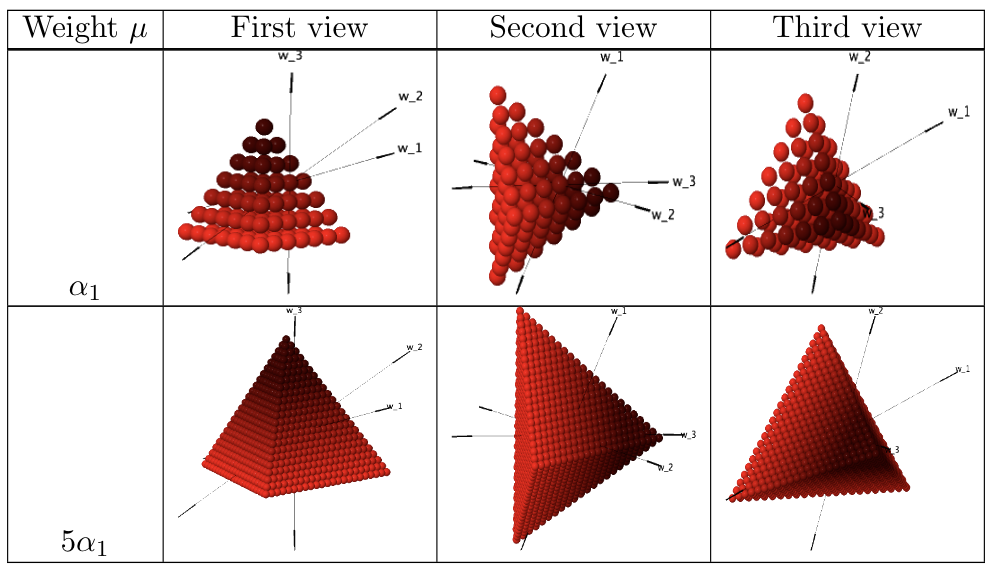}
    \end{center}
    
    \item\hspace{-1.5mm}: $\mu=n\al_2$. This empty region takes the form of a cube. As before, the cube increases in size as the coefficient of $\al_2$ increases.

        \begin{center}
\includegraphics[width=5in]{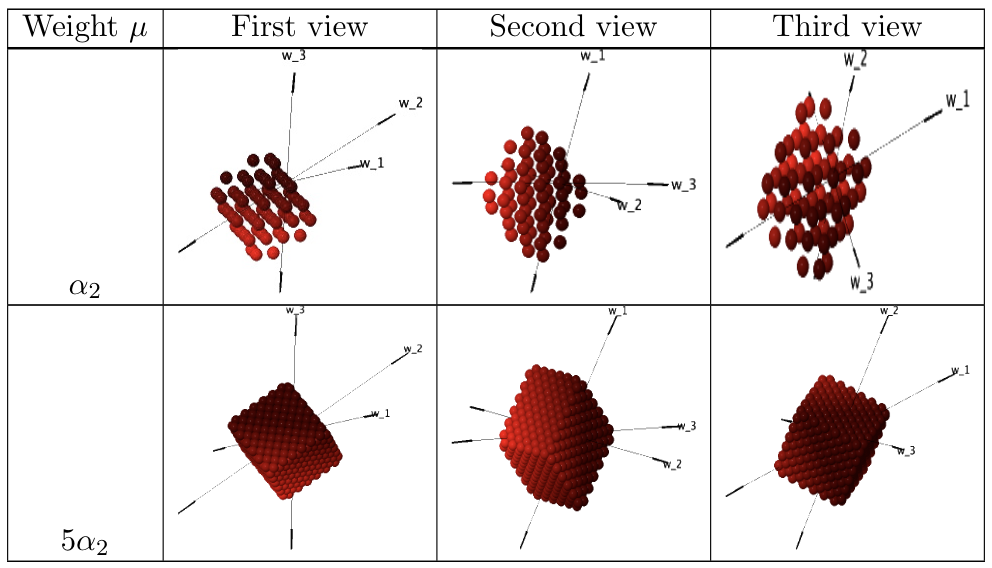}
    \end{center}

    \item\hspace{-1.5mm}: $\mu=n\al_3$. This empty region forms a tetrahedron but with a different orientation than when $\mu=n\a_1$. As before, the tetrahedron increases in size as the coefficient of $\a_3$ increases.
    
        \begin{center}
\includegraphics[width=5in]{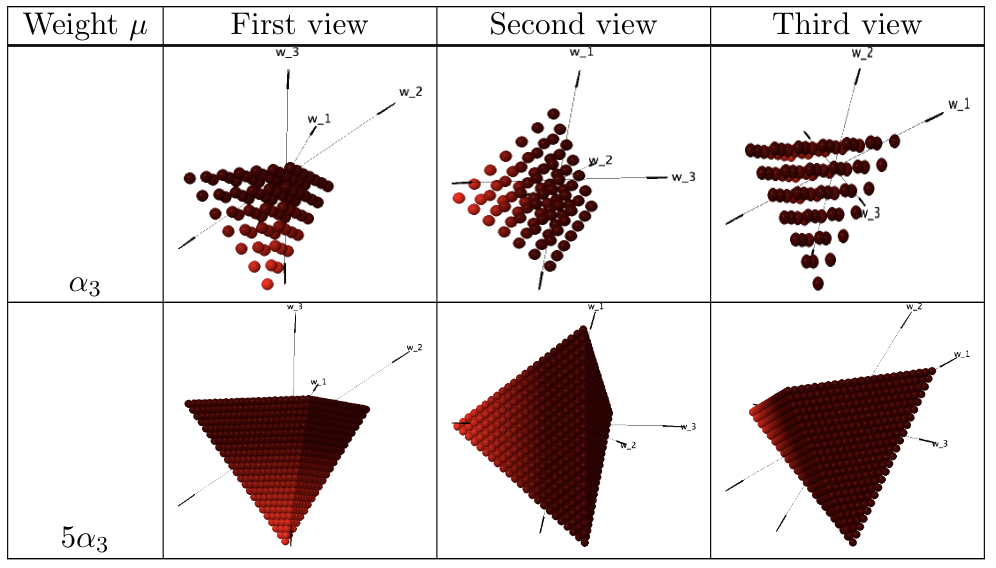}
    \end{center}
    
    \item\hspace{-1.5mm}:\label{geoempty:case4} $\mu=n\al_1+n\al_2$. Once again the empty region is a tetrahedron, but we observe that each face shows a set of points near their center. As $n$ increases, so does size of the region.

        \begin{center}
\includegraphics[width=5in]{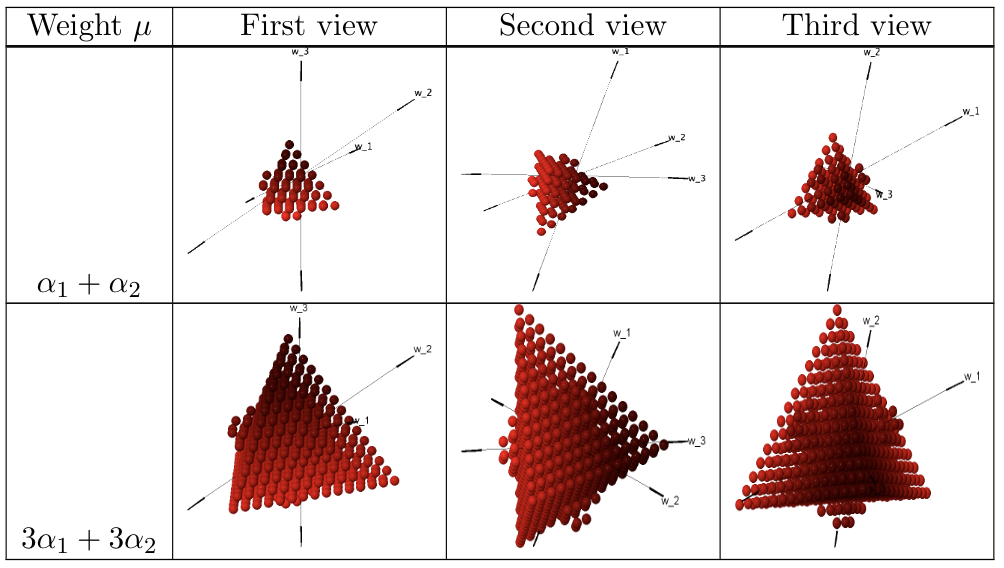}
    \end{center}

\item\hspace{-1.5mm}: $\mu=n\al_2+n\al_3$. The empty region is a tetrahedron with a few points coming out of it. However, the orientation and the size is different from \ref{geoempty:case4}. As the coefficients for $\al_2,\al_3$ increase, the shape gets bigger.
\vspace{.1in}

        \begin{center}
\includegraphics[width=5in]{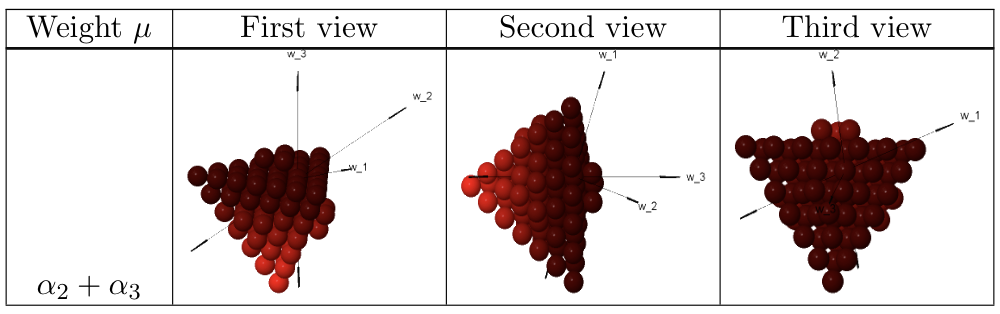}\\
\includegraphics[width=5in]{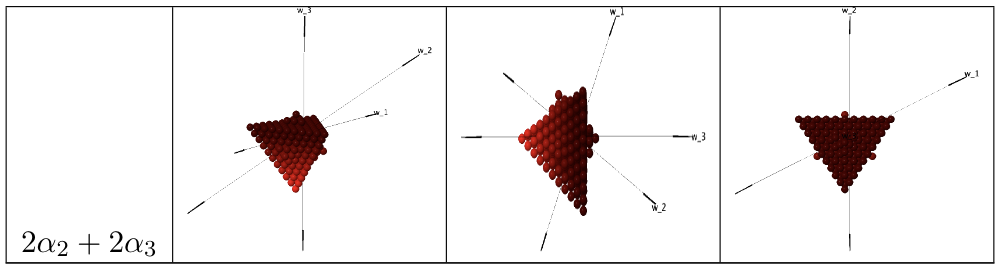}
    \end{center}
    
    \item\hspace{-1.5mm}: $\mu=n\al_1+n\al_3$. The empty region is a stellated octahedron, being formed by two tetrahedrons. As before, the size of the region grows as $n$ grows.
    
        \begin{center}
\includegraphics[width=5in]{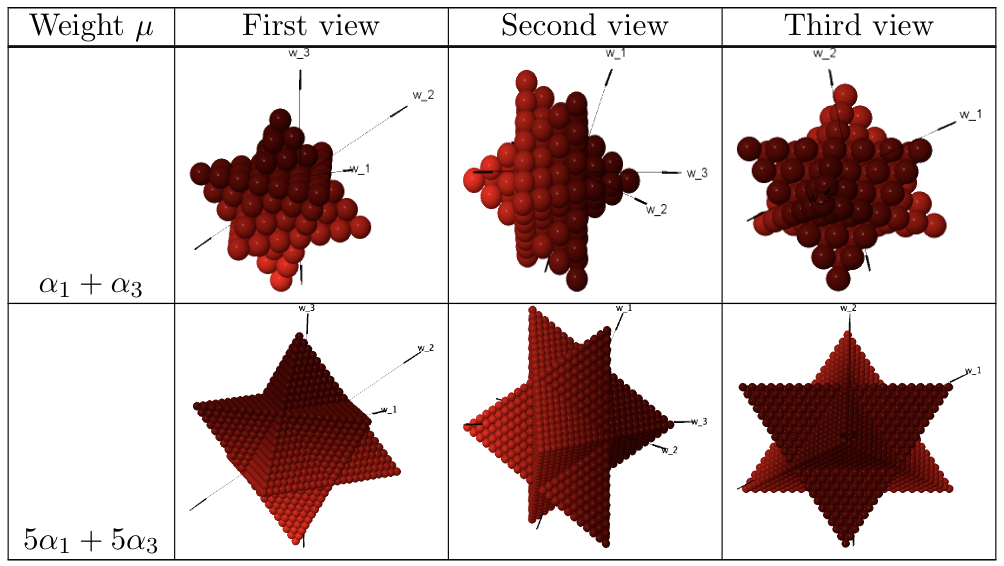}
    \end{center}

    \item\hspace{-1.5mm}: $\mu=n\al_1+n\al_2+n\al_3$. The empty region is a stellated octahedron, being formed by two tetrahedrons. Note that as $n$ grows, so does the size of the region.

        \begin{center}
\includegraphics[width=5in]{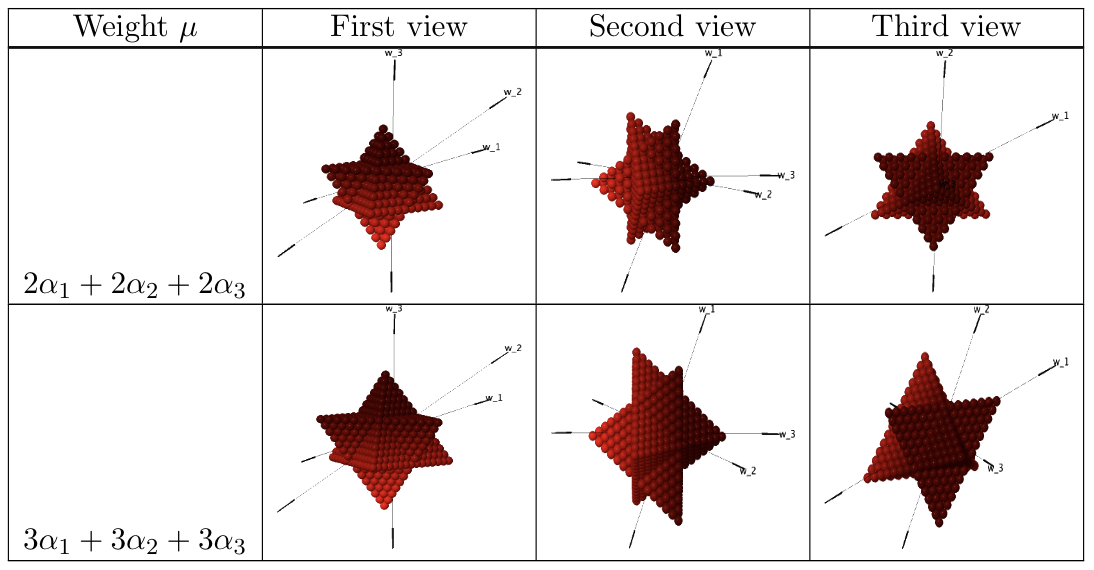}
    \end{center}
        
    \item\hspace{-1.5mm}:
    $\mu=m\a_1+n\a_2+k\a_3$.
    Here, note that we let $m, n,$ and $k$ be of different parity. When $m, k$ are odd and $n$ is even, the empty region is a tetrahedron. However, if $n$ is odd and $m, k$ are even, we get a stellated octahedron, with one tetrahedron smaller than the other. These shapes seem to be consistently determined by the parity of $m, n, k$ as described. Again, as the  coefficients $m,n,k$ grow, so does the size of the region.

        \begin{center}
\includegraphics[width=5in]{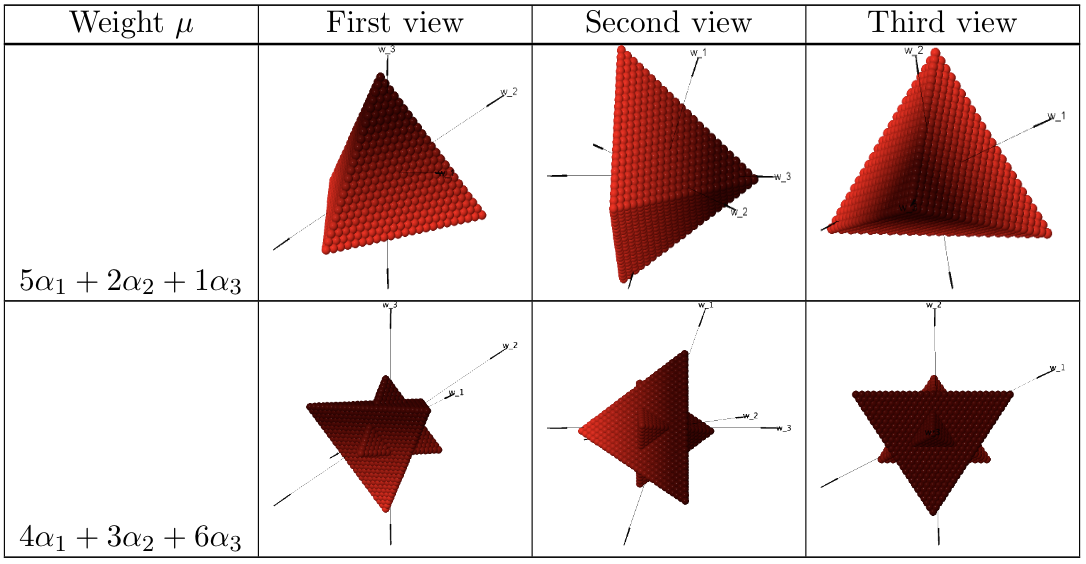}
    \end{center}
    
\end{enumerate}
}

%%%%%%%%%%	 	SEC 5	 %%%%%%%%%%
\section{The  \texorpdfstring{$q$}{q}-analog of Kostant's weight multiplicity formula }\label{sec:qmult}
In this section we use Theorem \ref{thm:main1} and Theorem \ref{thm:195 alt sets} to give a formula for the $q$-multiplicity of a weight $\mu$ in $L(\lambda)$, a highest weight irreducible representation of $\sl4$ with highest weight $\lambda$. We begin by letting $\lambda=m\w_1+n\w_2+k\w_3$ and $\mu=c_1\w_1+c_2\w_2+c_3\w_3$, with $m,n,k,c_1,c_2,c_3\in\mathbb{N}$, and we set the notation for the rest of the section as given in Table \ref{tab:tablewithelementsFULLCOEFFS}.

\begin{theorem}\label{thm:qKWMF}
Let $\lam=m\w_1+n\w_2+k\w_3$ and
 $\mu=c_1\w_1+c_2\w_2+c_3\w_3$, with $m,n,k,c_1,c_2,c_3\in\N$. If
$ x=\frac{3m+2n+k-3c_1-2c_2-c_3}{4}$,
$y=\frac{m+2n+k-c_1-2c_2-c_3}{2}$, and
$z=\frac{m+2n+3k-c_1-2c_2-3c_3}{4}$,
then

{\footnotesize
\[
m_q(\lambda,\mu)=\begin{cases}
Z_1-Z_{11}-Z_3+Z_5+Z_{10}-Z_7 & P_1,Q_1,R_1,Q_6,R_4,Q_5,R_3 \in \N \ \text{and}\  P_4, P_3 \notin \N \\
Z_1-Z_6-Z_3+Z_4+Z_8-Z_2 & P_1,Q_1,R_1,P_4,Q_6,P_3,Q_4 \in \N \ \text{and} \ R_4, R_3 \notin \N\\
Z_1-Z_6-Z_{11}-Z_3+Z_8+Z_9 & P_1,Q_1,R_1,P_4,R_4,Q_6,Q_4 \in \N \ \text{and} \ P_3, Q_5, R_3 \notin \N\\
Z_1-Z_6-Z_{11}-Z_3+Z_9+Z_5 & P_1,Q_1,R_1,P_4,Q_6,R_4, Q_5 \in \N \ \text{and} \ P_3,Q_4,R_3 \notin \N\\
Z_1-Z_6-Z_{11}-Z_3+Z_9 & P_1,Q_1,R_1,P_4,Q_6,R_4 \in \N\ \text{and} \ P_3,Q_4, Q_5, R_3 \notin \N\\
Z_1-Z_{11}-Z_3+Z_5 & P_1,Q_1,R_1,Q_6,R_4,Q_5 \in \N \ \text{and} \ P_4,P_3,R_3 \notin \N \\
Z_1-Z_6-Z_3+Z_9 & P_1,Q_1,R_1,P_4,R_4 \in \N \ \text{and} \ Q_6,Q_4,Q_5 \notin \N\\
Z_1-Z_6-Z_{11}+Z_8 & P_1,Q_1,R_1,P_4,Q_6,Q_4 \in \N \ \text{and} \ R_4,P_3,R_3 \notin \N\\
Z_1-Z_{11}-Z_3 & P_1,Q_1,R_1,Q_6,R_4 \in \N \ \text{and} \ P_4,P_3, Q_5, R_3 \notin \N \\
Z_1-Z_6-Z_{11} & P_1,Q_1,R_1,P_4,Q_6 \in \N \ \text{and} \ R_4,P_3,Q_4,R_3 \notin \N\\
Z_1-Z_3 & P_1,Q_1,R_1,R_4 \in \N \ \text{and} \ Q_6,P_4,Q_5 \notin \N \ \text{and} \ (P_3 \notin \N \ \text{or} \ Q_4 \notin \N )\\
Z_1-Z_{11} & P_1,Q_1,R_1,Q_6 \in \N \ \text{and} \ P_4,R_4,R_3,P_3 \notin \N\\
Z_1 - Z_6 & P_1,Q_1,R_1, P_4 \in \N \ \text{and} \ Q_6,R_4,Q_4 \notin \N \ \text{and} \ (Q_5 \notin \N \ \text{or} \ R_3 \notin \N)\\
Z_1 & P_1,Q_1, R_1 \in \N \ \text{and} \ R_4,P_4,Q_6 \notin \N \ \text{and} \ (P_3 \notin \N \ \text{or} \ Q_4 \notin \N) \\& \text{and} \ (Q_5 \notin \N \ \text{or} \ R_3 \notin \N )\\
0 & \textnormal{otherwise,}
\end{cases}
\]
}
where $Z_1,\dots,Z_{11}$ are defined as in Appendix \ref{appendix:ztable}.
\end{theorem}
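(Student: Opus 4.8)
The plan is to start from Lusztig's formula \eqref{eq:q-analog} together with the observation that $\wp_q(\sigma(\lambda+\rho)-(\rho+\mu))=0$ whenever $\sigma\notin\A(\lambda,\mu)$, so that $m_q(\lambda,\mu)=\sum_{\sigma\in\A(\lambda,\mu)}(-1)^{\ell(\sigma)}\wp_q(\sigma(\lambda+\rho)-(\rho+\mu))$. By Table~\ref{tab:tablewithelementsFULLCOEFFS} each summand is $\wp_q(P_\bullet\al_1+Q_\bullet\al_2+R_\bullet\al_3)$ for an explicit triple of linear forms, and $\ell(\sigma)$ is read off from a reduced word. Thus the proof reduces to two tasks: (i) determining $\A(\lambda,\mu)$ precisely when $\lambda$ and $\mu$ are both dominant, and (ii) evaluating each resulting partition function by Theorem~\ref{thm:main1}.

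For task (i) I would first make a structural reduction using the hypothesis $c_1,c_2,c_3\in\N$. Two points do the work. First, applying the standard fact that $w\nu\preceq\nu$ in the root order for every dominant weight $\nu$ and every $w\in W$ to $\nu=\lambda+\rho$ gives $\sigma(\lambda+\rho)-(\rho+\mu)\preceq\lambda-\mu$; combined with the fact that $\sigma(\lambda+\rho)-(\lambda+\rho)$ lies in the root lattice, this shows that $\A(\lambda,\mu)\ne\emptyset$ forces $\lambda-\mu\in\N\al_1\oplus\N\al_2\oplus\N\al_3$, i.e. $1\in\A(\lambda,\mu)$, equivalently $P_1,Q_1,R_1\in\N$ — which is why this clause occurs in all fourteen nonempty rows. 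Second, for each of the thirteen Weyl group elements of length at least $3$ other than $s_1s_2s_1$ and $s_2s_3s_2$, the membership criterion established in the proof of Theorem~\ref{thm:195 alt sets} contains a clause which, after substituting $x,y,z$ in terms of $m,n,k,c_1,c_2,c_3$, takes the form (a nonnegative integer combination of $m,n,k,c_1,c_2,c_3$ plus a positive constant) $\le 0$ — for instance $s_1s_2s_3\in\A(\lambda,\mu)$ forces $-c_1-c_2-c_3-z-3\ge 0$, i.e. $m+2n+3k+3c_1+2c_2+c_3+12\le 0$ — which is impossible. Hence $\A(\lambda,\mu)$ is contained in the eleven elements $\{1,s_1,s_2,s_3,s_1s_2,s_2s_1,s_2s_3,s_3s_1,s_3s_2,s_1s_2s_1,s_2s_3s_2\}$, whose corresponding summands are exactly $Z_1,\dots,Z_{11}$.

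Next I would carry out the finite case analysis. By the proof of Theorem~\ref{thm:195 alt sets}, membership of each of these eleven elements in $\A(\lambda,\mu)$ is equivalent to the three corresponding entries among the ten forms $P_1,P_3,P_4,Q_1,Q_4,Q_5,Q_6,R_1,R_3,R_4$ lying in $\N$; and once $P_1,Q_1,R_1\in\N$ all ten of these quantities are automatically integers, so outside that case "$\notin\N$" simply means "is negative." Intersecting the corresponding regions in $\Z\al_1\oplus\Z\al_2\oplus\Z\al_3$ and using the numerous coincidences among the eleven triples (for instance the triples for $s_2$ and $s_1s_2$ differ only in the $\al_1$-entry, and those for $s_3$ and $s_2s_3$ only in the $\al_2$-entry) collapses the $2^{11}$ a priori sign patterns to exactly the fourteen nonempty Weyl alternation sets listed; the disjunctive clauses such as "$P_3\notin\N$ or $Q_4\notin\N$" appear precisely because excluding an element like $s_1s_2s_1$ only requires one of its three coordinates to be negative without pinning down which. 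For each of the fourteen sets, summing $(-1)^{\ell(\sigma)}\wp_q(\sigma(\lambda+\rho)-(\rho+\mu))$ with $\ell(1)=0$, $\ell(s_i)=1$, $\ell(s_is_j)=2$ and $\ell(s_1s_2s_1)=\ell(s_2s_3s_2)=3$ produces the displayed signed combination of the $Z_i$, and when no clause holds $\A(\lambda,\mu)=\emptyset$, so $m_q(\lambda,\mu)=0$. Finally, for task (ii) each $Z_i=\wp_q(P_\bullet\al_1+Q_\bullet\al_2+R_\bullet\al_3)$ is evaluated by selecting the branch of Theorem~\ref{thm:main1} dictated by the relative sizes of $P_\bullet,Q_\bullet,R_\bullet$; these are routine substitutions, deferred to Appendix~\ref{appendix:ztable}.

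The main obstacle is the case analysis of the third paragraph: it is elementary but voluminous and error-prone, amounting to an audit of all sign patterns of ten linear forms restricted to the cone $P_1,Q_1,R_1\ge 0$, and the delicate part is organizing it so that exactly the fourteen stated cases emerge — pairwise disjoint, jointly with the "otherwise" row exhaustive, and with the correct disjunctive clauses. A secondary point requiring care is the integrality bookkeeping: one must verify at the outset that $P_1,Q_1,R_1\in\N$ already forces every other $P_\bullet,Q_\bullet,R_\bullet$ to be an integer, so that all the conditions can be stated uniformly as membership in $\N$.
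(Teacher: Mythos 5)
Your overall route coincides with the paper's: discard the Weyl group elements that cannot contribute when $\lambda$ and $\mu$ are dominant, reduce to the eleven elements whose contributions are $Z_1,\dots,Z_{11}$, intersect the membership inequalities coming from the proof of Theorem \ref{thm:195 alt sets} to obtain the fourteen nonempty cases, and evaluate each partition function via Theorem \ref{thm:main1} (Appendix \ref{appendix:ztable}). Your dominance-order observation explaining why $P_1,Q_1,R_1\in\N$ occurs in every nonzero row is correct and is a conceptual point the paper leaves implicit, and your integrality remark (that $P_1,Q_1,R_1\in\N$ forces all other $P_\bullet,Q_\bullet,R_\bullet$ to be integers) is fine.

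There is, however, one genuine gap: your claim that each of the thirteen excluded elements of length at least $3$ has a \emph{single} membership clause that is impossible on its own fails for $s_3s_1s_2$. Its triple is $(P_3,Q_6,R_3)$, and none of these coordinates is one of the manifestly negative forms $P_2,Q_2,R_2,Q_3$; indeed $P_3=\frac{-m-2n+k-3c_1-2c_2-c_3}{4}-2$ is nonnegative whenever $k$ is large compared with $m,n,c_1,c_2,c_3$, and similarly $R_3=\frac{m-2n-k-c_1-2c_2-3c_3}{4}-2$ and $Q_6$ can each be nonnegative for suitable dominant $\lambda,\mu$. Excluding $s_3s_1s_2$ therefore requires a \emph{joint} incompatibility argument, which the paper supplies separately: $P_3\geq 0$ forces $k\geq 8+m+2n+3c_1+2c_2+c_3$ while $R_3\geq 0$ forces $k\leq -8+m-2n-c_1-2c_2-3c_3$, and combining these yields $0\geq 16+4n+4c_1+4c_2+4c_3$, a contradiction since $n,c_1,c_2,c_3\geq 0$. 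Without this step your reduction of the sum to the eleven $Z_i$ --- and hence the fourteen-case formula, which has no term corresponding to $s_3s_1s_2$ --- is not justified as written. The repair is short, but it is a different kind of argument (incompatibility of two clauses taken together) than the one your proposal invokes, so you should add it explicitly rather than subsume $s_3s_1s_2$ under the ``one impossible clause'' pattern.
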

\begin{proof}
\indent Since $m,n,k,c_1,c_2,c_3\in\N$, we have
$P_2$, $Q_2$, $R_2$ are less than zero by inspection, and a straightforward substitution shows that $Q_3=-2m-2k-2c_1-4c_2-2c_3-3<0$.
Consequently, $\wp(\sigma(\lam+\mu)-(\rho+\mu))=0$ for all 
\[\sigma\in W_0:=\left\{\begin{matrix}s_2s_3s_1,s_3s_2s_1,s_2s_3s_2s_1,s_3s_1s_2s_1,s_2s_3s_1s_2,s_2s_3s_1s_2s_1,\\s_1s_2s_3,s_1s_2s_3s_2,s_1s_2s_3s_1,s_1s_2s_3s_2s_1,s_1s_2s_3s_1s_2,s_1s_2s_3s_1s_2s_1\end{matrix}\right\}.\]
Thus, the only possible Weyl group elements  contributing nontrivially to $m_q(\lambda,\mu)$ are those in $W\setminus W_0$. 
Table \ref{tab:tablewithelementsFULLCOEFFS} presents our notation for writing  $\sigma(\lambda+\rho)-\rho-\mu$ as a nonnegative integral sum of simple roots for all $\sigma\in W$ and in particular for $\sigma\in 
W \setminus W_0$.

Consider the case $\sigma=s_3s_1s_2$.
Note that \[P_3 = \frac{-m-2n+k-3c_1-2c_2-c_3}{4}-2
\qquad\mbox{and}\qquad R_3=\frac{m-2n-k-c_1-2c_2-3c_3}{4}-2.\]
In order for us to have $\wp_q(s_3s_1s_2(\lam+\rho)-(\rho+\mu))> 0$,
we must have in particular that $P_3\geq 0$ and $R_3\geq 0$.
As a result, we have that
\begin{align*}
    k &\geq 8+m+2n+3c_1+2c_2+c_3 \text{ and }
    k \leq -8+m-2n-c_1-2c_2-3c_3.
\end{align*}
It then follows that
\begin{align*}
    8+m+2n+3c_1+2c_2+c_3 \leq
    -8+m-2n-c_1-2c_2-3c_3,
\end{align*}
which implies
$0\geq 16+4n+4c_1+4c_2+4c_3$. 
Since $n,c_1,c_1,c_3\geq 0$, this is a contradiction, and thus $\wp_q(s_3s_1s_2(\lam+\rho)-(\rho+\mu)) = 0$ for all $\lam,\mu$ in the nonnegative octant of the fundamental weight lattice.

\indent Hence, the only remaining elements of the Weyl group for which it is a possibility that $\wp_q(\sigma(\lambda+\rho)-(\rho+\mu))>0$ are those in Table \ref{tab:last 11}.

\begin{table}[htp]
\begin{tabular}{|l|l|c|}
\hline
 $\sigma$ & $\xi=\sigma(\lam+\rho)-(\rho+\mu)$&$\wp_q(\xi)$ \\\hline\hline

1 &
$P_1\al_1 + Q_1\al_2 + R_1\al_3$ &$Z_{1}$ \\\hline

$s_1s_2s_1 $&
$P_3\al_1 + Q_4\al_2 + R_1\al_3$ &$Z_{2}$ \\\hline

$s_3$ &
$P_1\al_1 + Q_1\al_2 + R_4\al_3$ &$Z_{3}$ \\\hline

$s_1s_2$ &
$P_3\al_1 + Q_6\al_2 + R_1\al_3$ &$Z_{4}$ \\\hline

$s_2s_3$ &
$P_1\al_1 + Q_5\al_2 + R_4\al_3$ &$Z_{5}$ \\\hline

$s_1$ &
$P_4\al_1 + Q_1\al_2 + R_1\al_3$ &$Z_{6}$ \\\hline

$s_2s_3s_2$ &
$P_1\al_1 + Q_5\al_2 + R_3\al_3$ &$Z_{7}$ \\\hline

$s_2s_1$ &
$P_4\al_1 + Q_4\al_2 + R_1\al_3$ &$Z_{8}$ \\\hline

$s_3s_1 $&
$P_4\al_1 + Q_1\al_2 + R_4\al_3$ &$Z_{9}$ \\\hline

$s_3s_2$ &
$P_1\al_1 + Q_6\al_2 + R_3\al_3$ &$Z_{10}$ \\\hline

$s_2$ &
$P_1\al_1 + Q_6\al_2 + R_1\al_3$ &$Z_{11}$ \\\hline
\end{tabular}
\qquad
\begin{tabular}{l}
$P_1=x$\\
$P_3=-c_1-c_2-y+z-2$\\
$P_4=-c_1-x+y-1$\\
$Q_1=y$\\
$Q_4=-c_1-c_2-x+z-2$\\
$Q_5=-c_2-c_3+x-z-2$\\
$Q_6=-c_2+x-y+z-1$\\
$R_1=z$\\
$R_3=-c_2-c_3+x-y-2$\\
$R_4=-c_3+y-z-1$\\
\end{tabular}
\caption{Notation for $\sigma(\lambda+\rho)-\rho-\mu$ when $\sigma\in W\setminus (W_0\cup\{s_3s_1s_2\})$.}\label{tab:last 11}
\end{table}
For each of the elements in Table \ref{tab:last 11} we have that 
\begin{align*}
    1\in\mathcal{A}(\lam,\mu) &\Longleftrightarrow P_1,Q_1,R_1\in\N ,\\
    s_1\in\mathcal{A}(\lam,\mu) &\Longleftrightarrow P_4,Q_1,R_1\in\N ,\\
    s_2\in\mathcal{A}(\lam,\mu) &\Longleftrightarrow P_1,Q_6,R_1\in\N ,\\
    s_3\in\mathcal{A}(\lam,\mu) &\Longleftrightarrow P_1,Q_1,R_4\in\N ,\\
    s_1s_2\in\mathcal{A}(\lam,\mu) &\Longleftrightarrow P_3,Q_6,R_1\in\N ,\\
    s_2s_1\in\mathcal{A}(\lam,\mu) &\Longleftrightarrow P_4,Q_4,R_1\in\N ,\\
    s_3s_1\in\mathcal{A}(\lam,\mu) &\Longleftrightarrow P_4,Q_1,R_4\in\N ,\\
    s_2s_3\in\mathcal{A}(\lam,\mu) &\Longleftrightarrow P_1,Q_5,R_4\in\N ,\\
    s_3s_2\in\mathcal{A}(\lam,\mu) &\Longleftrightarrow P_1,Q_6,R_3\in\N ,\\
    s_1s_2s_1\in\mathcal{A}(\lam,\mu) &\Longleftrightarrow P_3,Q_4,R_1\in\N ,\\ 
    s_2s_3s_2\in\mathcal{A}(\lam,\mu) &\Longleftrightarrow P_1,Q_5,R_3\in\N .
\end{align*}
Intersecting these inequalities and applying the integrality conditions from Section \ref{subsec:Weyl alt sets}, we find that our Weyl alternation set for $\lam$ and $\mu$ are 
\begin{equation*}
    \mathcal{A}(\lam,\mu) =
    \begin{cases}
        {\{1\}} & R_1,P_1,Q_1\in\N  \text{ and }   Q_6,R_4,P_4\notin \N \text{ and}\\&(Q_5\notin\N \text{ or } R_3\notin\N) \text{ and } (P_3\notin\N \text{ or } Q_4\notin\N)\\
        {\{1,s_2\}} & R_1,Q_6,P_1,Q_1\in\N \text{ and }  P_3,R_4,R_3,P_4\notin\N\\
        {\{1,s_3\}} & R_1,P_1,R_4,Q_1\in\N \text{ and }  Q_6,Q_5,P_4\notin\N \text{ and }\\&(P_3\notin\N \text{ or } Q_4\notin\N)\\
        {\{1,s_1\}} & R_1,P_1,Q_1,P_4\in\N \text{ and }  Q_6,Q_4,R_4\notin\N \text{ and }\\& (Q_5\notin\N \text{ or } R_3\notin\N)\\
        {\{1,s_3,s_2\}} & {R_1,Q_6,P_1,R_4,Q_1\in\N \text{ and }  P_3,Q_5,R_3,P_4\notin\N}\\
        {\{1,s_2,s_1\}} & {R_1,Q_6,P_1,Q_1,P_4\in\N \text{ and }  P_3,Q_4,R_4,R_3\notin\N}\\
        {\{1,s_3,s_2s_3,s_2\}} & {R_1,Q_5,Q_1,R_4,Q_6,P_1\in\N \text{ and }  P_3,R_3,P_4\notin\N}\\
        {\{s_1,s_3s_1,s_3,1\}} & {R_1,P_1,R_4,Q_1,P_4\in\N \text{ and }  Q_6,Q_5,Q_4\notin\N}\\
        {\{s_1,s_2,1,s_2s_1\}} & {R_1,Q_1,P_4,Q_6,P_1,Q_4\in\N \text{ and }  P_3,R_4,R_3\notin\N}\\
        {\{s_1,s_3s_1,s_3,s_2,1\}} & {R_1,Q_1,P_4,R_4,Q_6,P_1\in\N \text{ and }  P_3,Q_5,Q_4,R_3\notin\N}\\
        {\{1,s_3,s_2s_3,s_2,s_3s_1,s_1\}} & {R_1,Q_5,Q_1,P_4,R_4,Q_6,P_1\in\N \text{ and }  P_3,Q_4,R_3\notin\N}\\
        {\{1,s_3s_1,s_2s_1,s_3,s_2,s_1\}} & {R_1,Q_1,P_4,R_4,Q_6,P_1,Q_4\in\N \text{ and }  P_3,Q_5,R_3\notin\N}\\
        {\{1,s_2s_3,s_3,s_2,s_2s_3s_2,s_3s_2\}} & {R_1,Q_5,Q_1,R_3,R_4,Q_6,P_1\in\N \text{ and }  P_3,P_4\notin\N}\\
        {\{1,s_1s_2s_1,s_2s_1,s_2,s_1,s_1s_2\}} & {R_1,Q_1,P_4,P_3,Q_6,P_1,Q_4\in\N \text{ and }  R_4,R_3\notin\N}.\\
    \end{cases}
\end{equation*}
    The result now follows from applying Theorem \ref{thm:main1} to evaluate the associated $q$-analog of Kostant's partition function for each element in the associated Weyl alternation set. In Appendix
    \ref{appendix:ztable} to describe concretely each of the polynomials $Z_1,\ldots, Z_{11}$.
\end{proof}
We end this section by applying Theorem \ref{thm:qKWMF} to compute a few $q$-weight multiplicities.
\begin{example}
Let $\lam=\a_1+\a_2+\a_3$ and $\mu=0$. Hence $m=1,n=0,k=1$, and $c_1=c_2=c_3=0$, from which 
we obtain $x=y=z=1$, and, hence, $\lambda=\omega_1+\omega_3.$ This means that $P_1=Q_1=R_1=1$, $P_3=-2$, $P_4=-1$, $Q_5=-2$, $Q_6=-2$, $R_4=-1.$ Since, $P_1, Q_1, R_1 \in \N$ and $P_3, P_4, Q_5, Q_6, R_4 \notin \N,$ then by Theorem \ref{thm:qKWMF} we have that
\begin{align*}
    m_q(\w_1+\w_3,0) &= Z_1 = \wp_q(P_1\al_1 + Q_1\al_2 + R_1\al_3) = \wp_q(\al_1+\al_2+\al_3).
\end{align*}
Applying the formula for $Z_1$ in Appendix \ref{appendix:ztable} we find that $t=2, L=0, 1\leq i\leq3.$ Using our closed formula in Theorem \ref{thm:main1}, part \eqref{thm:main1:part1} gives us the final result
\begin{align*}
    m_q(\w_1+\w_3,0)&=q^1+q^2+q^3.
\end{align*}
Evaluating at $q=1$ yields $m(\w_1+\w_3,0)=1+1+1=3$. We recall, that when $\lambda$ is the highest root, $m(\lambda,0)$ is the multiplicity of the zero-weight in the adjoint representation of $\sl4$. This equals the rank of $\sl4$ which is indeed 3.
\end{example}

\begin{example}
Let $\lam=\w_1+2\w_2+3\w_3$ and $\mu=\w_1+\w_2+\w_3$. Hence $m=1,n=2,k=3$, and $c_1=c_2=c_3=1$, from which 
we obtain $x=1,y=2,z=2$, and, hence, 
$P_1 = 1$, $Q_1 = 2$, $R_1 = 2$, $R_4 = -2$, $P_4 = -1$, $Q_6 = -1$, $P_3 = -4$, $Q_5 = -5.$
Since $P_1,Q_1,R_1\in\N$ and $R_4,P_4,Q_6,P_3,Q_5\notin\N$,
then by Theorem \ref{thm:qKWMF} we have that
\begin{align*}
    m_q(\w_1+2\w_2+3\w_3,\w_1+\w_2+\w_3) &= Z_1 = \wp_q(P_1\al_1 + Q_1\al_2 + R_1\al_3) = \wp_q(\al_1+2\al_2+2\al_3).
\end{align*}
Applying the formula for $Z_1$ in Appendix \ref{appendix:ztable} we find that $t=3$, $F_1=1$, $F_2=1$, $L=0$, and $\jmid=-1$.
Then, since $\jmid<0$, we find that the coefficient of $q^2$ is $c_2=1$.
Applying our closed formula in a similar fashion in the case where $i=2,3,4,5$ gives us the final result
\begin{align*}
    m_q(\w_1+2\w_2+3\w_3,\w_1+\w_2+\w_3) 
    &= q^2 + 3q^3 + 2q^4 + q^5.
\end{align*}
Evaluating at $q=1$ yields 
$   m(\w_1+2\w_2+3\w_3,\w_1+\w_2+\w_3) = 1+3+2+1
    = 7.
$
\end{example}

\begin{example}
Let  $\lam=\w_1+3\w_2$ and $\mu=\w_1+\w_2$.
Hence,
$P_1= 1$, $Q_1 = 2$, $R_1 = 1$, $R_4= 0$, $Q_6 = -2$, $P_4 = -1$, $Q_5= -3$, $P_3= -5.$
Note that $P_1,Q_1,R_1,R_4\in\N$ and $Q_6,P_4,Q_5,P_3\notin\N$. 
Thus, by Theorem \ref{thm:qKWMF}, we have that
\begin{align*}
    m_q(\w_1+3\w_2,\w_1+\w_2) &= Z_1 - Z_3 =
    % \wp_q(P_1\al_1 + Q_1\al_2 + R_1\al_3) - \wp_q(P_1\al_1 + Q_1\al_2 + R_4\al_3) =
    \wp_q(\al_1 + 2\al_2 + \al_3) - \wp_q(\al_1 + 2\al_2).
\end{align*}
Applying the formula for $Z_1$ and $Z_3$ in Appendix \ref{appendix:ztable} we find that
$
    m_q(\w_1+3\w_2,\w_1+\w_2)=q^2+q^3+q^4.
$
Evaluating this polynomial at $q=1$ yields
$
    m(\w_1+3\w_2,\w_1+\w_2)=1+1+1= 3.
$
\end{example}

%%%%%%%%%%	 	SEC 6	 %%%%%%%%%%
\section{Future Work}\label{sec:future}
As we showed in Section \ref{sec:qpartition}, for $i\in\N$ the set of partitions of weights as sums of exactly $i$ positive roots of the Lie algebra $\sl4$ are in bijection with certain subsets of $\{\one,\onep,2\}$-partitions of $i$. We believe that is just one occurrence of such a bijection. Hence we pose the following.

\begin{problem}
Characterize sets of restricted colored integer partitions that are in bijection with partitions of weights as sums of a certain number of positive roots of a Lie algebra.
\end{problem} 

An answer to this question would elucidate and strengthen the initial connection we have made between restricted colored integer partitions and the representation theory of Lie algebras. A connection which was asked about by Corteel and Lovejoy in \cite{CL}.

\section*{Acknowledgements}
This research was supported in part by the Alfred P. Sloan Foundation, the Mathematical Sciences Research Institute, and the National Science Foundation. The second author thanks Jesus De Loera for conversations that inspired this research.

\addresseshere

\newpage

\appendix

\section{Theorem \ref{thm:195 alt sets}: Weyl alternation sets of \texorpdfstring{$\sl4$}{sl4(C)}}\label{appendix:AS}

Table \ref{tab:tablewithelements} provides the reduced expression of $\sigma(\lambda+\rho)-\rho-\mu$ after the substitutions for the integrality condition in Lemma \ref{lem:integral} are applied.

\begin{table}[h]
\begin{tabular}{|l|p{1.5in}|l|}\hline
$\sigma\in W$& $\sigma(\lambda+\rho)-\rho-\mu$\\ \hline\hline
1 &
$P_1\a_1+Q_1\a_2+R_1\a_3$\\ \hline
$s_1$ &
$P_4\a_{1}+Q_1\a_{2}+R_1\a_3$\\ \hline
$s_2$ &
$P_1\a_{1}+Q_6\a_{2}+R_1\a_3$\\\hline
$s_3$ &
$P_1\a_{1}+Q_1\a_{2}+R_4\a_{3}$\\\hline
$s_1s_2$ &
$P_3\a_{1}+Q_6\a_{2}+R_1\a_{3}$\\\hline
$s_2s_1$ &
$P_4\a_{1}+Q_4\a_{2}+R_1\a_3$\\\hline
$s_2s_3$ &
$P_1\a_{1}+Q_5\a_{2}+R_4\a_3$\\\hline
$s_3s_1$&
$P_4\a_1+Q_1\a_{2}+R_4\a_3$\\\hline
$s_3s_2$ &
$P_1\a_{1}+Q_6\a_{2}+R_3\a_3$\\\hline
$s_1s_2s_1$&
$P_3\a_{1}+Q_4\a_{2}+R_1\a_{3}$\\\hline
$s_1s_2s_3$ &
$P_2\a_{1}+Q_5\a_{2}+R_4\a_3$\\\hline
$s_2s_3s_1$&
$P_4\a_{1}+Q_3\a_{2}+R_4\a_3$\\\hline
$s_2s_3s_2$ &
$P_1\a_{1}+Q_5\a_{2}+R_3\a_3$\\\hline
$s_3s_1s_2$ &
$P_3\a_{1}+Q_6\a_{2}+R_3\a_{3}$\\\hline
$s_3s_2s_1$ &
$P_4\a_1+Q_4\a_{2}+R_2\a_3$\\\hline
$s_1s_2s_3s_1$&
$P_2\a_1+Q_3\a_{2}+R_4\a_{3}$\\\hline
$s_1s_2s_3s_2$ &
$P_2\a_{1}+Q_5\a_{2}+R_3\a_{3}$\\\hline
$s_2s_3s_1s_2$&
$P_3\a_{1}+Q_2\a_{2}+R_3\a_3$\\\hline
$s_2s_3s_2s_1$ &
$P_4\a_{1}+Q_3\a_{2}+R_2\a_3$\\\hline
$s_3s_1s_2s_1$ &
$P_3\a_{1}+Q_4\a_{2}+R_2\a_{3}$\\\hline
$s_1s_2s_3s_1s_2$ &
$P_2\a_{1}+Q_2\a_{2}+R_3\a_3$\\\hline
$s_1s_2s_3s_2s_1$&
$P_2\a_1+Q_3\a_2+R_2\a_3$\\\hline
$s_2s_3s_1s_2s_1$ &
$P_3\a_{1}+Q_2\a_{2}+R_2\a_{3}$\\\hline
$s_1s_2s_3s_1s_2s_1$ &
$P_2\a_{1}+Q_2\a_{2}+R_2\a_3$\\\hline
\end{tabular}
\begin{tabular}{l}
$P_1=x$\\
$P_2=-c_1-c_2-c_3-z-3$\\
$P_3=-c_1-c_2-y+z-2$\\
$P_4=-c_1-x+y-1$\\
$Q_1=y$\\
$Q_2=-c_1-2c_2-c_3-y-4$\\
$Q_3=-c_1-c_2-c_3-x+y-z-3$\\
$Q_4=-c_1-c_2-x+z-2$\\
$Q_5=-c_2-c_3+x-z-2$\\
$Q_6=-c_2+x-y+z-1$\\
$R_1=z$\\
$R_2=-c_1-c_2-c_3-x-3$\\
$R_3=-c_2-c_3+x-y-2$\\
$R_4=-c_3+y-z-1$\\
\end{tabular}
\caption{Notation for $\sigma(\lambda+\rho)-\rho-\mu$ as a sum of simple roots for each $\sigma\in W$.}
\label{tab:tablewithelements}
\end{table}
Thus, the conditions Table \ref{tab:conditions} describe explicitly when a given element $\sigma\in W$ is in $\A(\lambda,\mu)$. 

\begin{table}[h]
    \centering
    \begin{tabular}{ll}
$K_1:$&$P_1=x\in\N$\\
$K_2:$&$P_2-c_1-c_2-c_3-z-3\in\N$\\
$K_3:$&$P_3=-c_1-c_2-y+z-2\in\N$\\
$K_4:$&$P_4=-c_1-x+y-1\in\N$\\
$K_5:$&$Q_1=y\in\N$\\
$K_6:$&$Q_2=-c_1-2c_2-c_3-y-4\in\N$\\
$K_7:$&$Q_3=-c_1-c_2-c_3-x+y-z-3\in\N$\\
$K_8:$&$Q_4=-c_1-c_2-x+z-2\in\N$\\
$K_9:$&$Q_5=-c_2-c_3+x-z-2\in\N$\\
$K_{10}:$&$Q_6=-c_2+x-y+z-1\in\N$\\
$K_{11}:$&$R_1=z\in\N$\\
$K_{12}:$&$R_2=-c_1-c_2-c_3-x-3\in\N$\\
$K_{13}:$&$R_3=-c_2-c_3+x-y-2\in\N$\\
$K_{14}:$&$R_4=-c_3+y-z-1\in\N$\\
\end{tabular}
    \caption{Conditions used in defining the Weyl alternation sets of $\sl4$.}
    \label{tab:conditions}
\end{table}

Using the conditions as listed in Table \ref{tab:conditions}, and letting $\neg$ denote  negation and $\vee$ denote or, then the Weyl alternation sets are as follows:
\begin{enumerate}[leftmargin=.7cm]
\item \altcirc{1} $\A(\lam,\mu)=\{1\}$ if $K_{11}$, $K_1$, $K_5$,$\neg K_{10}$, $\neg K_4$, $\neg K_{14}$, $\neg K_2$ $\vee$ $\neg K_7$ $\vee$ $\neg K_{12}$, $\neg K_2$ $\vee$ $\neg K_6$ $\vee$ $\neg K_{12}$, $\neg K_3$ $\vee$ $\neg K_6$ $\vee$ $\neg K_{12}$, $\neg K_3$ $\vee$ $\neg K_8$, $\neg K_9$ $\vee$ $\neg K_{13}$, $\neg K_2$ $\vee$ $\neg K_6$ $\vee$ $\neg K_{13}$, $\neg K_3$ $\vee$ $\neg K_6$ $\vee$ $\neg K_{13}$, 
\item \altcirc{2} $\A(\lam,\mu)=\{s_1\}$ if $K_{11}$, $K_5$, $K_4$,$\neg K_1$, $\neg K_{14}$, $\neg K_8$, $\neg K_2$ $\vee$ $\neg K_6$ $\vee$ $\neg K_{12}$, $\neg K_3$ $\vee$ $\neg K_6$ $\vee$ $\neg K_{12}$, $\neg K_3$ $\vee$ $\neg K_{10}$, $\neg K_{12}$ $\vee$ $\neg K_7$, $\neg K_2$ $\vee$ $\neg K_6$ $\vee$ $\neg K_{13}$, $\neg K_3$ $\vee$ $\neg K_6$ $\vee$ $\neg K_{13}$, $\neg K_2$ $\vee$ $\neg K_9$ $\vee$ $\neg K_{13}$, 
\item \altcirc{3} $\A(\lam,\mu)=\{s_2\}$ if $K_{11}$, $K_{10}$, $K_1$,$\neg K_5$, $\neg K_{13}$, $\neg K_3$, $\neg K_2$ $\vee$ $\neg K_7$ $\vee$ $\neg K_{12}$, $\neg K_2$ $\vee$ $\neg K_6$ $\vee$ $\neg K_{12}$, $\neg K_2$ $\vee$ $\neg K_7$ $\vee$ $\neg K_{14}$, $\neg K_9$ $\vee$ $\neg K_{14}$, $\neg K_{12}$ $\vee$ $\neg K_7$ $\vee$ $\neg K_4$, $\neg K_7$ $\vee$ $\neg K_{14}$ $\vee$ $\neg K_4$, $\neg K_8$ $\vee$ $\neg K_4$, 
\item \altcirc{4} $\A(\lam,\mu)=\{s_3\}$ if $K_1$, $K_{14}$, $K_5$,$\neg K_{11}$, $\neg K_9$, $\neg K_4$, $\neg K_2$ $\vee$ $\neg K_6$ $\vee$ $\neg K_{12}$, $\neg K_2$ $\vee$ $\neg K_7$, $\neg K_{10}$ $\vee$ $\neg K_{13}$, $\neg K_3$ $\vee$ $\neg K_6$ $\vee$ $\neg K_{12}$, $\neg K_3$ $\vee$ $\neg K_8$ $\vee$ $\neg K_{12}$, $\neg K_2$ $\vee$ $\neg K_6$ $\vee$ $\neg K_{13}$, $\neg K_3$ $\vee$ $\neg K_6$ $\vee$ $\neg K_{13}$, 
\item \altcirc{5} $\A(\lam,\mu)=\{s_1s_2\}$ if $K_3$, $K_{10}$, $K_{11}$,$\neg K_1$, $\neg K_8$, $\neg K_{13}$, $\neg K_2$ $\vee$ $\neg K_7$ $\vee$ $\neg K_{12}$, $\neg K_2$ $\vee$ $\neg K_7$ $\vee$ $\neg K_{14}$, $\neg K_5$ $\vee$ $\neg K_4$, $\neg K_6$ $\vee$ $\neg K_{12}$, $\neg K_2$ $\vee$ $\neg K_9$ $\vee$ $\neg K_{14}$, $\neg K_{12}$ $\vee$ $\neg K_7$ $\vee$ $\neg K_4$, $\neg K_7$ $\vee$ $\neg K_{14}$ $\vee$ $\neg K_4$, 
\item \altcirc{6} $\A(\lam,\mu)=\{s_2s_1\}$ if $K_{11}$, $K_8$, $K_4$,$\neg K_5$, $\neg K_3$, $\neg K_{12}$, $\neg K_{10}$ $\vee$ $\neg K_1$, $\neg K_1$ $\vee$ $\neg K_9$ $\vee$ $\neg K_{14}$, $\neg K_2$ $\vee$ $\neg K_9$ $\vee$ $\neg K_{14}$, $\neg K_1$ $\vee$ $\neg K_9$ $\vee$ $\neg K_{13}$, $\neg K_2$ $\vee$ $\neg K_6$ $\vee$ $\neg K_{13}$, $\neg K_7$ $\vee$ $\neg K_{14}$, $\neg K_2$ $\vee$ $\neg K_9$ $\vee$ $\neg K_{13}$, 
\item \altcirc{7} $\A(\lam,\mu)=\{s_2s_3\}$ if $K_1$, $K_9$, $K_{14}$,$\neg K_5$, $\neg K_2$, $\neg K_{13}$, $\neg K_{11}$ $\vee$ $\neg K_{10}$, $\neg K_3$ $\vee$ $\neg K_6$ $\vee$ $\neg K_{12}$, $\neg K_3$ $\vee$ $\neg K_{11}$ $\vee$ $\neg K_8$, $\neg K_8$ $\vee$ $\neg K_{12}$ $\vee$ $\neg K_4$, $\neg K_3$ $\vee$ $\neg K_8$ $\vee$ $\neg K_{12}$, $\neg K_7$ $\vee$ $\neg K_4$, $\neg K_{11}$ $\vee$ $\neg K_8$ $\vee$ $\neg K_4$, 
\item \altcirc{8} $\A(\lam,\mu)=\{s_3s_1\}$ if $K_{14}$, $K_5$, $K_4$,$\neg K_{11}$, $\neg K_1$, $\neg K_7$, $\neg K_2$ $\vee$ $\neg K_6$ $\vee$ $\neg K_{12}$, $\neg K_3$ $\vee$ $\neg K_6$ $\vee$ $\neg K_{12}$, $\neg K_8$ $\vee$ $\neg K_{12}$, $\neg K_2$ $\vee$ $\neg K_9$, $\neg K_2$ $\vee$ $\neg K_6$ $\vee$ $\neg K_{13}$, $\neg K_3$ $\vee$ $\neg K_{10}$ $\vee$ $\neg K_{13}$, $\neg K_3$ $\vee$ $\neg K_6$ $\vee$ $\neg K_{13}$, 
\item \altcirc{9} $\A(\lam,\mu)=\{s_3s_2\}$ if $K_{10}$, $K_1$, $K_{13}$,$\neg K_{11}$, $\neg K_9$, $\neg K_3$, $\neg K_2$ $\vee$ $\neg K_7$ $\vee$ $\neg K_{12}$, $\neg K_2$ $\vee$ $\neg K_7$ $\vee$ $\neg K_{14}$, $\neg K_8$ $\vee$ $\neg K_{12}$ $\vee$ $\neg K_4$, $\neg K_{14}$ $\vee$ $\neg K_5$, $\neg K_{12}$ $\vee$ $\neg K_7$ $\vee$ $\neg K_4$, $\neg K_2$ $\vee$ $\neg K_6$, $\neg K_7$ $\vee$ $\neg K_{14}$ $\vee$ $\neg K_4$, 
\item \altcirc{10} $\A(\lam,\mu)=\{s_1s_2s_1\}$ if $K_3$, $K_{11}$, $K_8$,$\neg K_{10}$, $\neg K_{12}$, $\neg K_4$, $\neg K_1$ $\vee$ $\neg K_5$, $\neg K_2$ $\vee$ $\neg K_7$ $\vee$ $\neg K_{14}$, $\neg K_1$ $\vee$ $\neg K_9$ $\vee$ $\neg K_{14}$, $\neg K_2$ $\vee$ $\neg K_9$ $\vee$ $\neg K_{14}$, $\neg K_1$ $\vee$ $\neg K_9$ $\vee$ $\neg K_{13}$, $\neg K_6$ $\vee$ $\neg K_{13}$, $\neg K_2$ $\vee$ $\neg K_9$ $\vee$ $\neg K_{13}$, 
\item \altcirc{11} $\A(\lam,\mu)=\{s_1s_2s_3\}$ if $K_2$, $K_9$, $K_{14}$,$\neg K_7$, $\neg K_1$, $\neg K_{13}$, $\neg K_6$ $\vee$ $\neg K_{12}$, $\neg K_3$ $\vee$ $\neg K_{11}$ $\vee$ $\neg K_8$, $\neg K_8$ $\vee$ $\neg K_{12}$ $\vee$ $\neg K_4$, $\neg K_3$ $\vee$ $\neg K_{10}$ $\vee$ $\neg K_{11}$, $\neg K_3$ $\vee$ $\neg K_8$ $\vee$ $\neg K_{12}$, $\neg K_5$ $\vee$ $\neg K_4$, $\neg K_{11}$ $\vee$ $\neg K_8$ $\vee$ $\neg K_4$, 
\item \altcirc{12} $\A(\lam,\mu)=\{s_2s_3s_1\}$ if $K_7$, $K_{14}$, $K_4$,$\neg K_2$, $\neg K_{12}$, $\neg K_5$, $\neg K_{11}$ $\vee$ $\neg K_{10}$ $\vee$ $\neg K_1$, $\neg K_{10}$ $\vee$ $\neg K_1$ $\vee$ $\neg K_{13}$, $\neg K_1$ $\vee$ $\neg K_9$, $\neg K_3$ $\vee$ $\neg K_{10}$ $\vee$ $\neg K_{11}$, $\neg K_3$ $\vee$ $\neg K_{10}$ $\vee$ $\neg K_{13}$, $\neg K_3$ $\vee$ $\neg K_6$ $\vee$ $\neg K_{13}$, $\neg K_{11}$ $\vee$ $\neg K_8$, 
\item \altcirc{13} $\A(\lam,\mu)=\{s_2s_3s_2\}$ if $K_1$, $K_9$, $K_{13}$,$\neg K_{10}$, $\neg K_{14}$, $\neg K_2$, $\neg K_{11}$ $\vee$ $\neg K_5$, $\neg K_3$ $\vee$ $\neg K_{11}$ $\vee$ $\neg K_8$, $\neg K_8$ $\vee$ $\neg K_{12}$ $\vee$ $\neg K_4$, $\neg K_{12}$ $\vee$ $\neg K_7$ $\vee$ $\neg K_4$, $\neg K_3$ $\vee$ $\neg K_8$ $\vee$ $\neg K_{12}$, $\neg K_3$ $\vee$ $\neg K_6$, $\neg K_{11}$ $\vee$ $\neg K_8$ $\vee$ $\neg K_4$, 
\item \altcirc{14} $\A(\lam,\mu)=\{s_3s_1s_2\}$ if $K_3$, $K_{10}$, $K_{13}$,$\neg K_1$, $\neg K_{11}$, $\neg K_6$, $\neg K_2$ $\vee$ $\neg K_7$ $\vee$ $\neg K_{12}$, $\neg K_2$ $\vee$ $\neg K_7$ $\vee$ $\neg K_{14}$, $\neg K_{12}$ $\vee$ $\neg K_7$ $\vee$ $\neg K_4$, $\neg K_8$ $\vee$ $\neg K_{12}$, $\neg K_{14}$ $\vee$ $\neg K_5$ $\vee$ $\neg K_4$, $\neg K_7$ $\vee$ $\neg K_{14}$ $\vee$ $\neg K_4$, $\neg K_2$ $\vee$ $\neg K_9$, 
\item \altcirc{15} $\A(\lam,\mu)=\{s_3s_2s_1\}$ if $K_8$, $K_4$, $K_{12}$,$\neg K_7$, $\neg K_3$, $\neg K_{11}$, $\neg K_2$ $\vee$ $\neg K_6$, $\neg K_{10}$ $\vee$ $\neg K_1$ $\vee$ $\neg K_{13}$, $\neg K_1$ $\vee$ $\neg K_9$ $\vee$ $\neg K_{14}$, $\neg K_2$ $\vee$ $\neg K_9$ $\vee$ $\neg K_{14}$, $\neg K_1$ $\vee$ $\neg K_9$ $\vee$ $\neg K_{13}$, $\neg K_{14}$ $\vee$ $\neg K_5$, $\neg K_2$ $\vee$ $\neg K_9$ $\vee$ $\neg K_{13}$, 
\item \altcirc{16} $\A(\lam,\mu)=\{s_1s_2s_3s_1\}$ if $K_2$, $K_7$, $K_{14}$,$\neg K_{12}$, $\neg K_9$, $\neg K_4$, $\neg K_{11}$ $\vee$ $\neg K_{10}$ $\vee$ $\neg K_1$, $\neg K_{10}$ $\vee$ $\neg K_1$ $\vee$ $\neg K_{13}$, $\neg K_3$ $\vee$ $\neg K_{11}$ $\vee$ $\neg K_8$, $\neg K_1$ $\vee$ $\neg K_5$, $\neg K_3$ $\vee$ $\neg K_{10}$ $\vee$ $\neg K_{11}$, $\neg K_6$ $\vee$ $\neg K_{13}$, $\neg K_3$ $\vee$ $\neg K_{10}$ $\vee$ $\neg K_{13}$, 
\item \altcirc{17} $\A(\lam,\mu)=\{s_1s_2s_3s_2\}$ if $K_2$, $K_9$, $K_{13}$,$\neg K_{14}$, $\neg K_1$, $\neg K_6$, $\neg K_7$ $\vee$ $\neg K_{12}$, $\neg K_{11}$ $\vee$ $\neg K_5$ $\vee$ $\neg K_4$, $\neg K_3$ $\vee$ $\neg K_{11}$ $\vee$ $\neg K_8$, $\neg K_8$ $\vee$ $\neg K_{12}$ $\vee$ $\neg K_4$, $\neg K_3$ $\vee$ $\neg K_8$ $\vee$ $\neg K_{12}$, $\neg K_3$ $\vee$ $\neg K_{10}$, $\neg K_{11}$ $\vee$ $\neg K_8$ $\vee$ $\neg K_4$, 
\item \altcirc{18} $\A(\lam,\mu)=\{s_2s_3s_1s_2\}$ if $K_3$, $K_6$, $K_{13}$,$\neg K_{12}$, $\neg K_2$, $\neg K_{10}$, $\neg K_{11}$ $\vee$ $\neg K_1$ $\vee$ $\neg K_5$, $\neg K_{11}$ $\vee$ $\neg K_5$ $\vee$ $\neg K_4$, $\neg K_{11}$ $\vee$ $\neg K_8$, $\neg K_1$ $\vee$ $\neg K_{14}$ $\vee$ $\neg K_5$, $\neg K_1$ $\vee$ $\neg K_9$, $\neg K_{14}$ $\vee$ $\neg K_5$ $\vee$ $\neg K_4$, $\neg K_7$ $\vee$ $\neg K_{14}$ $\vee$ $\neg K_4$, 
\item \altcirc{19} $\A(\lam,\mu)=\{s_2s_3s_2s_1\}$ if $K_4$, $K_7$, $K_{12}$,$\neg K_2$, $\neg K_8$, $\neg K_{14}$, $\neg K_{11}$ $\vee$ $\neg K_{10}$ $\vee$ $\neg K_1$, $\neg K_{11}$ $\vee$ $\neg K_5$, $\neg K_{10}$ $\vee$ $\neg K_1$ $\vee$ $\neg K_{13}$, $\neg K_3$ $\vee$ $\neg K_6$, $\neg K_1$ $\vee$ $\neg K_9$ $\vee$ $\neg K_{13}$, $\neg K_3$ $\vee$ $\neg K_{10}$ $\vee$ $\neg K_{11}$, $\neg K_3$ $\vee$ $\neg K_{10}$ $\vee$ $\neg K_{13}$, 
\item \altcirc{20} $\A(\lam,\mu)=\{s_3s_1s_2s_1\}$ if $K_3$, $K_8$, $K_{12}$,$\neg K_6$, $\neg K_{11}$, $\neg K_4$, $\neg K_2$ $\vee$ $\neg K_7$, $\neg K_1$ $\vee$ $\neg K_9$ $\vee$ $\neg K_{14}$, $\neg K_1$ $\vee$ $\neg K_{14}$ $\vee$ $\neg K_5$, $\neg K_2$ $\vee$ $\neg K_9$ $\vee$ $\neg K_{14}$, $\neg K_1$ $\vee$ $\neg K_9$ $\vee$ $\neg K_{13}$, $\neg K_{10}$ $\vee$ $\neg K_{13}$, $\neg K_2$ $\vee$ $\neg K_9$ $\vee$ $\neg K_{13}$, 
\item \altcirc{21} $\A(\lam,\mu)=\{s_1s_2s_3s_1s_2\}$ if $K_2$, $K_6$, $K_{13}$,$\neg K_{12}$, $\neg K_3$, $\neg K_9$, $\neg K_{11}$ $\vee$ $\neg K_1$ $\vee$ $\neg K_5$, $\neg K_7$ $\vee$ $\neg K_{14}$, $\neg K_{11}$ $\vee$ $\neg K_5$ $\vee$ $\neg K_4$, $\neg K_{10}$ $\vee$ $\neg K_1$, $\neg K_1$ $\vee$ $\neg K_{14}$ $\vee$ $\neg K_5$, $\neg K_{14}$ $\vee$ $\neg K_5$ $\vee$ $\neg K_4$, $\neg K_{11}$ $\vee$ $\neg K_8$ $\vee$ $\neg K_4$, 
\item \altcirc{22} $\A(\lam,\mu)=\{s_1s_2s_3s_2s_1\}$ if $K_2$, $K_7$, $K_{12}$,$\neg K_6$, $\neg K_{14}$, $\neg K_4$, $\neg K_{11}$ $\vee$ $\neg K_1$ $\vee$ $\neg K_5$, $\neg K_{11}$ $\vee$ $\neg K_{10}$ $\vee$ $\neg K_1$, $\neg K_{10}$ $\vee$ $\neg K_1$ $\vee$ $\neg K_{13}$, $\neg K_3$ $\vee$ $\neg K_{10}$ $\vee$ $\neg K_{11}$, $\neg K_3$ $\vee$ $\neg K_8$, $\neg K_3$ $\vee$ $\neg K_{10}$ $\vee$ $\neg K_{13}$, $\neg K_9$ $\vee$ $\neg K_{13}$, 
\item \altcirc{23} $\A(\lam,\mu)=\{s_2s_3s_1s_2s_1\}$ if $K_3$, $K_6$, $K_{12}$,$\neg K_2$, $\neg K_8$, $\neg K_{13}$, $\neg K_{11}$ $\vee$ $\neg K_1$ $\vee$ $\neg K_5$, $\neg K_{11}$ $\vee$ $\neg K_5$ $\vee$ $\neg K_4$, $\neg K_1$ $\vee$ $\neg K_9$ $\vee$ $\neg K_{14}$, $\neg K_1$ $\vee$ $\neg K_{14}$ $\vee$ $\neg K_5$, $\neg K_{10}$ $\vee$ $\neg K_{11}$, $\neg K_7$ $\vee$ $\neg K_4$, $\neg K_{14}$ $\vee$ $\neg K_5$ $\vee$ $\neg K_4$, 
\item \altcirc{24} $\A(\lam,\mu)=\{s_1s_2s_3s_1s_2s_1\}$ if $K_2$, $K_6$, $K_{12}$,$\neg K_7$, $\neg K_3$, $\neg K_{13}$, $\neg K_{11}$ $\vee$ $\neg K_1$ $\vee$ $\neg K_5$, $\neg K_{11}$ $\vee$ $\neg K_{10}$ $\vee$ $\neg K_1$, $\neg K_{11}$ $\vee$ $\neg K_5$ $\vee$ $\neg K_4$, $\neg K_8$ $\vee$ $\neg K_4$, $\neg K_1$ $\vee$ $\neg K_{14}$ $\vee$ $\neg K_5$, $\neg K_9$ $\vee$ $\neg K_{14}$, $\neg K_{14}$ $\vee$ $\neg K_5$ $\vee$ $\neg K_4$, 
\item \altcirc{25} $\A(\lam,\mu)=\{1,s_1\}$ if $K_{11}$, $K_1$, $K_5$, $K_4$,$\neg K_{10}$, $\neg K_{14}$, $\neg K_8$, $\neg K_2$ $\vee$ $\neg K_6$ $\vee$ $\neg K_{12}$, $\neg K_3$ $\vee$ $\neg K_6$ $\vee$ $\neg K_{12}$, $\neg K_9$ $\vee$ $\neg K_{13}$, $\neg K_{12}$ $\vee$ $\neg K_7$, $\neg K_2$ $\vee$ $\neg K_6$ $\vee$ $\neg K_{13}$, $\neg K_3$ $\vee$ $\neg K_6$ $\vee$ $\neg K_{13}$, 
\item \altcirc{26} $\A(\lam,\mu)=\{1,s_2\}$ if $K_{11}$, $K_{10}$, $K_1$, $K_5$,$\neg K_4$, $\neg K_{13}$, $\neg K_{14}$, $\neg K_3$, $\neg K_2$ $\vee$ $\neg K_7$ $\vee$ $\neg K_{12}$, $\neg K_2$ $\vee$ $\neg K_6$ $\vee$ $\neg K_{12}$, 
\item \altcirc{27} $\A(\lam,\mu)=\{1,s_3\}$ if $K_{11}$, $K_1$, $K_{14}$, $K_5$,$\neg K_{10}$, $\neg K_4$, $\neg K_9$, $\neg K_2$ $\vee$ $\neg K_6$ $\vee$ $\neg K_{12}$, $\neg K_2$ $\vee$ $\neg K_7$, $\neg K_3$ $\vee$ $\neg K_6$ $\vee$ $\neg K_{12}$, $\neg K_3$ $\vee$ $\neg K_8$, $\neg K_2$ $\vee$ $\neg K_6$ $\vee$ $\neg K_{13}$, $\neg K_3$ $\vee$ $\neg K_6$ $\vee$ $\neg K_{13}$, 
\item \altcirc{28} $\A(\lam,\mu)=\{s_1,s_2s_1\}$ if $K_{11}$, $K_8$, $K_5$, $K_4$,$\neg K_1$, $\neg K_3$, $\neg K_{12}$, $\neg K_{14}$, $\neg K_2$ $\vee$ $\neg K_6$ $\vee$ $\neg K_{13}$, $\neg K_2$ $\vee$ $\neg K_9$ $\vee$ $\neg K_{13}$, 
\item \altcirc{29} $\A(\lam,\mu)=\{s_1,s_3s_1\}$ if $K_{11}$, $K_{14}$, $K_5$, $K_4$,$\neg K_1$, $\neg K_7$, $\neg K_8$, $\neg K_2$ $\vee$ $\neg K_6$ $\vee$ $\neg K_{12}$, $\neg K_3$ $\vee$ $\neg K_6$ $\vee$ $\neg K_{12}$, $\neg K_2$ $\vee$ $\neg K_9$, $\neg K_3$ $\vee$ $\neg K_{10}$, $\neg K_2$ $\vee$ $\neg K_6$ $\vee$ $\neg K_{13}$, $\neg K_3$ $\vee$ $\neg K_6$ $\vee$ $\neg K_{13}$, 
\item \altcirc{30} $\A(\lam,\mu)=\{s_2,s_1s_2\}$ if $K_{11}$, $K_{10}$, $K_1$, $K_3$,$\neg K_5$, $\neg K_{13}$, $\neg K_8$, $\neg K_2$ $\vee$ $\neg K_7$ $\vee$ $\neg K_{12}$, $\neg K_2$ $\vee$ $\neg K_7$ $\vee$ $\neg K_{14}$, $\neg K_6$ $\vee$ $\neg K_{12}$, $\neg K_9$ $\vee$ $\neg K_{14}$, $\neg K_{12}$ $\vee$ $\neg K_7$ $\vee$ $\neg K_4$, $\neg K_7$ $\vee$ $\neg K_{14}$ $\vee$ $\neg K_4$, 
\item \altcirc{31} $\A(\lam,\mu)=\{s_2,s_3s_2\}$ if $K_{11}$, $K_{10}$, $K_1$, $K_{13}$,$\neg K_5$, $\neg K_9$, $\neg K_3$, $\neg K_2$ $\vee$ $\neg K_7$ $\vee$ $\neg K_{12}$, $\neg K_2$ $\vee$ $\neg K_7$ $\vee$ $\neg K_{14}$, $\neg K_{12}$ $\vee$ $\neg K_7$ $\vee$ $\neg K_4$, $\neg K_2$ $\vee$ $\neg K_6$, $\neg K_7$ $\vee$ $\neg K_{14}$ $\vee$ $\neg K_4$, $\neg K_8$ $\vee$ $\neg K_4$, 
\item \altcirc{32} $\A(\lam,\mu)=\{s_3,s_2s_3\}$ if $K_1$, $K_9$, $K_5$, $K_{14}$,$\neg K_{11}$, $\neg K_2$, $\neg K_{13}$, $\neg K_4$, $\neg K_3$ $\vee$ $\neg K_6$ $\vee$ $\neg K_{12}$, $\neg K_3$ $\vee$ $\neg K_8$ $\vee$ $\neg K_{12}$, 
\item \altcirc{33} $\A(\lam,\mu)=\{s_3,s_3s_1\}$ if $K_1$, $K_{14}$, $K_5$, $K_4$,$\neg K_{11}$, $\neg K_9$, $\neg K_7$, $\neg K_2$ $\vee$ $\neg K_6$ $\vee$ $\neg K_{12}$, $\neg K_{10}$ $\vee$ $\neg K_{13}$, $\neg K_3$ $\vee$ $\neg K_6$ $\vee$ $\neg K_{12}$, $\neg K_8$ $\vee$ $\neg K_{12}$, $\neg K_2$ $\vee$ $\neg K_6$ $\vee$ $\neg K_{13}$, $\neg K_3$ $\vee$ $\neg K_6$ $\vee$ $\neg K_{13}$, 
\item \altcirc{34} $\A(\lam,\mu)=\{s_1s_2,s_1s_2s_1\}$ if $K_3$, $K_{11}$, $K_8$, $K_{10}$,$\neg K_1$, $\neg K_{12}$, $\neg K_{13}$, $\neg K_4$, $\neg K_2$ $\vee$ $\neg K_7$ $\vee$ $\neg K_{14}$, $\neg K_2$ $\vee$ $\neg K_9$ $\vee$ $\neg K_{14}$, 
\item \altcirc{35} $\A(\lam,\mu)=\{s_1s_2,s_3s_1s_2\}$ if $K_3$, $K_{10}$, $K_{13}$, $K_{11}$,$\neg K_1$, $\neg K_8$, $\neg K_6$, $\neg K_2$ $\vee$ $\neg K_7$ $\vee$ $\neg K_{12}$, $\neg K_2$ $\vee$ $\neg K_7$ $\vee$ $\neg K_{14}$, $\neg K_5$ $\vee$ $\neg K_4$, $\neg K_{12}$ $\vee$ $\neg K_7$ $\vee$ $\neg K_4$, $\neg K_7$ $\vee$ $\neg K_{14}$ $\vee$ $\neg K_4$, $\neg K_2$ $\vee$ $\neg K_9$, 
\item \altcirc{36} $\A(\lam,\mu)=\{s_2s_1,s_1s_2s_1\}$ if $K_3$, $K_{11}$, $K_8$, $K_4$,$\neg K_5$, $\neg K_{12}$, $\neg K_{10}$, $\neg K_1$ $\vee$ $\neg K_9$ $\vee$ $\neg K_{14}$, $\neg K_2$ $\vee$ $\neg K_9$ $\vee$ $\neg K_{14}$, $\neg K_1$ $\vee$ $\neg K_9$ $\vee$ $\neg K_{13}$, $\neg K_7$ $\vee$ $\neg K_{14}$, $\neg K_6$ $\vee$ $\neg K_{13}$, $\neg K_2$ $\vee$ $\neg K_9$ $\vee$ $\neg K_{13}$, 
\item \altcirc{37} $\A(\lam,\mu)=\{s_2s_1,s_3s_2s_1\}$ if $K_{11}$, $K_8$, $K_4$, $K_{12}$,$\neg K_5$, $\neg K_3$, $\neg K_7$, $\neg K_{10}$ $\vee$ $\neg K_1$, $\neg K_2$ $\vee$ $\neg K_6$, $\neg K_1$ $\vee$ $\neg K_9$ $\vee$ $\neg K_{14}$, $\neg K_2$ $\vee$ $\neg K_9$ $\vee$ $\neg K_{14}$, $\neg K_1$ $\vee$ $\neg K_9$ $\vee$ $\neg K_{13}$, $\neg K_2$ $\vee$ $\neg K_9$ $\vee$ $\neg K_{13}$, 
\item \altcirc{38} $\A(\lam,\mu)=\{s_2s_3,s_1s_2s_3\}$ if $K_2$, $K_1$, $K_9$, $K_{14}$,$\neg K_7$, $\neg K_5$, $\neg K_{13}$, $\neg K_{11}$ $\vee$ $\neg K_{10}$, $\neg K_6$ $\vee$ $\neg K_{12}$, $\neg K_3$ $\vee$ $\neg K_{11}$ $\vee$ $\neg K_8$, $\neg K_8$ $\vee$ $\neg K_{12}$ $\vee$ $\neg K_4$, $\neg K_3$ $\vee$ $\neg K_8$ $\vee$ $\neg K_{12}$, $\neg K_{11}$ $\vee$ $\neg K_8$ $\vee$ $\neg K_4$, 
\item \altcirc{39} $\A(\lam,\mu)=\{s_2s_3,s_2s_3s_2\}$ if $K_1$, $K_9$, $K_{13}$, $K_{14}$,$\neg K_{10}$, $\neg K_5$, $\neg K_2$, $\neg K_3$ $\vee$ $\neg K_{11}$ $\vee$ $\neg K_8$, $\neg K_8$ $\vee$ $\neg K_{12}$ $\vee$ $\neg K_4$, $\neg K_3$ $\vee$ $\neg K_8$ $\vee$ $\neg K_{12}$, $\neg K_7$ $\vee$ $\neg K_4$, $\neg K_3$ $\vee$ $\neg K_6$, $\neg K_{11}$ $\vee$ $\neg K_8$ $\vee$ $\neg K_4$, 
\item \altcirc{40} $\A(\lam,\mu)=\{s_3s_1,s_2s_3s_1\}$ if $K_7$, $K_{14}$, $K_5$, $K_4$,$\neg K_2$, $\neg K_{11}$, $\neg K_1$, $\neg K_{12}$, $\neg K_3$ $\vee$ $\neg K_{10}$ $\vee$ $\neg K_{13}$, $\neg K_3$ $\vee$ $\neg K_6$ $\vee$ $\neg K_{13}$, 
\item \altcirc{41} $\A(\lam,\mu)=\{s_3s_2,s_2s_3s_2\}$ if $K_{10}$, $K_1$, $K_9$, $K_{13}$,$\neg K_{11}$, $\neg K_{14}$, $\neg K_3$, $\neg K_2$, $\neg K_8$ $\vee$ $\neg K_{12}$ $\vee$ $\neg K_4$, $\neg K_{12}$ $\vee$ $\neg K_7$ $\vee$ $\neg K_4$, 
\item \altcirc{42} $\A(\lam,\mu)=\{s_3s_2,s_3s_1s_2\}$ if $K_3$, $K_{10}$, $K_1$, $K_{13}$,$\neg K_{11}$, $\neg K_9$, $\neg K_6$, $\neg K_2$ $\vee$ $\neg K_7$ $\vee$ $\neg K_{12}$, $\neg K_2$ $\vee$ $\neg K_7$ $\vee$ $\neg K_{14}$, $\neg K_{14}$ $\vee$ $\neg K_5$, $\neg K_{12}$ $\vee$ $\neg K_7$ $\vee$ $\neg K_4$, $\neg K_8$ $\vee$ $\neg K_{12}$, $\neg K_7$ $\vee$ $\neg K_{14}$ $\vee$ $\neg K_4$, 
\item \altcirc{43} $\A(\lam,\mu)=\{s_1s_2s_1,s_3s_1s_2s_1\}$ if $K_3$, $K_{11}$, $K_8$, $K_{12}$,$\neg K_6$, $\neg K_4$, $\neg K_{10}$, $\neg K_1$ $\vee$ $\neg K_5$, $\neg K_2$ $\vee$ $\neg K_7$, $\neg K_1$ $\vee$ $\neg K_9$ $\vee$ $\neg K_{14}$, $\neg K_2$ $\vee$ $\neg K_9$ $\vee$ $\neg K_{14}$, $\neg K_1$ $\vee$ $\neg K_9$ $\vee$ $\neg K_{13}$, $\neg K_2$ $\vee$ $\neg K_9$ $\vee$ $\neg K_{13}$, 
\item \altcirc{44} $\A(\lam,\mu)=\{s_1s_2s_3,s_1s_2s_3s_1\}$ if $K_2$, $K_9$, $K_7$, $K_{14}$,$\neg K_{12}$, $\neg K_1$, $\neg K_4$, $\neg K_{13}$, $\neg K_3$ $\vee$ $\neg K_{11}$ $\vee$ $\neg K_8$, $\neg K_3$ $\vee$ $\neg K_{10}$ $\vee$ $\neg K_{11}$, 
\item \altcirc{45} $\A(\lam,\mu)=\{s_1s_2s_3,s_1s_2s_3s_2\}$ if $K_2$, $K_9$, $K_{14}$, $K_{13}$,$\neg K_7$, $\neg K_1$, $\neg K_6$, $\neg K_3$ $\vee$ $\neg K_{11}$ $\vee$ $\neg K_8$, $\neg K_8$ $\vee$ $\neg K_{12}$ $\vee$ $\neg K_4$, $\neg K_3$ $\vee$ $\neg K_8$ $\vee$ $\neg K_{12}$, $\neg K_5$ $\vee$ $\neg K_4$, $\neg K_3$ $\vee$ $\neg K_{10}$, $\neg K_{11}$ $\vee$ $\neg K_8$ $\vee$ $\neg K_4$, 
\item \altcirc{46} $\A(\lam,\mu)=\{s_2s_3s_1,s_1s_2s_3s_1\}$ if $K_2$, $K_7$, $K_{14}$, $K_4$,$\neg K_{12}$, $\neg K_9$, $\neg K_5$, $\neg K_{11}$ $\vee$ $\neg K_{10}$ $\vee$ $\neg K_1$, $\neg K_{10}$ $\vee$ $\neg K_1$ $\vee$ $\neg K_{13}$, $\neg K_3$ $\vee$ $\neg K_{10}$ $\vee$ $\neg K_{11}$, $\neg K_6$ $\vee$ $\neg K_{13}$, $\neg K_3$ $\vee$ $\neg K_{10}$ $\vee$ $\neg K_{13}$, $\neg K_{11}$ $\vee$ $\neg K_8$, 
\item \altcirc{47} $\A(\lam,\mu)=\{s_2s_3s_1,s_2s_3s_2s_1\}$ if $K_4$, $K_7$, $K_{14}$, $K_{12}$,$\neg K_2$, $\neg K_8$, $\neg K_5$, $\neg K_{11}$ $\vee$ $\neg K_{10}$ $\vee$ $\neg K_1$, $\neg K_{10}$ $\vee$ $\neg K_1$ $\vee$ $\neg K_{13}$, $\neg K_3$ $\vee$ $\neg K_6$, $\neg K_1$ $\vee$ $\neg K_9$, $\neg K_3$ $\vee$ $\neg K_{10}$ $\vee$ $\neg K_{11}$, $\neg K_3$ $\vee$ $\neg K_{10}$ $\vee$ $\neg K_{13}$, 
\item \altcirc{48} $\A(\lam,\mu)=\{s_2s_3s_2,s_1s_2s_3s_2\}$ if $K_2$, $K_1$, $K_9$, $K_{13}$,$\neg K_{10}$, $\neg K_{14}$, $\neg K_6$, $\neg K_{11}$ $\vee$ $\neg K_5$, $\neg K_7$ $\vee$ $\neg K_{12}$, $\neg K_3$ $\vee$ $\neg K_{11}$ $\vee$ $\neg K_8$, $\neg K_8$ $\vee$ $\neg K_{12}$ $\vee$ $\neg K_4$, $\neg K_3$ $\vee$ $\neg K_8$ $\vee$ $\neg K_{12}$, $\neg K_{11}$ $\vee$ $\neg K_8$ $\vee$ $\neg K_4$, 
\item \altcirc{49} $\A(\lam,\mu)=\{s_3s_1s_2,s_2s_3s_1s_2\}$ if $K_3$, $K_{10}$, $K_6$, $K_{13}$,$\neg K_1$, $\neg K_{12}$, $\neg K_{11}$, $\neg K_2$, $\neg K_{14}$ $\vee$ $\neg K_5$ $\vee$ $\neg K_4$, $\neg K_7$ $\vee$ $\neg K_{14}$ $\vee$ $\neg K_4$, 
\item \altcirc{50} $\A(\lam,\mu)=\{s_3s_2s_1,s_2s_3s_2s_1\}$ if $K_7$, $K_8$, $K_4$, $K_{12}$,$\neg K_2$, $\neg K_3$, $\neg K_{14}$, $\neg K_{11}$, $\neg K_{10}$ $\vee$ $\neg K_1$ $\vee$ $\neg K_{13}$, $\neg K_1$ $\vee$ $\neg K_9$ $\vee$ $\neg K_{13}$, 
\item \altcirc{51} $\A(\lam,\mu)=\{s_3s_2s_1,s_3s_1s_2s_1\}$ if $K_3$, $K_8$, $K_4$, $K_{12}$,$\neg K_6$, $\neg K_{11}$, $\neg K_7$, $\neg K_1$ $\vee$ $\neg K_9$ $\vee$ $\neg K_{14}$, $\neg K_2$ $\vee$ $\neg K_9$ $\vee$ $\neg K_{14}$, $\neg K_1$ $\vee$ $\neg K_9$ $\vee$ $\neg K_{13}$, $\neg K_{14}$ $\vee$ $\neg K_5$, $\neg K_{10}$ $\vee$ $\neg K_{13}$, $\neg K_2$ $\vee$ $\neg K_9$ $\vee$ $\neg K_{13}$, 
\item \altcirc{52} $\A(\lam,\mu)=\{s_1s_2s_3s_1,s_1s_2s_3s_2s_1\}$ if $K_2$, $K_7$, $K_{14}$, $K_{12}$,$\neg K_6$, $\neg K_9$, $\neg K_4$, $\neg K_{11}$ $\vee$ $\neg K_{10}$ $\vee$ $\neg K_1$, $\neg K_{10}$ $\vee$ $\neg K_1$ $\vee$ $\neg K_{13}$, $\neg K_1$ $\vee$ $\neg K_5$, $\neg K_3$ $\vee$ $\neg K_{10}$ $\vee$ $\neg K_{11}$, $\neg K_3$ $\vee$ $\neg K_8$, $\neg K_3$ $\vee$ $\neg K_{10}$ $\vee$ $\neg K_{13}$, 
\item \altcirc{53} $\A(\lam,\mu)=\{s_1s_2s_3s_2,s_1s_2s_3s_1s_2\}$ if $K_2$, $K_9$, $K_6$, $K_{13}$,$\neg K_{12}$, $\neg K_{14}$, $\neg K_1$, $\neg K_3$, $\neg K_{11}$ $\vee$ $\neg K_5$ $\vee$ $\neg K_4$, $\neg K_{11}$ $\vee$ $\neg K_8$ $\vee$ $\neg K_4$, 
\item \altcirc{54} $\A(\lam,\mu)=\{s_2s_3s_1s_2,s_1s_2s_3s_1s_2\}$ if $K_3$, $K_2$, $K_6$, $K_{13}$,$\neg K_{12}$, $\neg K_{10}$, $\neg K_9$, $\neg K_{11}$ $\vee$ $\neg K_1$ $\vee$ $\neg K_5$, $\neg K_7$ $\vee$ $\neg K_{14}$, $\neg K_{11}$ $\vee$ $\neg K_5$ $\vee$ $\neg K_4$, $\neg K_{11}$ $\vee$ $\neg K_8$, $\neg K_1$ $\vee$ $\neg K_{14}$ $\vee$ $\neg K_5$, $\neg K_{14}$ $\vee$ $\neg K_5$ $\vee$ $\neg K_4$, 
\item \altcirc{55} $\A(\lam,\mu)=\{s_2s_3s_1s_2,s_2s_3s_1s_2s_1\}$ if $K_3$, $K_6$, $K_{13}$, $K_{12}$,$\neg K_2$, $\neg K_8$, $\neg K_{10}$, $\neg K_{11}$ $\vee$ $\neg K_1$ $\vee$ $\neg K_5$, $\neg K_{11}$ $\vee$ $\neg K_5$ $\vee$ $\neg K_4$, $\neg K_1$ $\vee$ $\neg K_{14}$ $\vee$ $\neg K_5$, $\neg K_1$ $\vee$ $\neg K_9$, $\neg K_7$ $\vee$ $\neg K_4$, $\neg K_{14}$ $\vee$ $\neg K_5$ $\vee$ $\neg K_4$, 
\item \altcirc{56} $\A(\lam,\mu)=\{s_2s_3s_2s_1,s_1s_2s_3s_2s_1\}$ if $K_2$, $K_4$, $K_7$, $K_{12}$,$\neg K_6$, $\neg K_{14}$, $\neg K_8$, $\neg K_{11}$ $\vee$ $\neg K_{10}$ $\vee$ $\neg K_1$, $\neg K_{11}$ $\vee$ $\neg K_5$, $\neg K_{10}$ $\vee$ $\neg K_1$ $\vee$ $\neg K_{13}$, $\neg K_3$ $\vee$ $\neg K_{10}$ $\vee$ $\neg K_{11}$, $\neg K_3$ $\vee$ $\neg K_{10}$ $\vee$ $\neg K_{13}$, $\neg K_9$ $\vee$ $\neg K_{13}$, 
\item \altcirc{57} $\A(\lam,\mu)=\{s_3s_1s_2s_1,s_2s_3s_1s_2s_1\}$ if $K_3$, $K_8$, $K_6$, $K_{12}$,$\neg K_2$, $\neg K_{11}$, $\neg K_4$, $\neg K_{13}$, $\neg K_1$ $\vee$ $\neg K_9$ $\vee$ $\neg K_{14}$, $\neg K_1$ $\vee$ $\neg K_{14}$ $\vee$ $\neg K_5$, 
\item \altcirc{58} $\A(\lam,\mu)=\{s_1s_2s_3s_1s_2,s_1s_2s_3s_1s_2s_1\}$ if $K_2$, $K_6$, $K_{13}$, $K_{12}$,$\neg K_7$, $\neg K_3$, $\neg K_9$, $\neg K_{11}$ $\vee$ $\neg K_1$ $\vee$ $\neg K_5$, $\neg K_{11}$ $\vee$ $\neg K_5$ $\vee$ $\neg K_4$, $\neg K_{10}$ $\vee$ $\neg K_1$, $\neg K_8$ $\vee$ $\neg K_4$, $\neg K_1$ $\vee$ $\neg K_{14}$ $\vee$ $\neg K_5$, $\neg K_{14}$ $\vee$ $\neg K_5$ $\vee$ $\neg K_4$, 
\item \altcirc{59} $\A(\lam,\mu)=\{s_1s_2s_3s_2s_1,s_1s_2s_3s_1s_2s_1\}$ if $K_2$, $K_7$, $K_6$, $K_{12}$,$\neg K_{14}$, $\neg K_3$, $\neg K_4$, $\neg K_{13}$, $\neg K_{11}$ $\vee$ $\neg K_1$ $\vee$ $\neg K_5$, $\neg K_{11}$ $\vee$ $\neg K_{10}$ $\vee$ $\neg K_1$, 
\item \altcirc{60} $\A(\lam,\mu)=\{s_2s_3s_1s_2s_1,s_1s_2s_3s_1s_2s_1\}$ if $K_3$, $K_2$, $K_6$, $K_{12}$,$\neg K_7$, $\neg K_8$, $\neg K_{13}$, $\neg K_{11}$ $\vee$ $\neg K_1$ $\vee$ $\neg K_5$, $\neg K_{11}$ $\vee$ $\neg K_5$ $\vee$ $\neg K_4$, $\neg K_1$ $\vee$ $\neg K_{14}$ $\vee$ $\neg K_5$, $\neg K_9$ $\vee$ $\neg K_{14}$, $\neg K_{10}$ $\vee$ $\neg K_{11}$, $\neg K_{14}$ $\vee$ $\neg K_5$ $\vee$ $\neg K_4$, 
\item \altcirc{61} $\A(\lam,\mu)=\{1,s_1,s_2\}$ if $K_{11}$, $K_{10}$, $K_1$, $K_5$, $K_4$,$\neg K_{13}$, $\neg K_{14}$, $\neg K_3$, $\neg K_8$, $\neg K_2$ $\vee$ $\neg K_6$ $\vee$ $\neg K_{12}$, $\neg K_{12}$ $\vee$ $\neg K_7$, 
\item \altcirc{62} $\A(\lam,\mu)=\{1,s_1,s_2s_1\}$ if $K_{11}$, $K_1$, $K_8$, $K_5$, $K_4$,$\neg K_{10}$, $\neg K_3$, $\neg K_{12}$, $\neg K_{14}$, $\neg K_9$ $\vee$ $\neg K_{13}$, $\neg K_2$ $\vee$ $\neg K_6$ $\vee$ $\neg K_{13}$, 
\item \altcirc{63} $\A(\lam,\mu)=\{1,s_2,s_3\}$ if $K_{11}$, $K_{10}$, $K_1$, $K_{14}$, $K_5$,$\neg K_4$, $\neg K_{13}$, $\neg K_9$, $\neg K_3$, $\neg K_2$ $\vee$ $\neg K_6$ $\vee$ $\neg K_{12}$, $\neg K_2$ $\vee$ $\neg K_7$, 
\item \altcirc{64} $\A(\lam,\mu)=\{1,s_2,s_1s_2\}$ if $K_{11}$, $K_{10}$, $K_1$, $K_3$, $K_5$,$\neg K_4$, $\neg K_{13}$, $\neg K_8$, $\neg K_{14}$, $\neg K_2$ $\vee$ $\neg K_7$ $\vee$ $\neg K_{12}$, $\neg K_6$ $\vee$ $\neg K_{12}$, 
\item \altcirc{65} $\A(\lam,\mu)=\{1,s_2,s_3s_2\}$ if $K_{11}$, $K_{10}$, $K_1$, $K_{13}$, $K_5$,$\neg K_4$, $\neg K_{14}$, $\neg K_9$, $\neg K_3$, $\neg K_2$ $\vee$ $\neg K_7$ $\vee$ $\neg K_{12}$, $\neg K_2$ $\vee$ $\neg K_6$, 
\item \altcirc{66} $\A(\lam,\mu)=\{1,s_3,s_2s_3\}$ if $K_{11}$, $K_1$, $K_9$, $K_5$, $K_{14}$,$\neg K_{10}$, $\neg K_4$, $\neg K_2$, $\neg K_{13}$, $\neg K_3$ $\vee$ $\neg K_6$ $\vee$ $\neg K_{12}$, $\neg K_3$ $\vee$ $\neg K_8$, 
\item \altcirc{67} $\A(\lam,\mu)=\{s_1,s_2s_1,s_3s_1\}$ if $K_{11}$, $K_8$, $K_{14}$, $K_5$, $K_4$,$\neg K_1$, $\neg K_3$, $\neg K_{12}$, $\neg K_7$, $\neg K_2$ $\vee$ $\neg K_9$, $\neg K_2$ $\vee$ $\neg K_6$ $\vee$ $\neg K_{13}$, 
\item \altcirc{68} $\A(\lam,\mu)=\{s_1,s_2s_1,s_1s_2s_1\}$ if $K_{11}$, $K_3$, $K_8$, $K_5$, $K_4$,$\neg K_1$, $\neg K_{12}$, $\neg K_{10}$, $\neg K_{14}$, $\neg K_6$ $\vee$ $\neg K_{13}$, $\neg K_2$ $\vee$ $\neg K_9$ $\vee$ $\neg K_{13}$, 
\item \altcirc{69} $\A(\lam,\mu)=\{s_1,s_2s_1,s_3s_2s_1\}$ if $K_{11}$, $K_8$, $K_{12}$, $K_5$, $K_4$,$\neg K_1$, $\neg K_3$, $\neg K_7$, $\neg K_{14}$, $\neg K_2$ $\vee$ $\neg K_6$, $\neg K_2$ $\vee$ $\neg K_9$ $\vee$ $\neg K_{13}$, 
\item \altcirc{70} $\A(\lam,\mu)=\{s_1,s_3s_1,s_2s_3s_1\}$ if $K_{11}$, $K_7$, $K_{14}$, $K_5$, $K_4$,$\neg K_1$, $\neg K_2$, $\neg K_{12}$, $\neg K_8$, $\neg K_3$ $\vee$ $\neg K_{10}$, $\neg K_3$ $\vee$ $\neg K_6$ $\vee$ $\neg K_{13}$, 
\item \altcirc{71} $\A(\lam,\mu)=\{s_2,s_1s_2,s_1s_2s_1\}$ if $K_{11}$, $K_{10}$, $K_1$, $K_8$, $K_3$,$\neg K_5$, $\neg K_{13}$, $\neg K_{12}$, $\neg K_4$, $\neg K_2$ $\vee$ $\neg K_7$ $\vee$ $\neg K_{14}$, $\neg K_9$ $\vee$ $\neg K_{14}$, 
\item \altcirc{72} $\A(\lam,\mu)=\{s_2,s_3s_2,s_2s_3s_2\}$ if $K_{11}$, $K_{10}$, $K_1$, $K_9$, $K_{13}$,$\neg K_5$, $\neg K_{14}$, $\neg K_3$, $\neg K_2$, $\neg K_{12}$ $\vee$ $\neg K_7$ $\vee$ $\neg K_4$, $\neg K_8$ $\vee$ $\neg K_4$, 
\item \altcirc{73} $\A(\lam,\mu)=\{s_3,s_2s_3,s_3s_1\}$ if $K_1$, $K_4$, $K_9$, $K_5$, $K_{14}$,$\neg K_{11}$, $\neg K_2$, $\neg K_{13}$, $\neg K_7$, $\neg K_3$ $\vee$ $\neg K_6$ $\vee$ $\neg K_{12}$, $\neg K_8$ $\vee$ $\neg K_{12}$, 
\item \altcirc{74} $\A(\lam,\mu)=\{s_3,s_2s_3,s_1s_2s_3\}$ if $K_2$, $K_1$, $K_9$, $K_5$, $K_{14}$,$\neg K_{11}$, $\neg K_7$, $\neg K_{13}$, $\neg K_4$, $\neg K_6$ $\vee$ $\neg K_{12}$, $\neg K_3$ $\vee$ $\neg K_8$ $\vee$ $\neg K_{12}$, 
\item \altcirc{75} $\A(\lam,\mu)=\{s_3,s_2s_3,s_2s_3s_2\}$ if $K_1$, $K_{13}$, $K_9$, $K_5$, $K_{14}$,$\neg K_{11}$, $\neg K_{10}$, $\neg K_2$, $\neg K_4$, $\neg K_3$ $\vee$ $\neg K_8$ $\vee$ $\neg K_{12}$, $\neg K_3$ $\vee$ $\neg K_6$, 
\item \altcirc{76} $\A(\lam,\mu)=\{s_3,s_3s_1,s_2s_3s_1\}$ if $K_1$, $K_7$, $K_{14}$, $K_5$, $K_4$,$\neg K_{11}$, $\neg K_2$, $\neg K_9$, $\neg K_{12}$, $\neg K_{10}$ $\vee$ $\neg K_{13}$, $\neg K_3$ $\vee$ $\neg K_6$ $\vee$ $\neg K_{13}$, 
\item \altcirc{77} $\A(\lam,\mu)=\{s_1s_2,s_2s_1,s_1s_2s_1\}$ if $K_3$, $K_{11}$, $K_8$, $K_4$, $K_{10}$,$\neg K_1$, $\neg K_5$, $\neg K_{12}$, $\neg K_{13}$, $\neg K_2$ $\vee$ $\neg K_9$ $\vee$ $\neg K_{14}$, $\neg K_7$ $\vee$ $\neg K_{14}$, 
\item \altcirc{78} $\A(\lam,\mu)=\{s_1s_2,s_1s_2s_1,s_3s_1s_2\}$ if $K_3$, $K_{11}$, $K_8$, $K_{13}$, $K_{10}$,$\neg K_1$, $\neg K_{12}$, $\neg K_6$, $\neg K_4$, $\neg K_2$ $\vee$ $\neg K_7$ $\vee$ $\neg K_{14}$, $\neg K_2$ $\vee$ $\neg K_9$, 
\item \altcirc{79} $\A(\lam,\mu)=\{s_1s_2,s_1s_2s_1,s_3s_1s_2s_1\}$ if $K_3$, $K_{11}$, $K_8$, $K_{12}$, $K_{10}$,$\neg K_1$, $\neg K_6$, $\neg K_4$, $\neg K_{13}$, $\neg K_2$ $\vee$ $\neg K_7$, $\neg K_2$ $\vee$ $\neg K_9$ $\vee$ $\neg K_{14}$, 
\item \altcirc{80} $\A(\lam,\mu)=\{s_1s_2,s_3s_1s_2,s_2s_3s_1s_2\}$ if $K_3$, $K_{10}$, $K_{13}$, $K_{11}$, $K_6$,$\neg K_1$, $\neg K_{12}$, $\neg K_8$, $\neg K_2$, $\neg K_5$ $\vee$ $\neg K_4$, $\neg K_7$ $\vee$ $\neg K_{14}$ $\vee$ $\neg K_4$, 
\item \altcirc{81} $\A(\lam,\mu)=\{s_2s_1,s_3s_2s_1,s_2s_3s_2s_1\}$ if $K_{11}$, $K_7$, $K_8$, $K_4$, $K_{12}$,$\neg K_2$, $\neg K_5$, $\neg K_3$, $\neg K_{14}$, $\neg K_{10}$ $\vee$ $\neg K_1$, $\neg K_1$ $\vee$ $\neg K_9$ $\vee$ $\neg K_{13}$, 
\item \altcirc{82} $\A(\lam,\mu)=\{s_2s_3,s_3s_2,s_2s_3s_2\}$ if $K_{10}$, $K_1$, $K_9$, $K_{13}$, $K_{14}$,$\neg K_{11}$, $\neg K_5$, $\neg K_2$, $\neg K_3$, $\neg K_8$ $\vee$ $\neg K_{12}$ $\vee$ $\neg K_4$, $\neg K_7$ $\vee$ $\neg K_4$, 
\item \altcirc{83} $\A(\lam,\mu)=\{s_2s_3,s_1s_2s_3,s_1s_2s_3s_1\}$ if $K_9$, $K_2$, $K_1$, $K_7$, $K_{14}$,$\neg K_{12}$, $\neg K_5$, $\neg K_{13}$, $\neg K_4$, $\neg K_{11}$ $\vee$ $\neg K_{10}$, $\neg K_3$ $\vee$ $\neg K_{11}$ $\vee$ $\neg K_8$, 
\item \altcirc{84} $\A(\lam,\mu)=\{s_3s_1,s_2s_3s_1,s_1s_2s_3s_1\}$ if $K_2$, $K_7$, $K_{14}$, $K_5$, $K_4$,$\neg K_{12}$, $\neg K_{11}$, $\neg K_1$, $\neg K_9$, $\neg K_6$ $\vee$ $\neg K_{13}$, $\neg K_3$ $\vee$ $\neg K_{10}$ $\vee$ $\neg K_{13}$, 
\item \altcirc{85} $\A(\lam,\mu)=\{s_3s_1,s_2s_3s_1,s_2s_3s_2s_1\}$ if $K_4$, $K_7$, $K_{14}$, $K_5$, $K_{12}$,$\neg K_2$, $\neg K_{11}$, $\neg K_8$, $\neg K_1$, $\neg K_3$ $\vee$ $\neg K_6$, $\neg K_3$ $\vee$ $\neg K_{10}$ $\vee$ $\neg K_{13}$, 
\item \altcirc{86} $\A(\lam,\mu)=\{s_3s_2,s_2s_3s_2,s_3s_1s_2\}$ if $K_3$, $K_{10}$, $K_1$, $K_9$, $K_{13}$,$\neg K_{11}$, $\neg K_{14}$, $\neg K_6$, $\neg K_2$, $\neg K_{12}$ $\vee$ $\neg K_7$ $\vee$ $\neg K_4$, $\neg K_8$ $\vee$ $\neg K_{12}$, 
\item \altcirc{87} $\A(\lam,\mu)=\{s_3s_2,s_2s_3s_2,s_1s_2s_3s_2\}$ if $K_{10}$, $K_1$, $K_9$, $K_{13}$, $K_2$,$\neg K_{11}$, $\neg K_{14}$, $\neg K_6$, $\neg K_3$, $\neg K_7$ $\vee$ $\neg K_{12}$, $\neg K_8$ $\vee$ $\neg K_{12}$ $\vee$ $\neg K_4$, 
\item \altcirc{88} $\A(\lam,\mu)=\{s_3s_2,s_3s_1s_2,s_2s_3s_1s_2\}$ if $K_3$, $K_{10}$, $K_1$, $K_6$, $K_{13}$,$\neg K_{11}$, $\neg K_{12}$, $\neg K_9$, $\neg K_2$, $\neg K_{14}$ $\vee$ $\neg K_5$, $\neg K_7$ $\vee$ $\neg K_{14}$ $\vee$ $\neg K_4$, 
\item \altcirc{89} $\A(\lam,\mu)=\{s_1s_2s_1,s_3s_1s_2s_1,s_2s_3s_1s_2s_1\}$ if $K_3$, $K_{11}$, $K_8$, $K_6$, $K_{12}$,$\neg K_2$, $\neg K_4$, $\neg K_{10}$, $\neg K_{13}$, $\neg K_1$ $\vee$ $\neg K_5$, $\neg K_1$ $\vee$ $\neg K_9$ $\vee$ $\neg K_{14}$, 
\item \altcirc{90} $\A(\lam,\mu)=\{s_1s_2s_3,s_2s_3s_1,s_1s_2s_3s_1\}$ if $K_2$, $K_9$, $K_7$, $K_{14}$, $K_4$,$\neg K_{12}$, $\neg K_1$, $\neg K_5$, $\neg K_{13}$, $\neg K_3$ $\vee$ $\neg K_{10}$ $\vee$ $\neg K_{11}$, $\neg K_{11}$ $\vee$ $\neg K_8$, 
\item \altcirc{91} $\A(\lam,\mu)=\{s_1s_2s_3,s_1s_2s_3s_1,s_1s_2s_3s_2\}$ if $K_2$, $K_9$, $K_7$, $K_{14}$, $K_{13}$,$\neg K_{12}$, $\neg K_1$, $\neg K_6$, $\neg K_4$, $\neg K_3$ $\vee$ $\neg K_{11}$ $\vee$ $\neg K_8$, $\neg K_3$ $\vee$ $\neg K_{10}$, 
\item \altcirc{92} $\A(\lam,\mu)=\{s_1s_2s_3,s_1s_2s_3s_1,s_1s_2s_3s_2s_1\}$ if $K_2$, $K_9$, $K_7$, $K_{14}$, $K_{12}$,$\neg K_6$, $\neg K_1$, $\neg K_4$, $\neg K_{13}$, $\neg K_3$ $\vee$ $\neg K_{10}$ $\vee$ $\neg K_{11}$, $\neg K_3$ $\vee$ $\neg K_8$, 
\item \altcirc{93} $\A(\lam,\mu)=\{s_1s_2s_3,s_1s_2s_3s_2,s_1s_2s_3s_1s_2\}$ if $K_2$, $K_9$, $K_{14}$, $K_{13}$, $K_6$,$\neg K_{12}$, $\neg K_7$, $\neg K_1$, $\neg K_3$, $\neg K_5$ $\vee$ $\neg K_4$, $\neg K_{11}$ $\vee$ $\neg K_8$ $\vee$ $\neg K_4$, 
\item \altcirc{94} $\A(\lam,\mu)=\{s_2s_3s_1,s_3s_2s_1,s_2s_3s_2s_1\}$ if $K_7$, $K_8$, $K_4$, $K_{14}$, $K_{12}$,$\neg K_2$, $\neg K_3$, $\neg K_5$, $\neg K_{11}$, $\neg K_{10}$ $\vee$ $\neg K_1$ $\vee$ $\neg K_{13}$, $\neg K_1$ $\vee$ $\neg K_9$, 
\item \altcirc{95} $\A(\lam,\mu)=\{s_2s_3s_2,s_1s_2s_3s_2,s_1s_2s_3s_1s_2\}$ if $K_2$, $K_1$, $K_9$, $K_{13}$, $K_6$,$\neg K_{12}$, $\neg K_{10}$, $\neg K_{14}$, $\neg K_3$, $\neg K_{11}$ $\vee$ $\neg K_5$, $\neg K_{11}$ $\vee$ $\neg K_8$ $\vee$ $\neg K_4$, 
\item \altcirc{96} $\A(\lam,\mu)=\{s_3s_1s_2,s_2s_3s_1s_2,s_1s_2s_3s_1s_2\}$ if $K_3$, $K_2$, $K_6$, $K_{13}$, $K_{10}$,$\neg K_{12}$, $\neg K_1$, $\neg K_{11}$, $\neg K_9$, $\neg K_7$ $\vee$ $\neg K_{14}$, $\neg K_{14}$ $\vee$ $\neg K_5$ $\vee$ $\neg K_4$, 
\item \altcirc{97} $\A(\lam,\mu)=\{s_3s_1s_2,s_2s_3s_1s_2,s_2s_3s_1s_2s_1\}$ if $K_3$, $K_{10}$, $K_6$, $K_{13}$, $K_{12}$,$\neg K_2$, $\neg K_1$, $\neg K_{11}$, $\neg K_8$, $\neg K_7$ $\vee$ $\neg K_4$, $\neg K_{14}$ $\vee$ $\neg K_5$ $\vee$ $\neg K_4$, 
\item \altcirc{98} $\A(\lam,\mu)=\{s_3s_2s_1,s_2s_3s_2s_1,s_3s_1s_2s_1\}$ if $K_3$, $K_7$, $K_8$, $K_4$, $K_{12}$,$\neg K_2$, $\neg K_6$, $\neg K_{11}$, $\neg K_{14}$, $\neg K_1$ $\vee$ $\neg K_9$ $\vee$ $\neg K_{13}$, $\neg K_{10}$ $\vee$ $\neg K_{13}$, 
\item \altcirc{99} $\A(\lam,\mu)=\{s_3s_2s_1,s_2s_3s_2s_1,s_1s_2s_3s_2s_1\}$ if $K_4$, $K_2$, $K_8$, $K_7$, $K_{12}$,$\neg K_6$, $\neg K_{14}$, $\neg K_3$, $\neg K_{11}$, $\neg K_{10}$ $\vee$ $\neg K_1$ $\vee$ $\neg K_{13}$, $\neg K_9$ $\vee$ $\neg K_{13}$, 
\item \altcirc{100} $\A(\lam,\mu)=\{s_3s_2s_1,s_3s_1s_2s_1,s_2s_3s_1s_2s_1\}$ if $K_3$, $K_8$, $K_4$, $K_6$, $K_{12}$,$\neg K_2$, $\neg K_{11}$, $\neg K_7$, $\neg K_{13}$, $\neg K_1$ $\vee$ $\neg K_9$ $\vee$ $\neg K_{14}$, $\neg K_{14}$ $\vee$ $\neg K_5$, 
\item \altcirc{101} $\A(\lam,\mu)=\{s_1s_2s_3s_1,s_1s_2s_3s_2s_1,s_1s_2s_3s_1s_2s_1\}$ if $K_{14}$, $K_2$, $K_7$, $K_6$, $K_{12}$,$\neg K_3$, $\neg K_9$, $\neg K_4$, $\neg K_{13}$, $\neg K_{11}$ $\vee$ $\neg K_{10}$ $\vee$ $\neg K_1$, $\neg K_1$ $\vee$ $\neg K_5$, 
\item \altcirc{102} $\A(\lam,\mu)=\{s_1s_2s_3s_2,s_2s_3s_1s_2,s_1s_2s_3s_1s_2\}$ if $K_3$, $K_2$, $K_9$, $K_6$, $K_{13}$,$\neg K_{12}$, $\neg K_{14}$, $\neg K_1$, $\neg K_{10}$, $\neg K_{11}$ $\vee$ $\neg K_5$ $\vee$ $\neg K_4$, $\neg K_{11}$ $\vee$ $\neg K_8$, 
\item \altcirc{103} $\A(\lam,\mu)=\{s_1s_2s_3s_2,s_1s_2s_3s_1s_2,s_1s_2s_3s_1s_2s_1\}$ if $K_2$, $K_9$, $K_6$, $K_{13}$, $K_{12}$,$\neg K_7$, $\neg K_3$, $\neg K_{14}$, $\neg K_1$, $\neg K_{11}$ $\vee$ $\neg K_5$ $\vee$ $\neg K_4$, $\neg K_8$ $\vee$ $\neg K_4$, 
\item \altcirc{104} $\A(\lam,\mu)=\{s_2s_3s_1s_2,s_3s_1s_2s_1,s_2s_3s_1s_2s_1\}$ if $K_3$, $K_8$, $K_6$, $K_{13}$, $K_{12}$,$\neg K_2$, $\neg K_{11}$, $\neg K_4$, $\neg K_{10}$, $\neg K_1$ $\vee$ $\neg K_{14}$ $\vee$ $\neg K_5$, $\neg K_1$ $\vee$ $\neg K_9$, 
\item \altcirc{105} $\A(\lam,\mu)=\{s_2s_3s_2s_1,s_1s_2s_3s_2s_1,s_1s_2s_3s_1s_2s_1\}$ if $K_2$, $K_4$, $K_7$, $K_6$, $K_{12}$,$\neg K_{14}$, $\neg K_3$, $\neg K_8$, $\neg K_{13}$, $\neg K_{11}$ $\vee$ $\neg K_{10}$ $\vee$ $\neg K_1$, $\neg K_{11}$ $\vee$ $\neg K_5$, 
\item \altcirc{106} $\A(\lam,\mu)=\{s_3s_1s_2s_1,s_2s_3s_1s_2s_1,s_1s_2s_3s_1s_2s_1\}$ if $K_3$, $K_2$, $K_8$, $K_6$, $K_{12}$,$\neg K_7$, $\neg K_{11}$, $\neg K_4$, $\neg K_{13}$, $\neg K_1$ $\vee$ $\neg K_{14}$ $\vee$ $\neg K_5$, $\neg K_9$ $\vee$ $\neg K_{14}$, 
\item \altcirc{107} $\A(\lam,\mu)=\{s_1s_2s_3s_1s_2,s_1s_2s_3s_2s_1,s_1s_2s_3s_1s_2s_1\}$ if $K_2$, $K_7$, $K_6$, $K_{13}$, $K_{12}$,$\neg K_{14}$, $\neg K_3$, $\neg K_4$, $\neg K_9$, $\neg K_{11}$ $\vee$ $\neg K_1$ $\vee$ $\neg K_5$, $\neg K_{10}$ $\vee$ $\neg K_1$, 
\item \altcirc{108} $\A(\lam,\mu)=\{s_1s_2s_3s_2s_1,s_2s_3s_1s_2s_1,s_1s_2s_3s_1s_2s_1\}$ if $K_3$, $K_2$, $K_7$, $K_6$, $K_{12}$,$\neg K_{14}$, $\neg K_4$, $\neg K_8$, $\neg K_{13}$, $\neg K_{11}$ $\vee$ $\neg K_1$ $\vee$ $\neg K_5$, $\neg K_{10}$ $\vee$ $\neg K_{11}$, 
\item \altcirc{109} $\A(\lam,\mu)=\{1,s_1,s_2,s_2s_1\}$ if $K_1$, $K_8$, $K_4$, $K_{11}$, $K_{10}$, $K_5$,$\neg K_{13}$, $\neg K_3$, $\neg K_{12}$, $\neg K_{14}$, 
\item \altcirc{110} $\A(\lam,\mu)=\{1,s_1,s_3,s_3s_1\}$ if $K_{11}$, $K_1$, $K_{14}$, $K_5$, $K_4$,$\neg K_{10}$, $\neg K_9$, $\neg K_7$, $\neg K_8$, $\neg K_2$ $\vee$ $\neg K_6$ $\vee$ $\neg K_{12}$, $\neg K_3$ $\vee$ $\neg K_6$ $\vee$ $\neg K_{12}$, $\neg K_2$ $\vee$ $\neg K_6$ $\vee$ $\neg K_{13}$, $\neg K_3$ $\vee$ $\neg K_6$ $\vee$ $\neg K_{13}$, 
\item \altcirc{111} $\A(\lam,\mu)=\{1,s_2,s_3,s_2s_3\}$ if $K_1$, $K_{11}$, $K_{14}$, $K_{10}$, $K_9$, $K_5$,$\neg K_4$, $\neg K_{13}$, $\neg K_2$, $\neg K_3$, 
\item \altcirc{112} $\A(\lam,\mu)=\{1,s_2,s_1s_2,s_1s_2s_1\}$ if $K_1$, $K_8$, $K_{11}$, $K_3$, $K_{10}$, $K_5$,$\neg K_4$, $\neg K_{13}$, $\neg K_{14}$, $\neg K_{12}$, 
\item \altcirc{113} $\A(\lam,\mu)=\{1,s_2,s_3s_2,s_2s_3s_2\}$ if $K_1$, $K_{11}$, $K_{10}$, $K_{13}$, $K_9$, $K_5$,$\neg K_4$, $\neg K_{14}$, $\neg K_3$, $\neg K_2$, 
\item \altcirc{114} $\A(\lam,\mu)=\{s_1,s_1s_2,s_2s_1,s_1s_2s_1\}$ if $K_8$, $K_4$, $K_{11}$, $K_3$, $K_{10}$, $K_5$,$\neg K_1$, $\neg K_{12}$, $\neg K_{14}$, $\neg K_{13}$, 
\item \altcirc{115} $\A(\lam,\mu)=\{s_1,s_2s_1,s_3s_1,s_2s_3s_1\}$ if $K_8$, $K_4$, $K_{11}$, $K_7$, $K_{14}$, $K_5$,$\neg K_1$, $\neg K_2$, $\neg K_3$, $\neg K_{12}$, 
\item \altcirc{116} $\A(\lam,\mu)=\{s_1,s_2s_1,s_3s_2s_1,s_2s_3s_2s_1\}$ if $K_8$, $K_4$, $K_{11}$, $K_7$, $K_5$, $K_{12}$,$\neg K_1$, $\neg K_2$, $\neg K_3$, $\neg K_{14}$, 
\item \altcirc{117} $\A(\lam,\mu)=\{s_2,s_1s_2,s_3s_2,s_3s_1s_2\}$ if $K_{11}$, $K_{10}$, $K_1$, $K_3$, $K_{13}$,$\neg K_5$, $\neg K_8$, $\neg K_9$, $\neg K_6$, $\neg K_2$ $\vee$ $\neg K_7$ $\vee$ $\neg K_{12}$, $\neg K_2$ $\vee$ $\neg K_7$ $\vee$ $\neg K_{14}$, $\neg K_{12}$ $\vee$ $\neg K_7$ $\vee$ $\neg K_4$, $\neg K_7$ $\vee$ $\neg K_{14}$ $\vee$ $\neg K_4$, 
\item \altcirc{118} $\A(\lam,\mu)=\{s_3,s_2s_3,s_3s_1,s_2s_3s_1\}$ if $K_1$, $K_4$, $K_7$, $K_{14}$, $K_9$, $K_5$,$\neg K_{11}$, $\neg K_2$, $\neg K_{13}$, $\neg K_{12}$, 
\item \altcirc{119} $\A(\lam,\mu)=\{s_3,s_2s_3,s_3s_2,s_2s_3s_2\}$ if $K_1$, $K_{14}$, $K_{10}$, $K_{13}$, $K_9$, $K_5$,$\neg K_{11}$, $\neg K_2$, $\neg K_4$, $\neg K_3$, 
\item \altcirc{120} $\A(\lam,\mu)=\{s_3,s_2s_3,s_1s_2s_3,s_1s_2s_3s_1\}$ if $K_1$, $K_7$, $K_{14}$, $K_2$, $K_9$, $K_5$,$\neg K_{11}$, $\neg K_{12}$, $\neg K_{13}$, $\neg K_4$, 
\item \altcirc{121} $\A(\lam,\mu)=\{s_1s_2,s_1s_2s_1,s_3s_1s_2,s_2s_3s_1s_2\}$ if $K_8$, $K_{11}$, $K_3$, $K_{10}$, $K_6$, $K_{13}$,$\neg K_1$, $\neg K_{12}$, $\neg K_2$, $\neg K_4$, 
\item \altcirc{122} $\A(\lam,\mu)=\{s_1s_2,s_1s_2s_1,s_3s_1s_2s_1,s_2s_3s_1s_2s_1\}$ if $K_8$, $K_{11}$, $K_3$, $K_{10}$, $K_6$, $K_{12}$,$\neg K_1$, $\neg K_2$, $\neg K_4$, $\neg K_{13}$, 
\item \altcirc{123} $\A(\lam,\mu)=\{s_2s_1,s_1s_2s_1,s_3s_2s_1,s_3s_1s_2s_1\}$ if $K_3$, $K_{11}$, $K_8$, $K_4$, $K_{12}$,$\neg K_5$, $\neg K_6$, $\neg K_{10}$, $\neg K_7$, $\neg K_1$ $\vee$ $\neg K_9$ $\vee$ $\neg K_{14}$, $\neg K_2$ $\vee$ $\neg K_9$ $\vee$ $\neg K_{14}$, $\neg K_1$ $\vee$ $\neg K_9$ $\vee$ $\neg K_{13}$, $\neg K_2$ $\vee$ $\neg K_9$ $\vee$ $\neg K_{13}$, 
\item \altcirc{124} $\A(\lam,\mu)=\{s_2s_3,s_1s_2s_3,s_2s_3s_2,s_1s_2s_3s_2\}$ if $K_2$, $K_1$, $K_9$, $K_{13}$, $K_{14}$,$\neg K_7$, $\neg K_{10}$, $\neg K_5$, $\neg K_6$, $\neg K_3$ $\vee$ $\neg K_{11}$ $\vee$ $\neg K_8$, $\neg K_8$ $\vee$ $\neg K_{12}$ $\vee$ $\neg K_4$, $\neg K_3$ $\vee$ $\neg K_8$ $\vee$ $\neg K_{12}$, $\neg K_{11}$ $\vee$ $\neg K_8$ $\vee$ $\neg K_4$, 
\item \altcirc{125} $\A(\lam,\mu)=\{s_3s_1,s_1s_2s_3,s_2s_3s_1,s_1s_2s_3s_1\}$ if $K_4$, $K_7$, $K_{14}$, $K_2$, $K_9$, $K_5$,$\neg K_{12}$, $\neg K_{11}$, $\neg K_1$, $\neg K_{13}$, 
\item \altcirc{126} $\A(\lam,\mu)=\{s_3s_1,s_2s_3s_1,s_3s_2s_1,s_2s_3s_2s_1\}$ if $K_8$, $K_4$, $K_7$, $K_{14}$, $K_5$, $K_{12}$,$\neg K_2$, $\neg K_{11}$, $\neg K_1$, $\neg K_3$, 
\item \altcirc{127} $\A(\lam,\mu)=\{s_3s_2,s_2s_3s_2,s_3s_1s_2,s_2s_3s_1s_2\}$ if $K_1$, $K_3$, $K_{10}$, $K_6$, $K_{13}$, $K_9$,$\neg K_{11}$, $\neg K_{12}$, $\neg K_{14}$, $\neg K_2$, 
\item \altcirc{128} $\A(\lam,\mu)=\{s_3s_2,s_2s_3s_2,s_1s_2s_3s_2,s_1s_2s_3s_1s_2\}$ if $K_1$, $K_{10}$, $K_6$, $K_{13}$, $K_2$, $K_9$,$\neg K_{11}$, $\neg K_{12}$, $\neg K_{14}$, $\neg K_3$, 
\item \altcirc{129} $\A(\lam,\mu)=\{s_1s_2s_3,s_1s_2s_3s_1,s_1s_2s_3s_2,s_1s_2s_3s_1s_2\}$ if $K_7$, $K_{14}$, $K_6$, $K_{13}$, $K_2$, $K_9$,$\neg K_{12}$, $\neg K_1$, $\neg K_4$, $\neg K_3$, 
\item \altcirc{130} $\A(\lam,\mu)=\{s_1s_2s_3,s_1s_2s_3s_1,s_1s_2s_3s_2s_1,s_1s_2s_3s_1s_2s_1\}$ if $K_7$, $K_{14}$, $K_6$, $K_2$, $K_9$, $K_{12}$,$\neg K_3$, $\neg K_1$, $\neg K_4$, $\neg K_{13}$, 
\item \altcirc{131} $\A(\lam,\mu)=\{s_2s_3s_1,s_1s_2s_3s_1,s_2s_3s_2s_1,s_1s_2s_3s_2s_1\}$ if $K_2$, $K_4$, $K_7$, $K_{14}$, $K_{12}$,$\neg K_6$, $\neg K_8$, $\neg K_9$, $\neg K_5$, $\neg K_{11}$ $\vee$ $\neg K_{10}$ $\vee$ $\neg K_1$, $\neg K_{10}$ $\vee$ $\neg K_1$ $\vee$ $\neg K_{13}$, $\neg K_3$ $\vee$ $\neg K_{10}$ $\vee$ $\neg K_{11}$, $\neg K_3$ $\vee$ $\neg K_{10}$ $\vee$ $\neg K_{13}$, 
\item \altcirc{132} $\A(\lam,\mu)=\{s_3s_1s_2,s_1s_2s_3s_2,s_2s_3s_1s_2,s_1s_2s_3s_1s_2\}$ if $K_3$, $K_{10}$, $K_6$, $K_{13}$, $K_2$, $K_9$,$\neg K_{12}$, $\neg K_1$, $\neg K_{14}$, $\neg K_{11}$, 
\item \altcirc{133} $\A(\lam,\mu)=\{s_3s_1s_2,s_2s_3s_1s_2,s_3s_1s_2s_1,s_2s_3s_1s_2s_1\}$ if $K_8$, $K_3$, $K_{10}$, $K_6$, $K_{13}$, $K_{12}$,$\neg K_2$, $\neg K_1$, $\neg K_{11}$, $\neg K_4$, 
\item \altcirc{134} $\A(\lam,\mu)=\{s_3s_2s_1,s_2s_3s_2s_1,s_3s_1s_2s_1,s_2s_3s_1s_2s_1\}$ if $K_8$, $K_4$, $K_7$, $K_3$, $K_6$, $K_{12}$,$\neg K_2$, $\neg K_{11}$, $\neg K_{14}$, $\neg K_{13}$, 
\item \altcirc{135} $\A(\lam,\mu)=\{s_3s_2s_1,s_2s_3s_2s_1,s_1s_2s_3s_2s_1,s_1s_2s_3s_1s_2s_1\}$ if $K_8$, $K_4$, $K_7$, $K_6$, $K_2$, $K_{12}$,$\neg K_{14}$, $\neg K_3$, $\neg K_{13}$, $\neg K_{11}$, 
\item \altcirc{136} $\A(\lam,\mu)=\{s_1s_2s_3s_2,s_1s_2s_3s_1s_2,s_1s_2s_3s_2s_1,s_1s_2s_3s_1s_2s_1\}$ if $K_7$, $K_6$, $K_{13}$, $K_2$, $K_9$, $K_{12}$,$\neg K_{14}$, $\neg K_3$, $\neg K_1$, $\neg K_4$, 
\item \altcirc{137} $\A(\lam,\mu)=\{s_2s_3s_1s_2,s_1s_2s_3s_1s_2,s_2s_3s_1s_2s_1,s_1s_2s_3s_1s_2s_1\}$ if $K_3$, $K_2$, $K_6$, $K_{13}$, $K_{12}$,$\neg K_7$, $\neg K_8$, $\neg K_{10}$, $\neg K_9$, $\neg K_{11}$ $\vee$ $\neg K_1$ $\vee$ $\neg K_5$, $\neg K_{11}$ $\vee$ $\neg K_5$ $\vee$ $\neg K_4$, $\neg K_1$ $\vee$ $\neg K_{14}$ $\vee$ $\neg K_5$, $\neg K_{14}$ $\vee$ $\neg K_5$ $\vee$ $\neg K_4$, 
\item \altcirc{138} $\A(\lam,\mu)=\{s_3s_1s_2s_1,s_1s_2s_3s_2s_1,s_2s_3s_1s_2s_1,s_1s_2s_3s_1s_2s_1\}$ if $K_8$, $K_7$, $K_3$, $K_6$, $K_2$, $K_{12}$,$\neg K_{14}$, $\neg K_{11}$, $\neg K_4$, $\neg K_{13}$, 
\item \altcirc{139} $\A(\lam,\mu)=\{1,s_1,s_2,s_3,s_3s_1\}$ if $K_1$, $K_4$, $K_{11}$, $K_{14}$, $K_{10}$, $K_5$,$\neg K_{13}$, $\neg K_9$, $\neg K_3$, $\neg K_7$, $\neg K_8$, $\neg K_2$ $\vee$ $\neg K_6$ $\vee$ $\neg K_{12}$, 
\item \altcirc{140} $\A(\lam,\mu)=\{1,s_1,s_3,s_2s_1,s_3s_1\}$ if $K_1$, $K_8$, $K_4$, $K_{11}$, $K_{14}$, $K_5$,$\neg K_{10}$, $\neg K_3$, $\neg K_{12}$, $\neg K_9$, $\neg K_7$, $\neg K_2$ $\vee$ $\neg K_6$ $\vee$ $\neg K_{13}$, 
\item \altcirc{141} $\A(\lam,\mu)=\{1,s_1,s_3,s_2s_3,s_3s_1\}$ if $K_1$, $K_4$, $K_{11}$, $K_{14}$, $K_9$, $K_5$,$\neg K_{10}$, $\neg K_2$, $\neg K_{13}$, $\neg K_7$, $\neg K_8$, $\neg K_3$ $\vee$ $\neg K_6$ $\vee$ $\neg K_{12}$, 
\item \altcirc{142} $\A(\lam,\mu)=\{1,s_1,s_3,s_3s_1,s_2s_3s_1\}$ if $K_1$, $K_4$, $K_{11}$, $K_7$, $K_{14}$, $K_5$,$\neg K_{10}$, $\neg K_2$, $\neg K_9$, $\neg K_{12}$, $\neg K_8$, $\neg K_3$ $\vee$ $\neg K_6$ $\vee$ $\neg K_{13}$, 
\item \altcirc{143} $\A(\lam,\mu)=\{1,s_2,s_1s_2,s_3s_2,s_3s_1s_2\}$ if $K_1$, $K_{11}$, $K_3$, $K_{10}$, $K_{13}$, $K_5$,$\neg K_4$, $\neg K_8$, $\neg K_{14}$, $\neg K_9$, $\neg K_6$, $\neg K_2$ $\vee$ $\neg K_7$ $\vee$ $\neg K_{12}$, 
\item \altcirc{144} $\A(\lam,\mu)=\{s_1,s_2s_1,s_1s_2s_1,s_3s_2s_1,s_3s_1s_2s_1\}$ if $K_8$, $K_4$, $K_{11}$, $K_3$, $K_5$, $K_{12}$,$\neg K_1$, $\neg K_6$, $\neg K_{10}$, $\neg K_7$, $\neg K_{14}$, $\neg K_2$ $\vee$ $\neg K_9$ $\vee$ $\neg K_{13}$, 
\item \altcirc{145} $\A(\lam,\mu)=\{s_2,s_1s_2,s_3s_2,s_1s_2s_1,s_3s_1s_2\}$ if $K_1$, $K_8$, $K_{11}$, $K_3$, $K_{10}$, $K_{13}$,$\neg K_5$, $\neg K_9$, $\neg K_{12}$, $\neg K_6$, $\neg K_4$, $\neg K_2$ $\vee$ $\neg K_7$ $\vee$ $\neg K_{14}$, 
\item \altcirc{146} $\A(\lam,\mu)=\{s_2,s_1s_2,s_3s_2,s_2s_3s_2,s_3s_1s_2\}$ if $K_1$, $K_{11}$, $K_3$, $K_{10}$, $K_{13}$, $K_9$,$\neg K_5$, $\neg K_8$, $\neg K_{14}$, $\neg K_6$, $\neg K_2$, $\neg K_{12}$ $\vee$ $\neg K_7$ $\vee$ $\neg K_4$, 
\item \altcirc{147} $\A(\lam,\mu)=\{s_2,s_1s_2,s_3s_2,s_3s_1s_2,s_2s_3s_1s_2\}$ if $K_1$, $K_{11}$, $K_3$, $K_{10}$, $K_6$, $K_{13}$,$\neg K_5$, $\neg K_{12}$, $\neg K_8$, $\neg K_9$, $\neg K_2$, $\neg K_7$ $\vee$ $\neg K_{14}$ $\vee$ $\neg K_4$, 
\item \altcirc{148} $\A(\lam,\mu)=\{s_3,s_2s_3,s_1s_2s_3,s_2s_3s_2,s_1s_2s_3s_2\}$ if $K_1$, $K_{14}$, $K_{13}$, $K_2$, $K_9$, $K_5$,$\neg K_{11}$, $\neg K_7$, $\neg K_{10}$, $\neg K_4$, $\neg K_6$, $\neg K_3$ $\vee$ $\neg K_8$ $\vee$ $\neg K_{12}$, 
\item \altcirc{149} $\A(\lam,\mu)=\{s_1s_2,s_2s_1,s_1s_2s_1,s_3s_2s_1,s_3s_1s_2s_1\}$ if $K_8$, $K_4$, $K_{11}$, $K_3$, $K_{10}$, $K_{12}$,$\neg K_1$, $\neg K_5$, $\neg K_6$, $\neg K_7$, $\neg K_{13}$, $\neg K_2$ $\vee$ $\neg K_9$ $\vee$ $\neg K_{14}$, 
\item \altcirc{150} $\A(\lam,\mu)=\{s_2s_1,s_1s_2s_1,s_3s_2s_1,s_2s_3s_2s_1,s_3s_1s_2s_1\}$ if $K_8$, $K_4$, $K_{11}$, $K_7$, $K_3$, $K_{12}$,$\neg K_2$, $\neg K_5$, $\neg K_6$, $\neg K_{10}$, $\neg K_{14}$, $\neg K_1$ $\vee$ $\neg K_9$ $\vee$ $\neg K_{13}$, 
\item \altcirc{151} $\A(\lam,\mu)=\{s_2s_1,s_1s_2s_1,s_3s_2s_1,s_3s_1s_2s_1,s_2s_3s_1s_2s_1\}$ if $K_8$, $K_4$, $K_{11}$, $K_3$, $K_6$, $K_{12}$,$\neg K_2$, $\neg K_5$, $\neg K_{10}$, $\neg K_7$, $\neg K_{13}$, $\neg K_1$ $\vee$ $\neg K_9$ $\vee$ $\neg K_{14}$, 
\item \altcirc{152} $\A(\lam,\mu)=\{s_2s_3,s_3s_2,s_1s_2s_3,s_2s_3s_2,s_1s_2s_3s_2\}$ if $K_1$, $K_{14}$, $K_{10}$, $K_{13}$, $K_2$, $K_9$,$\neg K_{11}$, $\neg K_7$, $\neg K_5$, $\neg K_6$, $\neg K_3$, $\neg K_8$ $\vee$ $\neg K_{12}$ $\vee$ $\neg K_4$, 
\item \altcirc{153} $\A(\lam,\mu)=\{s_2s_3,s_1s_2s_3,s_2s_3s_2,s_1s_2s_3s_1,s_1s_2s_3s_2\}$ if $K_1$, $K_7$, $K_{14}$, $K_{13}$, $K_2$, $K_9$,$\neg K_{12}$, $\neg K_{10}$, $\neg K_5$, $\neg K_6$, $\neg K_4$, $\neg K_3$ $\vee$ $\neg K_{11}$ $\vee$ $\neg K_8$, 
\item \altcirc{154} $\A(\lam,\mu)=\{s_2s_3,s_1s_2s_3,s_2s_3s_2,s_1s_2s_3s_2,s_1s_2s_3s_1s_2\}$ if $K_1$, $K_{14}$, $K_6$, $K_{13}$, $K_2$, $K_9$,$\neg K_{12}$, $\neg K_7$, $\neg K_{10}$, $\neg K_5$, $\neg K_3$, $\neg K_{11}$ $\vee$ $\neg K_8$ $\vee$ $\neg K_4$, 
\item \altcirc{155} $\A(\lam,\mu)=\{s_3s_1,s_2s_3s_1,s_1s_2s_3s_1,s_2s_3s_2s_1,s_1s_2s_3s_2s_1\}$ if $K_4$, $K_7$, $K_{14}$, $K_2$, $K_5$, $K_{12}$,$\neg K_6$, $\neg K_{11}$, $\neg K_8$, $\neg K_1$, $\neg K_9$, $\neg K_3$ $\vee$ $\neg K_{10}$ $\vee$ $\neg K_{13}$, 
\item \altcirc{156} $\A(\lam,\mu)=\{s_1s_2s_3,s_2s_3s_1,s_1s_2s_3s_1,s_2s_3s_2s_1,s_1s_2s_3s_2s_1\}$ if $K_4$, $K_7$, $K_{14}$, $K_2$, $K_9$, $K_{12}$,$\neg K_6$, $\neg K_8$, $\neg K_1$, $\neg K_5$, $\neg K_{13}$, $\neg K_3$ $\vee$ $\neg K_{10}$ $\vee$ $\neg K_{11}$, 
\item \altcirc{157} $\A(\lam,\mu)=\{s_2s_3s_1,s_3s_2s_1,s_1s_2s_3s_1,s_2s_3s_2s_1,s_1s_2s_3s_2s_1\}$ if $K_8$, $K_4$, $K_7$, $K_{14}$, $K_2$, $K_{12}$,$\neg K_6$, $\neg K_9$, $\neg K_3$, $\neg K_5$, $\neg K_{11}$, $\neg K_{10}$ $\vee$ $\neg K_1$ $\vee$ $\neg K_{13}$, 
\item \altcirc{158} $\A(\lam,\mu)=\{s_2s_3s_1,s_1s_2s_3s_1,s_2s_3s_2s_1,s_1s_2s_3s_2s_1,s_1s_2s_3s_1s_2s_1\}$ if $K_4$, $K_7$, $K_{14}$, $K_6$, $K_2$, $K_{12}$,$\neg K_3$, $\neg K_8$, $\neg K_9$, $\neg K_5$, $\neg K_{13}$, $\neg K_{11}$ $\vee$ $\neg K_{10}$ $\vee$ $\neg K_1$, 
\item \altcirc{159} $\A(\lam,\mu)=\{s_3s_1s_2,s_2s_3s_1s_2,s_1s_2s_3s_1s_2,s_2s_3s_1s_2s_1,s_1s_2s_3s_1s_2s_1\}$ if $K_3$, $K_{10}$, $K_6$, $K_{13}$, $K_2$, $K_{12}$,$\neg K_7$, $\neg K_1$, $\neg K_{11}$, $\neg K_8$, $\neg K_9$, $\neg K_{14}$ $\vee$ $\neg K_5$ $\vee$ $\neg K_4$, 
\item \altcirc{160} $\A(\lam,\mu)=\{s_1s_2s_3s_2,s_2s_3s_1s_2,s_1s_2s_3s_1s_2,s_2s_3s_1s_2s_1,s_1s_2s_3s_1s_2s_1\}$ if $K_3$, $K_6$, $K_{13}$, $K_2$, $K_9$, $K_{12}$,$\neg K_7$, $\neg K_{14}$, $\neg K_1$, $\neg K_8$, $\neg K_{10}$, $\neg K_{11}$ $\vee$ $\neg K_5$ $\vee$ $\neg K_4$, 
\item \altcirc{161} $\A(\lam,\mu)=\{s_2s_3s_1s_2,s_3s_1s_2s_1,s_1s_2s_3s_1s_2,s_2s_3s_1s_2s_1,s_1s_2s_3s_1s_2s_1\}$ if $K_8$, $K_3$, $K_6$, $K_{13}$, $K_2$, $K_{12}$,$\neg K_7$, $\neg K_{11}$, $\neg K_4$, $\neg K_{10}$, $\neg K_9$, $\neg K_1$ $\vee$ $\neg K_{14}$ $\vee$ $\neg K_5$, 
\item \altcirc{162} $\A(\lam,\mu)=\{s_2s_3s_1s_2,s_1s_2s_3s_1s_2,s_1s_2s_3s_2s_1,s_2s_3s_1s_2s_1,s_1s_2s_3s_1s_2s_1\}$ if $K_7$, $K_3$, $K_6$, $K_{13}$, $K_2$, $K_{12}$,$\neg K_{14}$, $\neg K_4$, $\neg K_8$, $\neg K_{10}$, $\neg K_9$, $\neg K_{11}$ $\vee$ $\neg K_1$ $\vee$ $\neg K_5$, 
\item \altcirc{163} $\A(\lam,\mu)=\{1,s_1,s_2,s_3,s_2s_1,s_3s_1\}$ if $K_1$, $K_8$, $K_4$, $K_{11}$, $K_{14}$, $K_{10}$, $K_5$,$\neg K_{13}$, $\neg K_3$, $\neg K_{12}$, $\neg K_9$, $\neg K_7$, 
\item \altcirc{164} $\A(\lam,\mu)=\{1,s_1,s_2,s_3,s_2s_3,s_3s_1\}$ if $K_1$, $K_4$, $K_{11}$, $K_{14}$, $K_{10}$, $K_9$, $K_5$,$\neg K_{13}$, $\neg K_2$, $\neg K_3$, $\neg K_7$, $\neg K_8$, 
\item \altcirc{165} $\A(\lam,\mu)=\{1,s_1,s_2,s_1s_2,s_2s_1,s_1s_2s_1\}$ if $K_1$, $K_8$, $K_4$, $K_{11}$, $K_3$, $K_{10}$, $K_5$,$\neg K_{13}$, $\neg K_{12}$, $\neg K_{14}$, 
\item \altcirc{166} $\A(\lam,\mu)=\{1,s_1,s_3,s_2s_1,s_3s_1,s_2s_3s_1\}$ if $K_1$, $K_8$, $K_4$, $K_{11}$, $K_7$, $K_{14}$, $K_5$,$\neg K_{10}$, $\neg K_2$, $\neg K_3$, $\neg K_{12}$, $\neg K_9$, 
\item \altcirc{167} $\A(\lam,\mu)=\{1,s_1,s_3,s_2s_3,s_3s_1,s_2s_3s_1\}$ if $K_1$, $K_4$, $K_{11}$, $K_7$, $K_{14}$, $K_9$, $K_5$,$\neg K_{10}$, $\neg K_2$, $\neg K_{13}$, $\neg K_{12}$, $\neg K_8$, 
\item \altcirc{168} $\A(\lam,\mu)=\{1,s_2,s_3,s_2s_3,s_3s_2,s_2s_3s_2\}$ if $K_1$, $K_{11}$, $K_{14}$, $K_{10}$, $K_{13}$, $K_9$, $K_5$,$\neg K_4$, $\neg K_2$, $\neg K_3$, 
\item \altcirc{169} $\A(\lam,\mu)=\{1,s_2,s_1s_2,s_3s_2,s_1s_2s_1,s_3s_1s_2\}$ if $K_1$, $K_8$, $K_{11}$, $K_3$, $K_{10}$, $K_{13}$, $K_5$,$\neg K_4$, $\neg K_{14}$, $\neg K_9$, $\neg K_{12}$, $\neg K_6$, 
\item \altcirc{170} $\A(\lam,\mu)=\{1,s_2,s_1s_2,s_3s_2,s_2s_3s_2,s_3s_1s_2\}$ if $K_1$, $K_{11}$, $K_3$, $K_{10}$, $K_{13}$, $K_9$, $K_5$,$\neg K_4$, $\neg K_8$, $\neg K_{14}$, $\neg K_6$, $\neg K_2$, 
\item \altcirc{171} $\A(\lam,\mu)=\{s_1,s_1s_2,s_2s_1,s_1s_2s_1,s_3s_2s_1,s_3s_1s_2s_1\}$ if $K_8$, $K_4$, $K_{11}$, $K_3$, $K_{10}$, $K_5$, $K_{12}$,$\neg K_1$, $\neg K_6$, $\neg K_7$, $\neg K_{14}$, $\neg K_{13}$, 
\item \altcirc{172} $\A(\lam,\mu)=\{s_1,s_2s_1,s_3s_1,s_2s_3s_1,s_3s_2s_1,s_2s_3s_2s_1\}$ if $K_8$, $K_4$, $K_{11}$, $K_7$, $K_{14}$, $K_5$, $K_{12}$,$\neg K_1$, $\neg K_2$, $\neg K_3$, 
\item \altcirc{173} $\A(\lam,\mu)=\{s_1,s_2s_1,s_1s_2s_1,s_3s_2s_1,s_2s_3s_2s_1,s_3s_1s_2s_1\}$ if $K_8$, $K_4$, $K_{11}$, $K_7$, $K_3$, $K_5$, $K_{12}$,$\neg K_1$, $\neg K_2$, $\neg K_6$, $\neg K_{10}$, $\neg K_{14}$, 
\item \altcirc{174} $\A(\lam,\mu)=\{s_2,s_1s_2,s_3s_2,s_1s_2s_1,s_3s_1s_2,s_2s_3s_1s_2\}$ if $K_1$, $K_8$, $K_{11}$, $K_3$, $K_{10}$, $K_6$, $K_{13}$,$\neg K_5$, $\neg K_{12}$, $\neg K_9$, $\neg K_2$, $\neg K_4$, 
\item \altcirc{175} $\A(\lam,\mu)=\{s_2,s_1s_2,s_3s_2,s_2s_3s_2,s_3s_1s_2,s_2s_3s_1s_2\}$ if $K_1$, $K_{11}$, $K_3$, $K_{10}$, $K_6$, $K_{13}$, $K_9$,$\neg K_5$, $\neg K_{12}$, $\neg K_8$, $\neg K_{14}$, $\neg K_2$, 
\item \altcirc{176} $\A(\lam,\mu)=\{s_3,s_2s_3,s_3s_1,s_1s_2s_3,s_2s_3s_1,s_1s_2s_3s_1\}$ if $K_1$, $K_4$, $K_7$, $K_{14}$, $K_2$, $K_9$, $K_5$,$\neg K_{11}$, $\neg K_{12}$, $\neg K_{13}$, 
\item \altcirc{177} $\A(\lam,\mu)=\{s_3,s_2s_3,s_3s_2,s_1s_2s_3,s_2s_3s_2,s_1s_2s_3s_2\}$ if $K_1$, $K_{14}$, $K_{10}$, $K_{13}$, $K_2$, $K_9$, $K_5$,$\neg K_{11}$, $\neg K_7$, $\neg K_4$, $\neg K_6$, $\neg K_3$, 
\item \altcirc{178} $\A(\lam,\mu)=\{s_3,s_2s_3,s_1s_2s_3,s_2s_3s_2,s_1s_2s_3s_1,s_1s_2s_3s_2\}$ if $K_1$, $K_7$, $K_{14}$, $K_{13}$, $K_2$, $K_9$, $K_5$,$\neg K_{11}$, $\neg K_{12}$, $\neg K_{10}$, $\neg K_4$, $\neg K_6$, 
\item \altcirc{179} $\A(\lam,\mu)=\{s_1s_2,s_2s_1,s_1s_2s_1,s_3s_2s_1,s_3s_1s_2s_1,s_2s_3s_1s_2s_1\}$ if $K_8$, $K_4$, $K_{11}$, $K_3$, $K_{10}$, $K_6$, $K_{12}$,$\neg K_1$, $\neg K_2$, $\neg K_5$, $\neg K_7$, $\neg K_{13}$, 
\item \altcirc{180} $\A(\lam,\mu)=\{s_1s_2,s_1s_2s_1,s_3s_1s_2,s_2s_3s_1s_2,s_3s_1s_2s_1,s_2s_3s_1s_2s_1\}$ if $K_8$, $K_{11}$, $K_3$, $K_{10}$, $K_6$, $K_{13}$, $K_{12}$,$\neg K_1$, $\neg K_2$, $\neg K_4$, 
\item \altcirc{181} $\A(\lam,\mu)=\{s_2s_1,s_1s_2s_1,s_3s_2s_1,s_2s_3s_2s_1,s_3s_1s_2s_1,s_2s_3s_1s_2s_1\}$ if $K_8$, $K_4$, $K_{11}$, $K_7$, $K_3$, $K_6$, $K_{12}$,$\neg K_2$, $\neg K_5$, $\neg K_{10}$, $\neg K_{14}$, $\neg K_{13}$, 
\item \altcirc{182} $\A(\lam,\mu)=\{s_2s_3,s_3s_2,s_1s_2s_3,s_2s_3s_2,s_1s_2s_3s_2,s_1s_2s_3s_1s_2\}$ if $K_1$, $K_{14}$, $K_{10}$, $K_6$, $K_{13}$, $K_2$, $K_9$,$\neg K_{11}$, $\neg K_{12}$, $\neg K_7$, $\neg K_5$, $\neg K_3$, 
\item \altcirc{183} $\A(\lam,\mu)=\{s_2s_3,s_1s_2s_3,s_2s_3s_2,s_1s_2s_3s_1,s_1s_2s_3s_2,s_1s_2s_3s_1s_2\}$ if $K_1$, $K_7$, $K_{14}$, $K_6$, $K_{13}$, $K_2$, $K_9$,$\neg K_{12}$, $\neg K_{10}$, $\neg K_5$, $\neg K_4$, $\neg K_3$, 
\item \altcirc{184} $\A(\lam,\mu)=\{s_3s_1,s_1s_2s_3,s_2s_3s_1,s_1s_2s_3s_1,s_2s_3s_2s_1,s_1s_2s_3s_2s_1\}$ if $K_4$, $K_7$, $K_{14}$, $K_2$, $K_9$, $K_5$, $K_{12}$,$\neg K_6$, $\neg K_{11}$, $\neg K_8$, $\neg K_1$, $\neg K_{13}$, 
\item \altcirc{185} $\A(\lam,\mu)=\{s_3s_1,s_2s_3s_1,s_3s_2s_1,s_1s_2s_3s_1,s_2s_3s_2s_1,s_1s_2s_3s_2s_1\}$ if $K_8$, $K_4$, $K_7$, $K_{14}$, $K_2$, $K_5$, $K_{12}$,$\neg K_6$, $\neg K_{11}$, $\neg K_1$, $\neg K_9$, $\neg K_3$, 
\item \altcirc{186} $\A(\lam,\mu)=\{s_3s_2,s_2s_3s_2,s_3s_1s_2,s_1s_2s_3s_2,s_2s_3s_1s_2,s_1s_2s_3s_1s_2\}$ if $K_1$, $K_3$, $K_{10}$, $K_6$, $K_{13}$, $K_2$, $K_9$,$\neg K_{11}$, $\neg K_{12}$, $\neg K_{14}$, 
\item \altcirc{187} $\A(\lam,\mu)=\{s_1s_2s_3,s_2s_3s_1,s_1s_2s_3s_1,s_2s_3s_2s_1,s_1s_2s_3s_2s_1,s_1s_2s_3s_1s_2s_1\}$ if $K_4$, $K_7$, $K_{14}$, $K_6$, $K_2$, $K_9$, $K_{12}$,$\neg K_3$, $\neg K_8$, $\neg K_1$, $\neg K_5$, $\neg K_{13}$, 
\item \altcirc{188} $\A(\lam,\mu)=\{s_1s_2s_3,s_1s_2s_3s_1,s_1s_2s_3s_2,s_1s_2s_3s_1s_2,s_1s_2s_3s_2s_1,s_1s_2s_3s_1s_2s_1\}$ if $K_7$, $K_{14}$, $K_6$, $K_{13}$, $K_2$, $K_9$, $K_{12}$,$\neg K_3$, $\neg K_1$, $\neg K_4$, 
\item \altcirc{189} $\A(\lam,\mu)=\{s_2s_3s_1,s_3s_2s_1,s_1s_2s_3s_1,s_2s_3s_2s_1,s_1s_2s_3s_2s_1,s_1s_2s_3s_1s_2s_1\}$ if $K_8$, $K_4$, $K_7$, $K_{14}$, $K_6$, $K_2$, $K_{12}$,$\neg K_3$, $\neg K_9$, $\neg K_5$, $\neg K_{13}$, $\neg K_{11}$, 
\item \altcirc{190} $\A(\lam,\mu)=\{s_3s_1s_2,s_1s_2s_3s_2,s_2s_3s_1s_2,s_1s_2s_3s_1s_2,s_2s_3s_1s_2s_1,s_1s_2s_3s_1s_2s_1\}$ if $K_3$, $K_{10}$, $K_6$, $K_{13}$, $K_2$, $K_9$, $K_{12}$,$\neg K_7$, $\neg K_1$, $\neg K_{14}$, $\neg K_{11}$, $\neg K_8$, 
\item \altcirc{191} $\A(\lam,\mu)=\{s_3s_1s_2,s_2s_3s_1s_2,s_3s_1s_2s_1,s_1s_2s_3s_1s_2,s_2s_3s_1s_2s_1,s_1s_2s_3s_1s_2s_1\}$ if $K_8$, $K_3$, $K_{10}$, $K_6$, $K_{13}$, $K_2$, $K_{12}$,$\neg K_7$, $\neg K_1$, $\neg K_{11}$, $\neg K_4$, $\neg K_9$, 
\item \altcirc{192} $\A(\lam,\mu)=\{s_3s_2s_1,s_2s_3s_2s_1,s_3s_1s_2s_1,s_1s_2s_3s_2s_1,s_2s_3s_1s_2s_1,s_1s_2s_3s_1s_2s_1\}$ if $K_8$, $K_4$, $K_7$, $K_3$, $K_6$, $K_2$, $K_{12}$,$\neg K_{14}$, $\neg K_{11}$, $\neg K_{13}$, 
\item \altcirc{193} $\A(\lam,\mu)=\{s_1s_2s_3s_2,s_2s_3s_1s_2,s_1s_2s_3s_1s_2,s_1s_2s_3s_2s_1,s_2s_3s_1s_2s_1,s_1s_2s_3s_1s_2s_1\}$ if $K_7$, $K_3$, $K_6$, $K_{13}$, $K_2$, $K_9$, $K_{12}$,$\neg K_{14}$, $\neg K_1$, $\neg K_4$, $\neg K_8$, $\neg K_{10}$, 
\item \altcirc{194} $\A(\lam,\mu)=\{s_2s_3s_1s_2,s_3s_1s_2s_1,s_1s_2s_3s_1s_2,s_1s_2s_3s_2s_1,s_2s_3s_1s_2s_1,s_1s_2s_3s_1s_2s_1\}$ if $K_8$, $K_7$, $K_3$, $K_6$, $K_{13}$, $K_2$, $K_{12}$,$\neg K_{14}$, $\neg K_{11}$, $\neg K_4$, $\neg K_{10}$, $\neg K_9$, 
\item $\A(\lam,\mu)=\varnothing$ otherwise.
\end{enumerate}

\newpage
\section{Theorem \ref{thm:qKWMF}: The associated \texorpdfstring{$Z_i$}{Zi} polynomials}
\label{appendix:ztable}
This appendix provides the formulas needed in Theorem \ref{thm:qKWMF}.

\subsection{\textbf{Formula for} \texorpdfstring{$Z_1$}{Z1}}
\zcase{Z_1}{P_1}{Q_1}{R_1}
\subsection{\textbf{Formula for} \texorpdfstring{$Z_2$}{Z2}}
\zcase{Z_2}{P_3}{Q_4}{R_1}
\subsection{\textbf{Formula for} \texorpdfstring{$Z_3$}{Z3}}
\zcase{Z_3}{P_1}{Q_1}{R_4}
\subsection{\textbf{Formula for} \texorpdfstring{$Z_4$}{Z4}}
\zcase{Z_4}{P_3}{Q_6}{R_1}
\subsection{\textbf{Formula for} \texorpdfstring{$Z_5$}{Z5}}
\zcase{Z_5}{P_1}{Q_5}{R_4}
\subsection{\textbf{Formula for} \texorpdfstring{$Z_6$}{Z6}}
\zcase{Z_6}{P_4}{Q_1}{R_1}
\subsection{\textbf{Formula for} \texorpdfstring{$Z_7$}{Z7}}
\zcase{Z_7}{P_1}{Q_5}{R_3}
\subsection{\textbf{Formula for} \texorpdfstring{$Z_8$}{Z8}}
\zcase{Z_8}{P_4}{Q_4}{R_1}
\subsection{\textbf{Formula for} \texorpdfstring{$Z_{9}$}{Z9}}
\zcase{Z_9}{P_4}{Q_1}{R_4}
\subsection{\textbf{Formula for} \texorpdfstring{$Z_{10}$}{Z10}}
\zcase{Z_{10}}{P_1}{Q_6}{R_3}
\subsection{\textbf{Formula for} \texorpdfstring{$Z_{11}$}{Z11}}
\zcase{Z_{11}}{P_1}{Q_6}{R_1}


\begin{thebibliography}{99}

\bibitem{brenti} A. Bjorner and F. Brenti (2005). \textit{Combinatorics of Coxeter groups}. Springer-Verlag.

\bibitem{CHI} K. Chang, P. E. Harris, and E. Insko. \textit{Kostant’s Weight Multiplicity Formula and the Fibonacci and Lucas Numbers.} To appear in Journal of Combinatorics.

\bibitem{Cochet} C. Cochet. \textit{Vector partition function and representation theory}. Actes de la conf\'erence Formal Power Series and Algebraic Combinatorics, Taormina, Sicile (2005), 20--25.



\bibitem{CL} S. Corteel, J. Lovejoy.
\textit{Overpartitions}.
Trans. Amer. Math. Soc., 356 (2004), pp. 1623--1635.

\bibitem{CLY}S. Corteel, J. Lovejoy, A. J. Yee.
\textit{Overpartitions and generating functions for generalized Frobenius partitions}.
Mathematics and Computer Science III: Algorithms, Trees, Combinatorics, and Probabilities (2004), pp. 15--24.

\bibitem{}B. Davison, J. Ongaro, and B. Szendr\"oi. \textit{Enumerating Coloured Partitions in 2 And 3 Dimensions}

\bibitem{deloera} J.A. De Loera and B. Sturmfels, \textit{Algebraic Unimodular Counting.} Mathematical Programming, Series B, 96 (2003), 183-203.


\bibitem{GW} R. Goodman, and N. R. Wallach. \textit{Symmetry, Representations and Invariants}, Springer, New York, 2009.


\bibitem{PHThesis} P. E. Harris, \textit{Combinatorial Problems Related to Kostant's Weight Multiplicity Formula,} Doctoral dissertation (2012). University of Wisconsin-Milwaukee, Milwaukee, WI.


\bibitem{PHcomputingweightmultiplicities} P. E. Harris. \textit{Computing weight multiplicities}. In: Wootton A., Peterson V., Lee C. (eds) A Primer for Undergraduate Research. Foundations for Undergraduate Research in Mathematics. Birkhauser, Cham (2017) 193-222.

\bibitem{harris2011} P. E. Harris. \textit{On the adjoint representation of $\mathfrak{sl}_n$ and the Fibonacci numbers}. C. R. Math. Acad. Sci. Paris 349 (2011) pp. 935-937. 

\bibitem{HIO} P. E. Harris, E. Insko, and M. Omar.\textit{ The $q$-analog of Kostant's partition function and the highest root of the simple Lie algebras}. Australasian Journal of Combinatorics 71 (2018), 68--91.

\bibitem{HIS}P. E. Harris, E. Insko, and A. Simpson. \textit{Computing weight $q$-multiplicities for the representations of the simple Lie algebras}. Applicable Algebra in Engineering, Communication and Computing, 29(4), August 2018, Volume 29, Issue 4, pp 351--362.

\bibitem{HIW} P. E. Harris, E. Insko, and L. K. Williams. \textit{The adjoint representation of a classical Lie algebra and the support of Kostant's weight multiplicity formula}. Journal of Combinatorics Volume 7 (2016) Number 1 pp. 75-116.

\bibitem{HarrisLauber} P. E. Harris, E. L. Lauber, \textit{ Weight $q$-multiplicities for representations of $\mathfrak{sp}_4(\mathbb{C}),$} Journal of Siberian Federal University. Mathematics \& Physics 2017, 10(4), 1-9.


\bibitem{HLM} P. E. Harris, H. Lescinsky, and G. Mabie. \textit{Lattice patterns for the support of Kostant's weight multiplicity formula on $\mathfrak{sl}_3(\CC)$}. Minnesota Journal of Undergraduate Mathematics, [S.l.], v. 4, n. 1, June 2018.

\bibitem{HLRRRKTDU}
P. E. Harris, M. Loving, J. Ramirez, J. Rennie, G. Rojas Kirby, E. Torres Davila, and F. O. Ulysse. \textit{Visualizing the support of Kostant’s weight multiplicity formula for the rank two Lie algebras}. Preprint: \url{https://arxiv.org/pdf/1908.08405.pdf}



\bibitem{mars1} P. E. Harris, A. Pankhurst, C. Perez, and A. Siddiqui, \textit{Partitions from Mars, Part 1,} Girls’ Angle Bulletin, February/March 2018 Volume 11 Number 3 p. 7-10.

\bibitem{mars2} P. E. Harris, A. Pankhurst, C. Perez, and A. Siddiqui, \textit{Partitions from Mars, Part 2,} Girls’ Angle Bulletin, February/March 2018 Volume 11 Number 3 p. 7-10.


\bibitem{HRSS}P. E. Harris, M. Rahmoeller, L. Schneider, and A. Simpson. \textit{\href{https://arxiv.org/pdf/1905.10319.pdf}{When is the $q$-multiplicity of a weight equal to a power of $q$?}}. Electronic Journal of Combinatorics 26(4) (2019), \#P4.17.

\bibitem{Keith} W.J. Keith.\textit{  Restricted k-color partitions}. Ramanujan J 40, 71--92 (2016).

\bibitem{Kim}B. Kim. \textit{A short note on the overpartition function}.
Discrete Mathematics
Volume 309, Issue 8, 28 April 2009, Pages 2528--2532.

\bibitem{kostant} B. Kostant, \textit{A formula for the multiplicity of a weight}, Proc. Natl. Acad. Sci, USA 44 (1958), 588-589.

\bibitem{lus} G. Lusztig, \textit{Singularities, character formulas, and a q-analog of weight multiplicities.} Ast\'{e}risque, (1983) 101-102, 208-229.

\bibitem{varadarajan} V. S. Varadarajan, \textit{Lie Groups, Lie Algebras, and Their Representations}, Springer (1984).
\end{thebibliography}
\end{document}